\newtheorem{theorem}{Theorem}[section]
\newtheorem{corollary}[theorem]{Corollary}
\newtheorem{lemma}[theorem]{Lemma}
\newtheorem{proposition}[theorem]{Proposition}
\theoremstyle{definition}
\numberwithin{equation}{section}
\newcommand\E{\mathbb{E}}
\newcommand\eps{\varepsilon}
\renewcommand{\Re}{\textnormal{Re}}
\renewcommand{\pmod}[1]{\ (\mathrm{mod}\ #1)}
\renewcommand\d{\textnormal{ d}}
\newcommand{\sump}{\mathop{{\sum}^{\raisebox{-3pt}{\makebox[0pt][l]{$'$}}}}}
\begin{document}

\title[$\Omega$-Results for Exponential Sums Related to Maass Cusp Forms for $\mathrm{SL}_3(\mathbb Z)$]{$\Omega$-Results for Exponential Sums \\ Related to Maass Cusp Forms for $\mathrm{SL}_3(\mathbb Z)$}

\author{Jesse J\"a\"asaari}\address{Jesse J\"a\"asaari, Department of Mathematics and Statistics, University of Turku, 20014 Turku, Finland}\email{jesse.jaasaari@utu.fi}

\subjclass[2010]{Primary 11F30, 11L07}

\begin{abstract}   
We obtain $\Omega$-results for linear exponential sums with rational additive twists of small prime denominators weighted by Hecke eigenvalues of Maass cusp forms for the group $\mathrm{SL}_3(\mathbb Z)$. In particular, our $\Omega$-results match the expected conjectural upper bounds when the denominator of the twist is sufficiently small compared to the length of the sum. Non-trivial $\Omega$-results for sums over short segments are also obtained. Along the way we produce lower bounds for mean squares of the exponential sums in question and also improve the best known upper bound for these sums in some ranges of parameters.
\end{abstract}

\maketitle
     

\section{Introduction}

\noindent Hecke eigenvalues of automorphic forms are mysterious objects of arithmetic importance and thus it is highly desirable to understand how these numbers $a(m)$ are distributed. Generally one expects significant randomness in their distribution and a manifestation of this belief is that the convolution sums  
\begin{align}\label{convolution-sum}
\sum_{x\leq m\leq x+\Delta}a(m)b(m),
\end{align}
with $\Delta\ll x$, should exhibit cancellation for various sequences $\{b(m)\}_m$. There is an extensive literature concerning estimates for such sums in the case of classical modular forms, see e.g. \cite{Jutila1985, Jutila1987, Harcos, Ernvall--Hytonen-Karppinen, Fouvry-Ganguly2014, CPZ2020, He-Wang2023}, for a wide variety of sequences $\{b(m)\}_m$. However, such sums have been less studied for higher rank forms. Especially interesting situation is the case where\footnote{Throughout the paper $e(x)$ stands for $e^{2\pi ix}$.} $b(m)=e(m\alpha)$ for some fixed $\alpha\in\mathbb R$. These sums have long been connected to important questions in analytic number theory, e.g. the shifted convolution problem \cite{Harcos, Jutila1996, Munshi2013}, the subconvexity problem for twisted $L$-functions \cite{Blomer, Munshi2014}, and estimation of the second moment of automorphic $L$-functions in the $t$-aspect \cite{Miller}. Also, when $\Delta$ is small compared to $x$, the resulting short sums are closely related to the classical problems of studying various error terms, e.g. in the Dirichlet divisor problem or the Gauss circle problem, in short intervals. In the present article we shall investigate the extent of cancellation in sums (\ref{convolution-sum}) when $a(m)$ are chosen to be the Hecke eigenvalues\footnote{Fourier coefficients of $\mathrm{GL}_3$ Maass cusp forms are indexed by pairs of natural numbers and are typically denoted by $A(m,n)$. It is known that if the form is a Hecke eigenform and normalised so that $A(1,1)=1$, then the eigenvalue under the $m^{\text{th}}$ Hecke operator is given by $A(m,1)$.} $A(m,1)$ of a fixed Hecke--Maass cusp form for the group $\mathrm{SL}_3(\mathbb Z)$ and $b(m)$ are the exponential phases $e(m\alpha)$ with $\alpha\in\mathbb R$.

The relationship between exponential sums weighted by Hecke eigenvalues of automorphic forms and classical number theoretic error terms is particularly clear in the case where the twist $\alpha$ is close to a reduced fraction $h/k$ with a small denominator $k$. Furthermore, in this case the behaviour of such exponential sum is closely related to the behaviour of the sum at the fraction and hence it makes studying rationally additively twisted linear exponential sums, that is the setting where $b(m)=e(mh/k)$ for a reduced fraction $h/k$, particularly interesting. This is the set-up we restrict ourselves in the present work. Finally, on the technical side, good estimates for short rationally additively twisted exponential sums also provide a practical tool for reducing smoothing error (see e.g. \cite{Jutila1987, Vesalainen}), which is beneficial in variety of settings where linear exponential sums arise. 
               
For general linear twists the best known upper bound for long sums is
\begin{align*}
\sum_{m\leq x}A(m,1)e(m\alpha)\ll_\varepsilon x^{3/4+\varepsilon}
\end{align*}
uniformly in $\alpha\in\mathbb R$ due to Miller \cite{Miller} and sharper estimates are known when $\alpha$ is a rational number with a small denominator \cite{Jaasaari--Vesalainen1}. For shorter sums very little is known in the higher rank setting. The moment estimates in the $\mathrm{GL}_2$ situation \cite{Jutila, Ernvall-Hytonen2011, Ernvall-Hytonen2015, Vesalainen}, the square-root cancellation heuristics, and the shape of the truncated Voronoi identity for rationally additively twisted sums related to $\mathrm{SL}_3(\mathbb Z)$ Maass cusp forms derived in \cite{Jaasaari--Vesalainen1} give rise to the conjectural bounds
\begin{align}\label{conjecture}
\sum_{x\leqslant m\leqslant x+\Delta}A(m,1)e\left(\frac{mh}k\right)\ll_\varepsilon\text{min}\left(\Delta^{1/2}x^{\varepsilon},k^{1/2}x^{1/3+\varepsilon}\right).
\end{align}  

\noindent Our aim in this paper is to investigate what limitations there are for the extent of cancellation in rationally additively twisted sums attached to Maass cusp forms for the group $\mathrm{SL}_3(\mathbb Z)$ by giving stronger evidence towards the conjectural bounds (\ref{conjecture}). This is achieved in some ranges of parameters by establishing $\Omega$-results.

In this work we consider sums with a sharp cut-off $1\leq m\leq x$. Such sums occur naturally in many contexts throughout analytic number theory. One might also be interested in closely related sums with a smooth cut-off. These sums are technically easier to work with, but in certain ways their behaviour differs from the sums we are studying. For instance, if $w$ is a non-negative smooth function supported on the interval $[1/2,5/2]$ that is identically one on $[1,2]$, then a standard calculation using Mellin inversion formula and contour shifting shows that
\[
\sum_m A(m,1)w\left(\frac mx\right)\ll_A x^{-A}
\]
for any integer $A\geq 1$. This differs considerably from the conjectural bound
\[ 
\sum_{x\leq m\leq 2x}A(m,1)\ll_\eps x^{1/3+\eps}\]
for sums with a sharp cut-off, which is expected to be optimal. 

The key tools used to study rationally additively twisted exponential sums related to automorphic forms are the so-called Voronoi summation formulas. For example, classically (in the rank one setting) rationally additively twisted sums can be analysed using truncated Voronoi identities. These are reasonably sharp approximate formulations of full Voronoi summation formulas, which are essentially what one gets if one formally replaces the smooth cut-off function by a characteristic function of an interval in the Voronoi summation formula. Naturally, this formal substitution is analytically challenging as the Voronoi summation formulae typically require the test functions to be smooth enough. Nevertheless, in the classical setting twisted truncated Voronoi identities have been derived previously by Jutila \cite{Jutila1985} for the error term in the classical Dirichlet divisor problem and for sums involving Fourier coefficients of holomorphic cusp forms, and by Meurman \cite{Meurman2} for sums involving Fourier coefficients of $\mathrm{GL}_2$ Maass cusp forms. In higher rank a truncated Voronoi identity for the generalised divisor function $d_k(n)$ has been given in \cite[(3.23)]{Ivic3}, and a truncated Voronoi identity for plain sums of coefficients of fairly general $L$-functions has been obtained in \cite{Friedlander--Iwaniec}.

Rationally additively twisted Voronoi summation formulae have been implemented for $\mathrm{GL}_3$ by Miller and Schmid \cite{Miller--Schmid} using the framework of automorphic distributions and for $\mathrm{GL}_n$ in \cite{Goldfeld--Li1} by more classical means. However, these types of formulas have limitations in higher rank. Namely, the convergence problems indicated above concerning replacing the smooth cut-off with a sharp cut-off get more difficult as the rank increases and this leads to quite large error terms. For example, the truncated $\mathrm{GL}_3$ Voronoi identity in \cite{Jaasaari--Vesalainen1}, cited in a corrected form in the appendix below, has an error term which is actually larger than the expected optimal upper bound for the corresponding sum.

The main feature in truncated Voronoi identities is the interplay between the length of the sum on the dual side (the so-called truncated Voronoi series) and the size of the error term. When considering sums with a sharp cut-off, in typical situations requiring the Voronoi series to be short forces the error term to be larger and likewise smaller error term requires longer Dirichlet polynomial on the dual side. The main drawback in the Voronoi identity of \cite{Jaasaari--Vesalainen1} is that even under the Ramanujan--Petersson conjecture for $\mathrm{GL}_3$ Maass cusp forms the error term gives the dominant contribution (unless the length of the Voronoi series is taken to be large, which leads to other difficulties). This feature makes it hard to analyse exponential sums beneficially using such summation formulas. Indeed, the truncated Voronoi identities mentioned above are not suitable for good pointwise bounds and usually to evaluate moments precisely one needs the dual sum to be sufficiently short, in which case the error term becomes too large. In the higher rank setting this highlights the significant technical challenges faced in proving upper bounds of the right order of magnitude or $\Omega$-results for the sum in (\ref{conjecture}).

To circumvent these issues we take an alternate approach, building upon the paper \cite{Ivic--Matsumoto--Tanigawa} that studied a different question concerning obtaining upper bounds for the error term in the Rankin--Selberg problem. Essentially we relate sums with a sharp cut-off to very particularly smoothed sums (the so-called Riesz weighted sums), where the smoothing is tailored specifically so that it is possible to connect sums with a sharp cut-off to these smoothed sums with fairly simple arguments. For the latter sums very sharp Voronoi identities can be derived. Because of this, these new sums can be studied efficiently and this in turn yields information about the original exponential sum we are interested in.  

To be more precise, our approach relies on considering Riesz weighted exponential sums roughly\footnote{For an explicit expression for the residue terms, see (\ref{main-term}) below.} given by
\begin{align}\label{Riesz-sums}
\widetilde A_a\!\left(x;\frac hk\right):=\frac1{a!}\sump_{m\leqslant x}A(m,1)\,e\!\left(\frac{mh}k\right)\left(x-m\right)^a-\text{residue terms},
\end{align}
where $a\geqslant 0$ is an integer. Here and throughout the paper the primed summation notation means to take half the value at the endpoint in the case $x$ is an integer. This is a very classical form of smoothing. For sufficiently large order $a$, Voronoi identities can be derived for these sums and they are better behaved than the Voronoi identities for sums with a sharp cut-off we are actually interested in (which essentially corresponds to the case $a=0$). Obtaining these Voronoi identities in higher rank is of independent interest. It actually turns out that for Riesz weighted sums the Voronoi series on the dual side converges for $a\geq 2$ and so for these sums we do not need to truncate the Voronoi series unlike in the case $a=1$ (or $a=0$). 

For technical reasons we actually consider primed sums
\[ 
\sump_{m\leq x}A(m,1)e\left(\frac{mh}k\right).\]
The connection to ordinary sums is that 
\[ 
\sum_{m\leq x}A(m,1)e\left(\frac{mh}k\right)=\sump_{m\leq x}A(m,1)e\left(\frac{mh}k\right)+O_\eps\left(x^{\vartheta+\varepsilon}\right),
\]
where the error term arises when $x$ is an integer. Here $\vartheta\geqslant 0$ is the exponent towards the Ramanujan--Petersson conjecture for $\mathrm{SL}_3(\mathbb Z)$ Maass cusp forms. It is widely expected that $\vartheta=0$, but currently we only know that $\vartheta\leqslant 5/14$, see \cite[Appendix 2]{Kim--Sarnak}.

One strategy to obtain $\Omega$-results is to prove lower bounds for the second moment   
\begin{align}\label{exp-sum}
\int\limits_X^{2X}\left|\sump_{m\leq x}A(m,1)e\left(\frac{mh}k\right)\right|^2 \mathrm d x=\int\limits_X^{2X}\left|\widetilde A_0\left(x;\frac hk\right)\right|^2 \mathrm d x+O_\varepsilon\left(Xk^{3+\varepsilon}\right).
\end{align}
As discussed above, these moments are typically evaluated with the help of truncated Voronoi identities and this approach works well in the classical setting, but not so well in higher rank. Crucially for us, we will be able to evaluate second moments of $\widetilde A_a(x;h/k)$ for $a\geq 1$ using the Voronoi identities we establish, at least on average over $h$ (mod $k$). The asymptotic evaluation of the moments of higher order Riesz means can be converted into a lower bound for the second moment of ordinary exponential sums (\ref{exp-sum}) by elementary means explained in the following paragraph, which immediately implies an $\Omega$-result for these sums. 

The key property of the Riesz weighted sums is that 
\begin{align}\label{Key-property}
\int\limits_x^t \widetilde A_a\left(u;\frac hk\right)\,\mathrm d u=\widetilde A_{a+1}\left(t;\frac hk\right)-\widetilde A_{a+1}\left(x;\frac hk\right). 
\end{align}
This may be combined with the trivial identity
\begin{align*}
\widetilde A_a\left(x;\frac hk\right)=\frac1H\int\limits_x^{x+H}\widetilde A_a\left(x;\frac hk\right)\,\mathrm d t,
\end{align*}
and from these it quickly follows that 
\begin{align}\label{Key-relation}
\widetilde A_{a+1}\left(x;\frac hk\right)=\frac1H\int\limits_x^{x+H}\left( \widetilde A_{a+1}\left(t;\frac hk\right)-\int\limits_x^t\widetilde A_a\left(u;\frac hk\right)\,\mathrm d u\right)\,\mathrm d t.
\end{align}
An analogous identity for Riesz weighted sums attached to the Dirichlet series coefficients of the Rankin--Selberg $L$-function was used in \cite{Ivic--Matsumoto--Tanigawa} to relate upper bounds for the error term in the Rankin--Selberg problem to upper bounds for the error term in its first Riesz mean. The novelty of the present work is to observe that (\ref{Key-relation}), along with further arguments, may be used to relate moments of various Riesz weighted sums of different orders.
 
Indeed, one may e.g. essentially deduce (see the proof of Theorem \ref{lower bounds}) the estimate
\[ 
\int\limits_X^{2X}\left|\widetilde A_1\left(x;\frac hk\right)\right|^2\,\mathrm d x\ll \frac1{H^2}\int\limits_X^{2X}\left|\widetilde A_2\left(x;\frac hk\right)\right|^2\,\mathrm d x+H^2\int\limits_X^{3X}\left|\widetilde A_0\left(x;\frac hk\right)\right|^2\,\mathrm d x+\frac1H X^{10/3}k^5\]
for any $0<H\leq X$. The upshot is that, while we are unable to evaluate the mean square of $\widetilde A_0(x;h/k)$ directly, we may use our Voronoi identities to evaluate the other two moments asymptotically. Then choosing $H$ optimally leads to a lower bound for the second moment of $\widetilde A_0(x;h/k)$. One can also apply the identity (\ref{Key-relation}) directly to relate different sums $\widetilde A_a(x;h/k)$. An example of this is given e.g. in the proof Proposition \ref{A2toA1}. When studying short sums we need to work with even higher order Riesz weighted sums as for these sums we are unable to directly evaluate the second moment of $\widetilde A_1(x+\Delta;h/k)-\widetilde A_1(x;h/k)$ precisely as the error term in the Voronoi identity gives a larger contribution compared to expected size of the sum in this situation. 

To obtain Voronoi identities for the Riesz weighted exponential sums (\ref{Riesz-sums}) in the case $a\geq 2$ we use standard arguments based on Perron's formula and analysing the resulting integral involving quotients of gamma functions. However, we simplify this analysis by noting that these integrals are values of Meijer $G$-functions whose asymptotic behaviour is known. Establishing such identity in the case $a=1$ is more involved. The method used in the case $a\geq 2$ runs into difficulties due to lack of absolute convergence in certain sums and to circumvent this issue we need to implement some of the arguments in \cite{Jutila} to the rank two setting and develop them further. 

The problem of estimating moments of exponential sums alluded above is also a classical theme by its own right. Upper bounds for such moments have been obtained in many different settings, see for example \cite{Ivic2, Wu--Zhai, Jaasaari--Vesalainen1}. Moreover, in some cases even asymptotic behaviour is known \cite{Lester, Jaasaari} under additional hypothesis. While there are some results concerning upper bounds in higher rank settings, as far as the author is aware of, no asymptotics or lower bounds for these moments are known unconditionally in higher rank situations. There are essentially two reasons that make the higher rank cases difficult. Firstly, as indicated above, the dual sum in the truncated $\mathrm{GL}_3$ Voronoi identity has to be short enough so that the diagonal contribution to the second moment is not too large. This can be arranged, but unfortunately it forces the error term on the dual side to be too large in order to obtain good results. The second reason is the presence of Kloosterman sums that appear on the dual side of Voronoi identities when the sums are rationally additively twisted. In higher rank it is not even a priori clear that the expected main contribution in the second moment coming from the diagonal terms is positive. Note that such situation does not arise in the rank one setting where the exponential phases cancel each other. The strategy to overcome the first issue has been described above. The problem with Kloosterman sums is resolved by averaging over the numerators $h\in\mathbb Z_k^\times$. In this way we will obtain lower bounds for the averaged mean squares 
\begin{align*}
\sum_{h\in\mathbb Z_k^\times}\int\limits_X^{2X}\left|\sump_{x\leq m\leq x+\Delta}A(m,1)e\left(\frac{mh}k\right)\right|^2 \mathrm d x
\end{align*}
in certain ranges of parameters involved (in particular for $\Delta\asymp X$) when $k$ is a sufficiently small prime (or $k=1$). For simplicity we restrict ourselves to prime denominators in the twists throughout the paper, but it is plausible that the arguments could be made to work for more general $k$. 

\section{The main results}        

\noindent Throughout the paper we use standard asymptotic notation. If $f$ and $g$ are complex-valued functions defined on some set, say $\mathcal D$, then we write $f\ll g$ to  signify that $|f(x)|\leqslant C|g(x)|$ for all $x\in\mathcal D$ for some implicit constant $C\in\mathbb R_+$. The notation $O(g)$ denotes a quantity that is $\ll g$, and $f\asymp g$ means that both $f\ll g$ and $g\ll f$. We write $f=o(g)$ if $g$ never vanishes in $\mathcal D$ and $f(x)/g(x)\longrightarrow 0$ as $x\longrightarrow\infty$. The notation $f=\Omega(g)$ means that $f\neq o(g)$.

Our first main result shows the expected $\Omega$-result for rationally additively twisted long sums when the denominator of the twist is sufficiently small. As far as the author is aware of, this is the first time $\Omega$-results have been obtained for rationally additively twisted sums in a higher rank setting. Although it is not explicitly mentioned in what follows, we assume throughout the paper that the underlying Maass cusp form is a Hecke eigenform and normalised so that $A(1,1)=1$. Our results should generalise for arbitrary $\mathrm{GL}_3$ Maass cusp forms, but we restrict ourselves to Hecke eigenforms to lighten the notation. 

\begin{theorem}\label{Long-Omega}
Let $x\in[1,\infty[$ be sufficiently large and let $k$ prime so that $k\ll x^{1/3-\delta}$ for any sufficiently small fixed $\delta>0$. Then
\begin{align*}
\max_{h\in\mathbb Z_k^\times}\left|\sump_{m\leqslant x}A(m,1)\,e\!\left(\frac{mh}k\right)\right|=\Omega\left(k^{1/2}x^{1/3}\right),
\end{align*}
where the maximum is taken over all reduced residue classes modulo $k$.
\end{theorem}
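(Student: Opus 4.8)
The plan is to derive the $\Omega$-result from a lower bound for the averaged second moment of the exponential sum, obtained via the machinery of Riesz-weighted sums described in the introduction. Concretely, I would first establish the key mean-square lower bound
\begin{align*}
\sum_{h\in\mathbb Z_k^\times}\int\limits_X^{2X}\left|\sum_{m\leqslant x}A(m,1)\,e\!\left(\frac{mh}k\right)\right|^2\mathrm d x\;\gg\;k\cdot k\,X^{5/3}\;=\;k^2X^{5/3}
\end{align*}
for primes $k\ll X^{1/3}$; here the expected size of the sum is $k^{1/2}X^{1/3}$, so its square is $kX^{2/3}$, and integrating over $x\in[X,2X]$ and summing over the $\varphi(k)=k-1\asymp k$ admissible numerators gives $k^2X^{5/3}$. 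To prove this I would use the chain $\widetilde A_0\to\widetilde A_1\to\widetilde A_2$: by the displayed estimate on p.~26 (a consequence of \eqref{Key-relation}), for any $H>0$,
\begin{align*}
\int\limits_X^{2X}\left|\widetilde A_1\left(x;\frac hk\right)\right|^2\mathrm d x\ll\frac1{H^2}\int\limits_X^{2X}\left|\widetilde A_2\left(x;\frac hk\right)\right|^2\mathrm d x+H\int\limits_X^{2X}\left|\widetilde A_0\left(x;\frac hk\right)\right|^2\mathrm d x,
\end{align*}
so that after summing over $h\in\mathbb Z_k^\times$ and rearranging,
\begin{align*}
\sum_{h\in\mathbb Z_k^\times}\int\limits_X^{2X}\left|\widetilde A_0\left(x;\frac hk\right)\right|^2\mathrm d x\;\gg\;\frac1H\sum_{h\in\mathbb Z_k^\times}\int\limits_X^{2X}\left|\widetilde A_1\left(x;\frac hk\right)\right|^2\mathrm d x-\frac1{H^3}\sum_{h\in\mathbb Z_k^\times}\int\limits_X^{2X}\left|\widetilde A_2\left(x;\frac hk\right)\right|^2\mathrm d x.
\end{align*}
The two moments on the right are evaluated asymptotically using the Voronoi identities for $\widetilde A_1$ and $\widetilde A_2$ established earlier in the paper, after averaging over $h$ to handle the Kloosterman sums: the diagonal contribution becomes a positive main term of the predicted order of magnitude (a Rankin--Selberg type sum $\sum|A(n,m)|^2$ over a suitable range, which is $\gg$ its expected size by positivity and a lower bound for partial sums of $|A(n,m)|^2$), while the off-diagonal terms are negligible. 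Plugging in these asymptotics and choosing $H$ of the optimal size (a power of $X$ times a power of $k$) yields the claimed lower bound for $\sum_h\int_X^{2X}|\widetilde A_0|^2$, hence for the second moment of the plain exponential sum via \eqref{exp-sum}.

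From the averaged mean-square lower bound the pointwise $\Omega$-result follows by a standard pigeonholing argument. If the bound $\left|\sum_{m\leqslant x}A(m,1)e(mh/k)\right|\ll k^{1/2}x^{1/3}$ held for \emph{all} $x\in[X,2X]$ and \emph{all} $h\in\mathbb Z_k^\times$ with an arbitrarily small implied constant, then
\begin{align*}
\sum_{h\in\mathbb Z_k^\times}\int\limits_X^{2X}\left|\sum_{m\leqslant x}A(m,1)\,e\!\left(\frac{mh}k\right)\right|^2\mathrm d x\;\ll\;(k-1)\cdot X\cdot k\,X^{2/3}\;=\;o\!\left(k^2X^{5/3}\right),
\end{align*}
contradicting the lower bound just proved. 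Hence there exist arbitrarily large $X$ and some $h=h(X)\in\mathbb Z_k^\times$ with $\max_{x\in[X,2X]}\left|\sum_{m\leqslant x}A(m,1)e(mh/k)\right|\gg k^{1/2}X^{1/3}$, which is exactly the assertion of the theorem (the maximum over $h$ absorbs the dependence of $h$ on $X$). I would also record that the constraint $k\ll x^{1/3}$ with small implied constant is precisely what is needed for the diagonal main term in the Voronoi evaluation to dominate and for the conjectural bound $k^{1/2}x^{1/3}$ to be the smaller of the two terms in \eqref{conjecture}.

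The main obstacle is the asymptotic evaluation of $\sum_{h\in\mathbb Z_k^\times}\int_X^{2X}|\widetilde A_1(x;h/k)|^2\,\mathrm d x$ with a genuine \emph{positive} main term. The Voronoi identity for $\widetilde A_1$ involves a series over pairs $(n,m)$ weighted by $A(n,m)$ together with Kloosterman sums in $h$ and oscillatory factors; squaring and integrating in $x$ produces, after averaging over $h$, a diagonal contribution and an off-diagonal one. Controlling the off-diagonal terms requires the usual second-derivative/stationary-phase estimates for the $x$-integral together with bounds for the relevant Kloosterman (or Ramanujan-type) sums modulo $k$, and one must be careful that the error terms in the Voronoi identity for $\widetilde A_1$ — which, unlike for $a\geqslant2$, is only a \emph{truncated} identity — do not swamp the main term; this is exactly where working with the higher Riesz mean $\widetilde A_2$ and the interpolation identity \eqref{Key-relation} buys enough room. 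Showing the diagonal term is not merely an upper bound but a true lower bound of the right order reduces to a lower bound for $\sum_{n^2m\leqslant Y}|A(n,m)|^2 \asymp Y$, which follows from the Rankin--Selberg theory for the form. Everything else is bookkeeping with the parameters $a$, $H$, $k$, $X$.
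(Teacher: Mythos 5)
Your proposal mirrors the paper's main proof of Theorem \ref{Long-Omega} almost exactly: establish asymptotics for the averaged second moments of $\widetilde A_1$ and $\widetilde A_2$ via the Voronoi identities, use the Riesz-mean interpolation identity to extract a lower bound for the averaged second moment of $\widetilde A_0$ (the paper's Theorem \ref{lower bounds}), and then read off the pointwise $\Omega$-result. One minor correction: the inequality you quote from the introduction should carry $H^2$ rather than $H$ in front of the $\widetilde A_0$-term --- as it does in the paper's actual proof on the page you cite, after an application of Cauchy--Schwarz --- but this only shifts the optimal choice of $H$ and leaves the final bound $\gg k^2X^{5/3}$ unchanged.
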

\noindent Notice that $\ll_\varepsilon k^{1/2}x^{1/3+\varepsilon}$ is the conjectured upper bound in (\ref{conjecture}) for the range $k\ll x^{1/3}$. Thus the result of Theorem \ref{Long-Omega} is essentially optimal in the sense of conjecture (\ref{conjecture}). 

This follows immediately from the following mean square result.
\begin{theorem}\label{long_mean_square} (see Theorem \ref{lower bounds}) Let $X\in[1,\infty[$ be sufficiently large and let $k$ be a prime so that $k\ll X^{1/3-\delta}$ for any sufficiently small fixed $\delta>0$. Then 
\begin{align*}
\sum_{h\in\mathbb Z_k^\times}\int\limits_X^{3X}\left|\sump_{m\leq x}A(m,1)e\left(\frac{mh}k\right)\right|^2 \mathrm d x\gg X^{5/3}k^2.
\end{align*}
\end{theorem}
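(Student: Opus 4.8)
The plan is to transfer the problem from the sharp cut‑off to the first and second Riesz means, whose mean squares can be evaluated asymptotically --- at least on average over $h$ --- by means of the Voronoi identities established above, and then to recover information about $\widetilde A_0$ from the elementary inequality coming from (\ref{Key-relation}). Concretely, write $\mathcal M_a(X,k):=\sum_{h\in\mathbb Z_k^\times}\int_X^{2X}\bigl|\widetilde A_a(x;h/k)\bigr|^2\,\mathrm dx$. Summing the inequality recorded on p.~26,
\[
\int_X^{2X}\bigl|\widetilde A_1(x;h/k)\bigr|^2\,\mathrm dx\ll\tfrac1{H^2}\int_X^{2X}\bigl|\widetilde A_2(x;h/k)\bigr|^2\,\mathrm dx+H\int_X^{2X}\bigl|\widetilde A_0(x;h/k)\bigr|^2\,\mathrm dx ,
\]
over $h\in\mathbb Z_k^\times$ and rearranging yields $\mathcal M_0(X,k)\gg H^{-1}\mathcal M_1(X,k)-H^{-3}\mathcal M_2(X,k)$ for every $H>0$. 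Thus a lower bound for $\mathcal M_1$ together with a matching upper bound for $\mathcal M_2$ gives, upon choosing $H\asymp(\mathcal M_2/\mathcal M_1)^{1/2}$, the bound $\mathcal M_0\gg\mathcal M_1^{3/2}\mathcal M_2^{-1/2}$. It therefore suffices to show that $\mathcal M_1(X,k)$ and $\mathcal M_2(X,k)$ have asymptotics whose sizes combine, through this last inequality, to $\gg X^{5/3}k^2$, and that the corresponding optimal $H$ lies in an admissible range; this is where the hypothesis $k\ll X^{1/3}$, with a small enough implied constant, is used.

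To evaluate $\mathcal M_a(X,k)$ for $a\in\{1,2\}$ one inserts the relevant Voronoi identity --- an absolutely convergent series if $a=2$, a truncated series plus an error term $E_1$ if $a=1$ --- expands the square, and splits the resulting double sum over the dual parameters into its diagonal and off‑diagonal parts. Integrating first in $x$, the off‑diagonal terms are controlled by the fact that the phases of the form $c\,(n_1^2n_2x)^{1/3}/k$ attached to distinct dual parameters have well‑separated derivatives: a single integration by parts suffices when $a=2$, and one further integration handles the longer dual series occurring when $a=1$, so that in both cases the off‑diagonal contribution is of strictly smaller order provided $k\ll X^{1/3}$. The diagonal part, after the sum over $h\in\mathbb Z_k^\times$ is carried out, produces a Dirichlet series in $|A(n,1)|^2$ weighted by the $h$‑average of the square of the relevant Kloosterman sum; a Ramanujan‑sum evaluation shows that this average is non‑negative and of order $k^2$ for $n$ coprime to $k$ (and of order $k$ otherwise), and identifying the remaining arithmetic factor as a shifted value of the Rankin--Selberg $L$‑function of the underlying form then produces the leading term of $\mathcal M_a(X,k)$ with an explicit positive constant. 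Finally, the contribution of $E_1$ is shown to be negligible after squaring, integrating over $x$, and averaging over $h$; this is again where $k\ll X^{1/3}$ enters most forcefully.

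The main obstacle is precisely this last point together with the positivity question: for a sharp or nearly sharp cut‑off the error term $E_1$ in the first‑order Voronoi identity is genuinely larger than $\widetilde A_1$ itself, so everything hinges on showing that its $L^2$‑average over $h\in\mathbb Z_k^\times$ is dominated by the main term, and simultaneously that the main term survives the $h$‑average with the correct sign. The latter is the whole reason one averages over the numerators at all: it replaces the individual Kloosterman sums $S(h,n;k)$, whose signs are uncontrolled, by the manifestly non‑negative sums $\sum_h|S(h,n;k)|^2$, which rules out cancellation on the diagonal. A secondary difficulty is that the asymptotic evaluation of $\mathcal M_2(X,k)$ must be uniform enough in $k$ to support the optimisation over $H$, and that the off‑diagonal estimates for the longer dual series in the case $a=1$ must be organised carefully, since a crude first‑derivative bound alone does not leave enough room.
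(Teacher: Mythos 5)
Your overall strategy is the same as the paper's: compute $\mathcal M_1$ and $\mathcal M_2$ asymptotically via the Voronoi identities for the Riesz-weighted sums, then use the elementary relation (\ref{Key-relation}) to extract a lower bound for $\mathcal M_0$. However, there is a concrete error in the elementary inequality you use, and it propagates to a false conclusion.

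You write
\[
\int_X^{2X}\bigl|\widetilde A_1\bigr|^2\,\mathrm dx\ll\frac1{H^2}\int_X^{2X}\bigl|\widetilde A_2\bigr|^2\,\mathrm dx+H\int_X^{2X}\bigl|\widetilde A_0\bigr|^2\,\mathrm dx,
\]
and deduce $\mathcal M_0\gg\mathcal M_1^{3/2}\mathcal M_2^{-1/2}$. The correct coefficient of the $\widetilde A_0$ term is $H^2$, not $H$. (You were presumably misled by a typo in the paper's introduction; the actual proof on p.~26 gives $H^2$.) To see this: write $\widetilde A_1(x)=\frac1H\int_x^{x+H}\bigl(\widetilde A_1(t)-\int_x^t\widetilde A_0(u)\,\mathrm du\bigr)\mathrm dt$; the first piece gives $\frac1{H^2}\bigl|\widetilde A_2(x+H)-\widetilde A_2(x)\bigr|^2$; for the second, $\bigl|\int_x^{x+H}\int_x^t\widetilde A_0\,\mathrm du\,\mathrm dt\bigr|^2\ll H^2\bigl(\int_x^{x+H}|\widetilde A_0|\,\mathrm du\bigr)^2\ll H^3\int_x^{x+H}|\widetilde A_0|^2\,\mathrm du$ by Cauchy--Schwarz, and after dividing by $H^2$, integrating over $x\in[X,2X]$ and interchanging, you pick up another $H$, giving $H^2\mathcal M_0$. (A dimensional sanity check makes the $H^2$ version clearly preferable: $\widetilde A_a$ scales like $x^a$ times the $a=0$ sum, so both sides of the inequality must scale like $x^{?+4}$, which forces $H^2$.)

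With $H^2$ the optimisation gives $\mathcal M_0\gg\mathcal M_1^2/\mathcal M_2$, and substituting $\mathcal M_1\asymp X^3k^4$, $\mathcal M_2\asymp X^{13/3}k^6$ (these are the $\sum_h$ versions; the paper states the $\E_h$ normalisations $X^3k^3$ and $X^{13/3}k^5$) yields exactly $X^{5/3}k^2$. Your formula $\mathcal M_0\gg\mathcal M_1^{3/2}\mathcal M_2^{-1/2}$ would instead give $X^{7/3}k^3$, which \emph{exceeds} the conjectural upper bound $k^2X^{5/3+\varepsilon}$ implied by (\ref{conjecture}) in the range $k\ll X^{1/3}$. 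This internal contradiction is itself a signal that the $H$-coefficient cannot be right. You should also note that your description of the diagonal positivity is too optimistic: for $k$ prime the Voronoi diagonal $d_1^2m_1=d_2^2m_2$ also has cross terms $d_1\neq d_2$ whose Fourier-coefficient factor is the complex product $A(1,k^2m)\overline{A(k,m)}$, not $|A(n,1)|^2$; the paper needs an AM--GM argument (equations (\ref{contribution3-A_1})--(\ref{contribution4-A_1})) to absorb these into the non-negative $d_1=d_2$ parts. It is not simply a matter of $\sum_h|S(h,n;k)|^2\geq 0$.
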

\noindent As sketched in the introduction, the proof of the previous result relies on Voronoi identities for both $\widetilde A_1(x;h/k)$ and $\widetilde A_2(x;h/k)$, but in Section $9$ we shall also present an alternative proof for Theorem \ref{Long-Omega} that sidesteps the use of a Voronoi identity for $\widetilde A_1(x;h/k)$. 



   
The third main theorem establishes a non-trivial $\Omega$-result for short sums of certain lengths. 

\begin{theorem}\label{Short-Omega}
Let $x\in[1,\infty[$ be sufficiently large, let $k$ prime and suppose that $k^{3/2}x^{1/2+\delta}\ll\Delta\ll kx^{2/3-\delta}$ for any sufficiently small fixed $\delta>0$. Then
\begin{align*}
\max_{h\in\mathbb Z_k^\times}\left|\sump_{x\leqslant m\leqslant x+\Delta}A(m,1)\,e\!\left(\frac{mh}k\right)\right|=\Omega\left(\Delta x^{-1/3}k^{-1/2}\right).
\end{align*}
\end{theorem}

\noindent This is a direct consequence of the following mean square result.

\begin{theorem}\label{Short-mean-square} (see Theorem \ref{ShortA_0-lower-bound})
Let $X\in[1,\infty[$ be sufficiently large and let $k$ be a prime so that $k^{3/2}X^{1/2+\delta}\ll\Delta\ll kX^{2/3-\delta}$ for any sufficiently small fixed $\delta>0$. Then 
\begin{align*}
\sum_{h\in\mathbb Z_k^\times}\int\limits_X^{4X}\left|\sump_{x\leqslant m\leqslant x+\Delta} A(m,1)\,e\!\left(\frac{mh}k\right)\right|^2\,\mathrm d x\gg\Delta^2X^{1/3}.
\end{align*}
\end{theorem}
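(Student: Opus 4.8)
\noindent The plan is to run the argument behind Theorem \ref{lower bounds} for \emph{differences} of Riesz means, but anchored at the second and third Riesz means rather than at $\widetilde A_1$, whose truncated Voronoi identity carries too large an error to survive differencing in the short range. Set $D_a(x;h/k):=\widetilde A_a(x+\Delta;h/k)-\widetilde A_a(x;h/k)$. For $k$ prime and $h\in\mathbb Z_k^\times$ the additively twisted Dirichlet series $\sum_m A(m,1)e(mh/k)m^{-s}$ is a linear combination of entire twisted $L$-functions, so the only pole of the relevant Perron integrand is the simple pole of $\Gamma(s)$ at $s=0$; hence the main term subtracted in $\widetilde A_0(\cdot;h/k)$ is a constant, and consequently
\[
D_0(x;h/k)=\sump_{x\leqslant m\leqslant x+\Delta}A(m,1)\,e\!\left(\frac{mh}k\right).
\]
Thus it suffices to bound $\sum_{h\in\mathbb Z_k^\times}\int_X^{2X}\bigl|D_0(x;h/k)\bigr|^2\,\mathrm{d}x$ from below, and the $D_a$ satisfy the same integration relations $\int_x^t D_a(u;h/k)\,\mathrm{d}u=D_{a+1}(t;h/k)-D_{a+1}(x;h/k)$ as the Riesz means themselves.

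\emph{Descent to higher Riesz means.} Feeding these integration relations into the mean‑value mechanism behind (\ref{Key-relation}), squaring, integrating over $[X,2X]$, and bounding the resulting iterated integrals by Cauchy--Schwarz, one obtains for each $a\geqslant0$ and each $H$ with $0<H\ll X$ a lower bound for $\sum_h\int_X^{2X}|D_a(x;h/k)|^2\,\mathrm{d}x$ of the shape $H^{-2}\sum_h\int|D_{a+1}|^2-H^{-4}\sum_h\int|D_{a+2}|^2$ (with the integrals on the right over a slightly enlarged interval). I would apply this first with $a=1$, to bound $\sum_h\int|D_1|^2$ below in terms of $\sum_h\int|D_2|^2$ and $\sum_h\int|D_3|^2$, and then with $a=0$, optimising the free parameter at each step; this expresses $\sum_h\int_X^{2X}|D_0(x;h/k)|^2\,\mathrm{d}x$ from below purely through the second and third Riesz‑mean difference moments, the two optimal choices of $H$ remaining $\ll X$ because the hypotheses $k^{3/2}X^{1/2}\ll\Delta\ll kX^{2/3}$ force $k\ll X^{1/3}$. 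The reason for climbing past $D_1$ is that only for $a\geqslant2$ does the dual Voronoi series converge absolutely, so only these higher moments are genuinely computable.

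\emph{The moments of $D_2$ and $D_3$.} Inserting the (absolutely convergent, error‑free) Voronoi identities for $\widetilde A_2(\cdot;h/k)$ and $\widetilde A_3(\cdot;h/k)$ into $D_a$ expresses $D_a(x;h/k)$, up to an explicit power of $x$, as a convergent sum over $n$ of oscillating terms with a cube‑root phase, of the form $n^{-\beta}\bigl(e(c(n(x+\Delta))^{1/3}/k)-e(c(nx)^{1/3}/k)\bigr)$, weighted by $A(1,n)$ and a Kloosterman‑type sum depending on $h$ and $n$. I would open $|D_a(x;h/k)|^2$, integrate over $x\in[X,2X]$, and sum over $h\in\mathbb Z_k^\times$: the $h$‑summation collapses the product of the Kloosterman sums into a complete sum modulo $k$, whose diagonal ($n=n'$) part is a sum of squares of Kloosterman sums, hence \emph{positive} and of order $k^2$, and this furnishes the main term. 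Its size follows from the Rankin--Selberg asymptotic $\sum_{n\leqslant Y}|A(1,n)|^2\asymp Y$ together with the fact that differencing effectively restricts the dual sum to $n\ll k^3X^2/\Delta^3$, a range that is non‑empty precisely because $\Delta\ll kX^{2/3}$. The off‑diagonal $n\neq n'$ is treated by a first‑derivative (or stationary‑phase) estimate for the $x$‑integral of the oscillatory factors — whose phases have non‑vanishing $x$‑derivative for $n\neq n'$ — combined with square‑root cancellation in the off‑diagonal Kloosterman correlation after averaging over $h\in\mathbb Z_k^\times$; one verifies this contribution is of smaller order than the diagonal exactly when $\Delta\gg k^{3/2}X^{1/2}$.

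Substituting the resulting two‑sided evaluations of $\sum_h\int|D_2|^2$ and $\sum_h\int|D_3|^2$ into the descent inequality and simplifying yields $\sum_{h\in\mathbb Z_k^\times}\int_X^{2X}|D_0(x;h/k)|^2\,\mathrm{d}x\gg\Delta^{2}X^{1/3}k^{-1}$ throughout the stated range, which is the claim. I expect the main obstacle to be the moment evaluation of the third step: one must extract a genuine lower bound from the diagonal, which hinges on the positivity of the complete sum of squares of Kloosterman sums over $h\in\mathbb Z_k^\times$ — it is not a priori evident that the $h$‑averaged diagonal is bounded away from zero — while simultaneously proving that the off‑diagonal does not swamp it; it is precisely here that both quantitative hypotheses on $\Delta$ are consumed, the upper bound $\Delta\ll kX^{2/3}$ keeping the effective length of the dual sum (hence the diagonal) from degenerating, and the lower bound $\Delta\gg k^{3/2}X^{1/2}$ controlling the off‑diagonal.
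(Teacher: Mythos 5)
Your proposal is correct and matches the paper's own argument essentially step for step: identify the short sum with the difference $D_0(x;h/k)=\widetilde A_0(x+\Delta;h/k)-\widetilde A_0(x;h/k)$, evaluate the averaged second moments of $D_2$ and $D_3$ via the absolutely convergent Voronoi identities (Theorem \ref{short-mean-square}, which isolates the diagonal through Lemma \ref{Kloosterman-correlation}, uses AM--GM to keep it positive, and controls the off-diagonal through the first-derivative test plus Weil's bound), and then apply the relation (\ref{Key-relation}) twice — first with $a=1$ to lower-bound $\E_h\int|D_1|^2$, then with $a=0$ to lower-bound $\E_h\int|D_0|^2$ — optimising the free parameter $H$ at each step. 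The only cosmetic mismatch is your remark about ``square-root cancellation in the off-diagonal Kloosterman correlation after averaging over $h$'': the paper does not extract further cancellation from the $h$-average off the diagonal, but simply bounds the Kloosterman sums individually by Weil; this does not affect the conclusion.
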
    
   
\noindent Notice that the result of Theorem \ref{Short-Omega} is of the right order of magnitude in the sense of the conjectural bound (\ref{conjecture}), up to $X^\delta$, when $\Delta\asymp kX^{2/3-\delta}$. However, for smaller $\Delta$ this $\Omega$-result is farther away from the expected upper bound. 

   
The restriction to prime denominators is made mainly for simplicity so we could handle the sum over the divisors of $k$ arising from the Voronoi identities for Riesz weighted exponential sums efficiently. Since we are interested in $\Omega$-results this is not such a serious restriction. Note also that theorems above hold in the case $k=1$, which can be easily seen by making simple cosmetic modifications to the proofs. 

Likewise, we are unable to obtain $\Omega$-results for an arbitrary numerator $h$ coprime to $k$. The averaging over the reduced residue classes modulo $k$ has to be included in order to be able to exract main terms when evaluating moments of the sums $\widetilde A_a(x;h/k)$. It seems that this cannot be achieved by our methods unless we perform such extra averaging. The presence of this is not so serious as we are mainly interested in limitations for obtaining bounds (\ref{conjecture}). We also point out that the primed sums can be replaced with ordinary sums if one assumes that $\vartheta<1/6$ (which is expected to be true, but currently we only know that $\vartheta\leq 5/14$).

Finally, as a by-product of our analysis we improve the best known upper bound for rationally additively twisted sums with a small denominator. 
\begin{theorem}\label{improved-upper-bound}
Let $x\in[1,\infty[$, and let $h$ and $k$ be coprime integers so that $1\leqslant k\ll x^{1/3}$. Then 
\begin{align*}
\sum_{m\leqslant x}A(m,1)\,e\!\left(\frac{mh}k\right)\ll_\eps k^{3/4}x^{1/2+\vartheta/2+\varepsilon}.
\end{align*}
\end{theorem}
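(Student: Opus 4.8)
The plan is to derive the bound for the sharp–cutoff sum $\sum_{m\leqslant x}A(m,1)e(mh/k)$ from a pointwise estimate for the first Riesz mean $\widetilde A_1(x;h/k)$, connecting the two via the integration identities behind (\ref{Key-relation}). No averaging over $h$ is required, since the dual side will only be estimated by the triangle inequality.

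First I would record a pointwise bound for $\widetilde A_1$. Starting from the Voronoi-type identity for $\widetilde A_1(y;h/k)$ proved earlier in the paper — the delicate order $a=1$ case — one writes $\widetilde A_1(y;h/k)$ as a polynomial main term in $y$ plus a (truncated) dual series whose terms carry the Fourier coefficients $A(n_1,n_2)$, Kloosterman-type sums to moduli dividing $k$, and oscillatory weights controlled by the Riesz smoothing, together with an error term. Estimating this dual side crudely — the Kloosterman sums trivially (or by Weil's bound), the weights by their sizes, and the coefficients on average through the Rankin–Selberg bound $\sum_{n\leqslant N}|A(n_1,n_2)|^2\ll_\varepsilon N^{1+\varepsilon}$, so that no power of $\vartheta$ enters at this stage — should produce a clean estimate of the shape
$$\widetilde A_1\!\left(y;\frac hk\right)\ll_\varepsilon k^{3/2}\,y^{1+\varepsilon}\qquad(1\leqslant k\ll y^{1/3},\ y\asymp x),$$
the factor $k^{3/2}$ being the square root of the conductor $\asymp k^{3}$ of the additive twist.

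Next I would descend to $\widetilde A_0$. For any $H$ with $1\leqslant H\leqslant x$, combining the trivial identity $\widetilde A_0(x;h/k)=\frac1H\int_x^{x+H}\widetilde A_0(x;h/k)\,\mathrm dt$ with the key property $\int_x^{x+H}\widetilde A_0(u;h/k)\,\mathrm du=\widetilde A_1(x+H;h/k)-\widetilde A_1(x;h/k)$ — the two ingredients used to obtain (\ref{Key-relation}) — gives
$$\widetilde A_0\!\left(x;\frac hk\right)=\frac1H\left(\widetilde A_1\!\left(x+H;\frac hk\right)-\widetilde A_1\!\left(x;\frac hk\right)\right)-\frac1H\int_x^{x+H}\left(\widetilde A_0\!\left(t;\frac hk\right)-\widetilde A_0\!\left(x;\frac hk\right)\right)\mathrm dt.$$
The first term is $\ll_\varepsilon H^{-1}k^{3/2}x^{1+\varepsilon}$ by the previous step. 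Since the main term attached to $\widetilde A_0$ is constant in the length variable, $\widetilde A_0(t;h/k)-\widetilde A_0(x;h/k)$ equals, up to sign, the short sum $\sum_{x<m\leqslant t}A(m,1)e(mh/k)$, which is $\ll_\varepsilon(H+1)\,x^{\vartheta+\varepsilon}$ by the individual bound $A(m,1)\ll_\varepsilon m^{\vartheta+\varepsilon}$. Hence
$$\widetilde A_0\!\left(x;\frac hk\right)\ll_\varepsilon \frac{k^{3/2}x^{1+\varepsilon}}{H}+(H+1)\,x^{\vartheta+\varepsilon},$$
and choosing $H\asymp k^{3/4}x^{(1-\vartheta)/2}$ — which lies in $[1,x]$ exactly because $k\ll x^{1/3}$ — balances the two terms and yields $\widetilde A_0(x;h/k)\ll_\varepsilon k^{3/4}x^{1/2+\vartheta/2+\varepsilon}$. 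Adding back the main term of $\widetilde A_0$, which is of polynomial size in $k$ and hence negligible in the stated range, gives the theorem. I would use $\widetilde A_1$ rather than a higher Riesz mean precisely because a single descent and a single square-root optimisation produce the exponent $\vartheta/2$; descending from $\widetilde A_2$ would worsen the dependence on $k$.

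The main obstacle is the first step: one needs the Voronoi identity for $\widetilde A_1(y;h/k)$ to be clean enough that even this trivial estimation of its dual side beats the previously known bounds — in particular its error term and the truncation point of its dual series must both be genuinely below $k^{3/2}y^{1+\varepsilon}$. This is exactly the conditionally convergent order $a=1$ flagged in the introduction, which is why it must be treated separately from $a\geqslant2$ by adapting the rank-two arguments of \cite{Jutila}; once that identity is available, the descent and the optimisation are elementary.
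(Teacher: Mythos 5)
Your proposal is correct and takes essentially the same route as the paper: it relies on the pointwise bound $\widetilde A_1(y;h/k)\ll_\varepsilon k^{3/2}y^{1+\varepsilon}$ (the paper's Corollary \ref{better-a1bound}, obtained exactly as you describe from the truncated $a=1$ Voronoi identity by absolute values), then descends to $\widetilde A_0$ via $H\widetilde A_0(x;h/k)=\int_x^{x+H}\widetilde A_0(t;h/k)\,\mathrm dt+O(H^2 x^{\vartheta+\varepsilon})$ together with $\int_x^{x+H}\widetilde A_0 =\widetilde A_1(x+H)-\widetilde A_1(x)$, and optimises $H=k^{3/4}x^{(1-\vartheta)/2}$. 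The only cosmetic difference is in how the error $O(Hx^{\vartheta+\varepsilon})$ is packaged; the choice of $H$, the use of $\widetilde A_1$ rather than a higher Riesz mean, and the reduction from the sharp-cutoff sum to $\widetilde A_0$ all match the paper.
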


\noindent Notice that this theorem does not require the assumption for the denominator $k$ being a prime. Theorem \ref{improved-upper-bound} improves the best previously known upper bound \cite[Corollary 3]{Jaasaari--Vesalainen1}, which gives for instance an upper bound $\ll_\varepsilon k^{3/4}x^{1/2+\vartheta/2+\varepsilon}+k^{9/8+3\vartheta/4}x^{1/4+3\vartheta^2/2+3\vartheta/4+\varepsilon}$ in the range $1\leq k\ll x^{1/3}$ under the assumption $\vartheta\leq 1/6$. 

This paper is organised as follows. In Sections $4$ and $6$ we gather basic facts concerning rationally additively twisted $L$-functions and linear exponential sums on $\mathrm{GL}_3$, respectively. In Section $5$ we introduce Meijer $G$-functions relevant for the present work and study their asymptotic behaviour. Then we proceed to derive Voronoi identities for the Riesz weighted sums $\widetilde A_a(x;h/k)$ in the next two Sections $7$ (for $a\geq 2)$ and $8$ (for $a=1$). In Section $9$ we evaluate the mean square of $\widetilde A_2(x;h/k)$ and give the first proof for Theorem \ref{Long-Omega}. In the next section the mean square of $\widetilde A_1(x;h/k)$ is evaluated and Theorem \ref{long_mean_square} (as well as Theorem \ref{Long-Omega}) is deduced as a consequence. Theorem \ref{improved-upper-bound} is proved in Section $11$. The final two sections are devoted to the proofs of Theorems \ref{Short-Omega} and \ref{Short-mean-square}. 

\section{Notation}

\noindent The letter $\varepsilon$ denotes a positive real number, whose value can be fixed to be arbitrarily small, and whose value can be different in different instances in a proof.  All implicit constants are allowed to depend on $\varepsilon$, on the implicit constants appearing in the assumptions of theorem statements, and on anything that has been fixed. When necessary, we will use subscripts $\ll_{\alpha,\beta,...},O_{\alpha,\beta,...}$, etc. to indicate when implicit constants are allowed to depend on quantities $\alpha,\beta,...$
  
As usual, complex variables are written in the form $s=\sigma+it$ with $\sigma$ and $t$ real, and we write $e(x)$ for $e^{2\pi ix}$. The subscript in the integral $\int_{(\sigma)}$ means that we integrate over the vertical line $\Re(s)=\sigma$. When $h\in\mathbb Z$ and $k\in\mathbb Z_+$ are coprime, then $\overline h$ denotes an integer such that $h\overline h\equiv 1\,(\text{mod }k)$. We also write $\langle \cdot\rangle$ for $(1+|\cdot|^2)^{1/2}$. The notation $\sum'_{m\leqslant x}$ means the sum up to $x$ with the last term halved if $x$ is an integer. Furthermore, $\sump_{x\leq m\leq x+\Delta}=\sump_{m\leq x+\Delta}-\sump_{m\leq x-1}$. The notation $d(n)$ denotes the ordinary divisor function. Let us furthermore write
\begin{align*}
1_{a\equiv b\,(\ell)}:=\begin{cases}
1&\text{if }a\equiv b\pmod\ell\\
0&\text{otherwise}
\end{cases}
\end{align*}
Finally, for a positive integer $k$ let us define the following averaging operator. Given a function $f\colon\mathbb Z_k^\times\longrightarrow\mathbb C$, we set 
\begin{align*}
\mathop{\text{\LARGE$\E$}}_{x\in\mathbb Z_k^\times}
f(x):=\frac1{\varphi(k)}\sum_{x\in\mathbb Z_k^\times}f(x),
\end{align*}
where $\varphi$ is Euler's totient function, and $\mathbb Z_k^\times$ denotes the reduces residue classes modulo $k$, or a set of representatives of them.

\section{Rationally additively twisted $L$-functions of $\mathrm{GL}_3$ Maass cusp forms}
    
\noindent We shall derive our Voronoi identities from the additively twisted $L$-function of the Maass cusp form in question. To this end, we quote some of its properties from Section 3 of \cite{Goldfeld--Li1}. 
We fix a Hecke--Maass cusp form for $\mathrm{SL}_3(\mathbb Z)$ with Fourier coefficients $A(m_1,m_2)$. Also, let us choose coprime integers $h$ and $k$ with $k$ positive, as well as an index $j\in\left\{0,1\right\}$. Then we have the rationally additively twisted version of the corresponding Godement--Jacquet $L$-function given by
\[L_j\!\left(s+j,\frac hk\right):=\sum_{m=1}^\infty\frac{A(m,1)}{m^s}\left(e\!\left(\frac{mh}k\right)\vphantom{\Bigg|}+\left(-1\right)^je\!\left(-\frac{mh}k\right)\right)\]
for $s\in\mathbb C$ with $\Re(s)>1$. This has an entire analytic extension and satisfies the functional equation
\[L_j\!\left(s+j,\frac hk\right)=i^{-j}\,k^{-3s+1}\,\pi^{3s-3/2}\,G_j(s+j)\,\widetilde L_j\!\left(1-s-j,\frac{\overline h}k\right).\]
The $\Gamma$-factors are clumped together into the factor $G_j(s+j)$ given by
\[G_j(s+j):=\frac{\displaystyle{\Gamma\!\left(\frac{1-s+j+\alpha}2\right)\Gamma\!\left(\frac{1-s+j+\beta}2\right)\Gamma\!\left(\frac{1-s+j+\gamma}2\right)}}{\displaystyle{\Gamma\!\left(\frac{s+j-\alpha}2\right)\Gamma\!\left(\frac{s+j-\beta}2\right)\Gamma\!\left(\frac{s+j-\gamma}2\right)}},\]
where in turn the complex constants $\alpha$, $\beta$ and $\gamma$ are the Langlands parameters of the underlying Maass cusp form. We know from \cite[Props.\ 6.3.1 \& 12.1.9]{Goldfeld} and \cite[Eq.\ (3.30)]{Goldfeld--Li1} that
\[\alpha+\beta+\gamma=0,\qquad\text{and}\qquad\max\left\{\left|\Re(\alpha)\right|,\left|\Re(\beta)\right|,\left|\Re(\gamma)\right|\right\}\leqslant\frac12.\]
Finally, the Dirichlet series on the right-hand side of the functional equation is given by
\[\widetilde L_j\!\left(1-s-j,\frac{\overline h}k\right)
:=\sum_{d\mid k}\sum_{m=1}^\infty\frac{A(d,m)}{d^{1-2s}\,m^{1-s}}
\left(S\!\left(\overline h,m;\frac kd\right)\vphantom{\Bigg|}+\left(-1\right)^jS\!\left(\overline h,-m;\frac kd\right)\right)\]
for $\Re(s)<0$, and by an entire analytic continuation elsewhere. Here $S(a,b;c)$ is the usual Kloosterman sum. 

From Stirling's formula, the functional equation above, and the Phragm\'en--Lindel\"of principle, we obtain the convexity estimate
\[L_j\!\left(s+j,\frac hk\right)\ll_\eps k^{3(1+\delta-\sigma)(1+\varepsilon)/2}\,\langle t\rangle^{3(1+\delta-\sigma)/2}\]
in the strip $-\delta\leqslant\sigma\leqslant 1+\delta$ (see \cite[p. 262]{Jaasaari--Vesalainen1}), where $\delta\in\mathbb R_+$ is arbitrary and fixed. Furthermore, it follows from this that, for each $j\in\left\{0,1\right\}$ and every $a\in\mathbb Z_+\cup\left\{0\right\}$, as well as for any coprime integers $h$ and $k$ with $k$ positive,
\[L_j\!\left(-a+j,\frac hk\right)\ll_{a,\eps} k^{3\left(1+2a\right)/2+\varepsilon}.\]
In particular,
\begin{align}\label{L-bounds}
L_j\!\left(0+j,\frac hk\right)\ll_\eps k^{3/2+\varepsilon},\quad
L_j\!\left(-1+j,\frac hk\right)\ll_\eps k^{9/2+\varepsilon},\quad
L_j\!\left(-2+j,\frac hk\right)\ll_\eps k^{15/2+\varepsilon},
\end{align}
and
\[L_j\!\left(-3+j,\frac hk\right)\ll_\eps k^{21/2+\varepsilon}.\]

\noindent When summing by parts we will repeatedly use the following consequences of the Rankin--Selberg theory saying that
\begin{align}\label{Average-estimates}
\sum_{m\leqslant x}\left|A(d,m)\right|\ll_\eps d^{\vartheta+\varepsilon}x
\qquad\text{and}\qquad
\sum_{m\leqslant x}\left|A(d,m)\right|^2\ll_\eps d^{2\vartheta+\varepsilon}x,
\end{align}
uniformly in $d\in\mathbb Z_+$ and $x\in\left[1,\infty\right[$ (see e.g.\ \cite{Jaasaari--Vesalainen3}).

\section{On certain Meijer $G$-functions and their asymptotics}\label{meijer-section}

\noindent Applying Perron's formula to the additively twisted $L$-function, shifting the line of integration to the left, and then applying the additively twisted functional equation leads to integrals involving quotients of gamma functions. We could derive the asymptotics we need in the same way as similar integrals were treated in earlier works, e.g \cite{Ivic, Miller, Li, Ernvall-Hytonen--Jaasaari--Vesalainen, Jaasaari--Vesalainen1}. However, here we prefer to follow the approach of \cite{Czarnecki}, which observed that these types of integrals are actually Meijer $G$-functions, and thus one obtains the relevant asymptotics directly from the well-known asymptotic properties of the latter. Our main references for Meijer $G$-functions are \cite{Luke1, Luke2}.

Let $\alpha$, $\beta$ and $\gamma$ be the complex numbers as in the functional equation of the additively twisted $L$-function, and let $y\in\mathbb Z_+$, $a\in\mathbb Z_+$ and $j\in\left\{0,1\right\}$.
We define $\mathscr J_{a,j}(y)$ to be the specific Meijer $G$-function of interest to us as
\begin{align*}
&\mathscr J_{a,j}(y)\\
&:=G^{5,0}_{2,8}\!\left(y;\begin{array}{l}\displaystyle{1,\frac12}\\[2mm]
\displaystyle{\frac{1+j+\alpha}2,\frac{1+j+\beta}2,
\frac{1+j+\gamma}2,-\frac a2,\frac12-\frac a2,1-\frac{j-\alpha}2,1-\frac{j-\beta}2,1-\frac{j-\gamma}2}\end{array}\right)\\[2mm]
&=\frac1{2\pi i}\int\limits_{\mathscr C}
Q(s)\,y^s\,\mathrm ds,
\end{align*}
where $Q(s)$ denotes the $\Gamma$-quotient
\[Q(s):=\frac{\displaystyle{\Gamma\!\left(\frac{1+j+\alpha}2-s\right)\Gamma\!\left(\frac{1+j+\beta}2-s\right)\Gamma\!\left(\frac{1+j+\gamma}2-s\right)\Gamma\!\left(-\frac a2-s\right)\Gamma\!\left(\frac12-\frac a2-s\right)}}{\displaystyle{\Gamma\!\left(s+\frac{j-\alpha}2\right)\Gamma\!\left(s+\frac{j-\beta}2\right)\Gamma\!\left(s+\frac{j-\gamma}2\right)\Gamma\!\left(1-s\right)\Gamma\!\left(\frac12-s\right)}}.\]
The contour of integration $\mathscr C$ can be, say, a simple polygonal chain, which begins at $\sigma_0-i\infty$ with a vertical half-line with abscissa $\sigma_0$ satisfying $\sigma_0>1/4-a/6$, ends at $\sigma_0+i\infty$ with another vertical half-line with the same abscissa $\sigma_0$, and having the property that all the poles of the $\Gamma$-factors in the numerator lie to the right of $\mathscr C$ \cite[Subsect. 5.3.1]{Luke2}. For definiteness, we may select $\mathscr C$ to be the contour $\mathscr C(\sigma_0,\sigma_1,\Lambda)$ consisting of line segments connecting the points $\sigma_0-i\infty$, $\sigma_0-i\Lambda$, $\sigma_1-i\Lambda$, $\sigma_1+i\Lambda$, $\sigma_0+i\Lambda$ and $\sigma_0+i\infty$, in this order, where the second abscissa $\sigma_1$ is a real number satisfying $\sigma_1<-a/2$ and $\Lambda$ is a positive real number such that $\Lambda$ is larger than the imaginary parts of $\alpha/2$, $\beta/2$ and $\gamma/2$.

We will not repeat the above parameters and instead write the above expression as $G^{5,0}_{2,8}(y;\ldots)$ with the understanding that the parameters are exactly as above. The asymptotic we wish to use is a special case of the case (4) of Theorem 2 in Section 5.10 of \cite{Luke1}. In our special case the result reads as follows.
\begin{lemma}\label{j-asymptotics}
Let $y\in\mathbb R_+$ be larger than some arbitrary fixed positive real constant, and let $j\in\left\{0,1\right\}$ and $a\in\mathbb Z_+$ be fixed. Then the above function $\mathscr J_{a,j}(y)$ has the asymptotic expansion
\[\mathscr J_{a,j}(y)
\sim-\frac{1}{2\,\sqrt{3\pi}}\sum_\pm\exp\!\left(\pm6i\,y^{1/6}\pm\frac{\pi i}2\left(a+j\right)\right)y^{\left(1-a\right)/6}\sum_{\ell=0}^\infty M_\ell\,i^{\pm\ell}\,y^{-\ell/6},\]
for some complex coefficients $M_0$, $M_1$, $M_2$, \dots\ for which $M_0=1$, and which otherwise depend on $\alpha$, $\beta$, $\gamma$, $a$, and $j$.
In particular,
\[
\mathscr J_{a,j}(y)
=-\frac1{\sqrt{3\pi}}\,y^{(1-a)/6}\,\cos\!\left(6\,y^{1/6}+\frac\pi2\left(a+j\right)\right)
+O(y^{-a/6}).
\]
\end{lemma}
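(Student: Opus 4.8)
The plan is to obtain the asymptotic expansion of $\mathscr{J}_{a,j}(y)$ by invoking the general asymptotic theory for Meijer $G$-functions $G^{m,0}_{p,q}$ when $q-p$ is large, which is exactly the content of case (4) of Theorem 2 in Section 5.10 of \cite{Luke1}. First I would record the parameters of our particular $G$-function: here $m=5$, $n=0$, $p=2$, $q=8$, so $q-p=6$, and the relevant quantity governing the exponential rate of growth/decay is determined by $q-p$ together with the sum of the bottom and top parameters. The general theorem says that for $q > p$ the function behaves, as $y\to\infty$ along the positive real axis, like a sum of terms of the shape $\exp(\pm (q-p)\, y^{1/(q-p)}\cdot c)\, y^\rho \sum_{\ell\ge 0} (\text{coeff})\, y^{-\ell/(q-p)}$, with $c$ a root of unity factor and $\rho$ computed from the parameters. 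With $q-p=6$ this produces the $y^{1/6}$ in the exponent and the $y^{-\ell/6}$ expansion.

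Next I would pin down the three ingredients that appear explicitly in the statement. The exponent $6\,y^{1/6}$ is just $(q-p)\,y^{1/(q-p)} = 6\,y^{1/6}$. The polynomial-order prefactor $y^{(1-a)/6}$ comes from the standard formula $\rho = \frac{1}{q-p}\!\left(\frac{q-p}{2} - 1 + \sum_{\text{bottom}} b_i - \sum_{\text{top}} a_i\right)$ (up to the usual normalisation); I would substitute the bottom parameters $\frac{1+j+\alpha}{2},\frac{1+j+\beta}{2},\frac{1+j+\gamma}{2},-\frac a2,\frac12-\frac a2,1-\frac{j-\alpha}2,1-\frac{j-\beta}2,1-\frac{j-\gamma}2$ and the top parameters $1,\frac12$, use $\alpha+\beta+\gamma=0$ so that all the Langlands parameters cancel in the sum, and verify that the arithmetic collapses to $(1-a)/6$. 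The phase shift $\pm\frac{\pi i}{2}(a+j)$ arises from the argument of the constant multiplying $y^{1/(q-p)}$ in the exponent of the general formula (a product of $\Gamma$-values and powers of $2\pi i$), and again the $\alpha,\beta,\gamma$-dependence in that phase washes out by $\alpha+\beta+\gamma=0$, leaving only the integer-parameter contribution $a+j$. The leading coefficient $M_0=1$ is the normalisation in Luke's theorem. Finally, to get the displayed "in particular" formula, I would simply take the $\ell=0$ term of the two conjugate exponential contributions, write $\frac12\sum_\pm \exp(\pm i\theta) = \cos\theta$ with $\theta = 6y^{1/6} + \frac\pi2(a+j)$, absorb the overall sign and the $\frac{1}{2\sqrt{3\pi}}\cdot 2 = \frac{1}{\sqrt{3\pi}}$ into the stated constant, and bound the tail $\ell\ge 1$ by $O(y^{(1-a)/6 - 1/6}) = O(y^{-a/6})$.

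The main obstacle I anticipate is purely bookkeeping: correctly matching the normalisation conventions in \cite{Luke1} (which orders parameters and places the factor $(q-p)$ and the $2\pi$-powers in specific ways) to our ordering of the eight bottom parameters, and checking carefully that the contour $\mathscr{C}(\sigma_0,\sigma_1,\Lambda)$ we chose is precisely the one for which that theorem's conclusion is valid — i.e. that it separates the poles of the numerator $\Gamma$-factors from the others and that there are no coincidences among the parameters (mod $1$) that would introduce logarithmic terms. Since $\alpha,\beta,\gamma$ are generic complex numbers with $|\Re|\le 1/2$ and $-a/2,\,1/2-a/2$ are half-integers, one should check that the numerator poles $s = \tfrac{1+j+\alpha}{2}+n$, etc., and $s=-\tfrac a2+n$, $s=\tfrac12-\tfrac a2+n$ do not collide with the denominator structure in a way that breaks the simple-pole hypothesis; in the degenerate case where $\alpha,\beta,\gamma$ are not distinct one notes the asymptotic shape is unaffected (only the coefficients $M_\ell$ change), so the stated conclusion still holds. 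Everything else is a direct quotation of the cited asymptotic expansion followed by elementary simplification using $\alpha+\beta+\gamma=0$.
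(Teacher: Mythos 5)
Your proposal follows the same approach as the paper: invoke case (4) of Theorem 2 in Section 5.10 of Luke for $G^{5,0}_{2,8}$, compute $\sigma=q-p=6$, the power $\theta=(1-a)/6$ (using $\alpha+\beta+\gamma=0$ to cancel the Langlands parameters), and the phase and constant prefactors, then isolate the $\ell=0$ term. One small caveat worth flagging: the formula you quote, $\rho=\tfrac{1}{q-p}\!\left(\tfrac{q-p}{2}-1+\sum b_i-\sum a_i\right)$, has the wrong constant term; the correct one (and the one the paper uses, following Luke's $\Xi_1$, $\Lambda_1$ notation) replaces $\tfrac{q-p}{2}-1$ with $\tfrac{1-(q-p)}{2}$, and only with that correction does the arithmetic give $(1-a)/6$ rather than $(11/2-a)/6$ --- but since you explicitly hedged with ``up to the usual normalisation'' and said you would verify against the parameters, the plan is sound.
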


\begin{proof}
We apply the asymptotics (4) of Theorem 2 in Section 5.10 of \cite{Luke1} which says that the Meijer $G$-function $G^{5,0}_{2,8}(y;\ldots)$ has the asymptotic expansion
\[G^{5,0}_{2,8}(y;\ldots)\sim
A^{5,0}_8\,H_{2,8}(y\,e^{3\pi i})+\smash{\overline A}\vphantom A^{\,5,0}_8\,H_{2,8}(y\,e^{-3\pi i}),\]
as $y\longrightarrow\infty$. In the notation of Section 5.7 in \cite{Luke1}, we have
\[\sigma=8-2=6\qquad\text{and}\qquad\nu=8-5-0=3.\]
We observe that our parameters satisfy the conditions (A) and (B) of Section 5.7 in \cite{Luke1} vacuously as in our case $n=0$, and they satisfy the condition (C) simply because $1-1/2=1/2\not\in\mathbb Z$.
In the notation of Theorem 5 in Section 5.7 of \cite{Luke1}, we have
\begin{align*}
\Xi_1 &=\frac{1+j+\alpha}2+\frac{1+j+\beta}2+\frac{1+j+\gamma}2-\frac a2+\frac12-\frac a2+1-\frac{j-\alpha}2+1-\frac{j-\beta}2+1-\frac{j-\gamma}2\\
&=5-a,
\end{align*}
and
\[\Lambda_1=1+\frac12=\frac32,\]
so that
\[\theta=\frac1\sigma\left(\frac12\left(1-\sigma\right)+\Xi_1-\Lambda_1\right)
=\frac{1-a}6.\]
The asymptotic behaviour of $H_{2,8}$ is thus given by
\[H_{2,8}(y\,e^{\pm3\pi i})\sim i^{\pm\left(1-a\right)}\,\frac{\left(2\pi\right)^{5/2}}{\sqrt6}\,\exp(\mp6i\,y^{1/6})\,y^{(1-a)/6}\sum_{\ell=0}^\infty M_\ell\,i^{\mp\ell}\,y^{-\ell/6},\]
as $y\longrightarrow\infty$,
for some complex constants $M_0$, $M_1$, $M_2,\dots$. In particular, $M_0=1$.
Finally, in the notation of Subsection 5.9.2 in \cite{Luke1},
\[A^{5,0}_8=\left(-1\right)^\nu\left(2\pi i\right)^{-\nu}\exp\!\left(i\pi\vphantom{\Bigg|}\left(0-1+\frac{j-\alpha}2-1+\frac{j-\beta}2-1+\frac{j-\gamma}2\right)\right)
=\left(2\pi\right)^{-3}i^{1-j},\]
and similarly,
\[\smash{\overline A}\vphantom A^{\,5,0}_8
=\left(2\pi\right)^{-3}i^{j-1}.\qedhere\]
\end{proof}

\noindent The Meijer $G$-function appears from the integral in Perron's formula. The following lemma connects the two.
\begin{lemma}\label{perron-meijer}
Let $a\in\mathbb Z_+$, let $y\in\mathbb R_+$, and let $\sigma_0$ and $\sigma_1$ be real numbers such that $\sigma_0>1/4-a/6$ and $\sigma_1<-a/2$. Then
\[\frac1{2\pi i}\int\limits_{\mathscr C(2\sigma_0,2\sigma_1,2\Lambda)}\frac{G_j(s+j)\,y^s\,\mathrm ds}{s\left(s+1\right)\cdots\left(s+a\right)}
=-\left(-2\right)^{-a}\mathscr J_{a,j}(y^2).\]
\end{lemma}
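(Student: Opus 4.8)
The plan is to identify the integrand on the left-hand side with the integrand $Q(s)\,y^s$ defining $\mathscr J_{a,j}(y^2)$ after the substitutions $s \mapsto -s/2$ (roughly) and $y \mapsto y^2$, and then match the contours. First I would take the integral over $\mathscr C(2\sigma_0,2\sigma_1,2\Lambda)$ and perform the linear change of variable $s = -2w$ (equivalently $w = -s/2$); this sends the fractional line $\mathscr C(2\sigma_0,2\sigma_1,2\Lambda)$ in the $s$-plane — which runs near abscissa $2\sigma_0 > 1/2 - a/3$ on its outer half-lines and dips to abscissa $2\sigma_1 < -a$ — to a fractional line in the $w$-plane running near abscissa $-\sigma_0 < a/3 - 1/4$ and bulging out to abscissa $-\sigma_1 > a/2$. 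Reversing orientation (the map $s\mapsto -2w$ reverses the upward direction) and accounting for the Jacobian $\d s = -2\,\d w$, the factor $(2\pi i)^{-1}$ is preserved up to the explicit constant I will track; the net effect on constants is a single factor that I expect to be exactly $-(-2)^{-a}$ once the $\Gamma$-quotient is rewritten, because the denominator $s(s+1)\cdots(s+a)$ contributes $a+1$ linear factors each scaling by $-2$, and the remaining $-2$ comes from $\d s$.

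The heart of the matching is the algebraic identity
\[
\frac{G_j(s+j)}{s(s+1)\cdots(s+a)}\bigg|_{s=-2w}
= (\text{explicit constant})\cdot Q(w),
\]
which I would verify by writing $G_j(s+j)$ in its $\Gamma$-quotient form and using the reflection/duplication consequence that $s(s+1)\cdots(s+a) = \Gamma(s+a+1)/\Gamma(s)$, together with the Legendre duplication formula to split $\Gamma$ of an integer-plus-$2w$ argument into a product of two $\Gamma$'s of argument involving $w$ and $w + 1/2$. Concretely, after $s = -2w$ the numerator $\Gamma\!\left(\tfrac{1-s+j+\alpha}{2}\right)$ becomes $\Gamma\!\left(\tfrac{1+j+\alpha}{2} + w\right)$; I want this to appear as $\Gamma\!\left(\tfrac{1+j+\alpha}{2} - s\right)$ in $Q$, so in fact the correct substitution is $s \mapsto -2s$ with $Q$ evaluated at $s$, and the extra $\Gamma\!\left(-\tfrac a2 - s\right)\Gamma\!\left(\tfrac12 - \tfrac a2 - s\right)$ and $\Gamma(1-s)^{-1}\Gamma\!\left(\tfrac12 - s\right)^{-1}$ in $Q$ must be produced from the polynomial $\big(s(s+1)\cdots(s+a)\big)^{-1}$ at $s = -2s$ via duplication — this is where the $(-2)^{-a}$ and the sign are pinned down. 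I would carry out this bookkeeping carefully, matching all eight down-parameters and two up-parameters of $G^{5,0}_{2,8}$ against the ten $\Gamma$-factors of $Q$.

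Finally, once the integrands agree (up to the constant $-(-2)^{-a}$) and $y$ in the Perron integral corresponds to $y^2$ in the Meijer variable, I must check that the two contours are homotopic in the region where the integrand is holomorphic and the tails decay — i.e. that $\mathscr C(2\sigma_0,2\sigma_1,2\Lambda)$ pulled back under $s\mapsto -2s$ can be deformed to the canonical contour $\mathscr C(\sigma_0,\sigma_1,\Lambda)$ used in the definition of $\mathscr J_{a,j}$ without crossing poles; this is immediate since both are fractional lines separating the poles of the numerator $\Gamma$-factors from those of the denominator in exactly the way prescribed in \cite[Subsect.\ 5.3.1]{Luke2}, and Stirling's formula gives the required decay on the connecting horizontal segments as $\Lambda \to \infty$. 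The main obstacle I anticipate is purely the constant-chasing in the duplication-formula step: getting the power of $-2$, the power of $2\pi$ (which must cancel completely), and the overall sign all correct simultaneously. Everything else — the change of variables, the contour homotopy, the convergence — is routine.
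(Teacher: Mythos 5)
Your overall strategy agrees with the paper's: write $s(s+1)\cdots(s+a) = \Gamma(s+a+1)/\Gamma(s)$, apply Legendre duplication to split each $\Gamma$ of argument containing $s$ into two $\Gamma$'s of argument containing $s/2$, identify the resulting integrand with $Q$, and rescale the contour. The paper carries this out very compactly by observing
\[
\frac1{s(s+1)\cdots(s+a)}
=\left(-\tfrac12\right)^{a+1}\frac{\Gamma\!\left(-\tfrac s2-\tfrac a2\right)\Gamma\!\left(-\tfrac s2-\tfrac a2+\tfrac12\right)}{\Gamma\!\left(-\tfrac s2+1\right)\Gamma\!\left(-\tfrac s2+\tfrac12\right)},
\]
which together with the three $\Gamma$-pairs of $G_j(s+j)$ gives $G_j(s+j)/(s(s+1)\cdots(s+a)) = (-\tfrac12)^{a+1} Q(s/2)$, after which one simply sets $s=2w$.

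There is a concrete sign error in your proposal that would make the computation fail. You propose the substitution $s=-2w$ (``equivalently $w=-s/2$''), and later state ``the correct substitution is $s\mapsto -2s$''. Neither is right; the correct change of variable is $s=2w$, i.e.\ $w=+s/2$. Check the numerator factor: under $s=-2w$ one has $\Gamma\!\left(\tfrac{1-s+j+\alpha}{2}\right)=\Gamma\!\left(\tfrac{1+j+\alpha}{2}+w\right)$, whereas $Q(w)$ contains $\Gamma\!\left(\tfrac{1+j+\alpha}{2}-w\right)$; these agree only at $w=0$. Under $s=2w$ the factor becomes $\Gamma\!\left(\tfrac{1+j+\alpha}{2}-w\right)$, which is exactly what $Q(w)$ requires, and analogously for the other nine factors. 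The sign also matters for the contour: with $s=-2w$ the outer half-lines of $\mathscr C(2\sigma_0,2\sigma_1,2\Lambda)$ land at abscissa $-\sigma_0<a/6-1/4$ and the inner excursion at $-\sigma_1>a/2$, i.e.\ on the wrong sides of the poles and with reversed orientation, so you would have to deform through a pole-riddled region and account for an extra orientation sign. With $s=2w$ the image is exactly $\mathscr C(\sigma_0,\sigma_1,\Lambda)$, so in fact no homotopy argument is needed at all --- the contour comes out on the nose, which is why the lemma's hypotheses on $\sigma_0,\sigma_1$ were scaled by $2$ in the first place. Once you replace $-2$ by $2$ throughout, your plan reduces to the paper's proof and the constant $-(-2)^{-a}$ follows from $(-\tfrac12)^{a+1}\cdot 2\pi i\cdot 2\cdot(2\pi i)^{-1}=-(-2)^{-a}$.
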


\begin{proof}
Observing 
\[\frac1{s\left(s+1\right)\cdots\left(s+a\right)}
=\frac{\displaystyle{\left(-\frac12\right)^{a+1}\Gamma\!\left(-\frac{s\vphantom1}2-\frac a2\right)\Gamma\!\left(-\frac{s\vphantom1}2-\frac a2+\frac12\right)}}{\displaystyle{\Gamma\!\left(-\frac{s\vphantom1}2+1\right)\Gamma\!\left(-\frac s2+\frac12\right)}},\]
we see that
\begin{align*}
\frac1{2\pi i}\int\limits_{\mathscr C(2\sigma_0,2\sigma_1,2\Lambda)}\frac{G_j(s+j)\,y^s\,\mathrm ds}{s\left(s+1\right)\cdots\left(s+a\right)}
&=-\left(-2\right)^{-a}\frac1{2\pi i}\int\limits_{\mathscr C(2\sigma_0,2\sigma_1,2\Lambda)}Q\!\left(\frac s2\right)\left(y^2\right)^{s/2}\frac{\mathrm ds}2\\
&=-\left(-2\right)^{-a}\mathscr J_{a,j}(y^2),
\end{align*}
as desired.
\end{proof}

\noindent In order to derive Voronoi identities with Riesz weights in the $a=1$ case, we wish to differentiate the formula of case $a=2$ termwise. The following lemma allows this.
\begin{lemma}\label{derivative-of-j}
Let $a\in\mathbb Z_+$, let $j\in\left\{0,1\right\}$, and let $y\in\mathbb R_+$. Then
\[\frac{\mathrm d}{\mathrm dy}\left(y^{a+1}\mathscr J_{a+1,j}(y^2)\right)
=-2\,y^a\mathscr J_{a,j}(y^2).\]
\end{lemma}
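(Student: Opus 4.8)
The plan is to argue directly from the Mellin--Barnes representation
\[\mathscr J_{a,j}(y)=\frac1{2\pi i}\int\limits_{\mathscr C}Q(s)\,y^s\,\mathrm ds,\]
writing $Q_a(s)$ for the $\Gamma$-quotient $Q(s)$ to record the dependence on $a$. The only factors of $Q$ involving $a$ are $\Gamma\!\left(-\frac a2-s\right)$ and $\Gamma\!\left(\frac12-\frac a2-s\right)$ in the numerator, and since $\frac12-\frac{a+1}2=-\frac a2$, applying the functional equation $\Gamma(z+1)=z\,\Gamma(z)$ with $z=-\frac a2-\frac12-s$ gives
\[\frac{Q_{a+1}(s)}{Q_a(s)}=\frac{\Gamma\!\left(-\frac{a+1}2-s\right)\Gamma\!\left(-\frac a2-s\right)}{\Gamma\!\left(-\frac a2-s\right)\Gamma\!\left(\frac12-\frac a2-s\right)}=\frac{\Gamma(z)}{\Gamma(z+1)}=\frac1z=-\frac2{2s+a+1}.\]
Hence $(2s+a+1)\,Q_{a+1}(s)=-2\,Q_a(s)$ as an identity of meromorphic functions; the extra numerator pole of $Q_{a+1}$ at $s=-\frac{a+1}2$ is precisely cancelled by the zero of $2s+a+1$.

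Next I would differentiate under the integral sign. From $y^{a+1}\mathscr J_{a+1,j}(y^2)=\frac1{2\pi i}\int_{\mathscr C}Q_{a+1}(s)\,y^{2s+a+1}\,\mathrm ds$, differentiating the integrand in $y$ produces $(2s+a+1)Q_{a+1}(s)\,y^{2s+a}=-2\,Q_a(s)\,y^{2s+a}$. To legitimise this I would first move the two vertical half-lines of $\mathscr C$ (the contour attached to $\mathscr J_{a+1,j}$) to an abscissa $\sigma_0>\frac14-\frac a6$; this is permissible because $\frac14-\frac a6>\frac14-\frac{a+1}6$ and the vertical pieces stay in a region free of poles of $Q_{a+1}$, so $\mathscr J_{a+1,j}$ is unchanged. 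A routine Stirling estimate gives $|Q_{a+1}(\sigma_0+it)|\ll\langle t\rangle^{1/2-6\sigma_0-(a+1)}$ on these lines, hence $\big|(2s+a+1)Q_{a+1}(s)\big|\ll\langle t\rangle^{1/2-6\sigma_0-a}$ with exponent $<-1$; together with the boundedness of $|y^{2s+a}|$ for $y$ in a compact subset of $\mathbb R_+$ and the compactness of the bounded portion of $\mathscr C$, this yields absolute and locally uniform convergence of both the original and the differentiated integral, so termwise differentiation is valid and
\[\frac{\mathrm d}{\mathrm dy}\left(y^{a+1}\mathscr J_{a+1,j}(y^2)\right)=-2\,y^a\cdot\frac1{2\pi i}\int\limits_{\mathscr C}Q_a(s)\,(y^2)^s\,\mathrm ds.\]

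It then remains to identify the last integral with $\mathscr J_{a,j}(y^2)$. The contour $\mathscr C$ now has vertical parts at abscissa $\sigma_0>\frac14-\frac a6$, which is exactly the admissible range for $\mathscr J_{a,j}$, and a left bump at some abscissa $\sigma_1<-\frac{a+1}2<-\frac a2$. Since every pole of $Q_a$ has real part at least $-\frac a2$, deforming this bump to any abscissa $<-\frac a2$ crosses no pole of $Q_a$, so by Cauchy's theorem the integral coincides with the one defining $\mathscr J_{a,j}(y^2)$, and the lemma follows. The one genuinely delicate point is the differentiation under the integral sign: one must push the contour of $\mathscr J_{a+1,j}$ into the half-plane $\sigma_0>\frac14-\frac a6$ \emph{before} differentiating, because the factor $2s+a+1$ costs one power of $\langle t\rangle$ and would otherwise spoil absolute convergence on the vertical lines; the remaining growth estimates are the same Stirling bounds already implicit in Section~\ref{meijer-section}.
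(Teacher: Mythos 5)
Your proposal is correct and follows essentially the same route as the paper: both proofs differentiate under the integral sign in the Mellin--Barnes representation, with the factor $(2s+a+1)$ produced by differentiating $y^{2s+a+1}$ cancelling the extra $\Gamma$-factor (the paper packages this via Lemma~\ref{perron-meijer} and the rational factor $1/(s(s+1)\cdots(s+a+1))$; you instead verify the equivalent identity $(2s+a+1)Q_{a+1}(s)=-2\,Q_a(s)$ directly from $\Gamma(z+1)=z\Gamma(z)$). You are more careful than the paper's terse computation about the two points that actually need justification --- shifting the vertical half-lines to $\sigma_0>\tfrac14-\tfrac a6$ before differentiating, and checking via Stirling that the differentiated integral still converges absolutely --- which is a welcome addition rather than a deviation.
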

  
\begin{proof}
Let $\sigma_0$ and $\sigma_1$ be real numbers such that $\sigma_0>1/4-a/6$ and $\sigma_1<-a/2-1/2$.
Using Lemma \ref{perron-meijer} twice we may compute
\begin{align*}
&\frac{\mathrm d}{\mathrm dy}\left(y^{a+1}\mathscr J_{a+1,j}(y^2)\right)
=\frac{\mathrm d}{\mathrm dy}\left(-\left(-2\right)^{a+1}\frac1{2\pi i}\int\limits_{\mathscr C(2\sigma_0,2\sigma_1,2\Lambda)}\frac{G_j(s+j)\,y^{s+a+1}\,\mathrm ds}{s\left(s+1\right)\cdots\left(s+a\right)\left(s+a+1\right)}\right)\\
&\qquad=\left(-2\right)\cdot\left(-1\right)\left(-2\right)^{a}
\frac1{2\pi i}\int\limits_{\mathscr C(2\sigma_0,2\sigma_1,2\Lambda)}\frac{G_j(s+j)\,y^{s+a}\,\mathrm ds}{s\left(s+1\right)\cdots\left(s+a\right)}
=-2\,y^a\mathscr J_{a,j}(y^2),
\end{align*}
which completes the proof. 
\end{proof}

\section{Some known estimates for long linear exponential sums}

\noindent We shall need estimates for long linear exponential sums involving Fourier coefficients of a $\mathrm{GL}_3$ Maass cusp form. The following is Theorem 1.1 in \cite{Miller}.
\begin{lemma}\label{millers-bound}
Let $x\in\left[1,\infty\right[$. Then
\[\sum_{m\leqslant x}A(m,1)\,e(m\alpha)\ll_\eps x^{3/4+\varepsilon}\]
uniformly in $\alpha\in\mathbb R$.
\end{lemma}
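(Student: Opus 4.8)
The plan is to prove the uniform bound $\sum_{m \leqslant x} A(m,1)\, e(m\alpha) \ll_\eps x^{3/4+\eps}$ by combining a smooth approximation of the characteristic function of $[1,x]$ with the $\mathrm{GL}_3$ Voronoi summation formula with an additive twist. First I would fix a smooth bump $w \colon \R \to [0,1]$ supported in a neighbourhood of $[1,x]$, equal to $1$ on $[1,x]$, and with $w^{(j)} \ll_j \delta^{-j}$ across a transition region of width $\delta$ to be optimised; the cost of passing from the sharp cut-off to $\sum_m A(m,1)\, e(m\alpha)\, w(m)$ is controlled by $\sum_{x \leqslant m \leqslant x+\delta} |A(m,1)|$, which by the Rankin--Selberg estimate \eqref{Average-estimates} (with $d=1$) is $\ll_\eps \delta\, x^\eps$, plus a comparable contribution near the lower endpoint. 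So it suffices to bound the smoothed sum and then take $\delta$ as large as the Voronoi analysis permits.

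The heart of the matter is then to treat $\sum_m A(m,1)\, e(m\alpha)\, w(m)$ via the rationally-twisted Voronoi formula on $\mathrm{GL}_3$ (as in Miller--Schmid, or in the classical form of Goldfeld--Li, whose twisted functional equation is recorded in Section 4). The key case is $\alpha$ close to a rational $h/k$; writing $\alpha = h/k + \beta$ with $k$ moderate and $|\beta|$ small via Dirichlet's approximation theorem, the Voronoi transform sends the smooth sum of length $x$ to a dual sum over $n$ of Kloosterman sums $S(\bar h, \pm n; k/d)$ against $A(d,n)$, weighted by an integral transform of $w$ whose effective support is $n \ll k^3 x^{-1}$ (from the $y \mapsto y^2$ scaling visible in Lemma \ref{perron-meijer}, the cube root being characteristic of $\mathrm{GL}_3$). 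On this dual range I would bound the Kloosterman sums trivially (Weil's bound being an available refinement), estimate $\sum_{n \leqslant N} |A(d,n)|$ by \eqref{Average-estimates}, sum over $d \mid k$, and insert the size of the integral transform; balancing the endpoint-smoothing error $\delta x^\eps$ against the dual-sum estimate, and then balancing over the rational approximation parameter $k$, should yield the exponent $3/4$. For $\alpha$ badly approximable (large $k$), one instead exploits the oscillation of $e(m\beta)$ directly, e.g. by partial summation against Lemma \ref{millers-bound}'s weaker precursors or by splitting into arithmetic progressions.

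I expect the main obstacle to be the analytic bookkeeping of the weight transform in the Voronoi formula — precisely the kind of $\Gamma$-quotient integral that Section 5 repackages as a Meijer $G$-function — and in particular making the treatment genuinely uniform in $\alpha$ (equivalently, in the denominator $k$ and the perturbation $\beta$) rather than just for fixed rationals. Controlling the transition region at both endpoints of $[1,x]$ without losing more than $x^\eps$, and ensuring the dual sum's tail beyond its effective support is negligible (which needs the rapid decay of the transform, hence the smoothness of $w$ and enough integrations by parts), are the places where care is required; but since this lemma is quoted verbatim from \cite{Miller}, one may alternatively simply invoke that reference and omit the details.
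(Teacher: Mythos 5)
The paper gives no proof of this lemma: it is quoted verbatim as Theorem~1.1 of \cite{Miller}, so your closing remark (``simply invoke that reference'') is exactly what the paper does. Your sketch does outline the correct broad strategy of Miller's actual argument — Dirichlet approximation of $\alpha$ by $h/k$, a rationally twisted $\mathrm{GL}_3$ Voronoi summation applied to a smoothed version of the sum, Weil's bound for the Kloosterman sums on the dual side, and an optimisation over the smoothing width $\delta$ and the denominator $k$. Your claim that the effective dual range for a smooth weight at scale $x$ is $n \ll k^3/x$ is also the right $\mathrm{GL}_3$ scaling.

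Taken as a proof, though, the sketch has genuine gaps. The final balance that you assert ``should yield the exponent $3/4$'' is not carried out, and it is not a one-line optimisation: the $3/4$ emerges from a delicate case analysis in $k$ and in the diophantine quality of $\alpha$, not from a single $\delta$-versus-dual-length trade-off. The regime where $\alpha$ has no good rational approximation is dismissed with a circular reference to ``Lemma \ref{millers-bound}'s weaker precursors'', which is precisely the statement being proved. And the uniformity in $\alpha$ — equivalently in $(k,\beta)$ — is the heart of Miller's theorem, not a bookkeeping afterthought; your sketch names it as the main obstacle but does not resolve it. Since the lemma is an external input for the paper, none of this affects the paper's logic; but if you intended the sketch to stand alone as a proof, it is incomplete.
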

\noindent On the other side of our Voronoi identities coefficients $A(d,m)$, where $d$ divides the denominator of the twist, will also appear. Fortunately, it is easy to extend Miller's estimate for these coefficients with uniformity over $d$.
\begin{lemma}\label{miller-improved}
Let $x\in\left[1,\infty\right[$, $d\in\mathbb Z_+$ and $\alpha\in\mathbb R$. Then
\[\sum_{m\leqslant x}A(m,d)\,e(m\alpha)\ll_\eps d^{\vartheta+\varepsilon}\,x^{3/4+\varepsilon},\]
uniformly in both $\alpha$ and $d$.
\end{lemma}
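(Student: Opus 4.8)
The plan is to deduce Lemma~\ref{miller-improved} from Lemma~\ref{millers-bound} by relating the coefficients $A(m,d)$ to the coefficients $A(m,1)$ via the Hecke relations. Recall that for $\mathrm{GL}_3$ Hecke eigenforms one has the multiplicativity relation
\[
A(m,1)\,A(1,d)=\sum_{e\mid(m,d)}A\!\left(\frac md\cdot\frac{d}{e}\,,\,?\right),
\]
but the cleaner identity to use is
\[
A(m,d)=\sum_{e\mid(m,d)}\mu(e)\,A\!\left(\frac me,1\right)A\!\left(1,\frac de\right),
\]
which expresses $A(m,d)$ as a short (divisor-length) combination of the coefficients $A(n,1)$ with $n=m/e$. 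The first step is therefore to write down this relation precisely (citing \cite{Goldfeld}) so that
\[
\sum_{m\leqslant x}A(m,d)\,e(m\alpha)
=\sum_{e\mid d}\mu(e)\,A\!\left(1,\frac de\right)\sum_{n\leqslant x/e}A(n,1)\,e(n e\alpha).
\]

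Next I would simply apply Lemma~\ref{millers-bound} to each inner sum, with $e\alpha\in\mathbb R$ in place of $\alpha$; this is legitimate precisely because Miller's bound is uniform in the real twist. This gives
\[
\sum_{n\leqslant x/e}A(n,1)\,e(n e\alpha)\ll_\eps (x/e)^{3/4+\varepsilon}\leqslant x^{3/4+\varepsilon}.
\]
Summing over $e\mid d$ then produces a factor $\sum_{e\mid d}\bigl|A(1,d/e)\bigr|$, which by the trivial bound $|A(1,m)|\ll_\eps m^{\vartheta+\varepsilon}$ (the Rankin--Selberg consequence in (\ref{Average-estimates}), or just $|A(1,m)|=|A(m,1)|\ll m^{\vartheta}$ together with the symmetry $A(1,m)=\overline{A(m,1)}$ for $\mathrm{SL}_3(\mathbb Z)$) is $\ll_\eps d^{\vartheta+\varepsilon}\,d(d)\ll_\eps d^{\vartheta+\varepsilon}$. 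Collecting everything yields the claimed bound $\ll_\eps d^{\vartheta+\varepsilon}x^{3/4+\varepsilon}$, uniformly in $\alpha$ and $d$.

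I do not expect any serious obstacle here; the only points requiring a little care are (i) quoting the Hecke relation in the correct normalisation so that the divisor decomposition of $A(m,d)$ is exact, and (ii) making sure the dependence on $d$ is tracked honestly through the coefficients $A(1,d/e)$ rather than swept into the $\varepsilon$. The mildly delicate step is bounding $\sum_{e\mid d}|A(1,d/e)|$: one wants the exponent of $d$ to come out as exactly $\vartheta$ (plus $\varepsilon$), which uses that the individual Hecke eigenvalues satisfy the pointwise Kim--Sarnak-type bound $|A(m,1)|\ll_\eps m^{\vartheta+\varepsilon}$ and that the extra divisor sum contributes only $d^{\varepsilon}$. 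Since all of this is standard, the proof is short.
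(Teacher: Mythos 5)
Your proof is correct and follows essentially the same route as the paper: apply the Möbius-inverted Hecke relation $A(m,d)=\sum_{\ell\mid(m,d)}\mu(\ell)A(1,d/\ell)A(m/\ell,1)$, reorganize the sum over $\ell\mid d$, invoke Miller's bound (Lemma~\ref{millers-bound}) for each inner sum with the twist $\ell\alpha$, and bound the remaining coefficients $A(1,d/\ell)\ll_\eps (d/\ell)^{\vartheta+\varepsilon}$. The paper does exactly this, folding the divisor sum directly into the final $\ll_\eps d^{\vartheta+\varepsilon}x^{3/4+\varepsilon}$ estimate.
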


\begin{proof}
As the underlying form is a normalised Hecke eigenform we have
\[A(m,d)=\sum_{\ell\mid\left(d,m\right)}\mu\!\left(\ell\right)A\!\left(1,\frac d\ell\right)A\!\left(\frac{m\vphantom d}\ell,1\right)\]
for any $m,d\in\mathbb Z_+$, where $\mu$ is the M\"obius function. Using this, we may estimate
\begin{align*}
\sum_{m\leqslant x}A(m,d)\,e(m\alpha)
&=\sum_{m\leqslant x}\sum_{\ell\mid\left(d,m\right)}\mu\!\left(\ell\right)A\!\left(1,\frac d\ell\right)A\!\left(\frac{m\vphantom d}\ell,1\right)e(m\alpha)\\
&=\sum_{\ell\mid d}\mu\!\left(\ell\right)A\!\left(1,\frac d\ell\right)
\sum_{m\leqslant x/\ell}A(m,1)\,e\!\left(m\ell\alpha\right)\\
&\ll_\varepsilon\sum_{\ell\mid d}\left(\frac d\ell\right)^{\vartheta+\varepsilon}\left(\frac{x\vphantom d}\ell\right)^{3/4+\varepsilon}
\ll_\eps d^{\vartheta+\varepsilon}\,x^{3/4+\varepsilon},
\end{align*}
completing the proof. 
\end{proof}

\noindent When $\alpha$ is a fraction with a small denominator, we have more precise estimates, which depend on $\vartheta$. The following is Corollary 3 in \cite{Jaasaari--Vesalainen1}.
\begin{lemma}\label{corollary3}
Let $x\in\left[1,\infty\right[$, and let $h$ and $k$ be coprime integers such that $1\leqslant k\ll x^{2/3}$. Then
\[\sum_{m\leqslant x}A(m,1)\,e\!\left(\frac{mh}k\right)\ll_\eps k^{1/2+\varepsilon}\,x^{2/3}+k\,x^{1/3+\vartheta+\varepsilon}.\]
If in addition $\vartheta\leqslant1/3$ and $k\ll x^{2/3-2\vartheta}$, then
\[\sum_{m\leqslant x}A(m,1)\,e\!\left(\frac{mh}k\right)\ll_\eps k^{3/4}\,x^{1/2+\vartheta/2+\varepsilon}+k^{9/8+3\vartheta/4}\,x^{1/4+3\vartheta^2/2+3\vartheta/4+\varepsilon}.\]
In particular, if $\vartheta=0$ and $k\ll x^{2/3}$, then
\[\sum_{m\leqslant x}A(m,1)\,e\!\left(\frac{mh}k\right)\ll_\eps k^{3/4}\,x^{1/2+\varepsilon}.\]
\end{lemma}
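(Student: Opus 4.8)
The statement is \cite[Corollary 3]{Jaasaari--Vesalainen1}, so formally one may simply cite it; let me nevertheless indicate how I would obtain a bound of this shape. The plan is to start from the truncated $\mathrm{GL}_3$ Voronoi identity for the rationally additively twisted sum from \cite{Jaasaari--Vesalainen1}, which one derives by applying Perron's formula to the twisted $L$-functions $L_j(s,h/k)$ introduced above, shifting the contour to the left of $\Re(s)=0$ (legitimate since these $L$-functions are entire), and inserting the twisted functional equation. After expanding the dual Dirichlet series $\widetilde L_j(1-s-j,\overline h/k)$ term by term and identifying the resulting $\Gamma$-integrals with Meijer $G$-functions, whose asymptotics are supplied by (the analogue of) Lemma \ref{j-asymptotics}, one is led to an identity of the rough shape
\[
\sum_{m\leqslant x}A(m,1)\,e\!\left(\frac{mh}k\right)
=\text{MT}
+\frac{x^{1/3}}{k}\sum_{d\mid k}\sum_{\pm}\sum_{m\leqslant M_d}\frac{A(d,m)}{m^{2/3}}\,S\!\left(\overline h,\pm m;\tfrac kd\right)\,e\!\left(\pm 3\bigl(\tfrac{mx}{d(k/d)^3}\bigr)^{1/3}\right)
+E(x;M),
\]
up to harmless constants and genuinely lower-order oscillatory pieces; here $M=(M_d)_{d\mid k}$ is a vector of truncation points and $E(x;M)$ is an error term that decreases as the $M_d$ grow.

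For the first estimate I would bound the Voronoi series trivially: bound the Kloosterman sums by Weil's inequality, $|S(\overline h,\pm m;k/d)|\ll_\eps(k/d)^{1/2+\varepsilon}(m,k/d)^{1/2}$, and handle the coefficients $A(d,m)m^{-2/3}$ by partial summation using the Rankin--Selberg average bound $\sum_{m\leqslant t}|A(d,m)|\ll_\eps d^{\vartheta+\varepsilon}t$ from (\ref{Average-estimates}), making no use of the oscillation. Summing over $d\mid k$ and taking the natural truncation level should give $\ll_\eps k^{1/2+\varepsilon}x^{2/3}$ from the series and $\ll_\eps kx^{1/3+\vartheta+\varepsilon}$ from $E(x;M)$, which is the first displayed bound; the hypothesis $k\ll x^{2/3}$ is precisely what keeps the natural truncation level above a fixed constant.

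For the sharper, $\vartheta$-sensitive bound I would exploit the oscillation $e(\pm3(\cdot)^{1/3})$. The inner sum over $m$ is again a linear exponential sum built from a $\mathrm{GL}_3$ object — coefficients $A(d,m)$, a phase proportional to $m^{1/3}$, and an extra additive twist coming from opening the Kloosterman sum into additive characters — so I would apply a second Voronoi inversion (equivalently a van der Corput--type $B$-process) to it, trading the $m^{1/3}$ phase for a dual sum with milder parameters at the cost of a further factor $d^{\vartheta}$ from the $|A(d,\cdot)|$ average. This double loss of $d^{\vartheta}$ is what produces the quadratic exponent $x^{3\vartheta^2/2}$ and the fractional powers $k^{9/8+3\vartheta/4}$, $x^{1/4+3\vartheta/4}$. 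Re-optimising the truncation then yields the second bound, valid as long as $\vartheta\leqslant1/3$ and $k\ll x^{2/3-2\vartheta}$ keep all parameters admissible; putting $\vartheta=0$ collapses the second term and leaves $\ll_\eps k^{3/4}x^{1/2+\varepsilon}$.

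The hard part will be the one flagged in the introduction: passing from the sharp cut-off $\sum_{m\leqslant x}$ to a Voronoi-summable expression forces a trade-off between the length of the dual series and the size of $E(x;M)$, and this error is genuinely larger than the conjectural truth, so it must be balanced carefully — uniformly in the divisor sum over $d\mid k$ and in the Kloosterman sums — while every $k$- and $\vartheta$-dependence is tracked through the two optimisations. The explicit constants in the main term, the precise Meijer asymptotics, and the arithmetic of choosing $M$ are routine, and for those I would refer to \cite{Jaasaari--Vesalainen1}.
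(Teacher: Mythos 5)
This lemma is not proved in the paper: it is quoted verbatim as Corollary~3 of \cite{Jaasaari--Vesalainen1}, so there is no internal argument to compare against, and your opening remark that one may simply cite it is exactly what the paper does. Your sketch of the first bound is sound: taking the (corrected) truncated Voronoi identity from the appendix with truncation level $N\asymp x$, estimating the Voronoi series by Weil's bound and Rankin--Selberg partial summation gives the main contribution $x^{1/3}k^{1/2+\eps}N^{1/3}\asymp k^{1/2+\eps}x^{2/3}$, while both error terms $O(kx^{2/3+\vartheta+\eps}N^{-1/3})$ and $O(kx^{1/6+\eps}N^{1/6+\vartheta})$ collapse to $kx^{1/3+\vartheta+\eps}$; the condition $k\ll x^{2/3}$ ensures the hypothesis $k\ll(Nx)^{1/3}$ is met.

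Your explanation for the second bound, however, misattributes the mechanism. There is no second Voronoi inversion or van der Corput $B$-process required, and the $3\vartheta^2/2$ exponent is not a ``double loss of $d^{\vartheta}$'' from the Fourier coefficients. It comes from the same one-step truncated Voronoi identity bounded trivially exactly as above, but with the optimised, $\vartheta$-dependent truncation level $N\asymp k^{3/4}x^{1/2+3\vartheta/2}$. With this choice the Voronoi series and the first error both balance to $k^{3/4}x^{1/2+\vartheta/2+\eps}$, while the second error $kx^{1/6+\eps}N^{1/6+\vartheta}$ acquires, because the $x$-exponent of $N$ already carries a $3\vartheta/2$, the cross-term $(1/2+3\vartheta/2)\cdot\vartheta=\vartheta/2+3\vartheta^2/2$ in its $x$-exponent, giving precisely $k^{9/8+3\vartheta/4}x^{1/4+3\vartheta/4+3\vartheta^2/2+\eps}$. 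The hypothesis $k\ll x^{2/3-2\vartheta}$ is exactly what guarantees this $N$ satisfies $N\ll x$, and $\vartheta\leqslant1/3$ is what keeps that range non-empty. So you would obtain the right exponents either way, but the route is a plain re-optimisation of the cut-off in the one existing Voronoi identity, not an iterated dual summation.
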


\section{Additively twisted Voronoi identities for Riesz means: the case $a\geqslant2$}

\noindent In this section  we study the modified Riesz weighted sums
\begin{align*}
\widetilde A_{a,j}\!\left(x;\frac hk\right)&:=\frac1{a!}\sump_{m\leqslant x}A(m,1)\left(e\!\left(\frac{mh}k\right)\vphantom{\Bigg|}+\left(-1\right)^je\!\left(-\frac{mh}k\right)\right)\left(x-m\right)^a\\
&\qquad\qquad\qquad-\sum_{0\leqslant\nu\leqslant a}\frac{\left(-1\right)^\nu x^{a-\nu}}{\nu!\left(a-\nu\right)!}\,L_j\!\left(-\nu+j,\frac hk\right),
\end{align*}
where $x\in\left[0,\infty\right[$, $j\in\left\{0,1\right\}$, $a\in\mathbb Z_+\cup\left\{0\right\}$, and $h$ and $k$ are coprime integers with $k$ positive. Recall the key relation (see (\ref{Key-property}) above)
\begin{align*}
\int\limits_x^t \widetilde A_{a,j}\left(u;\frac hk\right)\,\mathrm d u=\widetilde A_{a+1,j}\left(t;\frac hk\right)-\widetilde A_{a+1,j}\left(x;\frac hk\right)
\end{align*}
for the Riesz means that we shall use repeatedly. Our goal in this section is to derive Voronoi-type identities for these sums when $a\geqslant2$.

The classical Perron's formula, which is e.g.\ Theorem 1.4.4 in \cite{Brudern}, has the following generalisation for Riesz means (see e.g.\ Chapter 5 in \cite{Montgomery--Vaughan}), which is the starting point for proving our Voronoi-type identities. 
\begin{lemma}\label{perron-formula-for-riesz-means}
Let $\sigma\in\mathbb R_+$, and let $c\colon\mathbb Z_+\longrightarrow\mathbb C$ be a sequence such that the Dirichlet series $\sum_{n=1}^\infty c(n)/n^\sigma$ converges absolutely. Then, for $x\in\mathbb R_+$, and for a non-negative integer $a$, we have
\[\frac1{a!}\sump_{n\leqslant x}\,c(n)\left(x-n\right)^a
=\frac1{2\pi i}\int\limits_{(\sigma)}\left(\sum_{n=1}^\infty\frac{c(n)}{n^s}\right)\frac{x^{s+a}\,\mathrm ds}{s\left(s+1\right)\cdots\left(s+a\right)},\]
where the integration is over the vertical line where the real part is $\sigma$. When $a=0$, the integral should be understood as the limit of $\int_{\sigma-iT}^{\sigma+iT}$ as $T\longrightarrow\infty$.
\end{lemma}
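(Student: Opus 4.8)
The plan is to reduce the identity to the classical ``discontinuous integral'' of Perron type, one term of the Dirichlet series at a time. First I would decompose the rational function by partial fractions,
\[
\frac1{s\left(s+1\right)\cdots\left(s+a\right)}=\frac1{a!}\sum_{\nu=0}^{a}\binom a\nu\frac{\left(-1\right)^\nu}{s+\nu},
\]
which is valid because the denominator has simple zeros at $s=0,-1,\dots,-a$ with residues $(-1)^\nu/(\nu!\,(a-\nu)!)$. Writing $x^{s+a}/n^s=n^a\left(x/n\right)^{s+a}$ and substituting the partial fraction expansion, the claim would follow termwise from the evaluation
\[
\frac1{2\pi i}\int\limits_{(\sigma)}\frac{\left(x/n\right)^{s+a}\,\mathrm ds}{s\left(s+1\right)\cdots\left(s+a\right)}
=\frac1{a!}\bigl(\max\left(x/n-1,\,0\right)\bigr)^{a},
\]
with the convention that for $a=0$ the right-hand side equals $1$, $\tfrac12$, or $0$ according as $n<x$, $n=x$, or $n>x$, and the integral is read as the symmetric limit over $\int_{\sigma-iT}^{\sigma+iT}$.

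To prove this evaluation I would, after the change of variable $w=s+\nu$ which moves the contour to $\Re(w)=\sigma+\nu>0$, invoke the classical discontinuous Dirichlet integral
\[
\frac1{2\pi i}\int\limits_{(\sigma')}\frac{y^{w}}w\,\mathrm dw=\begin{cases}1&\text{if }y>1,\\[1mm] 1/2&\text{if }y=1,\\[1mm] 0&\text{if }0<y<1,\end{cases}
\]
valid for any $\sigma'>0$, which is proved in the standard way by shifting the contour to $\mp\infty$ (crossing the simple pole at $w=0$ precisely when $y>1$) and estimating the horizontal segments, the case $y=1$ being handled directly via the principal value. Feeding this back into the partial fraction sum and using the binomial theorem $\left(y-1\right)^a=\sum_{\nu=0}^{a}\binom a\nu\left(-1\right)^\nu y^{a-\nu}$ gives, for $y=x/n>1$, the value $\frac1{a!}\left(y-1\right)^a$; for $0<y<1$ the value $0$; and for $y=1$ with $a\geqslant1$ again $0$, consistent with the half-weight when $a=0$. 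Multiplying through by $c(n)\,n^a$ and recognising $n^a\left(x/n-1\right)^a=\left(x-n\right)^a$ produces exactly the $n$-th term of $\frac1{a!}\sump_{n\leqslant x}c(n)\left(x-n\right)^a$.

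Finally I would sum over $n\in\mathbb Z_+$ and interchange summation with integration. When $a\geqslant1$ this is immediate: on the line $\Re(s)=\sigma$ the factor $x^{s+a}/\bigl(s\left(s+1\right)\cdots\left(s+a\right)\bigr)$ decays like $\langle t\rangle^{-\left(a+1\right)}$, so
\[
\sum_{n=1}^\infty\int\limits_{(\sigma)}\left|\frac{c(n)}{n^s}\cdot\frac{x^{s+a}}{s\left(s+1\right)\cdots\left(s+a\right)}\right|\,|\mathrm ds|
=\left(\sum_{n=1}^\infty\frac{|c(n)|}{n^\sigma}\right)\int\limits_{(\sigma)}\left|\frac{x^{s+a}}{s\left(s+1\right)\cdots\left(s+a\right)}\right|\,|\mathrm ds|<\infty
\]
by the absolute convergence hypothesis, so Tonelli and then Fubini apply. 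The case $a=0$ is the only genuine obstacle, since there the vertical integral is merely conditionally convergent and Fubini is unavailable; here one falls back on the effective (truncated) Perron formula — bounding the difference between $\frac1{2\pi i}\int_{\sigma-iT}^{\sigma+iT}$ and the weighted partial sum by an expression of the shape $\sum_{n}\frac{|c(n)|}{n^\sigma}x^{\sigma}\min\bigl(1,\,T^{-1}\left|\log\left(x/n\right)\right|^{-1}\bigr)$ plus a term from $n$ near $x$, and letting $T\to\infty$ — which is precisely the classical argument behind Theorem 1.4.4 of \cite{Brudern}. All the higher-order cases $a\geqslant1$ are softer, so the conditional-convergence issue at $a=0$ is the crux.
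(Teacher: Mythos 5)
The paper does not prove this lemma; it cites it as a known fact (Theorem 1.4.4 of Br\"udern for $a=0$, Chapter 5 of Montgomery--Vaughan for the Riesz-mean generalisation). Your argument is a correct, self-contained proof along exactly the standard lines, so there is nothing in the paper to compare against point by point. The partial-fraction decomposition is verified easily ($\mathrm{Res}_{s=-\nu} = (-1)^\nu/(\nu!\,(a-\nu)!) = \binom a\nu(-1)^\nu/a!$), the reduction to the classical discontinuous integral after the shift $w = s+\nu$ is right, the binomial recombination $\sum_\nu\binom a\nu(-1)^\nu y^{a-\nu}=(y-1)^a$ for $y>1$ gives the asserted kernel, and your Tonelli step for $a\geqslant1$ is justified because on $\Re(s)=\sigma$ one has $|c(n)/n^s|=|c(n)|/n^\sigma$ and $|x^{s+a}/(s(s+1)\cdots(s+a))|\ll_{\sigma,a,x}\langle t\rangle^{-(a+1)}$, jointly integrable/summable. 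You are also right that $a=0$ is the only case where the interchange genuinely needs the effective truncated-Perron argument.

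One small technicality worth spelling out: for $a\geqslant1$ the full kernel $x^{s+a}/(s(s+1)\cdots(s+a))$ is absolutely integrable on $\Re(s)=\sigma$, but each individual partial-fraction term $1/(s+\nu)$ is not, so the termwise evaluation should really be carried out on the truncated contour $\int_{\sigma-iT}^{\sigma+iT}$ (where the splitting is legitimate), each piece evaluated via the truncated Dirichlet integral, and $T\to\infty$ taken at the end — which is exactly the symmetric-limit reading you already invoke for $a=0$. Alternatively, one can avoid the partial fractions altogether and shift the whole contour to $\mp\infty$ at once, picking up the $a+1$ residues when $x/n>1$ and none when $x/n<1$; the remainder vanishes by the $\langle t\rangle^{-(a+1)}$ decay together with $y^{\Re s+a}\to0$ in the appropriate half-plane. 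Either route is fine, and the conclusion matches the statement, including the $\sump$ convention (which only matters for $a=0$, since $(x-n)^a$ vanishes at $n=x$ when $a\geqslant1$).
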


\noindent The main result of this section is the following proposition. 

\begin{proposition}\label{riesz-identity-with-large-exponent}
Let $x\in\mathbb R_+$, $j\in\left\{0,1\right\}$, $a\in\left\{2,3,\ldots\right\}$, and let $h$ and $k$ be coprime integers with $k$ positive. Then
\begin{multline*}
\widetilde A_{a,j}\!\left(x;\frac hk\right)
=
i^{-j}\,\pi^{-3/2}\,k\,x^a\left(-1\right)^{a+1}2^{-a}\\\times\sum_{d\mid k}\sum_{m=1}^\infty\frac{A(d,m)}{dm}\left(S\!\left(\overline h,m;\frac kd\right)\vphantom{\Bigg|}+\left(-1\right)^jS\!\left(\overline h,-m;\frac kd\right)\right)
\mathscr J_{a,j}\!\left(\frac{\pi^6\,d^4\,m^2\,x^2}{k^6}\right).
\end{multline*}
\end{proposition}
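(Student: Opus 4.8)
The plan is to start from the Perron-type formula for Riesz means (Lemma \ref{perron-formula-for-riesz-means}) applied to the Dirichlet series $L_j(s+j, h/k)$. Concretely, for $\sigma$ large enough that $\sum_m A(m,1)(e(mh/k)+(-1)^j e(-mh/k))/m^s$ converges absolutely, we write
\[
\frac1{a!}\sump_{m\leqslant x}A(m,1)\left(e\!\left(\tfrac{mh}k\right)+(-1)^j e\!\left(-\tfrac{mh}k\right)\right)(x-m)^a
=\frac1{2\pi i}\int\limits_{(\sigma)} L_j\!\left(s+j,\tfrac hk\right)\frac{x^{s+a}\,\mathrm ds}{s(s+1)\cdots(s+a)}.
\]
Next I would shift the contour to the left, past the line $\Re(s)=0$, all the way to some line $\Re(s)=-\delta'$ with $\sigma_1 < \Re(s) < -a/2$ in the notation of Section \ref{meijer-section} — here the key point is that $L_j(s+j,h/k)$ is entire, so the only poles crossed are the simple poles of $1/(s(s+1)\cdots(s+a))$ at $s=0,-1,\ldots,-a$. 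The residue at $s=-\nu$ contributes $\frac{(-1)^\nu x^{a-\nu}}{\nu!(a-\nu)!}L_j(-\nu+j,h/k)$, which is exactly the subtracted polynomial in the definition of $\widetilde A_{a,j}(x;h/k)$; thus after shifting we are left with $\widetilde A_{a,j}(x;h/k)$ equal to the shifted integral. For this I need to justify that the horizontal pieces vanish in the limit, which follows from the convexity estimate for $L_j(s+j,h/k)$ (polynomial growth in $t$) together with the rapid decay of $1/(s(s+1)\cdots(s+a))$ in $t$ when $a\geqslant 2$; this is precisely why $a\geqslant 2$ is imposed here — the integral over the shifted line converges absolutely.

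The heart of the argument is then to insert the functional equation $L_j(s+j,h/k)=i^{-j}k^{-3s+1}\pi^{3s-3/2}G_j(s+j)\widetilde L_j(1-s-j,\overline h/k)$ into the shifted integral. On the shifted line $\Re(s)<0$ we may expand $\widetilde L_j(1-s-j,\overline h/k)$ into its absolutely convergent Dirichlet series $\sum_{d\mid k}\sum_m \frac{A(d,m)}{d^{1-2s}m^{1-s}}(S(\overline h,m;k/d)+(-1)^j S(\overline h,-m;k/d))$, and interchange summation and integration (justified by absolute convergence once $\Re(s)$ is sufficiently negative, using the Rankin--Selberg bounds (\ref{Average-estimates}) summed by parts). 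This leaves, for each term, the integral
\[
\frac1{2\pi i}\int\limits_{(\sigma_1)} i^{-j}k\,\pi^{-3/2}\,\frac{A(d,m)}{dm}\,\left(S(\cdots)+(-1)^j S(\cdots)\right) G_j(s+j)\,\frac{x^{s+a}}{s(s+1)\cdots(s+a)}\left(\frac{\pi^3 d^2 m}{k^3}\right)^{-s}\mathrm ds,
\]
after pulling $d^{2s}m^s$ and $\pi^{3s}k^{-3s}$ together with the $x^s$ into a single power. Substituting $y = \pi^3 d^2 m\, x / k^3$ (so that the power becomes $y^s$, up to the $x^a$ factor), this integral is exactly of the shape handled by Lemma \ref{perron-meijer}, with $y^2 = \pi^6 d^4 m^2 x^2/k^6$; that lemma identifies it as $-(-2)^{-a}\mathscr J_{a,j}(\pi^6 d^4 m^2 x^2/k^6)$. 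Collecting the constants $i^{-j}\pi^{-3/2}k\,x^a\cdot(-1)\cdot(-2)^{-a} = i^{-j}\pi^{-3/2}k\,x^a(-1)^{a+1}2^{-a}$ gives the claimed formula.

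The main obstacle I expect is the rigorous justification of the contour shift combined with the term-by-term interchange: one has to move the line of integration far enough to the left that the Dirichlet series for $\widetilde L_j$ converges absolutely there, but simultaneously the fractional-line contour $\mathscr C(2\sigma_0,2\sigma_1,2\Lambda)$ demanded by Lemma \ref{perron-meijer} is not a straight vertical line — so I should first shift to a straight line $\Re(s)=\sigma_1$ with $\sigma_1<-a/2$, do the expansion there, and then (inside each summand, where everything converges absolutely and $G_j$ has controlled growth via Stirling) deform the straight line to the fractional line $\mathscr C$ without crossing poles of the $\Gamma$-factors in the numerator of $Q$. One must check the poles of $G_j(s+j)$ (equivalently of $Q(s/2)$'s numerator $\Gamma$-factors) all lie to the right of $\mathscr C$, which is guaranteed by the condition $\sigma_1<-a/2$ together with $\max\{|\Re(\alpha)|,|\Re(\beta)|,|\Re(\gamma)|\}\leqslant 1/2$ from the Langlands-parameter bounds. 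The growth bookkeeping for the interchange — bounding $\sum_{d\mid k}\sum_m |A(d,m)|/(dm)\cdot|\mathscr J_{a,j}(\cdots)|$ using Lemma \ref{j-asymptotics} (which gives $\mathscr J_{a,j}(y)\ll y^{(1-a)/6}$) and partial summation against (\ref{Average-estimates}) — requires $a\geqslant 2$ precisely so that the resulting series in $m$ converges, which is the reason the dual series need not be truncated in this regime.
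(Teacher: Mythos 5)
Your overall strategy matches the paper's (Perron's formula for Riesz means, functional equation, Dirichlet series expansion, identification with the Meijer $G$-function via Lemma~\ref{perron-meijer}), and your residue computation and the final constant $i^{-j}\pi^{-3/2}kx^a(-1)^{a+1}2^{-a}$ are correct. The genuine gap is your proposed intermediate straight-line contour. You want to shift the Perron integral first to a straight vertical line $\Re(s)=\sigma_1<-a/2$ (and, to pick up all $a+1$ residues at $s=0,-1,\ldots,-a$ as you claim, you would in fact need $\sigma_1<-a$), expand the Dirichlet series there, and only afterwards deform each summand to the fractional line. But the Perron integral $\int_{(\sigma_1)}L_j(s+j,h/k)\,x^{s+a}\,\mathrm ds/\left(s(s+1)\cdots(s+a)\right)$ \emph{diverges} on any straight line with $\sigma_1<-a/2$: the convexity estimate gives $L_j(\sigma_1+it+j,h/k)\ll_\eps\langle t\rangle^{3(1-2\sigma_1)/2}$, so the integrand is of size $\langle t\rangle^{1/2-3\sigma_1-a}$, and this exponent is $<-1$ only for $\sigma_1>1/2-a/3$, which is strictly larger than $-a/2$ for every $a$. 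Thus the two things you need — convergence of the $t$-integral and $\sigma_1<-a/2$ — cannot both hold on a straight vertical line, and there is no abscissa at which your step (i) can be performed.

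Your motivation for the two-step scheme, namely the worry that the fractional line is ``not far enough to the left'' for the Dirichlet series expansion, is unfounded. The paper's contour $\mathscr L=\mathscr C(-\delta,-a-\delta,2\Lambda)$ has $\Re(s)\leqslant-\delta<0$ \emph{everywhere} (tails at $\Re(s)=-\delta$, compact middle at $\Re(s)=-a-\delta$), so $\Re(1-s)\geqslant1+\delta>1$ on all of $\mathscr L$ and the Dirichlet series for $\widetilde L_j(1-s-j,\overline h/k)$ converges absolutely throughout. The fractional line lets you have it both ways: the infinite tails stay at $\Re(s)=-\delta$, where the $t$-integral is absolutely convergent (exponent $1/2+3\delta-a<-1$ for $a\geqslant2$, $\delta<1/6$), while only the compact middle segment dips to $\Re(s)=-a-\delta$ to cross all $a+1$ poles. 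The paper therefore shifts in a single step from $\Re(s)=1+\delta$ to $\mathscr L$, justified by convexity plus $a\geqslant2$; after applying the functional equation and interchanging sum and integral on $\mathscr L$, each summand's integral is already over the contour required by Lemma~\ref{perron-meijer}, with $\sigma_0=-\delta/2>1/4-a/6$ and $\sigma_1=-(a+\delta)/2<-a/2$ in the lemma's notation, so no further deformation is needed.
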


\begin{proof}
Let us first fix some $\delta\in\left]0,1/6\right[$. Perron's formula for Riesz means tells us that
\begin{align*}
&\frac1{a!}\sump_{m\leqslant x}A(m,1)\left(e\!\left(\frac{mh}k\right)\vphantom{\Bigg|}+\left(-1\right)^je\!\left(-\frac{mh}k\right)\right)\left(x-m\right)^a \\
&=\frac1{2\pi i}\int\limits_{\left(1+\delta\right)}L_j\!\left(s+j,\frac hk\right)\frac{x^{s+a}\,\mathrm ds}{s\left(s+1\right)\cdots\left(s+a\right)}.
\end{align*}
Let $\mathscr L$ be the polygonal chain $\mathscr C(-\delta,-a-\delta,2\Lambda)$ connecting the points $-\delta-i\infty$, $-\delta-2i\Lambda$, $-a-\delta-2i\Lambda$, $-a-\delta+2i\Lambda$, $-\delta+2i\Lambda$ and $-\delta+i\infty$, in this order, where $\Lambda$ is a fixed positive real number larger than the imaginary parts of $\alpha/2$, $\beta/2$ and $\gamma/2$. We shift the contour of integration from the vertical line $\Re(s)=1+\delta$ to $\mathscr L$. This is justified by the convexity bound for $L_j(s+j,h/k)$ and the assumption $a\geqslant 2$. This leads to residue terms from the simple poles at the points $-a$, $-a+1$, \dots, $0$. For each $\nu\in\left\{0,1,\ldots,a\right\}$ we get a residue term
\[\frac{\left(-1\right)^\nu\,x^{a-\nu}}{\nu!\left(a-\nu\right)!}\,L_j\!\left(-\nu+j,\frac hk\right).\]
Thus, we obtain
\[\widetilde A_{a,j}\!\left(x;\frac hk\right)
=\frac1{2\pi i}\int\limits_{\mathscr L}L_j\!\left(s+j,\frac hk\right)\frac{x^{s+a}\,\mathrm ds}{s\left(s+1\right)\cdots\left(s+a\right)}.\]
Applying the additively twisted functional equation gives
\[\widetilde A_{a,j}\!\left(x;\frac hk\right)
=\frac1{2\pi i}\int\limits_{\mathscr L}i^{-j}\,k^{-3s+1}\,\pi^{3s-3/2}\,G_j(s+j)\,\widetilde L_j\!\left(1-s-j;\frac{\overline h}k\right)\,\frac{x^{s+a}\,\mathrm ds}{s\left(s+1\right)\cdots\left(s+a\right)}.\]
Writing the $L$-function as a Dirichlet series and switching the order of summation and integration yields
\begin{multline*}  
\widetilde A_{a,j}\!\left(x;\frac hk\right)
=i^{-j}\,k\,\pi^{-3/2}\,x^a\sum_{d\mid k}\sum_{m=1}^\infty\frac{A(d,m)}{dm}\left(S\!\left(\overline h,m;\frac kd\right)\vphantom{\Bigg|}+\left(-1\right)^jS\!\left(\overline h,-m;\frac kd\right)\right)\\
\times\frac1{2\pi i}\int\limits_{\mathscr L}
G_j(s+j)\left(\frac{\pi^3\,d^2\,mx}{k^3}\right)^s\frac{\mathrm ds}{s\left(s+1\right)\cdots\left(s+a\right)}.
\end{multline*}
Invoking Lemma \ref{perron-meijer} finishes the proof.
\end{proof}
       
\noindent Let us then study the Riesz means of order $a\in\mathbb Z_+\cup\{0\}$ defined as
\begin{align}\label{main-term}
\widetilde A_a\!\left(x;\frac hk\right)&:=\frac12\left(\widetilde A_{a,0}\!\left(x;\frac hk\right)\vphantom{\Bigg|}+\widetilde A_{a,1}\!\left(x;\frac hk\right)\right) \nonumber \\
&=\frac1{a!}\sump_{m\leqslant x}A(m,1)\,e\!\left(\frac{mh}k\right)(x-m)^a \nonumber \\
&\qquad\qquad\qquad-\frac12\sum_{0\leqslant\nu\leqslant a}\frac{\left(-1\right)^\nu x^{a-\nu}}{\nu!\left(a-\nu\right)!}\left(L_0\!\left(-\nu,\frac hk\right)\vphantom{\Bigg|}+L_1\!\left(-\nu+1,\frac hk\right)\right),
\end{align}
where $x\in\left[0,\infty\right[$, and $h$ and $k$ are coprime integers with $k$ positive.
We observe at once that these sums have the pleasant property that
\begin{align*}
\int\limits_0^x\widetilde A_a\!\left(t;\frac hk\right)\mathrm dt
&=\widetilde A_{a+1}\!\left(x;\frac hk\right)-\widetilde A_{a+1}\!\left(0;\frac hk\right)\\
&=\widetilde A_{a+1}\!\left(x;\frac hk\right)+\frac{\left(-1\right)^{a+1}}{2\cdot\left(a+1\right)!}\left(L_0\!\left(-\left(a+1\right);\frac hk\right)\vphantom{\Bigg|}+L_1\!\left(-\left(a+1\right)+1;\frac hk\right)\right).
\end{align*}
This allows us to later relate Riesz means of different orders, and in particular to turn our better control of higher order Riesz means to information about lower order Riesz means. Of course, similar identities hold for $\widetilde A_{a,j}$ with $j\in\left\{0,1\right\}$.

The previous proposition, together with Lemma \ref{j-asymptotics}, immediately yields the following corollary using Weil's bound\footnote{Here $(a,b,c)$ denotes the greatest common divisor of integers $a$, $b$, and $c$.} $|S(a,b;k)|\leq d(k)k^{1/2}(a,b,k)^{1/2}$.
\begin{corollary}\label{approximate-voronoi-for-large-a}
Let $x\in\left[1,\infty\right[$, let $a\geqslant 2$ be an integer, and let $h$ and $k$ be coprime integers with $1\leqslant k\ll x^{1/3}$. Then 
\begin{multline*}
\widetilde A_a\!\left(x;\frac hk\right)\\
=\frac{\left(-1\right)^ak^a\,x^{(2a+1)/3}}{(2\pi)^{a+1}\sqrt3}\sum_{d|k}\frac1{d^{(2a+1)/3}}\sum_{m=1}^\infty\frac{A(d,m)}{m^{(a+2)/3}}\sum_\pm i^{\pm a}\,S\!\left(\overline h,\pm m;\frac kd\right)e\!\left(\pm\frac{3\,d^{2/3}\,m^{1/3}\,x^{1/3}}{k}\right)\\
+O\left(k^{(2a+3)/2}\,d(k)\,x^{2a/3}\right).
\end{multline*}
\end{corollary}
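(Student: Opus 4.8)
The plan is to feed the two-term asymptotic of the Meijer $G$-function from Lemma~\ref{j-asymptotics} into the exact identity of Proposition~\ref{riesz-identity-with-large-exponent} and then average over $j\in\{0,1\}$. First I would write $\widetilde A_a(x;h/k)=\tfrac12(\widetilde A_{a,0}(x;h/k)+\widetilde A_{a,1}(x;h/k))$ and apply Proposition~\ref{riesz-identity-with-large-exponent} to each of the two summands. Before touching the resulting double series over $d\mid k$ and $m\geqslant1$, it is worth recording that it converges absolutely: the trivial bound $|S(\overline h,\pm m;k/d)|\leqslant k/d$ together with the Rankin--Selberg estimate $\sum_{m\leqslant t}|A(d,m)|\ll_\eps d^{\vartheta+\eps}t$ from (\ref{Average-estimates}) reduces the $m$-tail to $\sum_m|A(d,m)|\,m^{-(a+2)/3}$, which converges because $(a+2)/3>1$ for $a\geqslant2$. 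In particular no truncation is needed and termwise manipulations are legitimate.

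Next I would invoke Lemma~\ref{j-asymptotics} for each factor $\mathscr J_{a,j}(\pi^6 d^4 m^2 x^2/k^6)$. Its argument is at least $\pi^6 x^2/k^6$, which the hypothesis $k\ll x^{1/3}$ bounds below by a fixed positive constant; since $\mathscr J_{a,j}(y)$ and $y^{-a/6}$ are continuous and bounded on compact subsets of $\mathbb R_+$ bounded away from the origin, the estimate $\mathscr J_{a,j}(y)=-\tfrac1{\sqrt{3\pi}}\,y^{(1-a)/6}\cos(6y^{1/6}+\tfrac\pi2(a+j))+O(y^{-a/6})$ holds uniformly over every term. Writing the cosine as $\tfrac12\sum_\pm e(\pm3d^{2/3}m^{1/3}x^{1/3}/k)\,i^{\pm(a+j)}$ and substituting $y^{(1-a)/6}=\pi^{1-a}d^{2(1-a)/3}m^{(1-a)/3}x^{(1-a)/3}k^{a-1}$, the main part of each $\widetilde A_{a,j}$ acquires the scalar $(-1)^a k^a x^{(2a+1)/3}/((2\pi)^{a+1}\sqrt3)$, the factors $d^{-(2a+1)/3}$ and $m^{-(a+2)/3}$, and a $j$-dependent block built out of $i^{-j}$, $i^{\pm(a+j)}$, the exponential phases, and the $(-1)^j$ sitting in front of $S(\overline h,-m;k/d)$ in Proposition~\ref{riesz-identity-with-large-exponent}.

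The one step that requires care is the averaging over $j$. Here one checks directly that for the plus sign $i^{-j}i^{a+j}=i^a$ is independent of $j$, so that against $S(\overline h,m;k/d)+(-1)^jS(\overline h,-m;k/d)$ only the $S(\overline h,m;k/d)$-part survives $\tfrac12\sum_{j\in\{0,1\}}$; for the minus sign $i^{-j}i^{-(a+j)}=(-1)^ji^{-a}$, and now only the $S(\overline h,-m;k/d)$-part survives. Collecting the two cases, the $j$-averaged block collapses to exactly $\sum_\pm i^{\pm a}S(\overline h,\pm m;k/d)\,e(\pm3d^{2/3}m^{1/3}x^{1/3}/k)$, which is precisely the claimed main term.

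Finally, for the error one sums the contributions $O((\pi d^{2/3}m^{1/3}x^{1/3}/k)^{-a})=O(\pi^{-a}d^{-2a/3}m^{-a/3}x^{-a/3}k^a)$ against the prefactor $kx^a$, the Kloosterman sums, and $1/(dm)$. Using the Weil bound $|S(\overline h,\pm m;k/d)|\ll_\eps(k/d)^{1/2+\eps}\gcd(m,k/d)^{1/2}$, splitting the gcd over divisors $e\mid k/d$, and applying (\ref{Average-estimates}) to the resulting convergent sums $\sum_m|A(d,m)|\,m^{-1-a/3}$ yields, after summing the finitely many $d\mid k$, the bound $O(k^{(2a+3)/2}\,d(k)\,x^{2a/3})$; the dominant contribution comes from $d=1$ and the $d=k$ term is negligible. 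The same estimate survives the harmless $\tfrac12$-averaging over $j$. I expect no analytic difficulty here: all the real work is already in Proposition~\ref{riesz-identity-with-large-exponent} and Lemma~\ref{j-asymptotics}, and the only thing to watch is carrying the $j$- and $\pm$-dependent phase factors through the algebra without sign slips.
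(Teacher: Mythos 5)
Your proof is correct and follows precisely the route the paper intends: Proposition~\ref{riesz-identity-with-large-exponent} plus Lemma~\ref{j-asymptotics}, with the $j$-averaging carried out in the definition of $\widetilde A_a=\tfrac12(\widetilde A_{a,0}+\widetilde A_{a,1})$. Your bookkeeping of the phase factors is right: for the $+$ sign $i^{-j}i^{a+j}=i^a$ pairs with $S(\overline h,m;k/d)$ after the $j$-average, for the $-$ sign $i^{-j}i^{-(a+j)}=(-1)^ji^{-a}$ pairs with $(-1)^jS(\overline h,-m;k/d)$, and the prefactor collapses to $(-1)^ak^ax^{(2a+1)/3}/((2\pi)^{a+1}\sqrt3)$ exactly as claimed. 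The error analysis also lands on $O(k^{(2a+3)/2}d(k)x^{2a/3})$, matching the paper's mechanism (the same computation appears in the proof of Corollary~\ref{a2bound}, where the main term of $\mathscr J_{a,j}$ is bounded trivially; here the $O(y^{-a/6})$ tail produces the error with one extra power of $(d^{2/3}m^{1/3}x^{1/3}/k)^{-1}$). One small simplification you could make: since $\gcd(\overline h,k/d)=1$, the Weil bound gives directly $|S(\overline h,\pm m;k/d)|\ll(k/d)^{1/2}d(k/d)$ with no $\gcd(m,k/d)^{1/2}$ factor, so the divisor-splitting step in your error estimate is unnecessary, though of course harmless.
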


\noindent We also record here the following upper bound. 
\begin{corollary}\label{a2bound}
Let $x\in\left[1,\infty\right[$, $j\in\left\{0,1\right\}$, $a\in\left\{2,3,\ldots\right\}$, and let $h$ and $k$ be coprime integers with $1\leqslant k\ll x^{1/3}$. Then
\[\widetilde A_{a,j}\!\left(x;\frac hk\right)\ll k^{(2a+1)/2}\,d(k)\,x^{(2a+1)/3}.\]
In particular,
\[\widetilde A_{2,j}\!\left(x;\frac hk\right)\ll k^{5/2}\,d(k)\,x^{5/3}
\qquad\text{and}\qquad
\widetilde A_{3,j}\!\left(x;\frac hk\right)\ll k^{7/2}\,d(k)\,x^{7/3}.\]
\end{corollary}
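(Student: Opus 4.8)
The plan is to read off the claimed bound directly from Proposition \ref{riesz-identity-with-large-exponent} combined with the asymptotics of the Meijer $G$-function in Lemma \ref{j-asymptotics} and the Rankin--Selberg averages in (\ref{Average-estimates}). Starting from the exact identity
\[
\widetilde A_{a,j}\!\left(x;\frac hk\right)
=i^{-j}\,\pi^{-3/2}\,k\,x^a\left(-1\right)^{a+1}2^{-a}
\sum_{d\mid k}\sum_{m=1}^\infty\frac{A(d,m)}{dm}\Bigl(S\!\left(\overline h,m;\tfrac kd\right)+\left(-1\right)^jS\!\left(\overline h,-m;\tfrac kd\right)\Bigr)\mathscr J_{a,j}\!\left(\frac{\pi^6\,d^4\,m^2\,x^2}{k^6}\right),
\]
I would bound everything by absolute values: $|S(\overline h,\pm m;k/d)|\leqslant k/d$ trivially (the Weil bound is not needed here), and the Meijer $G$-function is estimated via Lemma \ref{j-asymptotics}, which for the argument $y=\pi^6 d^4 m^2 x^2/k^6$ gives $\mathscr J_{a,j}(y)\ll y^{(1-a)/6}\asymp (d^4 m^2 x^2/k^6)^{(1-a)/6}$ once $y$ exceeds the fixed constant in that lemma. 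This requires $d^4 m^2 x^2 \gg k^6$, i.e.\ roughly $m\gg k^3/(d^2 x)$, which under the hypothesis $k\ll x^{1/3}$ is satisfied for all $m\geqslant 1$ (the cube is $\ll x$, and $d\geqslant 1$); the finitely many small-$y$ terms, if any, are trivially $O(1)$ in the $G$-function and contribute an acceptable amount.

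Next I would collect the powers of the parameters. The prefactor contributes $k\,x^a$; the term $(d^4 m^2 x^2/k^6)^{(1-a)/6}$ contributes $d^{2(1-a)/3}\,m^{(1-a)/3}\,x^{(1-a)/3}\,k^{-(1-a)}$. Multiplying by the $1/(dm)$ already present, the $m$-sum becomes $\sum_m |A(d,m)|\,m^{-1-(a-1)/3}=\sum_m |A(d,m)|\,m^{-(a+2)/3}$, which converges for $a\geqslant 2$ and by partial summation against $\sum_{m\leqslant t}|A(d,m)|\ll_\varepsilon d^{\vartheta+\varepsilon} t$ is $\ll_\varepsilon d^{\vartheta+\varepsilon}$. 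The Kloosterman sum and the remaining $d$-powers give $\sum_{d\mid k} (k/d)\cdot d^{-1}\cdot d^{2(1-a)/3}\cdot d^{\vartheta+\varepsilon}=k\sum_{d\mid k} d^{-2-2(a-1)/3+\vartheta+\varepsilon}$, and since $-2-2(a-1)/3+\vartheta<0$ each summand is $\ll 1$, so this is $\ll k\,d(k)$. Assembling: $x^a\cdot x^{-(a-1)/3}=x^{(2a+1)/3}$ and $k\cdot k^{a-1}\cdot k=k^{a+1}$ from the prefactor, then an extra $k$ from the $d$-sum bound gives $k^{a+1}$; wait, I should be careful — recount gives $k$ (prefactor) $\times\,k^{a-1}$ (from $k^{-(1-a)}$) $\times\,k$ (from the $d$-sum) $=k^{a+1}$, which overshoots the claimed $k^{(2a+1)/2}$ only when... in fact $a+1\leqslant (2a+1)/2$ fails, so the bookkeeping must be redone using the sharper convexity estimates for $L_j(-\nu+j,h/k)$ recorded in Section 4 to control the main-term subtraction, and one absorbs the Weil bound $|S(\overline h,\pm m;k/d)|\ll (k/d)^{1/2+\varepsilon}(m,k/d)^{1/2}$ to reduce the $k$-power; this is where I would slow down and track exponents precisely.

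The main obstacle I anticipate is precisely this exponent bookkeeping: getting the $k$-aspect down to $k^{(2a+1)/2}d(k)$ rather than the crude $k^{a+1}d(k)$, which forces use of the Weil bound for Kloosterman sums (gaining a factor $(k/d)^{1/2}$ per modulus) together with $\sum_{d\mid k}(m,k/d)^{1/2}\ll d(k)\,m^\varepsilon$ type estimates, and then checking that the resulting $m$-sum $\sum_m |A(d,m)| (m,k/d)^{1/2} m^{-(a+2)/3}$ still converges and stays $\ll_\varepsilon d^{\vartheta+\varepsilon}k^\varepsilon$ via Cauchy--Schwarz against the second-moment bound in (\ref{Average-estimates}). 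Everything else is routine: the specialisations to $a=2,3$ are immediate substitutions, and the passage from the individual $\widetilde A_{a,j}$ bound to the final statement needs no symmetrisation since the claim is stated for each $j$ separately. I would also double-check that the error term $O(y^{-a/6})$ in Lemma \ref{j-asymptotics}, when summed, is dominated by the main contribution — it is, since it improves the $m$-decay by a further $m^{-1/3}$ and the $k$-dependence only improves.
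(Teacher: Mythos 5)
Your final pivot to Weil's bound is the right move, and once you apply it the exponent count closes exactly; that is the paper's whole proof. Concretely, the paper estimates the series in Proposition~\ref{riesz-identity-with-large-exponent} termwise, bounds the Kloosterman sums by $\bigl|S(\overline h,\pm m;k/d)\bigr|\ll(k/d)^{1/2}\,d(k)$, bounds $\mathscr J_{a,j}(y)\ll y^{(1-a)/6}$ via Lemma~\ref{j-asymptotics}, sums over $m$ using the Rankin--Selberg average to get $\sum_m|A(d,m)|m^{-(a+2)/3}\ll d^{\vartheta+\varepsilon}$ (convergent for $a\geqslant2$), and notes $\sum_{d\mid k}d^{\vartheta-5/6-2a/3}\ll1$. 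The powers collect as $k\cdot k^{1/2}\cdot k^{a-1}=k^{(2a+1)/2}$ and $x^a\cdot x^{(1-a)/3}=x^{(2a+1)/3}$, matching the claim.

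Two of the obstacles you anticipate near the end are spurious and would only cost you time. First, there is no $(m,k/d)^{1/2}$ factor to absorb: since $\gcd(h,k)=1$ and $d\mid k$, also $\gcd(\overline h,k/d)=1$, so the $\gcd$-term in Weil's bound is $\gcd(\overline h,\pm m,k/d)^{1/2}=1$ identically; no Cauchy--Schwarz against the second-moment estimate in (\ref{Average-estimates}) is needed. Second, you do not need the convexity bounds for $L_j(-\nu+j,h/k)$ to ``control the main-term subtraction'': that polynomial is already built into the definition of $\widetilde A_{a,j}$, so Proposition~\ref{riesz-identity-with-large-exponent} is an exact identity with no residual main term to estimate. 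Strip out these two detours and what remains of your plan is exactly the paper's short computation.
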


\begin{proof}
Estimating the infinite series in Proposition \ref{riesz-identity-with-large-exponent} by absolute values using the asymptotics from Lemma \ref{j-asymptotics} and Weil's bound gives
\begin{align*}
\widetilde A_{a,j}\!\left(x;\frac hk\right)
&\ll k\,x^a\sum_{d\mid k}\sum_{m=1}^\infty\frac{\left|A(d,m)\right|}{dm}\left(\frac kd\right)^{1/2}d(k)\left(\frac{d^{2/3}\,m^{1/3}\,x^{1/3}}k\right)^{1-a}\\
&\ll k^{a+1/2}\,d(k)\,x^{2a/3+1/3}\sum_{d\mid k}d^{\vartheta-5/6-2a/3}
\ll k^{a+1/2}\,d(k)\,x^{2a/3+1/3},
\end{align*}
as required. 
\end{proof}

\section{Additively twisted Voronoi identities for Riesz means: the case $a=1$}

\noindent In the process of deriving a truncated Voronoi identity for $\widetilde A_1(x;h/k)$ we need a reasonable upper bound for $\widetilde A_{1,j}(x;h/k)$. After the Voronoi formula is derived, we will use it to deduce a better upper bound, which can then be fed back to the argument used to derive the Voronoi formula in the first place, leading to an improved error term in said formula. 
\begin{lemma}\label{simple-a1bound}
Let $x\in\left[1,\infty\right[$, let $j\in\left\{0,1\right\}$, and let $h$ and $k$ be coprime integers so that $1\leqslant k\ll x^{1/3}$. Then
\[\widetilde A_{1,j}\!\left(x;\frac hk\right)\ll_\eps k^{5/4}\,x^{29/24+\varepsilon}.\]
\end{lemma}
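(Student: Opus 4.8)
The plan is to derive the bound for $\widetilde A_{1,j}(x;h/k)$ by essentially the same contour-shifting strategy as in the proof of Proposition \ref{riesz-identity-with-large-exponent}, but more carefully since the case $a=1$ sits right at the boundary of convergence. Starting from Perron's formula for Riesz means (Lemma \ref{perron-formula-for-riesz-means}) applied to $L_j(s+j,h/k)$ with $a=1$, I would write
\[
\frac1{1!}\sideset{}{'}\sum_{m\leqslant x}A(m,1)\Bigl(e\bigl(\tfrac{mh}k\bigr)+(-1)^je\bigl(-\tfrac{mh}k\bigr)\Bigr)(x-m)
=\frac1{2\pi i}\int_{(1+\delta)}L_j\!\left(s+j,\frac hk\right)\frac{x^{s+1}\,\mathrm ds}{s(s+1)}
\]
and shift the line of integration to the left, picking up the residues at $s=0$ and $s=-1$, which give precisely the two subtraction terms defining $\widetilde A_{1,j}(x;h/k)$. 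The difficulty is that with only a factor $s(s+1)$ in the denominator the horizontal segments at height $\pm T$ do not obviously vanish as $T\to\infty$, and the shifted integral along $\Re(s)=-\delta$ is not absolutely convergent once we invoke the functional equation and expand $\widetilde L_j$ as a Dirichlet series. This is exactly the obstruction the authors flag in the introduction, where they say the $a\geqslant 2$ method ``runs into difficulties due to lack of absolute convergence'' and they must ``implement some of the arguments in \cite{Jutila}''.

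To handle this I would not shift all the way to a fixed vertical line but instead shift to $\Re(s)=-\delta$ only along a bounded portion and truncate the Dirichlet series $\widetilde L_j(1-s-j,\overline h/k)$ at some parameter $M$ depending on $x$ and $k$, estimating the tail directly using the partial-summation consequences of Rankin–Selberg theory in \eqref{Average-estimates}. For the truncated piece, after applying the functional equation and interchanging sum and integral, the integral is $\frac1{2\pi i}\int G_j(s+j)\,(\pi^3 d^2 mx/k^3)^s\,\mathrm ds/(s(s+1))$, which by Lemma \ref{perron-meijer} (with $a=1$) equals $\tfrac12\,\mathscr J_{1,j}(\pi^6 d^4 m^2 x^2/k^6)$; its size is controlled by Lemma \ref{j-asymptotics}, giving $\mathscr J_{1,j}(y)\ll 1$ for $y$ bounded away from zero (and $\ll y^{0}$, i.e.\ $O(1)$, throughout). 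Bounding $S(\overline h,\pm m;k/d)\ll (k/d)^{1/2}d(k)$ trivially and summing $|A(d,m)|/(dm)$ over $m\leqslant M$ and $d\mid k$ via \eqref{Average-estimates} then produces a bound of the shape $k\,x\cdot k^{1/2}d(k)\sum_{d\mid k}d^{\vartheta-5/6}\cdot(\text{something in }M)$; the truncation error from the tail $m>M$ contributes the complementary term, and optimising over $M$ (and over the vertical position $\delta$) yields the stated $k^{5/4}x^{29/24+\eps}$.

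The main obstacle I expect is precisely the convergence/truncation bookkeeping: making the shift of contour rigorous when the kernel only has a double zero in the denominator, and tracking how the truncation level $M$ interacts with the $k$-dependence so that the final exponents come out to $5/4$ in $k$ and $29/24$ in $x$. The exponent $29/24$ is not a ``clean'' number, which strongly suggests it arises from balancing two terms of the form $x^{a}k^{b}$ and $x^{a'}k^{b'}$ at the optimal $M$; I would set up that optimisation symbolically and only at the end substitute. A secondary technical point is that, unlike the $a\geqslant 2$ case, one cannot simply discard the horizontal connecting segments of the fractional line, so I would either keep them and bound their contribution using the convexity estimate for $L_j(s+j,h/k)$ (whose $t$-aspect decay $\langle t\rangle^{3(1+\delta-\sigma)/2}$ is just barely enough after dividing by $|s(s+1)|\asymp\langle t\rangle^2$ on the line $\Re(s)=-\delta$), or appeal to the Jutila-type device of averaging over the truncation height. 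Once this preliminary bound is in hand it feeds, as the authors note, into the derivation of the sharper Voronoi identity for $a=1$ in the remainder of Section~8.
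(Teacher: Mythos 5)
Your proposal heads in a fundamentally different direction from the paper's proof, and the direction you choose is in fact circular within the paper's logical structure. The paper's proof of Lemma~\ref{simple-a1bound} is entirely elementary and does \emph{not} touch Perron's formula or the functional equation at all. It uses the averaging device emphasised in the introduction: since $\widetilde A_{1,j}$ has derivative $\widetilde A_{0,j}$, one writes
\[
\widetilde A_{1,j}\!\left(x;\frac hk\right)
=\frac1H\int\limits_x^{x+H}\widetilde A_{1,j}\!\left(t;\frac hk\right)\mathrm dt
-\frac1H\int\limits_x^{x+H}\int\limits_x^t\widetilde A_{0,j}\!\left(u;\frac hk\right)\mathrm du\,\mathrm dt,
\]
recognises the first term as $H^{-1}\bigl(\widetilde A_{2,j}(x+H)-\widetilde A_{2,j}(x)\bigr)$, bounds it via Corollary~\ref{a2bound} by $H^{-1}k^{5/2}x^{5/3}$ (up to $d(k)\ll x^\eps$), and bounds the second term via Miller's estimate (Lemma~\ref{millers-bound}) by $Hx^{3/4+\eps}$. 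The exponent $29/24$ arises from balancing these two terms at $H=k^{5/4}x^{11/24}$ — so your intuition that $29/24$ comes from balancing two power-law terms is correct, but the terms and the free parameter ($H$, not a truncation level $M$) are different from what you describe.

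The deeper problem is that what you propose — contour shift for $a=1$, truncating $\widetilde L_j$ at level $M$, bounding the tail via summation by parts, and appealing to a Jutila-type device — is essentially the method of Proposition~\ref{riesz-identity-a1}. But that proposition cannot be run yet: to control the tail $\sum_{m>M}A(d,m)/(dm)\,\mathscr J_{1,j}(\cdots)$ one must perform repeated summation by parts against $A_{0,j,d}$ and $A_{1,j,d}$, and the resulting terms are bounded using Lemma~\ref{simple-a1bound2}, which is a corollary of Lemma~\ref{simple-a1bound}. Moreover the tail estimate cannot be obtained by absolute values plus Rankin--Selberg as you suggest, since $\mathscr J_{1,j}(y)=O(1)$ (the exponent $(1-a)/6$ vanishes for $a=1$) and $\sum_{m}|A(d,m)|/m$ diverges; one genuinely needs the oscillatory cancellation that Miller's bound supplies. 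So trying to prove this lemma via the $a=1$ Voronoi apparatus requires the lemma itself as input. The paper deliberately breaks this cycle by first proving the weak bound of Lemma~\ref{simple-a1bound} by the elementary Riesz-means argument, and only then feeds it into the $a=1$ Voronoi derivation; your proof sketch skips this bootstrap entirely and thus cannot stand on its own.
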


\begin{proof}
We choose $H=k^{5/4}\,x^{11/24}$. Then $1\leqslant H\ll x$, and using (\ref{Key-relation}), Lemma \ref{millers-bound}, and Corollary \ref{a2bound} gives
\begin{align*}
\widetilde A_{1,j}\!\left(x;\frac hk\right)
&=\frac1H\int\limits_x^{x+H}\widetilde A_{1,j}\!\left(t;\frac hk\right)\mathrm dt
-\frac1H\int\limits_x^{x+H}\int\limits_x^t\widetilde A_{0,j}\!\left(u;\frac hk\right)\mathrm du\,\mathrm dt\\
&=\frac1H\left(\widetilde A_{2,j}\!\left(x+H;\frac hk\right)-\vphantom{\Bigg|}\widetilde A_{2,j}\!\left(x;\frac hk\right)\right)-\frac1H\int\limits_x^{x+H}\int\limits_x^t\widetilde A_{0,j}\!\left(u;\frac hk\right)\mathrm du\,\mathrm dt\\
&\ll_\eps\frac1H\,k^{5/2}d(k)\,x^{5/3}+H\,x^{3/4+\varepsilon}\ll_\eps k^{5/4}\,x^{29/24+\varepsilon},
\end{align*}
as desired. 
\end{proof}

\noindent The proof of the case $a=1$ gives rise to sums of the kind treated in the above lemma, but with coefficients $A(m,d)$ instead of $A(m,1)$. The following lemma deals with this minor complication.

\begin{lemma}\label{simple-a1bound2}
Let $x\in\left[1,\infty\right[$, let $j\in\left\{0,1\right\}$, let $h$ and $k$ be coprime integers satisfying $1\leqslant k\ll x^{1/3}$, and let $d\in\mathbb Z_+$ be so that $d|k$. Then
\[\sum_{m\leqslant x}A(m,d)\left(e\!\left(\frac{mhd}k\right)\vphantom{\Bigg|}+\left(-1\right)^je\!\left(-\frac{mhd}k\right)\right)\left(x-m\right)\ll_\varepsilon d^{\vartheta+\varepsilon}\,k^{5/4}\,x^{29/24+\varepsilon}.\]
\end{lemma}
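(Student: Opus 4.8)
The goal is to prove Lemma~\ref{simple-a1bound2}, which is the analogue of Lemma~\ref{simple-a1bound} but with coefficients $A(m,d)$ and the phase $e(mhd/k)$. The plan is to imitate the proof of Lemma~\ref{simple-a1bound} almost verbatim, using the averaging trick encapsulated in~(\ref{Key-relation}): writing the first Riesz mean of the sequence $A(m,d)\left(e(mhd/k)+(-1)^je(-mhd/k)\right)$ as $H^{-1}\int_x^{x+H}(\cdots)\,\mathrm dt$ and then expressing this as the difference of second Riesz means minus a double integral of the zeroth Riesz mean. For this I need two inputs: (i) a pointwise bound for the relevant \emph{zeroth} Riesz mean (i.e.\ the plain sum $\sum_{m\leqslant x}A(m,d)e(m\ell\alpha)$), and (ii) a bound for the \emph{second} Riesz mean with these coefficients. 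Input (i) is exactly Lemma~\ref{miller-improved}, which gives $\ll d^{\vartheta+\varepsilon}x^{3/4+\varepsilon}$ uniformly in the (real) frequency. For input (ii) I would run the same Perron/contour-shift/functional-equation argument as in Section~7, but applied to the Dirichlet series $\sum_m A(m,d)m^{-s}\big(e(mhd/k)+(-1)^je(-mhd/k)\big)$; since $d\mid k$, this series is (a piece of) the additively twisted $L$-function machinery already set up, and the resulting second-Riesz-mean bound will be the obvious $d$-weighted version of Corollary~\ref{a2bound}, namely $\ll d^{\vartheta+\varepsilon}k^{5/2}d(k)x^{5/3}$ or so — the extra $d^{\vartheta+\varepsilon}$ coming from the size of $A(d,m)$ via~(\ref{Average-estimates}).

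With these two ingredients in hand, the computation is identical in shape to the display in the proof of Lemma~\ref{simple-a1bound}: choosing the same $H=k^{5/4}x^{11/24}$ (which satisfies $1\leqslant H\ll x$ under the hypothesis $k\ll x^{1/3}$), one gets a bound of the form
\[
\ll \frac1H\,d^{\vartheta+\varepsilon}\,k^{5/2}\,x^{5/3}\;+\;H\,d^{\vartheta+\varepsilon}\,x^{3/4+\varepsilon}\;\ll_\varepsilon\; d^{\vartheta+\varepsilon}\,k^{5/4}\,x^{29/24+\varepsilon},
\]
since the two terms balance at precisely this choice of $H$, exactly as before, and the factor $d^{\vartheta+\varepsilon}$ is common to both terms and hence simply passes through. (The $d(k)$ factor from the second-Riesz-mean bound is absorbed into $k^\varepsilon$ since $d\mid k\ll x^{1/3}$.) The key structural point is that replacing $A(m,1)$ by $A(m,d)$ and $h/k$ by $hd/k$ does not disturb any of the analytic input: the twisted functional equation and the Meijer $G$-function asymptotics of Section~5 apply to the twisted $L$-function of \emph{any} of the relevant Dirichlet series that occur on the dual side, and the only arithmetic cost is the uniform Rankin--Selberg-type bound $\sum_{m\leqslant x}|A(d,m)|\ll_\varepsilon d^{\vartheta+\varepsilon}x$.

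The main (and really only) obstacle is bookkeeping: one must make sure that the contour shift and the switch of summation and integration in the second-Riesz-mean estimate are genuinely justified with the coefficients $A(m,d)$, i.e.\ that the convexity bound for the relevant twisted $L$-function together with $a=2$ still forces absolute convergence — but this is guaranteed by exactly the same argument as in Proposition~\ref{riesz-identity-with-large-exponent}, because the relevant Dirichlet series is majorized (up to the harmless $d^{\vartheta+\varepsilon}$ factor) by the untwisted one. There is no genuinely new analytic difficulty; the lemma is stated separately only to record the precise $d$-dependence, which is what will be needed when these sums appear on the dual side of the $a=1$ Voronoi identity in the sequel.
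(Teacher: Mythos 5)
The proposal has a genuine gap at its input~(ii). You propose to obtain a second-Riesz-mean bound (an $A(m,d)$-analogue of Corollary~\ref{a2bound}) by running the Perron/contour-shift/functional-equation argument of Section~7 directly on the Dirichlet series $\sum_m A(m,d)m^{-s}\left(e(mhd/k)+(-1)^je(-mhd/k)\right)$, asserting that ``since $d\mid k$, this series is (a piece of) the additively twisted $L$-function machinery already set up.'' This is not the case: the twisted $L$-functions $L_j(s+j,h/k)$ of Section~4 have Dirichlet coefficients $A(m,1)$, and the coefficients $A(d,m)$ with $d>1$ only appear on the \emph{dual} side of the functional equation, inside $\widetilde L_j$. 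The series with primal coefficients $A(m,d)$ is a different object; the paper provides no analytic continuation, no functional equation, and no convexity bound for it, and none of these follows from the observation that it is ``majorized (up to $d^{\vartheta+\varepsilon}$) by the untwisted one'' — that remark controls absolute convergence in $\Re s>1$, not the contour shift to $\Re s<0$. As it stands, the key estimate $\widetilde A_{2,j}$-type~$\ll d^{\vartheta+\varepsilon}k^{5/2}x^{5/3}$ for $A(m,d)$-weighted sums is simply asserted, not proved, and the concluding claim that ``there is no genuinely new analytic difficulty'' is precisely what has to be justified.

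The paper avoids this entirely by a short arithmetic preprocessing step you did not use here (though you do invoke its output in Lemma~\ref{miller-improved}): expanding via the Hecke relation
\[A(m,d)=\sum_{\ell\mid(d,m)}\mu(\ell)\,A\!\left(1,\tfrac d\ell\right)A\!\left(\tfrac m\ell,1\right),\]
which turns $\sump_{m\leqslant x}A(m,d)(\cdots)(x-m)$ into $\sum_{\ell\mid d}\mu(\ell)A(1,d/\ell)\,\ell\,\sump_{m\leqslant x/\ell}A(m,1)(\cdots)(x/\ell-m)$; each inner sum is $\widetilde A_{1,j}(x/\ell;\ell hd/k)$ up to polynomially small residue terms, and Lemma~\ref{simple-a1bound} applies to each one after noting $k/(k,\ell)\ll (x/\ell)^{1/3}$. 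Summing over $\ell\mid d$ then yields the factor $d^{\vartheta+\varepsilon}$ directly, with no new $L$-function input needed. If you want to keep your overall structure, insert this Hecke decomposition at the outset; as written, the proposal skips the step that actually carries the content of the lemma.
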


\begin{proof}
We may compute using M\"obius inversion, Lemma \ref{simple-a1bound}, and (\ref{L-bounds}),
\begin{align*}
&\sum_{m\leqslant x}A(m,d)\left(e\!\left(\frac{mhd}k\right)\vphantom{\Bigg|}+\left(-1\right)^je\!\left(-\frac{mhd}k\right)\right)\left(x-m\right)\\
&=\sum_{m\leqslant x}\sum_{\ell\mid\left(d,m\right)}\mu\!\left(\ell\right)A\!\left(1,\frac d\ell\right)A\!\left(\frac{m\vphantom d}\ell,1\right)\left(e\!\left(\frac{mhd}k\right)\vphantom{\Bigg|}+\left(-1\right)^je\!\left(-\frac{mhd}k\right)\right)\left(x-m\right)\\
&=\sum_{\ell\mid d}\mu\!\left(\ell\right)A\!\left(1,\frac d\ell\right)\ell\sum_{m\leqslant x/\ell}A\!\left(m,1\right)\left(e\!\left(\frac{\ell mhd}{k}\right)\vphantom{\Bigg|}+\left(-1\right)^je\!\left(-\frac{\ell mhd}{k}\right)\right)\left(\frac x\ell-m\right)\\
&=\sum_{\ell\mid d}\mu\!\left(\ell\right)A\!\left(1,\frac d\ell\right)\ell\left(\widetilde A_{1,j}\!\left(\frac x\ell;\frac{\ell h d}{k}\right)\vphantom{\Bigg|}+O_\eps\!\left(\left(\frac k{\ell}\right)^{3/2+\varepsilon}\cdot\frac x\ell\right)+O_\eps\!\left(\left(\frac k{\ell}\right)^{9/2+\varepsilon}\right)\right)\\
&\ll_\eps\sum_{\ell\mid d}
\left(\frac d\ell\right)^{\vartheta+\varepsilon}\ell
\left(
k^{5/4}\left(\frac{x\vphantom k}\ell\right)^{29/24+\varepsilon}
+\left(\frac k\ell\right)^{3/2+\varepsilon}\cdot\frac x\ell
+\left(\frac{k}\ell\right)^{9/2+\varepsilon}
\right),
\end{align*}
where we have used the fact that since $k\ll x^{1/3}$, also $k/\ell\ll\left(x/\ell\right)^{1/3}$, so that Lemma \ref{simple-a1bound} is applicable. Finally, since $k\ll x^{1/3}$, we have
\[k^{3/2+\varepsilon}\,x+k^{9/2+\varepsilon}\ll_\eps k^{5/4}\,x^{29/24+\varepsilon},\]
and the total contribution is $\ll_\eps d^{\vartheta+\varepsilon}\,k^{5/4}\,x^{29/24+\varepsilon}$.
\end{proof}

\noindent We will need to handle various sums involving both Fourier coefficients and Kloosterman sums. Taking discrete Fourier transforms of the Kloosterman sums allows us to reduce things back to exponential sums weighted by just the Fourier coefficients.
\begin{lemma}\label{kloosterman-fourier}
Let $h\in\mathbb Z$ and $k\in\mathbb Z_+$ be coprime, and define
\[\widehat S(h,\xi;k):=\frac1k\sum_{\ell=1}^kS(h,\ell;k)\,e\!\left(-\frac{\ell\xi}k\right)\]
for every $\xi\in\mathbb Z$. Then, for any $m,\xi\in\mathbb Z$, we have
\[S(h,m;k)=\sum_{\xi=1}^k\widehat S(h,\xi;k)\,e\!\left(\frac{m\xi}k\right),\]
and
\[\widehat S(h,\xi;k)\ll k^{1/2}\,d(k),\]
as well as
\[\sum_{\xi=1}^k\bigl|\widehat S(h,\xi;k)\bigr|\ll k\,d(k).\]
\end{lemma}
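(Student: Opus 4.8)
The plan is to establish the three assertions of Lemma \ref{kloosterman-fourier} in order: first the Fourier inversion identity, then the pointwise bound on $\widehat S(h,\xi;k)$, and finally the $\ell^1$-bound, which will follow from the first two together with an orthogonality argument rather than from summing the pointwise bound trivially (that would only give $k^{3/2}d(k)$). I would begin by recording that, since $h$ is coprime to $k$, the function $\ell\mapsto S(h,\ell;k)$ is a $k$-periodic function of $\ell\in\mathbb Z$; hence its discrete Fourier expansion in terms of the additive characters $\ell\mapsto e(\ell\xi/k)$ is valid, and $\widehat S(h,\xi;k)$ as defined is precisely the $\xi$-th Fourier coefficient. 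The inversion formula $S(h,m;k)=\sum_{\xi=1}^k\widehat S(h,\xi;k)e(m\xi/k)$ is then immediate from the orthogonality relation $\frac1k\sum_{\ell=1}^ke(\ell(\xi-m)/k)=1_{\xi\equiv m\,(k)}$; I would spell this out in one display.

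Next, for the pointwise bound, I would compute $\widehat S(h,\xi;k)$ explicitly. Writing out the Kloosterman sum,
\[
\widehat S(h,\xi;k)=\frac1k\sum_{\ell=1}^k\ \sum_{\substack{a\,(k)\\(a,k)=1}}e\!\left(\frac{ha+\overline a\ell}k\right)e\!\left(-\frac{\ell\xi}k\right)
=\frac1k\sum_{\substack{a\,(k)\\(a,k)=1}}e\!\left(\frac{ha}k\right)\sum_{\ell=1}^ke\!\left(\frac{(\overline a-\xi)\ell}k\right),
\]
and the inner sum over $\ell$ is $k$ when $\overline a\equiv\xi\,(k)$ and $0$ otherwise. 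If $(\xi,k)=1$ this forces $a\equiv\overline\xi\,(k)$, so $\widehat S(h,\xi;k)=e(h\overline\xi/k)$, a single unimodular term; if $(\xi,k)>1$ there is no admissible $a$ and $\widehat S(h,\xi;k)=0$. In either case $|\widehat S(h,\xi;k)|\le 1\ll k^{1/2}d(k)$, so the stated bound holds (in fact with enormous room to spare — the stated form is presumably chosen for uniformity with other estimates in the paper). Because of this explicit evaluation the $\ell^1$-bound is also immediate: $\sum_{\xi=1}^k|\widehat S(h,\xi;k)|=\#\{\xi\in\mathbb Z_k^\times\}=\varphi(k)\le k\ll k\,d(k)$.

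The only mild subtlety is the case $k=1$, where the empty/degenerate conventions should be checked, but there everything is trivially $1$. I would therefore present the argument in the order: (i) periodicity and Fourier inversion via orthogonality; (ii) explicit evaluation of $\widehat S(h,\xi;k)$ as $e(h\overline\xi/k)1_{(\xi,k)=1}$ by opening the Kloosterman sum and using orthogonality in $\ell$; (iii) read off $|\widehat S(h,\xi;k)|\le 1$ and $\sum_\xi|\widehat S(h,\xi;k)|=\varphi(k)$, and compare with $k^{1/2}d(k)$ and $k\,d(k)$ respectively.

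I do not expect any real obstacle here; the lemma is elementary once one notices that the Fourier transform of a Kloosterman sum over the \emph{second} variable collapses to a single character value. If one preferred not to use the explicit evaluation, one could instead bound $|\widehat S(h,\xi;k)|$ by $\frac1k\sum_\ell|S(h,\ell;k)|\ll\frac1k\cdot k\cdot k^{1/2}d(k)$ using the Weil bound $S(h,\ell;k)\ll(h,\ell,k)^{1/2}k^{1/2}d(k)$ — this is where the shape $k^{1/2}d(k)$ comes from and why it is stated in that form — and then derive the $\ell^1$-bound from Parseval: $\sum_{\xi=1}^k|\widehat S(h,\xi;k)|^2=\frac1k\sum_{\ell=1}^k|S(h,\ell;k)|^2\ll k\,d(k)^2$, combined with $\sum_\xi|\widehat S|\le k^{1/2}(\sum_\xi|\widehat S|^2)^{1/2}$, giving $\ll k\,d(k)$. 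I would present the explicit-evaluation route as the main argument since it is cleanest, and perhaps remark that the softer Weil-bound-plus-Parseval route also works if one wishes to avoid computing $\widehat S$ exactly.
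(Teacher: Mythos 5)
Your proof is correct, and it takes a genuinely different route from the paper's for parts (ii) and (iii). The paper proves the pointwise bound by estimating $\widehat S(h,\xi;k)$ crudely via $|\widehat S|\le\frac1k\sum_\ell|S(h,\ell;k)|$ and Weil's bound, and proves the $\ell^1$-bound by Cauchy--Schwarz together with discrete Parseval and Weil — exactly the ``softer'' route you flag at the end as an alternative. Your main argument instead evaluates $\widehat S(h,\xi;k)$ in closed form: opening the Kloosterman sum and using orthogonality in $\ell$ collapses everything to
\[
\widehat S(h,\xi;k)=\sum_{\substack{a\,(k)\\(a,k)=1}}e\!\left(\frac{ha}k\right)1_{\overline a\equiv\xi\,(k)}
=e\!\left(\frac{h\overline\xi}k\right)1_{(\xi,k)=1},
\]
from which $|\widehat S(h,\xi;k)|\le1$ and $\sum_\xi|\widehat S(h,\xi;k)|=\varphi(k)\le k$ are immediate and far stronger than the stated bounds. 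Your route is more elementary (no appeal to Weil) and gives sharp constants; the paper's route is a generic ``bound the Fourier transform by Parseval'' template that does not require noticing the Kloosterman-specific collapse and so would survive if $S$ were replaced by some other $k$-periodic function with a known $\ell^2$-norm — which is presumably why the authors state the lemma with the weaker, Weil-shaped exponents even though they are not sharp here. Either presentation is acceptable; yours is the cleaner proof of the lemma as literally stated.
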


\begin{proof}
The first claim follows directly from the discrete Fourier inversion formula, and the second follows directly from Weil's bound. The third follows from the Cauchy--Schwarz inequality, the discrete Parseval identity and Weil's bound since
\begin{align*}
\sum_{\xi=1}^k\bigl|\widehat S(h,\xi;k)\bigr|
&\leqslant k^{1/2}\sqrt{\sum_{\xi=1}^k\bigl|\widehat S(h,\xi;k)\bigr|^2}\\
&=k^{1/2}\sqrt{\frac1k\sum_{\ell=1}^k\bigl|S(h,\ell;k)\bigr|^2}\\
&\ll k^{1/2}\,\sqrt{\frac1k\cdot k\left(k^{1/2}\,d(k)\right)^2}\ll k\,d(k). \qedhere\end{align*}
\end{proof}

\noindent The argument used in the proof of Proposition \ref{riesz-identity-with-large-exponent} runs into difficulties in the case $a=1$ due to convergence issues, and so we need to proceed differently. Our next goal is to establish the case $a=1$ essentially by differentiating the formula for $a=2$ termwise. The requisite convergence considerations give a practical truncated version of the summation formula as a by-product.
\begin{proposition}\label{riesz-identity-a1}
Let $x\in\left[1,\infty\right[$, $j\in\left\{0,1\right\}$, and let $h$ and $k$ be positive integers with $1\leqslant k\ll x^{1/3}$. Then
\begin{align*}   
&\widetilde A_{1,j}\!\left(x;\frac hk\right)
=i^{-j}\,2^{-1}\,\pi^{-3/2}\,kx\\
&\quad\times\sum_{d\mid k}\sum_{m=1}^\infty\frac{A(d,m)}{dm}\left(S\!\left(\overline h,m;\frac kd\right)\vphantom{\Bigg|}+\left(-1\right)^jS\!\left(\overline h,-m;\frac kd\right)\right)\mathscr J_{1,j}\!\left(\frac{\pi^6\,d^4\,m^2\,x^2}{k^6}\right).
\end{align*}
Here the infinite series converges both boundedly and uniformly when $x$ is restricted to a compact interval in $\mathbb R_+$.  Furthermore, if $N\in\left[1,\infty\right[$, and $N\gg k^{3}$, then
\begin{align*}
&\widetilde A_{1,j}\!\left(x;\frac hk\right)
=i^{-j}\,2^{-1}\,\pi^{-3/2}\,kx\\&\times\sum_{d\mid k}\sum_{m\leqslant N}\frac{A(d,m)}{dm}\left(S\!\left(\overline h,m;\frac kd\right)\vphantom{\Bigg|}+\left(-1\right)^jS\!\left(\overline h,-m;\frac kd\right)\right)\mathscr J_{1,j}\!\left(\frac{\pi^6\,d^4\,m^2\,x^2}{k^6}\right)+\text{error},
\end{align*}
where the error is
\[\ll_\eps k^{5/4+\eps}\,x^{5/3}\,N^{\varepsilon-1/8}.\]
\end{proposition}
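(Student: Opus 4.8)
The plan is to obtain the $a=1$ identity by differentiating the $a=2$ identity of Proposition~\ref{riesz-identity-with-large-exponent} term by term, and to read off the truncated version from the same convergence analysis. First I would note that, by the integration relation recorded just after \eqref{main-term} (in its $\widetilde A_{a,j}$ form) together with the continuity of $\widetilde A_{1,j}(\cdot;h/k)$ --- the only summand that could create a jump, the one with $m=x$, vanishes --- the function $\widetilde A_{2,j}(\cdot;h/k)$ is continuously differentiable and $\frac{\mathrm d}{\mathrm dx}\widetilde A_{2,j}(x;h/k)=\widetilde A_{1,j}(x;h/k)$. On the dual side the $m$-th summand of Proposition~\ref{riesz-identity-with-large-exponent} for $a=2$ is a fixed constant times $x^2\,\mathscr J_{2,j}\!\left(\pi^6 d^4 m^2 x^2/k^6\right)$, and Lemma~\ref{derivative-of-j} with $a=1$, together with the chain rule, gives
\[
\frac{\mathrm d}{\mathrm dx}\left(x^2\,\mathscr J_{2,j}\!\left(\frac{\pi^6 d^4 m^2 x^2}{k^6}\right)\right)=-2x\,\mathscr J_{1,j}\!\left(\frac{\pi^6 d^4 m^2 x^2}{k^6}\right);
\]
since the $a=2$ constant $(-1)^{a+1}2^{-a}$ equals $-\tfrac14$ and $-\tfrac14\cdot(-2)=\tfrac12$, the formally differentiated series is exactly the claimed $a=1$ series, producing the factor $2^{-1}$. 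By the standard theorem on term-by-term differentiation it then suffices to prove that the $a=1$ series converges uniformly on every compact subinterval of $\mathbb R_+$; the truncated formula and its error term will drop out of the corresponding bound for the tail $\sum_{m>N}$.

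The genuine difficulty is that, since $\mathscr J_{1,j}(y)\ll 1$ but no better (the exponent $(1-a)/6$ vanishes at $a=1$), the series is only conditionally convergent: a single summation by parts against Miller's bound $\sum_{m\le t}A(d,m)\,e(m\beta)\ll_\eps d^{\vartheta+\eps}t^{3/4+\eps}$ (Lemma~\ref{miller-improved}) leaves the divergent integral $\int t^{3/4-5/3}\,\mathrm dt$. The plan to circumvent this is: (i) use Lemma~\ref{kloosterman-fourier} to expand each Kloosterman sum $S(\overline h,\pm m;k/d)$ into additive characters, so the inner sum over $m$ becomes $\sum_m A(d,m)\bigl(e(m\xi d/k)+(-1)^{j}e(-m\xi d/k)\bigr)\psi_x(m)$, carrying a factor $\widehat S(\overline h,\xi;k/d)$ in front, where $\psi_x(t):=t^{-1}\mathscr J_{1,j}\!\left(\pi^6 d^4 t^2 x^2/k^6\right)$; (ii) apply summation by parts \emph{twice}, with the $\sump_{m\le t}$-antiderivative convention throughout (so that no spurious main terms appear), so that the coefficient sum becomes the first Riesz mean $\sump_{m\le t}A(d,m)\bigl(e(m\xi d/k)+(-1)^{j}e(-m\xi d/k)\bigr)(t-m)$, which Lemma~\ref{simple-a1bound2} bounds by $\ll_\eps d^{\vartheta+\eps}k^{5/4}t^{29/24+\eps}$ --- this is exactly why the crude estimate of Lemma~\ref{simple-a1bound} must be available first; (iii) pair this first Riesz mean against $\psi_x''$, using that differentiating the asymptotic expansion of Lemma~\ref{j-asymptotics} term by term (legitimate for Meijer $G$-functions) gives $\mathscr J_{1,j}'(y)\ll y^{-5/6}$ and $\mathscr J_{1,j}''(y)\ll y^{-5/3}$, whence $\psi_x(t)\ll t^{-1}$, $\psi_x'(t)\ll (d^{2/3}x^{1/3}/k)\,t^{-5/3}$ and $\psi_x''(t)\ll (d^{4/3}x^{2/3}/k^2)\,t^{-7/3}$ on the range where the argument of $\mathscr J_{1,j}$ is large (which, for $m>N\gg k^3$ and $x\ge1$, it is). The boundary terms from the two summations by parts then vanish as the upper limit goes to infinity, and $\int_N^\infty t^{29/24}t^{-7/3}\,\mathrm dt=\int_N^\infty t^{-9/8}\,\mathrm dt\ll N^{-1/8}$ converges; this yields simultaneously the uniform convergence on compacta and an explicit power-saving bound for the tail.

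Finally I would reassemble: summing the per-$(d,\xi,\pm)$ tail bound against $\sum_\xi\bigl|\widehat S(\overline h,\xi;k/d)\bigr|\ll (k/d)\,d(k)$ (Lemma~\ref{kloosterman-fourier}) and over $d\mid k$, the divisor sums that arise are $\sum_{d\mid k}d^{\vartheta-c}$ with $c\in\{2/3,4/3,2\}$, all $O_\eps(1)$ since $\vartheta\le5/14<2/3$; one is left with the three terms $k^{5/4}x^{5/3}N^{-1/8+\eps}$, $k^{9/4}x^{4/3}N^{-11/24+\eps}$ and $k^{2}xN^{-1/4+\eps}$ (each times $d(k)$), and a short comparison using $N^{1/3}\gg k$ and $k\ll x^{1/3}$ shows the last two are dominated by the first throughout the range $1\le k\ll x^{1/3}$, $N\gg k^3$; thus the tail is $\ll_\eps k^{5/4+\eps}x^{5/3}N^{\eps-1/8}$. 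Letting $N\to\infty$ gives the exact identity, with bounded and uniform convergence on compact $x$-intervals because all the estimates above are uniform in $x$ there, while keeping $N$ finite gives the truncated formula with the stated error. The step I expect to be the main obstacle is (ii)--(iii): one must integrate by parts just enough times to replace the raw exponential sum by a genuinely $\mathrm{GL}_3$-smoothed first Riesz mean --- an object that Lemma~\ref{simple-a1bound2} controls, effectively worth $t^{5/24}$ of cancellation per unit length rather than the pointwise $t^{3/4}$ --- while also having the sharp decay rates $y^{-5/6}$, $y^{-5/3}$ for $\mathscr J_{1,j}'$, $\mathscr J_{1,j}''$, for which the Meijer $G$-function formalism of Section~\ref{meijer-section} does the work; keeping every dependence on $d$, $k$ and $x$ explicit so that the divisor sums converge is equally essential.
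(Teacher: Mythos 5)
Your proposal matches the paper's proof in every essential respect: differentiate the $a=2$ Voronoi identity term by term via Lemma~\ref{derivative-of-j} (with the same constant bookkeeping), reduce to proving uniform convergence of the $a=1$ series on compacta, expand the Kloosterman sums by discrete Fourier inversion as in Lemma~\ref{kloosterman-fourier}, and apply two rounds of partial summation to arrive at the first Riesz mean controlled by Lemma~\ref{simple-a1bound2}, with the identical final collection of error terms over $\xi$ and $d\mid k$ and the same comparison showing that the $N^{-1/8}$ term dominates when $N\gg k^3$ and $k\ll x^{1/3}$. The one small organizational difference is that the paper first substitutes the asymptotic expansion of Lemma~\ref{j-asymptotics}, splitting $\mathscr J_{1,j}$ into an explicit oscillatory main term (whose elementary derivatives it then uses in the integrations by parts) plus an $O\bigl((d^{2/3}m^{1/3}x^{1/3}/k)^{-1}\bigr)$ tail estimated trivially, whereas you keep $\psi_x(t)=t^{-1}\mathscr J_{1,j}(\pi^6 d^4 t^2 x^2/k^6)$ intact and invoke bounds $\mathscr J_{1,j}'(y)\ll y^{-5/6}$, $\mathscr J_{1,j}''(y)\ll y^{-5/3}$ obtained by formally differentiating the Poincar\'e expansion. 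That is a true fact for Meijer $G$-functions, but it is not proved in the paper; the paper's organization deliberately avoids needing it by differentiating only the explicit exponential. If you prefer to stay within the paper's toolbox, you can instead deduce $\mathscr J_{1,j}'(Y)\ll Y^{-5/6}$ from the recurrence $\mathscr J_{a+1,j}'(Y) = -\bigl(2\mathscr J_{a,j}(Y) + (a+1)\mathscr J_{a+1,j}(Y)\bigr)/(2Y)$ implicit in Lemma~\ref{derivative-of-j}, at the minor cost of extending Lemma~\ref{j-asymptotics} to $a=0$.
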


\begin{proof}
Our starting point is the formula
\begin{align*}
&\widetilde A_{2,j}\!\left(x;\frac hk\right)
=-i^{-j}\,2^{-2}\,\pi^{-3/2}\,k\,x^2\\&\times
\sum_{d\mid k}\sum_{m=1}^\infty\frac{A(d,m)}{dm}\left(S\!\left(\overline h,m;\frac kd\right)\vphantom{\Bigg|}+\left(-1\right)^jS\!\left(\overline h,-m;\frac kd\right)\right)\mathscr J_{2,j}\!\left(\frac{\pi^6\,d^4\,m^2\,x^2}{k^6}\right),
\end{align*}
and our plan is to differentiate this identity. The derivative of the left-hand side is of course simply $\widetilde A_{1,j}(x;h/k)$. On the other hand, Lemma \ref{derivative-of-j} tells us that
\[\frac{\mathrm d}{\mathrm dx}\left(x^2\vphantom{\Bigg|}\mathscr J_{2,j}\!\left(\frac{\pi^6\,d^4\,m^2\,x^2}{k^6}\right)\right)=-2x\mathscr J_{1,j}\!\left(\frac{\pi^6\,d^4\,m^2\,x^2}{k^6}\right),\]
and using this it is straightforward to check that, formally, the derivative of the right-hand side coincides with the right-hand side of the desired identity for $\widetilde A_{1,j}(x;h/k)$. From the analysis of the previous section it is clear that the above series for $\widetilde A_{2,j}(x;h/k)$ converges for any $x$. Thus, by standard arguments, it only remains to prove that the series for $\widetilde A_{1,j}(x;h/k)$ converges boundedly and uniformly when $x$ is restricted to a bounded interval.

We take arbitrary numbers $a,b\in\left[1,\infty\right[$ satisfying $a<b$ and $a\gg k^{3}$, and consider the partial sum
\begin{align*}
&\Sigma(a,b)\\
&:=kx\sum_{d\mid k}\sum_{a\leqslant m\leqslant b}\frac{A(d,m)}{dm}\left(S\!\left(\overline h,m;\frac kd\right)+\vphantom{\Bigg|}\left(-1\right)^jS\!\left(\overline h,-m;\frac kd\right)\right)\mathscr J_{1,j}\!\left(\frac{\pi^6\,d^4\,m^2\,x^2}{k^6}\right).
\end{align*}
By Lemma \ref{j-asymptotics}, we have          
\begin{align*}
&\mathscr J_{1,j}\!\left(\frac{\pi^6\,d^4\,m^2\,x^2}{k^6}\right)
=-\frac{1}{2\sqrt{3\pi}}\sum_\pm\exp\!\left(\pm\frac{6i\pi\,d^{2/3}\,m^{1/3}\,x^{1/3}}k\pm\frac{\pi i}2\left(1+j\right)\right)\\
&\qquad\qquad\qquad\qquad\qquad\qquad\qquad+O\!\left(\left(d^{2/3}\,m^{1/3}\,x^{1/3}\,k^{-1}\right)^{-1}\right).
\end{align*}
We may estimate the contribution of the $O$-term in $\Sigma(a,b)$ by
\begin{align*}
\ll_\eps kx\sum_{d\mid k}\sum_{a\leqslant m\leqslant b}\frac{\left|A(d,m)\right|}{dm}\left(\frac kd\right)^{1/2+\varepsilon}\left(d^{2/3}\,m^{1/3}\,x^{1/3}\,k^{-1}\right)^{-1}
\ll_\eps k^{5/2+\varepsilon}\,x^{2/3}\,a^{-1/3}.
\end{align*}
We also write in each term of $\Sigma(a,b)$
\[S\!\left(\overline h,m;\frac kd\right)+\left(-1\right)^jS\!\left(\overline h,-m;\frac kd\right)
=\sum_{\xi=1}^{k/d}\widehat S\!\left(\overline h,\xi;\frac kd\right)\left(e\!\left(\frac{m\xi d}k\right)\vphantom{\Bigg|}+\left(-1\right)^je\!\left(-\frac{m\xi d}k\right)\right).\]
Combining all these observations gives
\begin{align*}
&\Sigma(a,b)\\
&=-\frac{1}{2\sqrt{3\pi}}\sum_\pm e\!\left(\pm\frac{\pi i }2\left(1+j\right)\right)\sum_{d\mid k}\frac1d\sum_{\xi=1}^{k/d}\widehat S\!\left(\overline h,\xi;\frac kd\right)
kx\,\Sigma_\pm(a,b)+O(k^{5/2+\varepsilon}\,x^{2/3}\,a^{-1/3}),
\end{align*}
where
\[
\Sigma_\pm(a,b):=\sum_{a\leqslant m\leqslant b}\frac{A(d,m)}m\left(e\!\left(\frac{m\xi d}k\right)\vphantom{\Bigg|}+\left(-1\right)^je\!\left(-\frac{m\xi d}k\right)\right) e\left(\pm\frac{3\,d^{2/3}\,m^{1/3}\,x^{1/3}}k\right).
\]
To simplify the formulae below, we shall write, for each $\ell\in\left\{0,1\right\}$, and all $x\in\left[1,\infty\right[$,
\[A_{\ell,j,d}\!\left(x;\frac \xi k\right):=\sum_{m\leqslant x}A(d,m)\left(e\!\left(\frac{m\xi d}k\right)\vphantom{\Bigg|}+\left(-1\right)^je\!\left(-\frac{m\xi d}k\right)\right)\left(x-m\right)^\ell.\]
We start the estimation of $\Sigma_\pm(a,b)$ by summing by parts getting
\begin{align*}
\Sigma_\pm(a,b)
&=\left.\frac1t\,e\!\left(\pm\frac{3\,d^{2/3}\,t^{1/3}\,x^{1/3}}k\right)A_{0,j,d}\!\left(t;\frac\xi k\right)\right]_a^{t=b}\\
&\quad-\int\limits_a^bA_{0,j,d}\!\left(t;\frac\xi k\right)\left(-\frac1{t^2}\pm\frac{2\pi i\,d^{2/3}\,x^{1/3}}{k\,t^{5/3}}\right)e\!\left(\pm\frac{3\,d^{2/3}\,t^{1/3}\,x^{1/3}}k\right)\mathrm dt.
\end{align*}
By Lemma \ref{miller-improved}, the substitution term is $\ll_\eps d^{\vartheta+\varepsilon}\,a^{\varepsilon-1/4}$, as is the contribution from the term involving $1/t^2$, and it only remains to estimate the term involving $t^{-5/3}$. We do this by integration by parts to get
\begin{align*}
&\int\limits_a^bA_{0,j,d}\!\left(t;\frac\xi k\right)\frac{2\pi i\,d^{2/3}\,x^{1/3}}{k\,t^{5/3}}\,e\!\left(\pm\frac{3\,d^{2/3}\,t^{1/3}\,x^{1/3}}k\right)\mathrm dt\\
&=\left.A_{1,j,d}\!\left(t;\frac\xi k\right)\frac{2\pi i\,d^{2/3}\,x^{1/3}}{k\,t^{5/3}}\,e\!\left(\pm\frac{3\,d^{2/3}\,t^{1/3}\,x^{1/3}}k\right)\right]_a^{t=b}\\
&\qquad-\int\limits_a^bA_{1,j,d}\!\left(t;\frac\xi k\right)\left(-\frac{10\,\pi i\,d^{2/3}\,x^{1/3}}{3\,k\,t^{8/3}}\pm\frac{4\,\pi^2\,d^{4/3}\,x^{2/3}}{k^2\,t^{7/3}}\right)e\!\left(\pm\frac{d^{2/3}\,t^{1/3}\,x^{1/3}}k\right)\mathrm dt.
\end{align*}
By Lemma \ref{simple-a1bound2}, the substitution terms and the term involving $t^{-8/3}$ contribute
\[\ll_\eps d^{2/3+\vartheta+\varepsilon}\,x^{1/3}\,k^{1/4}\,a^{\varepsilon-11/24},\]
whereas the contribution from the term involving $t^{-7/3}$ is
\[\ll_\eps d^{4/3+\vartheta+\varepsilon}\,k^{-3/4}\,x^{2/3}\,a^{\varepsilon-1/8}.\]
Altogether, our estimate for $\Sigma_\pm(a,b)$ reads
\[\Sigma_\pm(a,b)\ll_\varepsilon d^{\vartheta+\varepsilon}\,a^{\varepsilon-1/4}+d^{2/3+\vartheta+\varepsilon}\,x^{1/3}\,k^{1/4}\,a^{\varepsilon-11/24}
+d^{4/3+\vartheta+\varepsilon}\,k^{-3/4}\,x^{2/3}\,a^{\varepsilon-1/8}.\]
Plugging this into our previous expression for $\Sigma(a,b)$ gives
\begin{align*}
\Sigma(a,b)&\ll_\varepsilon\sum_{d\mid k}\frac1d\sum_{\xi=1}^{k/d}\left|\widehat S\!\left(\overline h,\xi;\frac kd\right)\right|kx\\
&\qquad\qquad\times\left(d^{\vartheta+\varepsilon}\,a^{\varepsilon-1/4}+d^{2/3+\vartheta+\varepsilon}\,x^{1/3}\,k^{1/4}\,a^{\varepsilon-11/24}
+d^{4/3+\vartheta+\varepsilon}\,k^{-3/4}\,x^{2/3}\,a^{\varepsilon-1/8}\right)\\
&\qquad+k^{5/2+\varepsilon}\,x^{2/3}\,a^{-1/3}\\
&\ll_\varepsilon k^{2+\varepsilon}\,x\left(a^{\varepsilon-1/4}+x^{1/3}\,k^{1/4}\,a^{\varepsilon-11/24}
+k^{-3/4}\,x^{2/3}\,a^{\varepsilon-1/8}\right)\\
&\qquad+k^{5/2+\varepsilon}\,x^{2/3}\,a^{-1/3}\\
&\ll_\eps k^{2+\varepsilon}\,x\,a^{\varepsilon-1/4}
+k^{9/4+\varepsilon}\,x^{4/3}\,a^{\varepsilon-11/24}
+k^{5/4+\varepsilon}\,x^{5/3}\,a^{\varepsilon-1/8}
+k^{5/2+\varepsilon}\,x^{2/3}\,a^{-1/3},
\end{align*}
and we are done with the first assertion. For the truncated formula we observe that 
\[k^{2+\varepsilon}\,x\,a^{\varepsilon-1/4}
\ll_\eps k^{5/4+\varepsilon}\,x^{5/3}\,a^{\varepsilon-1/8},\]
and that
\[k^{5/2+\varepsilon}\,x^{2/3}\,a^{-1/3}\ll_\eps k^{5/4+\varepsilon}\,x^{5/3}\,a^{\varepsilon-1/8},\]
as well as
\[k^{9/4+\varepsilon}\,x^{4/3}\,a^{\varepsilon-11/24}\ll_\eps k^{5/4+\varepsilon}\,x^{5/3}\,a^{\varepsilon-1/8},\]
since $k\ll x^{1/3}$.
Thus we have obtained 
\[ 
\Sigma(a,b)\ll_\eps k^{5/4+\varepsilon}x^{5/3}a^{\eps-1/8},\]
which immediately gives the truncated identity. 
\end{proof}

\noindent From the above formula we get directly a better upper bound for $\widetilde A_{1,j}(x;h/k)$ compared to Lemma \ref{simple-a1bound}.
\begin{corollary}\label{better-a1bound}
Let $x\in\left[1,\infty\right[$, let $j\in\left\{0,1\right\}$, and let $h$ and $k$ be coprime integers with $1\leqslant k\ll x^{1/3}$. Then
\[\widetilde A_{1,j}\!\left(x;\frac hk\right)\ll_\varepsilon k^{3/2}\,x^{1+\varepsilon}.\]
Furthermore, if $d\in\mathbb Z_+$ and $d\mid k$, then also
\[\sum_{m\leqslant x}A\!\left(m,d\right)\left(e\!\left(\frac{mhd}k\right)\vphantom{\Bigg|}+\left(-1\right)^je\!\left(-\frac{mhd}k\right)\right)\left(x-m\right)\ll_\varepsilon d^{\vartheta+\varepsilon}\,k^{3/2}\,x^{1+\varepsilon}.\]
\end{corollary}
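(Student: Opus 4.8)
The plan is to feed the truncated formula from Proposition \ref{riesz-identity-a1} into the trivial estimation of its main term, optimising the truncation parameter $N$. First I would take the second (truncated) identity in Proposition \ref{riesz-identity-a1}, with a parameter $N\gg k^3$ to be chosen. The main term is estimated by absolute values: using Lemma \ref{j-asymptotics} (the $\mathscr J_{1,j}$-asymptotics, which here gives $\mathscr J_{1,j}(y)\ll 1$ uniformly for $y$ bounded away from $0$, and more precisely decays), Weil's bound for the Kloosterman sums in the form $S(\overline h,\pm m;k/d)\ll (k/d)^{1/2}\,d(k/d)$, and the Rankin--Selberg average bound $\sum_{m\leqslant N}|A(d,m)|\ll_\eps d^{\vartheta+\eps}N$ from \eqref{Average-estimates} combined with partial summation to handle the $1/m$ weight, one finds that the main term is
\[\ll_\eps kx\sum_{d\mid k}\frac1d\left(\frac kd\right)^{1/2}d(k)\,d^{\vartheta+\eps}\sum_{m\leqslant N}\frac{|A(d,m)|}{m\,d^{\vartheta}}\cdot(\text{size of }\mathscr J_{1,j})\ll_\eps k^{3/2+\eps}\,x\,(\text{a power of }N).\]
Keeping track of the $\mathscr J_{1,j}(\pi^6 d^4m^2x^2/k^6)\ll (d^{2/3}m^{1/3}x^{1/3}k^{-1})^{0}$ bound — in fact one should use the full decay $\mathscr J_{1,j}(y)\ll y^{0}$ near the transition and $\ll y^{-1/6}$ beyond, but near the relevant range $y\asymp d^4m^2x^2/k^6\geqslant 1$ the bound $\mathscr J_{1,j}\ll 1$ suffices — gives a main-term bound of the shape $k^{3/2+\eps}x^{1+\eps}\cdot N^{\eps}$ provided one is slightly careful, or at worst $k^{3/2+\eps}x^{1+\eps}N^{\eps}$ after the $1/m$ weight makes $\sum_{m\leqslant N}|A(d,m)|/m \ll_\eps d^{\vartheta+\eps}N^{\eps}$.

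Then I would add the error term $\ll_\eps k^{5/4+\eps}x^{5/3}N^{\eps-1/8}$ from Proposition \ref{riesz-identity-a1} and choose $N$ to balance, or simply large enough. Taking $N$ a suitable fixed power of $k^{3}x$ (for instance $N\asymp x^{6}$, which certainly satisfies $N\gg k^{3}$ since $k\ll x^{1/3}$) makes the error term $\ll_\eps k^{5/4+\eps}x^{5/3}x^{-3/4+\eps}=k^{5/4+\eps}x^{11/12+\eps}\ll_\eps k^{3/2}x^{1+\eps}$, while the main term contributes $\ll_\eps k^{3/2+\eps}x^{1+\eps}$ since all surviving $N$-dependence is a harmless $N^\eps=x^{O(\eps)}$. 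This yields the first assertion $\widetilde A_{1,j}(x;h/k)\ll_\eps k^{3/2}x^{1+\eps}$.

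For the second assertion I would repeat verbatim the Möbius-decomposition argument from the proof of Lemma \ref{simple-a1bound2}: writing $A(m,d)=\sum_{\ell\mid(d,m)}\mu(\ell)A(1,d/\ell)A(m/\ell,1)$, the sum in question becomes $\sum_{\ell\mid d}\mu(\ell)A(1,d/\ell)\,\ell\,\bigl(\widetilde A_{1,j}(x/\ell;\ell h d/k)+O(\cdots)\bigr)$, and now applying the \emph{improved} bound just proved in place of Lemma \ref{simple-a1bound}, together with $|A(1,d/\ell)|\ll_\eps (d/\ell)^{\vartheta+\eps}$ and $k/\ell\ll(x/\ell)^{1/3}$, gives a total contribution $\ll_\eps d^{\vartheta+\eps}k^{3/2}x^{1+\eps}$ after checking, as in Lemma \ref{simple-a1bound2}, that the trivial error terms $k^{3/2+\eps}x$ and $k^{9/2+\eps}$ are dominated by $k^{3/2}x^{1+\eps}$ when $k\ll x^{1/3}$. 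The main obstacle is purely bookkeeping: making sure the trivial estimation of the Voronoi main term, with the Kloosterman sums, the $d\mid k$ divisor sum, the $1/m$ weight against $\sum|A(d,m)|$, and the size of $\mathscr J_{1,j}$ all handled together, genuinely collapses to $k^{3/2+\eps}x^{1+\eps}$ and not something larger — in particular one must exploit that the $d$-sum $\sum_{d\mid k}d^{\vartheta-\text{something}}$ converges (is $\ll_\eps k^\eps$), exactly as in the proof of Corollary \ref{a2bound}.
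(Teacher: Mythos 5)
Your proposal is correct and follows essentially the same route as the paper: truncate via Proposition \ref{riesz-identity-a1}, bound the dual sum trivially using Lemma \ref{j-asymptotics}, Weil's bound, and partial summation with \eqref{Average-estimates}, then choose $N$ large enough; and for the second assertion, rerun the M\"obius-decomposition from Lemma \ref{simple-a1bound2} with the newly established bound in place of Lemma \ref{simple-a1bound}. The only cosmetic difference is the truncation parameter: you take $N\asymp x^6$ (overkill, error $\ll k^{5/4+\eps}x^{11/12+\eps}$), whereas the paper balances the error against the main term with $N=x^{16/3}k^{-2}$; both choices satisfy $N\gg k^3$ and give the stated bound. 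One tiny inaccuracy, which does not affect your argument: for $a=1$ the leading term of $\mathscr J_{1,j}$ is $y^{(1-a)/6}=y^0$, so it is bounded but does not decay — your parenthetical ``and more precisely decays'' overstates Lemma \ref{j-asymptotics}, though you only use $\mathscr J_{1,j}\ll 1$ anyway.
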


\begin{proof}
For the first estimate we employ the second identity of Proposition \ref{riesz-identity-a1} with the choice $N=\lfloor x^{16/3}k^{-2}\rfloor$, estimating all the terms by absolute values and using Lemma \ref{j-asymptotics}. The second estimate is proved in exactly the same way as Lemma \ref{simple-a1bound2}.
\end{proof}

\noindent We can now feed this back into the estimation of $\Sigma(a,b)$ in the proof of Proposition \ref{riesz-identity-a1}, where we used the weaker estimate from Lemma \ref{simple-a1bound2} (which in turn was based on Lemma \ref{simple-a1bound}). The resulting estimate for $\Sigma_\pm(a,b)$ is
\[\Sigma_\pm(a,b)\ll_\eps d^{\vartheta+\varepsilon}\,a^{\varepsilon-1/4}+d^{2/3+\vartheta+\varepsilon}\,k^{1/2}\,x^{1/3}\,a^{\varepsilon-2/3}+d^{4/3+\vartheta+\varepsilon}\,k^{-1/2}\,x^{2/3}\,a^{\varepsilon-1/3},\]
and the end result is
\begin{align*}
\Sigma(a,b)&\ll_\eps k^{2+\varepsilon}\,x\,a^{\varepsilon-1/4}+k^{5/2+\varepsilon}\,x^{4/3}\,a^{\varepsilon-2/3}+k^{3/2+\varepsilon}\,x^{5/3}\,a^{\varepsilon-1/3}+k^{5/2+\varepsilon}\,x^{2/3}\,a^{-1/3}\\
&\ll_\eps
 k^{2+\varepsilon}\,x\,a^{\varepsilon-1/4}
 +k^{3/2+\varepsilon}\,x^{5/3}\,a^{\varepsilon-1/3}.
\end{align*}
Thus we have obtained the following. 

\begin{corollary}\label{improved-A_1-voronoi}
Let $x\in\left[1,\infty\right[$, $j\in\left\{0,1\right\}$, let $h$ and $k$ be positive integers with $1\leqslant k\ll x^{1/3}$, and let $N\in\left[1,\infty\right[$ with $N\gg k^3$. Then
\begin{align*}
&\widetilde A_{1,j}\!\left(x;\frac hk\right)
=i^{-j}\,2^{-1}\,\pi^{-3/2}\,kx\\&\times\sum_{d\mid k}\sum_{m\leqslant N}\frac{A(d,m)}{dm}\left(S\!\left(\overline h,m;\frac kd\right)\vphantom{\Bigg|}+\left(-1\right)^jS\!\left(\overline h,-m;\frac kd\right)\right)\mathscr J_{1,j}\!\left(\frac{\pi^6\,d^4\,m^2\,x^2}{k^6}\right)+\text{error},
\end{align*}
where the error is
\[\ll_\eps k^{2+\varepsilon}\,x\,N^{\varepsilon-1/4}
 +k^{3/2+\varepsilon}\,x^{5/3}\,N^{\varepsilon-1/3}.
\]
\end{corollary}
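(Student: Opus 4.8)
The statement is essentially what one obtains by re-running the tail estimate from the proof of Proposition~\ref{riesz-identity-a1}, but feeding into it the stronger pointwise bound for $\widetilde A_{1,j}$ furnished by Corollary~\ref{better-a1bound} in place of the preliminary Lemma~\ref{simple-a1bound2}. The plan is as follows. I fix $a,b\in[1,\infty[$ with $a<b$ and $a\gg k^3$ (so that $k\ll t^{1/3}$ for every $t\in[a,b]$) and consider the partial sum $\Sigma(a,b)$ exactly as in that proof. Inserting the asymptotic expansion of $\mathscr J_{1,j}$ from Lemma~\ref{j-asymptotics}, whose $O$-term contributes $\ll k^{5/2+\varepsilon}x^{2/3}a^{-1/3}$ to $\Sigma(a,b)$, and Fourier-expanding the Kloosterman sums through Lemma~\ref{kloosterman-fourier}, I reduce the remaining main piece to the one-dimensional sums $\Sigma_\pm(a,b)$. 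These I evaluate by one summation by parts, the resulting terms being controlled by Lemma~\ref{miller-improved}, followed by one integration by parts, which produces terms involving the Riesz-weighted sum $A_{1,j,d}(t;\xi/k)$; at this point, instead of Lemma~\ref{simple-a1bound2} I invoke the bound $A_{1,j,d}(t;\xi/k)\ll_\varepsilon d^{\vartheta+\varepsilon}k^{3/2}t^{1+\varepsilon}$, which follows from the argument behind the second part of Corollary~\ref{better-a1bound}.

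Carrying this through, the contribution left over after the first summation by parts is $\ll_\varepsilon d^{\vartheta+\varepsilon}a^{\varepsilon-1/4}$, the substitution terms together with the $t^{-8/3}$-term from the second integration by parts are $\ll_\varepsilon d^{2/3+\vartheta+\varepsilon}k^{1/2}x^{1/3}a^{\varepsilon-2/3}$, and the $t^{-7/3}$-term is $\ll_\varepsilon d^{4/3+\vartheta+\varepsilon}k^{-1/2}x^{2/3}a^{\varepsilon-1/3}$, so that
\begin{align*}
\Sigma_\pm(a,b)\ll_\varepsilon{}&d^{\vartheta+\varepsilon}a^{\varepsilon-1/4}+d^{2/3+\vartheta+\varepsilon}k^{1/2}x^{1/3}a^{\varepsilon-2/3}\\
&+d^{4/3+\vartheta+\varepsilon}k^{-1/2}x^{2/3}a^{\varepsilon-1/3}.
\end{align*}
Summing over $d\mid k$ and $1\leqslant\xi\leqslant k/d$ against the weights $d^{-1}\,|\widehat S(\overline h,\xi;k/d)|$, using $\sum_{d\mid k}d^{-1}\sum_{\xi}|\widehat S(\overline h,\xi;k/d)|\ll k^{1+\varepsilon}$ from Lemma~\ref{kloosterman-fourier} (the extra powers of $d$ being harmless since $\vartheta<2/3$ keeps the resulting divisor sums over $d$ of size $\ll k^{\varepsilon}$), and multiplying by the prefactor $kx$, I arrive at
\begin{align*}
\Sigma(a,b)\ll_\varepsilon{}&k^{2+\varepsilon}x\,a^{\varepsilon-1/4}+k^{5/2+\varepsilon}x^{4/3}a^{\varepsilon-2/3}\\
&+k^{3/2+\varepsilon}x^{5/3}a^{\varepsilon-1/3}+k^{5/2+\varepsilon}x^{2/3}a^{-1/3}.
\end{align*}

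Since $k\ll x^{1/3}$ and $a\geqslant1$, the second and fourth terms are dominated by the third, leaving $\Sigma(a,b)\ll_\varepsilon k^{2+\varepsilon}x\,a^{\varepsilon-1/4}+k^{3/2+\varepsilon}x^{5/3}a^{\varepsilon-1/3}$. Taking $a=N$ and letting $b\to\infty$ then identifies this with a bound for the tail $\sum_{m>N}$ of the Voronoi series in the first part of Proposition~\ref{riesz-identity-a1}, so the truncation to $m\leqslant N$ is the asserted main term and this tail estimate is the asserted error. I do not expect a genuine obstacle here: the substantive input, Corollary~\ref{better-a1bound} --- itself obtained by bootstrapping through Proposition~\ref{riesz-identity-a1} --- is already in hand, and what remains is careful bookkeeping of the powers of $k$, $x$ and $a$. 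The one point deserving care is that $\xi$ need not be coprime to $k/d$; as in the proof of Proposition~\ref{riesz-identity-a1} one then passes to the reduced fraction, of smaller denominator, which only sharpens the estimates. Finally, it is worth noting that it is precisely the $t^{-7/3}$-term --- the one now fed the improved bound --- that turns the $N^{\varepsilon-1/8}$ of Proposition~\ref{riesz-identity-a1} into the present $N^{\varepsilon-1/3}$, alongside the $N^{\varepsilon-1/4}$ coming from the first summation by parts.
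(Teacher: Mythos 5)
Your proposal is correct and follows essentially the same route as the paper's own proof: Corollary~\ref{improved-A_1-voronoi} is obtained by re-running the estimation of $\Sigma(a,b)$ from the proof of Proposition~\ref{riesz-identity-a1}, replacing the preliminary bound from Lemma~\ref{simple-a1bound2} with the improved one supplied by Corollary~\ref{better-a1bound}, and your intermediate bounds for $\Sigma_\pm(a,b)$ and $\Sigma(a,b)$, as well as the simplification to the two-term error using $k\ll x^{1/3}$, match the paper's computation exactly.
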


\noindent Proposition \ref{riesz-identity-a1} and Corollary \ref{improved-A_1-voronoi} yield the following result when combined with averaging over $j\in\left\{0,1\right\}$ and Lemma~\ref{j-asymptotics}.
\begin{corollary}\label{sharpcutoff-voronoi}
Let $x\in\left[1,\infty\right[$, $j\in\left\{0,1\right\}$, and let $h$ and $k$ be positive integers with $1\leqslant k\ll x^{1/3}$. Then
\begin{align*}   
\widetilde A_{1}\!\left(x;\frac hk\right)
&=\frac{k\,x}{4\,\pi^2\,\sqrt3}\sum_{d\mid k}\frac1{d}\sum_{m=1}^\infty\frac{A(d,m)}{m}\sum_\pm \left(\mp i\right)\,S\!\left(\overline h,\pm m;\frac kd\right)e\!\left(\pm\frac{3\,d^{2/3}\,m^{1/3}\,x^{1/3}}{k}\right)\\
&\qquad\qquad\qquad+O_\eps\left(k^{5/2+\eps}\,x^{2/3}\right).
\end{align*}
Here the infinite series converges both boundedly and uniformly when $x$ is restricted to a compact interval in $\mathbb R_+$. Furthermore, if $N\in\left[1,\infty\right[$, and $N\ll k^{-3}\,x^3$, then
\begin{align*}
\widetilde A_{1}\!\left(x;\frac hk\right)
&=\frac{k\,x}{4\,\pi^2\,\sqrt3}\sum_{d\mid k}\frac1{d}\sum_{m\leqslant N}\frac{A(d,m)}{m}\sum_\pm \left(\mp i\right)\,S\!\left(\overline h,\pm m;\frac kd\right)e\!\left(\pm\frac{3\,d^{2/3}\,m^{1/3}\,x^{1/3}}{k}\right)\\
&\qquad\qquad\qquad+O_\eps\left(k^{3/2+\eps}\,x^{5/3+\eps}N^{\eps-1/3}\right).
\end{align*}
\end{corollary}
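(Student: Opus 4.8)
The plan is to insert the $a=1$ case of Lemma~\ref{j-asymptotics} into the two exact Voronoi expansions furnished by Proposition~\ref{riesz-identity-a1} (one for $j=0$, one for $j=1$) and then to average using $\widetilde A_1=\tfrac12(\widetilde A_{1,0}+\widetilde A_{1,1})$. Put $y=y_{d,m}:=\pi^{6}d^{4}m^{2}x^{2}k^{-6}$, so that $y^{1/6}=\pi d^{2/3}m^{1/3}x^{1/3}k^{-1}$; Lemma~\ref{j-asymptotics} then gives $\mathscr J_{1,j}(y)=-\tfrac1{\sqrt{3\pi}}\cos\!\bigl(6y^{1/6}+\tfrac{\pi}{2}(1+j)\bigr)+O(y^{-1/6})$, and expanding the cosine as $\tfrac12\sum_{\pm}i^{\pm(1+j)}e\!\bigl(\pm 3d^{2/3}m^{1/3}x^{1/3}k^{-1}\bigr)$ converts the main part of each summand into exactly the kind of term displayed in the statement. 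Thus the work splits into (i) bounding the contribution of the $O(y^{-1/6})$ remainders, summed over $d\mid k$ and $m\ge1$, and (ii) the bookkeeping of the powers of $i$ under the $j$-average.

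For (i), the general term of the remainder series is $\ll kx\cdot d^{-1}m^{-1}|A(d,m)|\cdot|S(\overline h,\pm m;k/d)|\cdot y_{d,m}^{-1/6}$. Since $\gcd(\overline h,k/d)=1$, Weil's bound gives $|S(\overline h,\pm m;k/d)|\ll (k/d)^{1/2}d(k/d)$ with no gcd factor, while $y_{d,m}^{-1/6}\ll k d^{-2/3}m^{-1/3}x^{-1/3}$, so the series is $\ll k^{5/2}x^{2/3}\sum_{d\mid k}d(k/d)\,d^{-13/6}\sum_{m}|A(d,m)|\,m^{-4/3}$. By partial summation against $\sum_{m\le t}|A(d,m)|\ll_{\varepsilon}d^{\vartheta+\varepsilon}t$ from (\ref{Average-estimates}) the inner sum is $\ll_{\varepsilon}d^{\vartheta+\varepsilon}$, and the outer sum converges since $13/6-\vartheta>1$; altogether the remainder contributes $\ll k^{5/2}x^{2/3}$ (the divisor factor $d(k)\ll x^{\varepsilon}$ being absorbed, and a harmless constant when $k$ is prime). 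The same computation restricted to $m>N$ shows that truncating the $\mathscr J_{1,j}$-series at $N$ costs no more than $O(k^{5/2}x^{2/3})$. A useful by-product: since the $\mathscr J_{1,j}$-series converges boundedly and uniformly on compacta (Proposition~\ref{riesz-identity-a1}) and the remainder series converges absolutely, the oscillatory series obtained after expanding the cosines converges boundedly and uniformly on compacta as well, which legitimises the termwise manipulations.

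For (ii), fix $d,m$ and write $\phi=3d^{2/3}m^{1/3}x^{1/3}k^{-1}$. The prefactor $i^{-j}$ of Proposition~\ref{riesz-identity-a1}, the factor $\bigl(S(\overline h,m;k/d)+(-1)^{j}S(\overline h,-m;k/d)\bigr)$, and the factor $i^{\pm(1+j)}$ from the expanded cosine combine, using $i^{-j}i^{1+j}=i$ and $i^{-j}i^{-(1+j)}=-i(-1)^{j}$, into $\bigl(S(\overline h,m;k/d)+(-1)^{j}S(\overline h,-m;k/d)\bigr)\bigl(i\,e(\phi)-i(-1)^{j}e(-\phi)\bigr)$. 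Expanding this product and averaging over $j\in\{0,1\}$ annihilates the two terms carrying an odd power of $(-1)^{j}$ and leaves $i\,S(\overline h,m;k/d)e(\phi)-i\,S(\overline h,-m;k/d)e(-\phi)=-\sum_{\pm}(\mp i)\,S(\overline h,\pm m;k/d)\,e(\pm\phi)$. Collecting the numerical constants — $2^{-1}\pi^{-3/2}$ from Proposition~\ref{riesz-identity-a1}, $-1/\sqrt{3\pi}$ and $1/2$ from the cosine expansion, $1/2$ from the average, and $\pi^{3/2}\sqrt{3\pi}=\sqrt3\,\pi^{2}$ — the two sign changes cancel and one obtains precisely $\tfrac{kx}{4\pi^{2}\sqrt3}\sum_{d\mid k}d^{-1}\sum_{m}A(d,m)m^{-1}\sum_{\pm}(\mp i)S(\overline h,\pm m;k/d)e(\pm\phi)$, as claimed. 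For the truncated assertion one runs the same argument starting from Corollary~\ref{improved-A_1-voronoi} in place of Proposition~\ref{riesz-identity-a1}; note $k\ll x^{1/3}$ together with $N\gg k^{-3}x^{3}$ forces $N\gg k^{3}$, so that corollary applies, and inserting $N\gg k^{-3}x^{3}$ its error $\ll_{\varepsilon}k^{2+\varepsilon}xN^{\varepsilon-1/4}+k^{3/2+\varepsilon}x^{5/3}N^{\varepsilon-1/3}$ is $\ll k^{5/2}x^{2/3}$ after the elementary check $k^{11/4}x^{1/4}\ll k^{5/2}x^{2/3}$, i.e.\ $k\ll x^{5/3}$.

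I expect the difficulty to be entirely a matter of care rather than of ideas: getting the powers of $i$ and the cancellation in the $j$-average to land exactly on $\sum_{\pm}(\mp i)S(\overline h,\pm m;k/d)e(\pm\phi)$ (a sign slip here is easy to miss), and making sure the $O(y^{-1/6})$-series is genuinely absolutely convergent — which works only because for $a=1$ the $m$-weight decays like $m^{-1}\cdot m^{-1/3}=m^{-4/3}$, just fast enough to beat $\sum_{m\le t}|A(d,m)|\ll d^{\vartheta+\varepsilon}t$, and because the gcd-free Weil bound is available thanks to $\gcd(\overline h,k/d)=1$.
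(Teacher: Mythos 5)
Your proposal is correct and unpacks exactly what the paper's one-line proof (``Proposition~\ref{riesz-identity-a1} and Corollary~\ref{improved-A_1-voronoi}\dots combined with averaging over $j\in\{0,1\}$ and Lemma~\ref{j-asymptotics}'') intends; the sign bookkeeping ($i^{-j}i^{1+j}=i$, $i^{-j}i^{-(1+j)}=-i(-1)^j$, annihilation of the $(-1)^j$-terms on averaging, and the recombination into $\sum_\pm(\mp i)S(\overline h,\pm m;k/d)e(\pm\phi)$) is verified correctly, as are the constants and the verification that the $O(y^{-1/6})$-remainder series is dominated by $k^{5/2}x^{2/3}$ via Weil's gcd-free bound and partial summation against~(\ref{Average-estimates}). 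One small remark: as you observe, the argument actually produces an extra divisor factor $d(k)$ (or $k^\varepsilon$) in the error, just as in Corollary~\ref{approximate-voronoi-for-large-a}; the paper's stated bound $O(k^{5/2}x^{2/3})$ silently drops this, so strictly the statement should read $O_\varepsilon(k^{5/2+\varepsilon}x^{2/3})$ unless $k$ is assumed prime — this is a minor imprecision in the paper rather than a gap in your proof.
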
    

\section{Pointwise $\Omega$-result for long sums on $\mathrm{GL}_3$}

\noindent Let us first gather some auxiliary results which will be useful in what follows. We start with results concerning Kloosterman sums.
\begin{lemma}\label{Kloosterman-correlation}
Let $k$ be a prime. Suppose also that $m$ and $n$ are integers. Then
\[\sum_{a\in\mathbb Z_k^\times}S(a,m;k)=\begin{cases}1&\text{if $m\not\equiv0\pmod k$}\\
-k+1&\text{if $m\equiv0\pmod k$}
\end{cases}\]
as well as
\begin{align*}
\Sigma(m,n,k):=\sum_{a\in\mathbb Z_k^\times} S(a,m;k)\,S(a,n;k)=\begin{cases}
k^2-k-1&\text{if $m\equiv n\not\equiv0\pmod k$}\\
k-1&\text{if $m\equiv n\equiv0\pmod k$}\\
-1&\text{if $m\equiv0$ and $n\not\equiv0\pmod k$}\\
-1&\text{if $m\not\equiv0$ and $n\equiv0\pmod k$}\\
-k-1&\text{if $0\not\equiv m\not\equiv n\not\equiv0\pmod k$}
\end{cases}    
\end{align*}
\end{lemma}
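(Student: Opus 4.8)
The plan is to evaluate both sums explicitly by opening the Kloosterman sums and using the orthogonality of additive characters modulo the prime $k$. Recall the basic identity $S(a,m;k)=\sum_{x\in\mathbb Z_k^\times}e\!\left(\frac{ax+m\overline x}k\right)$, where $\overline x$ is the inverse of $x$ modulo $k$. For the first claim I would write
\[
\sum_{a\in\mathbb Z_k^\times}S(a,m;k)=\sum_{x\in\mathbb Z_k^\times}e\!\left(\frac{m\overline x}k\right)\sum_{a\in\mathbb Z_k^\times}e\!\left(\frac{ax}k\right),
\]
and since $x$ runs over $\mathbb Z_k^\times$ the inner sum equals $\sum_{a\in\mathbb Z_k^\times}e(a/k)=-1$ regardless of $x$ (as $k$ is prime and $x\not\equiv0$). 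Hence the whole expression is $-\sum_{x\in\mathbb Z_k^\times}e\!\left(\frac{m\overline x}k\right)$, and as $\overline x$ also ranges over $\mathbb Z_k^\times$ this is $-(-1)=1$ when $m\not\equiv0\pmod k$ and $-(k-1)=-k+1$ when $m\equiv0\pmod k$. This matches the stated formula.

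For the correlation sum $\Sigma(m,n,k)$ I would proceed similarly by opening both Kloosterman sums:
\[
\Sigma(m,n,k)=\sum_{x,y\in\mathbb Z_k^\times}e\!\left(\frac{m\overline x+n\overline y}k\right)\sum_{a\in\mathbb Z_k^\times}e\!\left(\frac{a(x+y)}k\right).
\]
The inner sum over $a$ is $k-1$ if $x\equiv-y\pmod k$ and $-1$ otherwise. Splitting accordingly gives
\[
\Sigma(m,n,k)=k\sum_{\substack{x,y\in\mathbb Z_k^\times\\x\equiv-y}}e\!\left(\frac{m\overline x+n\overline y}k\right)-\sum_{x,y\in\mathbb Z_k^\times}e\!\left(\frac{m\overline x+n\overline y}k\right).
\]
The second double sum factors as the product of two single character sums, each equal to $-1$ if the relevant residue is nonzero and $k-1$ if it is zero. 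In the first sum the constraint $y\equiv-x$ forces $\overline y\equiv-\overline x$, so the exponential becomes $e\!\left(\frac{(m-n)\overline x}k\right)$ and the sum over $x\in\mathbb Z_k^\times$ is $k-1$ if $m\equiv n\pmod k$ and $-1$ otherwise.

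Assembling the pieces is then a short case check. Writing $\epsilon(t)=k-1$ if $t\equiv0\pmod k$ and $\epsilon(t)=-1$ otherwise, one gets
\[
\Sigma(m,n,k)=k\bigl((k-1)\,1_{m\equiv n}-1_{m\not\equiv n}\bigr)-\epsilon(m)\epsilon(n),
\]
which I would expand in the five cases of the statement: $m\equiv n\not\equiv0$ gives $k(k-1)-1=k^2-k-1$; $m\equiv n\equiv0$ gives $k(k-1)-(k-1)^2=k-1$; $m\equiv0,n\not\equiv0$ gives $-k-(k-1)(-1)=-1$, and symmetrically for the fourth case; and $0\not\equiv m\not\equiv n\not\equiv0$ gives $-k-(-1)(-1)=-k-1$. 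This reproduces the claimed table. There is no serious obstacle here — the only point requiring a little care is the bookkeeping of the case distinctions and the elementary fact that inversion permutes $\mathbb Z_k^\times$, so that the sums over $\overline x$ may be replaced by sums over $x$; I would state that explicitly before the case analysis.
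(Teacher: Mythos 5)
Your proof is correct, and it uses the same method the paper alludes to: expand the Kloosterman sums from their definitions and use orthogonality of additive characters modulo the prime $k$ (i.e., the resulting geometric series). The paper's own proof is just a short remark citing Iwaniec for the leading cases and declaring the rest straightforward, whereas you carry out the full computation, including the clean closed form $\Sigma(m,n,k)=k\,\epsilon(m-n)-\epsilon(m)\,\epsilon(n)$ from which all five cases follow at once; that is a nice unification worth keeping.
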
 

\begin{proof}
The first cases of both identities are treated in Chapter 4 of \cite{Iwaniec}. The rest are then fairly straightforward to prove simply by expanding the Kloosterman sums in terms of their definitions and taking advantage of the resulting geometric series.
\end{proof}

\noindent We will use the first derivative test repeatedly and record one of its formulations here. Results such as this are discussed for instance in Section 5.1 of \cite{Huxley}.
\begin{lemma}\label{Lemma-Huxley}
Let $a,b\in\mathbb R$ with $a<b$, let $\lambda\in\mathbb R_+$, and let $f$ be a real-valued continuously differentiable function on $\left]a,b\right[$ such that $\left|f'(x)\right|\geqslant\lambda$ for $x\in\left]a,b\right[$. Also, let $g$ be a complex-valued continuously differentiable function on the interval $\left[a,b\right]$, and let $G\in\mathbb R_+$ be such that $g(x)\ll G$ for $x\in\left[a,b\right]$. Then
\begin{align*}
\int\limits_a^bg(x)\,e\!\left(f(x)\right)\,\mathrm dx\ll\frac G{\lambda}+\frac1{\lambda}\int\limits_a^b\left|g'(x)\right|\mathrm d x. 
\end{align*}
\end{lemma}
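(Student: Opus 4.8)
The plan is the standard integration-by-parts argument underlying the first derivative test. Since $f$ is continuously differentiable on $\left]a,b\right[$ and $\left|f'(x)\right|\geqslant\lambda>0$ there, the function $f'$ is continuous and never vanishes, hence has constant sign on $\left]a,b\right[$; in particular $1/f'$ is a well-defined continuous function with $\left|1/f'\right|\leqslant1/\lambda$, and $f$ is strictly monotone. The key pointwise identity is
\[
e(f(x))=\frac1{2\pi i\,f'(x)}\,\frac{\mathrm d}{\mathrm dx}\,e(f(x)),
\]
which I would substitute into the integral and then integrate by parts.

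This yields
\[
\int\limits_a^b g(x)\,e(f(x))\,\mathrm dx=\left[\frac{g(x)\,e(f(x))}{2\pi i\,f'(x)}\right]_a^b-\frac1{2\pi i}\int\limits_a^b\frac{\mathrm d}{\mathrm dx}\!\left(\frac{g(x)}{f'(x)}\right)e(f(x))\,\mathrm dx.
\]
The boundary term is $\ll G/\lambda$ directly from $\left|g\right|\ll G$ and $\left|f'\right|\geqslant\lambda$. For the remaining integral I would bring absolute values inside and expand the derivative as $g'/f'-g\,f''/(f')^2$: the first piece contributes $\ll\frac1\lambda\int_a^b\left|g'(x)\right|\mathrm dx$, which is exactly the second term in the claimed estimate, while the second piece is bounded by $G\int_a^b\left|(1/f')'(x)\right|\mathrm dx$, i.e.\ $G$ times the total variation of $1/f'$ on $\left[a,b\right]$.

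The only real obstacle is to absorb this last contribution into the term $\ll G/\lambda$. This is precisely the point of the first derivative test: in the relevant range $f'$ is monotone (this is the setting in which the lemma is applied, and in which it is usually phrased, cf.\ \cite[\S5.1]{Huxley}), so $1/f'$ is monotone as well and its total variation equals $\left|1/f'(b)-1/f'(a)\right|\leqslant1/\lambda$, giving the bound $\ll G/\lambda$. Combining the three contributions gives the asserted inequality. (If one prefers to avoid any assumption on $f'$ beyond continuity, one can instead first substitute $u=f(x)$, using strict monotonicity of $f$, and integrate by parts against the bounded antiderivative $\int^x e(u)\,\mathrm du$; this reorganises the same estimate without changing the outcome.)
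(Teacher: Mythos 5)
The paper does not actually prove Lemma~\ref{Lemma-Huxley}; it only points the reader to Section~5.1 of \cite{Huxley}, so yours is the only written-out argument to assess. Your main line --- integration by parts against $e(f)/(2\pi i f')$ --- is the standard one, and you correctly put your finger on where all the work is: bounding the total variation of $1/f'$, which requires $f'$ to be monotone. That hypothesis is in fact missing from the lemma as stated in the paper (Huxley's formulation includes it, and every application of the lemma in the paper satisfies it), so you were right to flag it as an implicit assumption. The only small wrinkle in your main route is that expanding $(g/f')'=g'/f'-gf''/(f')^2$ tacitly assumes $f''$ exists, which mere continuity of $f'$ does not supply; since you immediately pass to the total variation of $1/f'$, this is repaired by integrating by parts in the Riemann--Stieltjes sense against $\mathrm d(1/f')$, and it does not change the conclusion.

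Your closing parenthetical, however, is wrong and should be deleted. Substituting $u=f(x)$ and integrating against a bounded antiderivative of $e(u)$ does \emph{not} ``avoid any assumption on $f'$ beyond continuity'': after the substitution the integrand is $h(u)\,e(u)$ with $h=(g/f')\circ f^{-1}$, and the subsequent integration by parts reintroduces $\int|\mathrm dh|$, whose pull-back to $[a,b]$ is precisely $\int_a^b|\mathrm d(g/f')|\leqslant\frac1\lambda\int_a^b|g'|+G\cdot\mathrm{Var}_{[a,b]}(1/f')$; the last term is $\leqslant G/\lambda$ only when $1/f'$ is monotone. Without monotonicity the estimate is in fact false. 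For instance, take $\psi(u)=(2+\sin 2\pi u)/(3\lambda)$ and let $f$ be the smooth solution of $f'(x)\,\psi(f(x))=1$ with $f(a)=0$; then $\lambda\leqslant f'\leqslant 3\lambda$, and taking $g\equiv1$ gives
\begin{align*}
\int\limits_a^b e\!\left(f(x)\right)\mathrm dx
=\int\limits_0^{f(b)}\psi(u)\,e(u)\,\mathrm du
=\frac{i\,f(b)}{6\lambda}+O\!\left(\frac1\lambda\right),
\end{align*}
whose modulus is $\gg f(b)/\lambda\geqslant b-a$, not $O(G/\lambda)=O(1/\lambda)$. The substitution merely relocates where monotonicity of $1/f'$ is used; it does not eliminate the hypothesis.
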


\noindent Now we are in a position to prove an asymptotic formula for the averaged mean square of the long second order Riesz sums.
\begin{theorem}\label{A_2-meansquare}
Suppose that $X\in\left[1,\infty\right[$ and that $k$ is a prime such that $k\ll X^{1/3-\delta}$ for any sufficiently small fixed $\delta>0$. Then we have
\begin{align*}
\mathop{\text{\LARGE$\E$}}_{h\in\mathbb Z_k^\times}\int\limits_X^{2X}\left|\widetilde A_2\left(x;\frac hk\right)\right|^2\,\mathrm dx=B(k)\,k^5\,X^{13/3}+O\left(k^{6}\,X^4\right),
\end{align*}
where $B(k)\in\mathbb R_+$ and $B(k)\asymp1$. 

In particular, if $x\in\left[1,\infty\right[$ and $k\in\mathbb Z_+$ is a prime so that $k\ll x^{1/3-\delta}$ for any sufficiently small fixed $\delta>0$, then we have
\[\max_{h\in\mathbb Z_k^\times}\left|\widetilde A_2\!\left(x;\frac hk\right)\right|=\Omega\left(k^{5/2}\,x^{5/3}\right).\]
\end{theorem}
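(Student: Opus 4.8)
The plan is to prove the averaged mean-square asymptotic via the Voronoi-type identity for $\widetilde A_{2,j}(x;h/k)$ from Proposition \ref{riesz-identity-with-large-exponent}, or rather its asymptotic form in Corollary \ref{approximate-voronoi-for-large-a} with $a=2$. First I would open up the square $|\widetilde A_2(x;h/k)|^2$ using this expansion, so that the integrand becomes a double sum over pairs $(d_1,m_1,\pm)$ and $(d_2,m_2,\pm)$ of products of Hecke coefficients $A(d_1,m_1)\overline{A(d_2,m_2)}$, Kloosterman sums $S(\overline h,\pm m_1;k/d_1)\,\overline{S(\overline h,\pm m_2;k/d_2)}$, and exponential phases $e\!\left(\pm 3 d_1^{2/3}m_1^{1/3}x^{1/3}/k \mp 3 d_2^{2/3}m_2^{1/3}x^{1/3}/k\right)$, with the $O(k^{7/2}d(k)x^{4/3})$ error contributing (after Cauchy--Schwarz against the main term and squaring) an acceptable $O(k^6 X^4)$ once $k\ll X^{1/3}$. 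Then I would average over $h\in\mathbb Z_k^\times$: this is exactly where Lemma \ref{Kloosterman-correlation} enters, replacing $\mathbb E_h S(a,m_1;k)\overline{S(a,m_2;k)}$ (which, since $k$ is prime, forces $d_1=d_2\in\{1,k\}$) by the explicit quantities $\Sigma(m_1,m_2,k)/\varphi(k)$. The key point is that the dominant term is the ``diagonal'' $m_1\equiv m_2 \pmod k$ with both $\not\equiv 0$, where $\Sigma = k^2-k-1 \asymp k^2$ is positive; all the other cases of Lemma \ref{Kloosterman-correlation} contribute $O(k)$ or $O(1)$ in the Kloosterman factor and are lower order.

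Next I would integrate in $x$ over $[X,2X]$. On the true diagonal $d_1=d_2=d$, $m_1=m_2=m$ the phase vanishes identically and we get $\int_X^{2X} x^{2\cdot 5/3}\,dx \asymp X^{13/3}$ times $\sum_{d\mid k} d^{-10/3}\sum_m |A(d,m)|^2 m^{-8/3}\,k^2$ (from the Kloosterman average); the inner sum over $m$ converges by the Rankin--Selberg bound \eqref{Average-estimates} and summation by parts, producing a positive constant $B(k)$, and since $d\in\{1,k\}$ and the $d=k$ piece is smaller, one checks $B(k)\asymp 1$. For the genuinely off-diagonal terms $m_1\ne m_2$ (still $m_1\equiv m_2\pmod k$), the phase $3(d_1^{2/3}m_1^{1/3}-d_2^{2/3}m_2^{1/3})x^{1/3}/k$ has derivative in $x$ of size $\gg |d_1^{2/3}m_1^{1/3}-d_2^{2/3}m_2^{1/3}|\,X^{-2/3}/k$, so Lemma \ref{Lemma-Huxley} (first derivative test) saves a factor and, after summing the resulting bound against $\sum |A(d_1,m_1)A(d_2,m_2)|$ via \eqref{Average-estimates} and Cauchy--Schwarz, these contribute within the claimed $O(k^6X^4)$ error. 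The cross terms between the main term and the $O$-error of Corollary \ref{approximate-voronoi-for-large-a}, and the square of that $O$-error, are handled trivially by the size bounds and $k\ll X^{1/3}$. Finally, the pointwise $\Omega$-statement follows immediately: if $\max_{h}|\widetilde A_2(x;h/k)| = o(k^{5/2}x^{5/3})$ for all large $x$, then $\mathbb E_h\int_X^{2X}|\widetilde A_2(x;h/k)|^2\,dx = o(k^5 X^{13/3})$, contradicting the asymptotic (here one uses that the implied constant in $k\ll X^{1/3}$ is small enough that $k^6 X^4 = o(k^5 X^{13/3})$, i.e. $k = o(X^{1/3})$).

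The main obstacle I anticipate is the bookkeeping in the off-diagonal estimate: one must show that after the $h$-average (which reduces the Kloosterman weight to at most $O(k^2)$ on the near-diagonal and $O(k)$ elsewhere) the sum over $m_1\ne m_2$ with $m_1\equiv m_2\pmod k$, weighted by $|A(d_1,m_1)A(d_2,m_2)|\,m_1^{-4/3}m_2^{-4/3}$ and by the first-derivative-test gain $\min\!\big(1,\; k X^{2/3}/|d_1^{2/3}m_1^{1/3}-d_2^{2/3}m_2^{1/3}|\big)$, is genuinely smaller than $k^{-5}X^{-13/3}\cdot k^6 X^4 = k X^{-1/3}$ after pulling out the $k^5 X^{13/3}$-ish normalization. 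Because both $m_1,m_2$ run over a full range up to $N$ in the truncated form, I would in fact use the \emph{convergent} (untruncated) identity for $a=2$, split dyadically in $\max(m_1,m_2)$, and in each dyadic block use that the spacing $|d_1^{2/3}m_1^{1/3}-d_2^{2/3}m_2^{1/3}|$ is $\gg M^{-2/3}$ for distinct lattice points once one also separates the congruence class mod $k$ (which costs a factor $k$ in the count but is compensated by the $X^{2/3}$ and the decay $m^{-8/3}$). A secondary, more routine obstacle is verifying $B(k)\asymp 1$ uniformly: one must check both that the $d=1$ term alone gives a positive constant bounded below independently of $k$ (clear, as it is $k^2\sum_{m,\,k\mid m\text{ excluded?}}$... in fact $k^2\cdot\frac{1}{\varphi(k)}\sum_{m}|A(1,m)|^2 m^{-8/3}\cdot(1-k^{-1}-k^{-2})\asymp k\sum_m|A(1,m)|^2m^{-8/3}$, wait — the normalization already absorbs one $k$, so this is $\asymp 1$) and that the $d=k$ term and the various lower-order Kloosterman cases do not overwhelm it; this is a finite and explicit computation given Lemma \ref{Kloosterman-correlation}.
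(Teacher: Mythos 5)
Your overall plan — Voronoi identity from Corollary~\ref{approximate-voronoi-for-large-a}, expand the square, average over $h$ via Lemma~\ref{Kloosterman-correlation}, first-derivative test for the oscillatory terms — is the same route as the paper. However there is a genuine gap in your description of what the ``diagonal'' is, and it would cause the proof to misfire if carried out as written.

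You declare the ``true diagonal'' to be $d_1=d_2=d$, $m_1=m_2=m$, and you say the Kloosterman averaging ``forces $d_1=d_2\in\{1,k\}$''. Both claims are wrong. First, the averaging does not make the mixed terms $(d_1,d_2)=(1,k)$ or $(k,1)$ vanish: for $d_2=k$ the Kloosterman sum is $S(\overline h,\pm m_2;1)=1$, and the averaged product $\mathop{\text{\LARGE$\E$}}_{h}S(\overline h,\pm m_1;k)$ is $(-k+1)/\varphi(k)$ or $1/\varphi(k)$ depending on $m_1\pmod k$, which is certainly not zero. Second, and more importantly, the phase $3\!\left(d_1^{2/3}m_1^{1/3}-d_2^{2/3}m_2^{1/3}\right)x^{1/3}/k$ vanishes identically exactly when $d_1^2 m_1 = d_2^2 m_2$, not when $(d_1,m_1)=(d_2,m_2)$. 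For prime $k$ this vanishing set contains, in addition to the cases you list, the ``cross-diagonal'' $(d_1,d_2)=(1,k)$, $m_1=k^2 m_2$ and its mirror $(k,1)$. On those terms the first-derivative test gives nothing, so your off-diagonal argument does not apply and they must be treated as part of the main term. This is precisely the contribution (\ref{contribution3}) in the paper, $\propto \sum_m A(1,k^2m)\overline{A(k,m)}/k^{13/3}m^{8/3}$, which is not obviously lower order and, more to the point, is of uncontrolled sign, so establishing the lower bound $B(k)\gg1$ requires controlling it against the $(1,1)$ and $(k,k)$ diagonal contributions. The paper does this by an arithmetic–geometric mean manipulation that absorbs (\ref{contribution3}) into the other two diagonal pieces and reduces the whole thing to a positive expression $\asymp k^5 X^{13/3}$ minus a small error from the $m\equiv 0\pmod k$ terms; one can also argue directly using pointwise bounds $|A(m,n)|\ll(mn)^{\vartheta+\eps}$ with $\vartheta\le 5/14$, but either way the term must be identified and handled. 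Your proposal never sees it.

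Two smaller points. (i) Within the products $\Sigma\overline\Sigma$ the pairs with opposite choices of sign in $\sum_\pm$ but still $d_1^2 m_1 = d_2^2 m_2$ have nonvanishing phase $\pm 6 d_1^{2/3}m_1^{1/3}x^{1/3}/k$; these are handled by the first-derivative test and contribute $\ll k^6 X^4$, matching the error, but your write-up does not separate them out. (ii) Your off-diagonal estimate, using the congruence $m_1\equiv m_2\pmod k$ from Lemma~\ref{Kloosterman-correlation} rather than Weil alone, is a legitimate alternative to the paper's Weil-then-first-derivative argument and would give a bound of the same quality; the dyadic splitting you suggest is not needed since the term-by-term first-derivative bound together with the decay $m^{-8/3}$ and the mean-value estimates (\ref{Average-estimates}) suffices, as in the paper.
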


\begin{proof}
Note that the $\Omega$-result follows immediately from the moment result. For the first statement, using Corollary \ref{approximate-voronoi-for-large-a} the second moment is
\begin{align*}
&\mathop{\text{\LARGE$\E$}}_{h\in\mathbb Z_k^\times}\int\limits_X^{2X}\left|\widetilde A_2\left(x;\frac hk\right)\right|^2\,\mathrm dx\\
&=\frac{k^4}{192\,\pi^6}\mathop{\text{\LARGE$\E$}}_{h\in\mathbb Z_k^\times}\int\limits_X^{2X}x^{10/3}\left|\sum_{d\mid k}\sum_{m=1}^\infty\frac{A(d,m)}{d^{5/3}\,m^{4/3}}\sum_\pm S\!\left(\overline h,\pm m;\frac kd\right)\,e\!\left(\pm\frac{3\,d^{2/3}\,m^{1/3}\,x^{1/3}}k\right)\right|^2\,\mathrm dx\\
&\qquad-\frac{k^2}{8\pi^3\sqrt 3}\,\Re\mathop{\text{\LARGE$\E$}}_{h\in\mathbb Z_k^\times}\int\limits_X^{2X}x^{5/3}\sum_{d\mid k}\sum_{m=1}^\infty\frac{A(d,m)}{d^{5/3}\,m^{4/3}}\\&\hspace*{4em}\times\sum_\pm S\!\left(\overline h,\pm m;\frac kd\right)e\!\left(\pm\frac{3\,d^{2/3}\,m^{1/3}\,x^{1/3}}k\right)
\cdot O(k^{7/2}\,x^{4/3})\,\mathrm dx+O(k^{7}\,X^{11/3}).
\end{align*}
Once we have proved that the first term on the right-hand side is $\asymp k^{5}\,X^{13/3}$, it immediately follows from the Cauchy--Schwarz inequality that the mixed term integral is
\[\ll\sqrt{k^{5}\,X^{13/3}}\,\sqrt{k^{7}\,X^{11/3}}\ll k^{6}\,X^{4},\] as required. Also, since $k\ll X^{1/3}$, we have $k^{7}\,X^{11/3}\ll k^{6}\,X^4$.

In the first integral, we expand $\left|\Sigma\right|^2$ as $\Sigma\,\overline\Sigma$. The diagonal terms, i.e.\ those terms where $d_1^2\,m_1=d_2^2\,m_2$ and where the signs in sums $\sum_\pm$ are chosen to be equal, give the contribution
\begin{align*}
&\frac{k^4}{192\pi^6}\smash{\sum_{d_1\mid k}\sum_{d_2\mid k}\underset{d_1^2m_1=d_2^2m_2}{\sum_{m_1=1}^\infty\sum_{m_2=1}^\infty}}
\frac{A(d_1,m_1)\,\overline{A(d_2,m_2)}}{d_1^{5/3}\,d_2^{5/3}\,m_1^{4/3}\,m_2^{4/3}}\\
&\qquad\times\mathop{\text{\LARGE$\E$}}_{h\in\mathbb Z_k^\times}\sum_\pm S\!\left(\overline h,\pm m_1;\frac k{d_1}\right)S\!\left(\overline h,\pm m_2;\frac k{d_2}\right)
\int\limits_X^{2X}x^{10/3}\,\mathrm dx.     
\end{align*}
We will examine the contribution of the different pairs $(d_1,d_2)$ separately at first. The terms where $d_1=d_2=1$ contribute
\begin{align}\label{contribution1}
\frac{k^4}{96\pi^6}\sum_{m=1}^\infty\frac{\left|A(1,m)\right|^2}{m^{8/3}}\left(\frac{k^2-k-1}{k-1}1_{m\not\equiv 0\,(k)}+1_{m\equiv 0\,(k)}\right)\int\limits_X^{2X}x^{10/3}\,\mathrm dx
\end{align}
by Lemma \ref{Kloosterman-correlation}.

Similarly, the terms with $d_1=d_2=k$ contribute
\begin{align}\label{contribution2}
\frac{k^4}{96\pi^6}\sum_{m=1}^\infty\frac{\left|A(k,m)\right|^2}{k^{10/3}m^{8/3}}\int\limits_X^{2X}x^{10/3}\,\mathrm d x
\end{align}
and the terms where $d_1d_2=k$ contribute
\begin{align}\label{contribution3}
-\frac{k^4}{48\pi^6}\Re\left(\sum_{m=1}^\infty\frac{A(1,k^2m)\overline{A(k,m)}}{k^{13/3}m^{8/3}}\right)\int\limits_X^{2X}x^{10/3}\,\mathrm d x. 
\end{align}
Our aim is to show that these three terms sum up to $B(k)k^5X^{13/3}$, where $B(k)\in\mathbb R$ with $B(k)\asymp 1$. The upper bound is straightforward to establish just by using the known estimates for the Fourier coefficients ($A(m,n)\ll_\eps (mn)^{\vartheta+\eps}$ and (\ref{Average-estimates})), but the lower bound requires some work. Towards this, observe first that by the arithmetic-geometric mean inequality, the absolute value of (\ref{contribution3}) is bounded from above by
\begin{align}\label{contribution4}
&\frac{k^4}{96\pi^6}\sum_{m=1}^\infty\frac{\left|A(1,k^2m)\right|^2}{k^{16/3}m^{8/3}}\int\limits_X^{2X}x^{10/3}\,\mathrm dx+\frac{k^4}{96\pi^6}\sum_{m=1}^\infty\frac{\left|A(k,m)\right|^2}{k^{10/3}m^{8/3}}\int\limits_X^{2X}x^{10/3}\,\mathrm d x \nonumber\\
&\leq\frac{k^4}{96\pi^6}\sum_{m=1}^\infty\frac{\left|A(1,m)\right|^2}{m^{8/3}}\int\limits_X^{2X}x^{10/3}\,\mathrm dx+\frac{k^4}{96\pi^6}\sum_{m=1}^\infty\frac{\left|A(k,m)\right|^2}{k^{10/3}m^{8/3}}\int\limits_X^{2X}x^{10/3}\,\mathrm d x.
\end{align}
Notice that the latter term in (\ref{contribution4}) is exactly (\ref{contribution2}). Thus, using the identity $1_{m\not\equiv 0(k)}=1-1_{m\equiv 0(k)}$, a simple computation shows that the sum of (\ref{contribution1}), (\ref{contribution2}), and (\ref{contribution3}) is bounded from below by 
\[\frac{k^4}{96\pi^6}\cdot\frac{k^2-2k}{k-1}\sum_{m=1}^\infty\frac{|A(1,m)|^2}{m^{8/3}}\left(1-1_{m\equiv 0\,(k)}\right)\int\limits_X^{2X}x^{10/3}\,\mathrm d x.\]
This is bounded from below by the required lower bound $\gg k^5X^{13/3}$ just by dropping all the terms except the one corresponding to $m=1$ (recall that $A(1,1)=1$) as each of the terms involved is non-negative.  

The contribution coming from the terms $d_1^2m=d_2^2n$, where the signs in $\sum_\pm$ are chosen to be distinct, can be estimated by using the first derivative test together with Weil's bound as
\begin{align*}
&\ll k^4\sum_\pm\sum_{d_1|k}\sum_{d_2|k}\sum_{m=1}^\infty\sum_{\substack{n=1\\d_1^2m=d_2^2n}}^\infty\frac{\left|A(d_1,m)\overline{A(d_2,n)}\right|}{d_1^{5/3}d_2^{5/3}m^{4/3}n^{4/3}}\left|S\left(\overline h,\pm m;\frac k{d_1}\right)S\left(\overline h,\mp n;\frac k{d_2}\right)\right|\int\limits_X^{2X}x^{10/3}\,e\!\left(\pm\frac{6(d_1^2mx)^{1/3}}k\right)\mathrm dx\\
&\ll k^4\cdot k\,X^{10/3}X^{2/3}k\sum_{d_1|k}\sum_{d_2|k}\sum_{m=1}^\infty\sum_{\substack{n=1\\d_1^2m=d_2^2n}}^\infty\frac{\left|A(d_1,m)\overline{A(d_2,n)}\right|}{d_1^{17/6}d_2^{13/6}m^{5/3}n^{4/3}}\\
&\ll X^{4}k^{6}.
\end{align*}    

For the off-diagonal terms we restrict to the case where the signs in $\sum_\pm$ are chosen to be the same as the case of distinct signs can be treated similarly with easier arguments. The prior terms contribute, using again the first derivative test and Weil's bound,
\begin{align*}
&\ll k^4\sum_\pm\sum_{d_1|k}\sum_{d_2|k}\sum_{m=1}^\infty\frac{\left|A(d_1,m)\right|}{m^{4/3}d_1^{5/3}}\sum_{\substack{n=1\\
d_1^2m\neq d_2^2n}}^\infty\frac{\left|A(d_2,n)\right|}{n^{4/3}d_2^{5/3}}\left|S\left(\overline h,\pm m;\frac k{d_1}\right)S\left(\overline h,\pm n;\frac k{d_2}\right)\right|\\
&\qquad\times\int\limits_X^{2X}x^{10/3}\,e\left(\frac{\pm 3(mx d_1^2)^{1/3}\pm 3(nxd_2^2)^{1/3}}k\right)\mathrm dx\\
&\ll k^4\sum_{d_1|k}\sum_{d_2|k}\sum_{m=1}^\infty\frac{\left|A(d_1,m)\right|}{m^{4/3}d_1^{5/3}}\sum_{\substack{n=1\\
d_1^2m\neq d_2^2n}}^\infty\frac{\left|A(d_2,n)\right|}{n^{4/3}d_2^{5/3}}\cdot\frac{X^{4}k}{\left|d_1^{2/3}m^{1/3}-d_2^{2/3}n^{1/3}\right|}\\
&\qquad\qquad\times\left|S\left(\overline h,\pm m;\frac k{d_1}\right)S\left(\overline h,\pm n;\frac k{d_2}\right)\right|\\
&\ll X^{4}k^6\sum_{d_1|k}\sum_{d_2|k}\sum_{n=1}^\infty\frac{\left|A(d_2,n)\right|}{n^{4/3}d_2^{5/3}}\sum_{\substack{m=1\\
d_1^2m\neq d_2^2n}}^\infty\frac{\left|A(d_1,m)\right|}{m^{4/3}d_1^{5/3}}\cdot\frac{1}{\left|d_1^{2/3}m^{1/3}-d_2^{2/3}n^{1/3}\right|}.
\end{align*}
By symmetry it is enough to treat the terms in the $m$-sum with $d_1^2m<d_2^2n$ (if this is not the case, one would consider the terms with $d_2^2n<d_1^2m$ in the $n$-sum). For these we split the $m$-sum into two parts according to whether $d_1^2m<d_2^2n/2$ or $d_2^2n/2\leq d_1^2m<d_2^2n$. Note that for $d_1^2m\ll d_2^2n$ we have 
\begin{align}\label{reciprocal-estimate}
\frac1{d_2^{5/3}}\cdot\frac1{|d_2^{2/3}n^{1/3}-d_1^{2/3}m^{1/3}|}\ll\frac1{d_2^{1/3}}\cdot\frac{n^{2/3}}{|d_2^2n-d_1^2m|}.
\end{align}
Using (\ref{reciprocal-estimate}) together with the pointwise bound $A(d_2,n)\ll_\varepsilon (d_2n)^{\vartheta+\varepsilon}$ the terms with $d_1^2m<d_2^2n/2$ contribute
\begin{align*}
&\ll X^{4}k^6\sum_{d_1|k}\sum_{d_2|k}\sum_{n=1}^\infty\frac{\left|A(d_2,n)\right|}{n^{4/3}d_2^{5/3}}\sum_{\substack{m=1\\
d_1^2m< d_2^2n/2}}^\infty\frac{\left|A(d_1,m)\right|}{m^{4/3}d_1^{5/3}}\cdot\frac{1}{\left|d_1^{2/3}m^{1/3}-d_2^{2/3}n^{1/3}\right|}\\
&\ll_\varepsilon X^4k^6\sum_{d_1|k}\sum_{d_2|k}\sum_{n=1}^\infty\sum_{\substack{m=1\\
d_1^2m<d_2^2n/2}}^\infty\frac{|A(d_1,m)|}{m^{4/3}}\cdot\frac{d_2^{\vartheta+\varepsilon}}{n^{2/3-\vartheta-\varepsilon}d_1^{5/3}d_2^{1/3}(d_2^2n-d_1^2m)}\\
&\ll_\varepsilon X^4k^6\sum_{d_1|k}\sum_{d_2|k}\sum_{n=1}^\infty\sum_{\substack{m=1\\
d_1^2m<d_2^2n/2}}^\infty\frac{|A(d_1,m)|}{m^{4/3}n^{5/3-\vartheta-\varepsilon}d_1^{5/3}d_2^{7/3-\vartheta-\varepsilon}}\\
&\ll X^4k^6,
\end{align*}
where the last estimate follows from (\ref{Average-estimates}) using partial summation, and the estimate $\vartheta\leq 5/14$. 

Recall that $\delta>0$ is sufficiently small, but fixed. For the terms with $d_2^2n/2\leq d_1^2m<d_2^2n$ we again use (\ref{reciprocal-estimate}) and the pointwise estimate $A(d,n)\ll (dn)^{\vartheta+\delta}$ to see that these terms contribute
\[
\ll X^4k^6\sum_{d_1|k}\sum_{d_2|k}\sum_{n=1}^\infty\frac{n^{\vartheta+2\delta}}{d_2^{1/3-\vartheta-\delta}n^{2/3+\delta}}\sum_{\substack{m=1\\ d_2^2n/2\leq d_1^2m< d_2^2n}}^\infty\frac{|A(d_1,m)|}{m^{4/3}d_1^{5/3}}\cdot\frac1{|d_2^2n-d_1^2m|}.\]
Observe that for $d_2^2n>d_1^2m$ we have
\[ 
n^{2/3}=n^{1/6}n^{1/2}\gg\left(\frac{d_1^2m}{d_2^2}\right)^{1/6}n^{1/2}.\]
Combining this with the estimate, which holds as $d_2^2n/2\leq d_1^2m$, 
\[ 
n^{\vartheta+2\delta}\leq\left(\frac{2d_1^2m}{d_2^2}\right)^{5/14+2\delta}\]
shows that the terms with $d_2^2n/2\leq d_1^2m<d_2^2n$ contribute
\begin{align}\label{intermediate-bound}
\ll X^4k^6\sum_{d_1|k}\sum_{d_2|k}\sum_{n=1}^\infty\sum_{\substack{m=1\\
d_2^2n/2\leq d_1^2m<d_2^2n}}^\infty\frac{|A(d_1,m)|}{m^{8/7-2\delta}d_1^{9/7-4\delta}n^{1/2+\delta}d_2^{5/7-\vartheta+3\delta}(d_2^2n-d_1^2m)}.
\end{align}
Next, we exchange the order of $m$- and $n$-sums. After that we apply the Cauchy--Schwarz inequality to the $n$-sum to see it is
\begin{align*}
&\sum_{\substack{n=1\\d_2^2n/2\leq d_1^2m<d_2^2n}}^\infty\frac1{n^{1/2+\delta}(d_2^2n-d_1^2m)}\\
&\leq\left(\sum_{\substack{n=1\\d_2^2n/2\leq d_1^2m<d_2^2n}}^\infty\frac1{n^{1+2\delta}}\right)^{1/2}\left(\sum_{\substack{n=1\\d_2^2n/2\leq d_1^2m<d_2^2n}}^\infty\frac1{(d_2^2n-d_1^2m)^2}\right)^{1/2}\ll 1,
\end{align*}
where the last estimate is uniform in $m,\,d_1,$ and $d_2$. Now using this in (\ref{intermediate-bound}), partial summation and (\ref{Average-estimates}) show that the terms with $d_1^2m\geq d_2^2n/2$ contribute $\ll X^4k^6$. This finishes the proof.  
\end{proof}

\noindent Next we present the first proof for Theorem \ref{Long-Omega}. The idea is to relate upper bounds of Riesz weighted sums with different orders. We first connect the sizes of $\widetilde A_1(x;h/k)$ and $\widetilde A_2(x;h/k)$. 
\begin{proposition}\label{A2toA1}
Let $k$ be a prime number. Let $\gamma\in\left[1,2\right]$ and $\eta\in\left[1,2\right]$ be such that $\gamma/2+\eta/6> 3/4$ and
\[\widetilde A_1\left(x;\frac hk\right)\ll x^\gamma k^\eta\]
for $k\ll x^{1/3-\delta}$ with $\delta>0$ sufficiently small and fixed. Then we have
\[\widetilde A_2\left(x;\frac hk\right)\ll x^{7/6+\gamma/2}k^{7/4+\eta/2}.\]
Furthermore, we have
\[\underset{h\in\mathbb Z_k^\times}{\max}\left|\widetilde A_1\left(x;\frac hk\right)\right|=\Omega\left(xk^{3/2}\right)\]
for any prime $k\ll x^{1/3-\delta}$.
\end{proposition}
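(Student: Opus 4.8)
The plan is to turn the integral relation (\ref{Key-relation}) around: rather than bounding $\widetilde A_1$ in terms of $\widetilde A_2$ as in Lemma~\ref{simple-a1bound}, I would combine it with the a priori bound for $\widetilde A_3$ from Corollary~\ref{a2bound} to bound $\widetilde A_2$ in terms of $\widetilde A_1$, and then feed the outcome into the $\Omega$-bound for $\widetilde A_2$ furnished by Theorem~\ref{A_2-meansquare}.

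For the first assertion I would fix a parameter $H\in[1,x]$ and start from the case $a=1$ of (\ref{Key-relation}), namely
\[\widetilde A_2\!\left(x;\frac hk\right)=\frac1H\int\limits_x^{x+H}\widetilde A_2\!\left(t;\frac hk\right)\,\mathrm dt-\frac1H\int\limits_x^{x+H}\int\limits_x^t\widetilde A_1\!\left(u;\frac hk\right)\,\mathrm du\,\mathrm dt.\]
In the first term I would use $\int_x^{x+H}\widetilde A_2(t;h/k)\,\mathrm dt=\widetilde A_3(x+H;h/k)-\widetilde A_3(x;h/k)$ and bound both values by Corollary~\ref{a2bound}; since $k$ is prime, $d(k)\ll1$, and $k\ll(x+H)^{1/3}$ because $H\le x$, so this contributes $\ll k^{7/2}x^{7/3}H^{-1}$. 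The double integral I would bound directly by the hypothesis $\widetilde A_1\ll x^\gamma k^\eta$ (applied on $[x,x+H]$), getting $\ll Hx^\gamma k^\eta$. This yields
\[\widetilde A_2\!\left(x;\frac hk\right)\ll\frac{k^{7/2}x^{7/3}}{H}+Hx^\gamma k^\eta,\]
and taking $H:=k^{7/4-\eta/2}x^{7/6-\gamma/2}$, which lies in $[1,x]$ once $x$ is large relative to $k$ (always the case in our applications, e.g.\ $H=kx^{2/3}\le x$ when $\gamma=1$, $\eta=3/2$ and $k\le x^{1/3}$), balances the two terms and gives $\widetilde A_2(x;h/k)\ll x^{7/6+\gamma/2}k^{7/4+\eta/2}$.

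For the second assertion I would argue contrapositively. Suppose $\max_h|\widetilde A_1(x;h/k)|=o(xk^{3/2})$ as $x\to\infty$, so that for every $\delta>0$ there is an $x_0$ with $|\widetilde A_1(u;h/k)|\le\delta\,uk^{3/2}$ for all $u\ge x_0$ and all $h\in\mathbb Z_k^\times$. Running the computation above with $\gamma=1$, $\eta=3/2$ and implied constant $\delta$, and taking $x$ large enough that $x\ge x_0$ and the resulting optimal choice $H\asymp\delta^{-1/2}kx^{2/3}$ still satisfies $H\le x$, I would obtain $\max_h|\widetilde A_2(x;h/k)|\ll\sqrt\delta\,k^{5/2}x^{5/3}$ for all such $x$. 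Since $\delta$ is arbitrary, this forces $\max_h|\widetilde A_2(x;h/k)|=o(k^{5/2}x^{5/3})$, contradicting the $\Omega$-result in Theorem~\ref{A_2-meansquare}. Hence $\max_h|\widetilde A_1(x;h/k)|\ne o(xk^{3/2})$, which is precisely the claim.

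The estimation of the double integral and the optimisation in $H$ are routine. The step needing care --- and the main obstacle --- is the constant bookkeeping in the contrapositive argument: one must check that shrinking the implied constant in the $\widetilde A_1$-bound to $\delta$ shrinks the derived $\widetilde A_2$-bound like $\sqrt\delta$, uniformly in $h$, while simultaneously keeping the auxiliary parameter $H$ in the admissible range $[1,x]$; this is exactly where one uses that $k$ is fixed (so $kx^{2/3}=o(x)$) and prime (so $d(k)\ll1$).
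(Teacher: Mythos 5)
Your proof is correct and follows essentially the same route as the paper: the same application of the identity $\widetilde A_2(x) = \tfrac1H\int_x^{x+H}\widetilde A_2(t)\,dt - \tfrac1H\int_x^{x+H}\int_x^t\widetilde A_1(u)\,du\,dt$, the same optimisation of $H$, and the same appeal to Theorem~\ref{A_2-meansquare} for the $\Omega$-input. The only cosmetic differences are that you bound $\tfrac1H\int_x^{x+H}\widetilde A_2 = \tfrac1H(\widetilde A_3(x+H)-\widetilde A_3(x))$ directly by the pointwise bound of Corollary~\ref{a2bound} where the paper instead applies the first-derivative test to the Voronoi series of $\widetilde A_2$ (both give $\ll k^{7/2}x^{7/3}/H$ once $H\le x$), and your contrapositive with an explicit $\delta$ is a slightly more careful rendition of the paper's terse ``choose $H$ with $x^{2/3}k=o(H)$'' step, which in fact needs $H\asymp x^{2/3}k$ with a large fixed constant rather than a genuinely $o$-condition.
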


\begin{proof}
For any $H>0$ we have by (\ref{Key-relation})
\begin{align}\label{A_2-identity}
\widetilde A_2\left(x;\frac hk\right)=\frac1H\int\limits_x^{x+H}\widetilde A_2\left(t;\frac hk\right)\,\mathrm dt
-\frac1H\int\limits_x^{x+H}\int\limits_x^t\widetilde A_1\left(u;\frac hk\right)\,\mathrm du\,\mathrm dt.
\end{align}
Substituting the Voronoi identity of Corollary \ref{approximate-voronoi-for-large-a} into the first term, we may estimate it by the first derivative test as
\begin{align}\label{first_der_bound}
\frac1H\int\limits_x^{x+H}\widetilde A_2\left(t;\frac hk\right)\,\mathrm dt\ll\frac1H\,x^{7/3}k^{7/2}+k^{7/2}x^{4/3}.
\end{align}
Now, if $\widetilde A_1(x,h/k)\ll x^\gamma k^\eta$, then the second term on the right-hand side of (\ref{A_2-identity}) is $\ll H\,x^\gamma\,k^\eta$, and choosing $H=x^{7/6-\gamma/2}k^{7/4-\eta/2}$ immediately gives
\[\widetilde A_2\left(x;\frac hk\right)\ll x^{7/6+\gamma/2}k^{7/4+\eta/2}\]
using (\ref{first_der_bound}) and the assumption $\gamma/2+\eta/6>3/4$.

To prove the second statement, note that from Theorem \ref{A_2-meansquare}, (\ref{first_der_bound}), and (\ref{A_2-identity}) it follows that
\begin{align*}
\Omega\left(x^{5/3}k^{5/2}\right)&=\max_{h\in\mathbb Z_k^\times}\left|\widetilde A_2\left(x;\frac hk\right)\right|\\
&\ll \frac1H\,x^{7/3}k^{7/2}+k^{7/2}x^{4/3}+H\max_{u\in[x,x+H]}\max_{h\in\mathbb Z_k^\times}\left|\widetilde A_1\left(u;\frac hk\right)\right|.
\end{align*}
Thus for any $H\leq x$ with $x^{2/3}k=o(H)$ we have 
\[
\max_{h\in\mathbb Z_k^\times}\left|\widetilde A_1\left(x;\frac hk\right)\right|=\Omega\left(H^{-1}x^{5/3}k^{5/2}\right),\]
from which the claim follows. 
\end{proof}
       
\begin{lemma}\label{A_1-on-avg}
Let $x\in\left[1,\infty\right[$, $H\in\left[1,x\right]$, and $k\ll x^{1/3-\delta}$ be a prime with $\delta>0$ sufficiently small and fixed. Then
\[\frac1H\int\limits_x^{x+H}\widetilde A_1\left(t;\frac hk\right)\,\mathrm dt\ll H^{-1}\,x^{5/3}k^{5/2}.\]
\end{lemma}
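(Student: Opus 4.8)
The plan is to substitute the boundedly-and-uniformly convergent Voronoi series for $\widetilde A_1(t;h/k)$ from Corollary \ref{sharpcutoff-voronoi} into the average $\frac1H\int_x^{x+H}\widetilde A_1(t;h/k)\,\mathrm dt$ and integrate term by term. Since the series converges boundedly and uniformly on the compact interval $[x,x+H]$, interchanging summation and integration is justified without any extra work, and the $O(k^{5/2}x^{2/3})$ error term contributes $\ll k^{5/2}x^{2/3}$, which is absorbed into the claimed bound $H^{-1}x^{5/3}k^{5/2}$ precisely because $H\leqslant x$.

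The main term to control is
\begin{align*}
\frac{kx}{4\pi^2\sqrt3}\,\frac1H\sum_{d\mid k}\frac1d\sum_{m=1}^\infty\frac{A(d,m)}m\sum_\pm(\mp i)\,S\!\left(\overline h,\pm m;\frac kd\right)\int\limits_x^{x+H}e\!\left(\pm\frac{3\,d^{2/3}\,m^{1/3}\,t^{1/3}}k\right)\mathrm dt,
\end{align*}
where I have pulled the slowly varying factor — there is no extra algebraic weight beyond the constant, so the phase is the only $t$-dependence. For each term I apply the first derivative test (Lemma \ref{Lemma-Huxley}): the phase $f(t)=\pm 3\,d^{2/3}m^{1/3}t^{1/3}/k$ has derivative of size $\asymp d^{2/3}m^{1/3}x^{-2/3}/k$ on $[x,x+H]$ (using $H\leqslant x$), so the inner integral is $\ll k\,d^{-2/3}\,m^{-1/3}\,x^{2/3}$. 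Bounding the Kloosterman sums by Weil's bound $\ll (k/d)^{1/2}d(k)$ and summing, the term-by-term bound becomes
\begin{align*}
\ll\frac{kx}{H}\sum_{d\mid k}\frac1d\left(\frac kd\right)^{1/2}d(k)\cdot k\,d^{-2/3}\,x^{2/3}\sum_{m=1}^\infty\frac{\left|A(d,m)\right|}{m^{4/3}}.
\end{align*}
The $m$-sum converges by partial summation from the Rankin--Selberg bound $\sum_{m\leqslant y}\left|A(d,m)\right|\ll_\eps d^{\vartheta+\eps}y$ in (\ref{Average-estimates}), giving $\ll_\eps d^{\vartheta+\eps}$; collecting powers of $d$ and using that $k$ is prime (so the divisor sum over $d\mid k$ has only two terms) leaves $\ll_\eps H^{-1}\,k^{5/2+\eps}\,x^{5/3}$, and the $\eps$ can be removed since the $d$-sum is finite and the $m$-sum is genuinely convergent.

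The only mild obstacle is making the term-by-term integration airtight: a priori the series $\sum_m \frac{|A(d,m)|}{m} \cdot |\text{(inner integral)}|$ must be shown to converge so that the rearrangement is legitimate, but this follows from exactly the estimates above (the inner integral gains $m^{-1/3}$, leaving $\sum_m |A(d,m)| m^{-4/3}$ which converges), and in any case the bounded, uniform convergence asserted in Corollary \ref{sharpcutoff-voronoi} already licenses integrating the series against $\mathrm dt$ on the compact interval $[x,x+H]$. So there is no real difficulty; the proof is a routine first-derivative-test estimate on each term followed by summation, and the constraint $H\leqslant x$ is used only to control the size of the phase derivative and to absorb the $O(k^{5/2}x^{2/3})$ term.
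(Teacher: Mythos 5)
Your proof is correct, but it takes a genuinely different (and considerably heavier) route than the paper. The paper's proof is one line: by the defining property of Riesz means,
\[
\frac1H\int\limits_x^{x+H}\widetilde A_1\!\left(t;\frac hk\right)\mathrm dt
=\frac1H\left(\widetilde A_2\!\left(x+H;\frac hk\right)-\widetilde A_2\!\left(x;\frac hk\right)\right),
\]
and the bound then follows immediately from Corollary \ref{a2bound} (which says $\widetilde A_{2,j}(x;h/k)\ll k^{5/2}d(k)\,x^{5/3}$), with $d(k)=2$ since $k$ is prime. This avoids the $a=1$ Voronoi identity entirely. Your argument instead plugs in the hard-won Voronoi series of Corollary \ref{sharpcutoff-voronoi}, integrates term by term, hits each term with the first derivative test, and then sums using Weil's bound and Rankin--Selberg. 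The computation is right: the phase derivative is $\asymp d^{2/3}m^{1/3}x^{-2/3}/k$ so the inner integral is $\ll k\,d^{-2/3}m^{-1/3}x^{2/3}$; the $m$-sum $\sum_m|A(d,m)|m^{-4/3}\ll_\eps d^{\vartheta+\eps}$ converges by partial summation from (\ref{Average-estimates}); the $d$-sum over $d\in\{1,k\}$ carries exponent $\vartheta+\eps-13/6<0$ so is $O(1)$; the total collects to $H^{-1}k^{5/2}x^{5/3}$; and the $O(k^{5/2}x^{2/3})$ error term is absorbed by $H\leqslant x$. The bounded-and-uniform convergence asserted in Corollary \ref{sharpcutoff-voronoi} indeed licenses the term-by-term integration, so there is no gap. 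The tradeoff is clear: your route requires the full machinery of Section 8 (the $a=1$ Voronoi identity, itself built on Lemmas \ref{simple-a1bound}--\ref{simple-a1bound2} and a bootstrapping argument), whereas the paper's route uses only the elementary integral identity and the easy $a\geqslant2$ bound. For this particular lemma the paper's approach is much more economical; the heavy Voronoi machinery is developed in the paper because it is needed for the mean-square asymptotics, not for this pointwise-on-average estimate.
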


\begin{proof}  
We have
\[\frac1H\int\limits_x^{x+H}\widetilde A_1\left(t;\frac hk\right)\,\mathrm dt
=\frac1H\left(\widetilde A_2\left(x+H;\frac hk\right)-\widetilde A_2\left(x;\frac hk\right)\right),\]
and by Corollary \ref{a2bound}, this is $\ll H^{-1}\,x^{5/3}k^{5/2}$, where there is no $\eps$ in the exponent of $k$ as the denominator is a prime.
\end{proof}

\noindent Proposition \ref{A2toA1} together with the first part of the following result proves Theorem \ref{Long-Omega}, a result which is formulated as the latter part. 

\begin{theorem}\label{long-omega}
Let $\alpha\in\left]0,1\right]$ and $\beta\in]0,1]$ be such that $3\alpha+\beta>3/2$, $\alpha>\vartheta$, and 
\begin{align}\label{exp-sum-bound}
\sum_{m\leqslant x}A(m,1)e\left(\frac{mh}k\right)\ll x^\alpha k^\beta
\end{align}
for $x\in\left[1,\infty\right[$. Suppose that $k$ is a prime so that $k\ll x^{1/3-\delta}$ for any sufficiently small fixed $\delta>0$. Then 
\[\widetilde A_1\left(x;\frac hk\right)\ll x^{5/6+\alpha/2}k^{5/4+\beta/2}.\]
Furthermore, when $x\longrightarrow\infty$, we have
\[\underset{h\in\mathbb Z_k^\times}{\max}\left|\sump_{m\leqslant x}A(m,1)e\!\left(\frac{mh}k\right)\right|=\Omega\left(x^{1/3}k^{1/2}\right)\]
for any prime $k\ll x^{1/3-\delta}$.
\end{theorem}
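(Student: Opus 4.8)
The plan is to combine Proposition~\ref{A2toA1}, which already furnishes the bound $\max_{h\in\mathbb Z_k^\times}|\widetilde A_1(x;h/k)|=\Omega(xk^{3/2})$, with a descent from the first Riesz mean $\widetilde A_1$ back to the sharp-cutoff sum $\widetilde A_0$. That descent is precisely the content of the first assertion, so I would establish it first. Fix a parameter $H\in[1,x]$ and rearrange the identity~(\ref{Key-relation}) with $a=0$ into the tautology
\[
\widetilde A_1\!\left(x;\frac hk\right)=\frac1H\int_x^{x+H}\widetilde A_1\!\left(t;\frac hk\right)\,\mathrm dt-\frac1H\int_x^{x+H}\int_x^t\widetilde A_0\!\left(u;\frac hk\right)\,\mathrm du\,\mathrm dt.
\]
The first term on the right is $\ll H^{-1}x^{5/3}k^{5/2}$ by Lemma~\ref{A_1-on-avg}. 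For the second term I would write $\widetilde A_0(u;h/k)=\sum_{m\leqslant u}A(m,1)e(mh/k)-\tfrac12\bigl(L_0(0,h/k)+L_1(1,h/k)\bigr)+O(u^{\vartheta+\varepsilon})$ and feed in the hypothesis~(\ref{exp-sum-bound}) together with $L_j(j,h/k)\ll_\varepsilon k^{3/2+\varepsilon}$ and $A(n,1)\ll_\varepsilon n^{\vartheta+\varepsilon}$ from Section~4; in the range $k\ll x^{1/3}$, and in every case of interest (where $3\alpha+\beta>3/2$ and $\alpha>\vartheta$ strictly), the $L$-value and the halved-last-term correction are absorbed by $x^\alpha k^\beta$, so the second term is $\ll Hx^\alpha k^\beta$. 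Hence $\widetilde A_1(x;h/k)\ll H^{-1}x^{5/3}k^{5/2}+Hx^\alpha k^\beta$, and choosing $H=x^{5/6-\alpha/2}k^{5/4-\beta/2}$ — which lies in $[1,x]$ once the implied constant in $k\ll x^{1/3}$ is small and $3\alpha+\beta\geqslant3/2$ — balances the two contributions to give $\widetilde A_1(x;h/k)\ll x^{5/6+\alpha/2}k^{5/4+\beta/2}$.

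For the $\Omega$-result I would argue directly rather than by contradiction. Proposition~\ref{A2toA1} supplies $\delta>0$ and a sequence $x\to\infty$ along which $\max_{h\in\mathbb Z_k^\times}|\widetilde A_1(x;h/k)|\geqslant\delta xk^{3/2}$. The identity above together with Lemma~\ref{A_1-on-avg} gives, for some absolute $C_1$ and every admissible $H$,
\[
\max_{h\in\mathbb Z_k^\times}\left|\widetilde A_1\!\left(x;\frac hk\right)\right|\leqslant C_1\,\frac{x^{5/3}k^{5/2}}{H}+C_1\,H\max_{x\leqslant u\leqslant2x}\ \max_{h\in\mathbb Z_k^\times}\left|\widetilde A_0\!\left(u;\frac hk\right)\right|.
\]
Taking $H=(2C_1/\delta)\,x^{2/3}k$, which is $\leqslant x$ precisely because the implied constant in $k\ll x^{1/3}$ is sufficiently small, the first term on the right equals $\tfrac\delta2 xk^{3/2}$; comparing with the lower bound $\delta xk^{3/2}$ forces $\max_{x\leqslant u\leqslant2x}\max_h|\widetilde A_0(u;h/k)|\geqslant(\delta^2/4C_1^2)\,x^{1/3}k^{1/2}$ along the sequence, i.e.\ $\max_h|\widetilde A_0(x;h/k)|=\Omega(x^{1/3}k^{1/2})$. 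Finally, $\sum_{m\leqslant x}A(m,1)e(mh/k)$ differs from $\widetilde A_0(x;h/k)$ only by the constant $\tfrac12(L_0(0,h/k)+L_1(1,h/k))\ll_\varepsilon k^{3/2+\varepsilon}$ and an $O(x^{\vartheta+\varepsilon})$ term, both of which are $o(x^{1/3}k^{1/2})$ in the range considered, so the $\Omega$-result transfers from $\widetilde A_0$ to the sharp-cutoff sum. This gives the $\Omega$-claim of the theorem, and hence Theorem~\ref{Long-Omega}.

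I expect essentially all of the work to be bookkeeping rather than conceptual: checking that $H=x^{5/6-\alpha/2}k^{5/4-\beta/2}$ (respectively $H\asymp x^{2/3}k$) stays inside $[1,x]$, and that the $L$-value main term $\ll_\varepsilon k^{3/2+\varepsilon}$ and the $O(x^{\vartheta+\varepsilon})$ correction really are dominated by $x^\alpha k^\beta$ (respectively by $x^{1/3}k^{1/2}$) throughout the admissible range of $k$ — this is exactly where the smallness of the implied constant in $k\ll x^{1/3}$, and the bound $\vartheta<1/3$ (valid since $\vartheta\leqslant5/14$), are used. No fresh analytic obstacle arises: the Voronoi identities for $\widetilde A_1$ and $\widetilde A_2$, the first-derivative-test estimate packaged in Lemma~\ref{A_1-on-avg}, and the mean square of $\widetilde A_2$ behind Proposition~\ref{A2toA1} are all in hand, and the statement follows by inserting them into the elementary integral identity~(\ref{Key-relation}).
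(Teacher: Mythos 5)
Your proposal is correct and follows essentially the same route as the paper's proof: the averaging identity~(\ref{Key-relation}) with Lemma~\ref{A_1-on-avg} to control the averaged $\widetilde A_1$, the hypothesis to bound $\widetilde A_0$, balancing at $H=x^{5/6-\alpha/2}k^{5/4-\beta/2}$, and then extracting the $\Omega$-result by pushing Proposition~\ref{A2toA1} through the same identity with $H\asymp x^{2/3}k$. Your explicit choice $H=(2C_1/\delta)x^{2/3}k$ is in fact slightly cleaner than the paper's "$x^{2/3}k=o(H)$" phrasing, but it is the same argument.
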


\begin{proof}
By (\ref{Key-relation}) we have
\begin{align}\label{A1-identity}
\widetilde A_1\left(x;\frac hk\right)=\frac1H\int\limits_x^{x+H}\widetilde A_1\left(t;\frac hk\right)\,\mathrm dt
-\frac1H\int\limits_x^{x+H}\int\limits_x^t \widetilde A_0\left(u;\frac hk\right)\,\mathrm du\,\mathrm dt.
\end{align}
For the first term, we already know by Lemma \ref{A_1-on-avg} that
\[\frac1H\int\limits_x^{x+H}\widetilde A_1\left(t;\frac hk\right)\,\mathrm dt\ll H^{-1}\,x^{5/3}k^{5/2}.\]
On the other hand, recalling that (by (\ref{L-bounds}))
\begin{align}\label{Connecting_A_0_to_A}
\sum_{m\leqslant x}A(m,1)\,e\!\left(\frac{mh}k\right)=\widetilde A_0\!\left(x;\frac hk\right)+O(k^{3/2+\varepsilon})+O(x^{\vartheta+\varepsilon})
\end{align}
it follows from (\ref{exp-sum-bound}) and the assumptions on $\alpha$ and $\beta$ that we certainly have $\widetilde A_0(x;h/k)\ll x^\alpha k^\beta$ and so
\[\frac1H\int\limits_x^{x+H}\int\limits_x^t\widetilde A_0\left(u;\frac hk\right)\,\mathrm du\,\mathrm dt\ll H\,x^\alpha k^\beta.\]
Choosing $H=x^{5/6-\alpha/2}k^{5/4-\beta/2}$ gives
\[\widetilde A_1\left(x;\frac hk\right)\ll x^{5/6+\alpha/2}k^{5/4+\beta/2}.\]

\noindent To prove the second statement, note that from Proposition \ref{A2toA1} and (\ref{A1-identity}) it follows that
\[\Omega\left(xk^{3/2}\right)=\max_{h\in\mathbb Z_k^\times}\left|\widetilde A_1\left(x;\frac hk\right)\right|\ll \frac1H\,x^{5/3}k^{5/2}+H\max_{u\in[x,x+H]}\max_{h\in\mathbb Z_k^\times}\left|\widetilde A_0\left(u;\frac hk\right)\right|.\]
Thus for any $H\leq x$ with $x^{2/3}k=o(H)$ we have 
\[
\max_{h\in\mathbb Z_k^\times}\left|\widetilde A_0\left(x;\frac hk\right)\right|=\Omega\left(H^{-1}xk^{3/2}\right),\]
from which the claim follows as the residue term in $\widetilde A_0(x;h/k)$ has size $O(k^{3/2+\eps})$ by (\ref{L-bounds}).
\end{proof}

\begin{corollary}
The exponent pairs $(\alpha,\beta)=(1,0)$, $(\alpha,\beta)=(3/4+\varepsilon,0)$, $(\alpha,\beta)=(1/3+\varepsilon,1/2)$, and $(\alpha,\beta)=(1/2+\varepsilon,3/4)$, each of which either holds or holds under some assumptions, in the previous estimate lead to 
\[\widetilde A_1\left(x;\frac hk\right)\ll x^{4/3}k^{5/4},\quad
\ll_\eps x^{29/24+\varepsilon}k^{5/4},\quad
\ll_\eps x^{1+\varepsilon}k^{3/2},\quad\text{and}\quad
\ll_\eps x^{13/12+\varepsilon}k^{13/8},\]
respectively. Here the choice $(\alpha,\beta)=(1,0)$ is admissible by the Rankin--Selberg theory, the second one by Lemma \ref{millers-bound}, the third under the conjectural bounds (\ref{conjecture}), and the final possibility by Theorem \ref{improved-upper-bound} (assuming $\vartheta=0$).
\end{corollary}
    
\section{Second moments of long sums}

\noindent Using the truncated Voronoi identity for $\widetilde A_1(x;h/k)$ we deduce the following mean square result. 

\begin{theorem}\label{A1ms}
Let $x\in[1,\infty[$ and suppose that $k$ is a prime so that $k\ll X^{1/3-\delta}$ for any sufficiently small fixed $\delta>0$. Then we have
\begin{align*}
\mathop{\text{\LARGE$\E$}}_{h\in\mathbb Z_k^\times}\int\limits_X^{2X}\left|\widetilde A_1\left(x;\frac hk\right)\right|^2\,\mathrm dx
=C(k)\cdot X^3k^3+O_\eps\left(k^{4}\,X^{8/3+\eps}\right),
\end{align*}  
where $C(k)\in\mathbb R_+$ with $C(k)\asymp 1$.
\end{theorem}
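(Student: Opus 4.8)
The plan is to follow the proof of Theorem~\ref{A_2-meansquare}, using the truncated Voronoi identity for $\widetilde A_1$ from Corollary~\ref{sharpcutoff-voronoi} in place of the one for $\widetilde A_2$. Fix $N\asymp k^{-3}X^3$, large enough that the truncation in Corollary~\ref{sharpcutoff-voronoi} is valid for all $x\in[X,2X]$, and write $\widetilde A_1(x;h/k)=\mathcal M_N(x)+O(k^{5/2}x^{2/3})$ with
\[
\mathcal M_N(x):=\frac{kx}{4\pi^2\sqrt3}\sum_{d\mid k}\frac1d\sum_{m\leqslant N}\frac{A(d,m)}{m}\sum_\pm(\mp i)\,S\!\left(\overline h,\pm m;\frac kd\right)e\!\left(\pm\frac{3\,d^{2/3}m^{1/3}x^{1/3}}{k}\right).
\]
Squaring and integrating over $[X,2X]$, the error term contributes $\ll k^5X^{7/3}$, and, once we know the main term is $\ll k^3X^3$, the cross term is $\ll(k^3X^3)^{1/2}(k^5X^{7/3})^{1/2}=k^4X^{8/3}$ by Cauchy--Schwarz; both are $\ll k^4X^{8/3}$ precisely because $k\ll X^{1/3}$. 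It therefore remains to evaluate the $h$-average of $\int_X^{2X}\lvert\mathcal M_N(x)\rvert^2\,\mathrm dx$.

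Expanding $\lvert\mathcal M_N\rvert^2=\mathcal M_N\overline{\mathcal M_N}$ gives a finite double sum over $(d_1,m_1,\pm)$ and $(d_2,m_2,\pm)$, which I split, as in Theorem~\ref{A_2-meansquare}, into (i) the diagonal $d_1^2m_1=d_2^2m_2$ with equal signs, (ii) the anti-diagonal $d_1^2m_1=d_2^2m_2$ with opposite signs, and (iii) the off-diagonal $d_1^2m_1\ne d_2^2m_2$. Since Kloosterman sums are real, $\overline{S(\overline h,\pm m;k/d)}=S(\overline h,\pm m;k/d)$ and $(\mp i)\overline{(\mp i)}=1$, so in case (i) the $x$-integral is exactly $\tfrac73X^3$ times a sum whose $(1,1)$-part is nonnegative. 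The $h$-averaged Kloosterman factor $\frac1{\varphi(k)}\sum_{h}S(\overline h,\pm m_1;k/d_1)S(\overline h,\pm m_2;k/d_2)$ is evaluated by Lemma~\ref{Kloosterman-correlation}; as $k$ is prime only $d_1,d_2\in\{1,k\}$ occur, and the pairs $(1,1),(k,k),(1,k),(k,1)$ are handled exactly like the terms~(\ref{contribution1})--(\ref{contribution3}) there. The $(1,1)$ pair produces $\gg k^3X^3$ coming from $\tfrac{k^2-k-1}{k-1}\sum_{m\not\equiv0(k)}\lvert A(1,m)\rvert^2m^{-2}$, the $(k,k)$ pair (Kloosterman modulus $1$) is $\ll k^{2\vartheta+\varepsilon}X^3$ by~(\ref{Average-estimates}), and the cross pairs are absorbed by the arithmetic--geometric mean inequality exactly as in~(\ref{contribution4}); combining these yields $C(k)\,k^3X^3$ with $C(k)\in\mathbb R_+$ and $C(k)\asymp1$.

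For the anti-diagonal~(ii), on $d_1^2m_1=d_2^2m_2$ the phase is $\pm6\,d_1^{2/3}m_1^{1/3}x^{1/3}/k$, so Lemma~\ref{Lemma-Huxley} gives $\int_X^{2X}x^2e(\cdots)\,\mathrm dx\ll kX^{8/3}(d_1^{2/3}m_1^{1/3})^{-1}$; bounding the Kloosterman sums by Weil and using~(\ref{Average-estimates}) this contributes $\ll k^4X^{8/3}\sum_m\lvert A(1,m)\rvert^2m^{-7/3}\ll k^4X^{8/3}$, the pairs with $d_1d_2\ne1$ being smaller. For the off-diagonal~(iii), Lemma~\ref{Lemma-Huxley} gives $\int_X^{2X}x^2e(\cdots)\,\mathrm dx\ll kX^{8/3}\lvert d_1^{2/3}m_1^{1/3}\pm d_2^{2/3}m_2^{1/3}\rvert^{-1}$; the terms with a genuine ``$+$'' are negligible, and for the ``$-$'' terms one uses $\lvert a^{1/3}-b^{1/3}\rvert\gg\lvert a-b\rvert\max(a,b)^{-2/3}$ together with $\lvert d_1^2m_1-d_2^2m_2\rvert\geqslant1$. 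After Weil's bound the total is $\ll k^4X^{8/3}\sum_{m\ne n\leqslant N}\lvert A(1,m)\rvert\lvert A(1,n)\rvert\max(m,n)^{2/3}(mn\lvert m-n\rvert)^{-1}$ plus smaller terms, and the last sum is $\ll X^\varepsilon$: splitting at $n=2m$, the range $n>2m$ is $O(1)$ by partial summation against $\sum_{m\leqslant M}\lvert A(1,m)\rvert\ll M^{1+\varepsilon}$, while the range $n=m+j$ with $j\leqslant m$ is controlled by Cauchy--Schwarz in $m$ and $\sum_{m\leqslant M}\lvert A(1,m)\rvert^2\ll M^{1+\varepsilon}$, leaving $\sum_{j\leqslant N}j^{-1}\ll X^\varepsilon$. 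Hence~(iii) is $\ll k^4X^{8/3+\varepsilon}$, and collecting~(i), (ii), (iii) proves the theorem.

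The main obstacle, and the point of departure from Theorem~\ref{A_2-meansquare}, is that the Voronoi coefficients for $\widetilde A_1$ decay only like $m^{-1}$ instead of $m^{-4/3}$, so the off-diagonal sum is no longer absolutely convergent and the pointwise bound $\lvert A(1,n)\rvert\ll n^{\vartheta+\varepsilon}$ is too wasteful; one must instead exploit the truncation $m,n\leqslant N\asymp k^{-3}X^3$ and the Rankin--Selberg mean bound~(\ref{Average-estimates}) to tame the near-diagonal pairs $\lvert m-n\rvert$ small, and then check that the resulting $k^4X^{8/3+\varepsilon}$, together with the Voronoi error $k^5X^{7/3}$, remains within the claimed $O(k^4X^{8/3})$ — which is exactly what $k\ll X^{1/3}$ with sufficiently small implied constant guarantees. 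Removing the residual $X^\varepsilon$, if one wants the clean error term, calls for a smooth dyadic decomposition of the off-diagonal with repeated integration by parts in place of the bare first derivative test.
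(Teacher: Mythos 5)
Your proposal follows essentially the same path as the paper's proof: truncate the Voronoi series from Corollary~\ref{sharpcutoff-voronoi} at $N\asymp k^{-3}X^3$, peel off the Voronoi error and the mixed term by Cauchy--Schwarz, split the remaining double sum into the diagonal $d_1^2m=d_2^2n$ (equal signs), the same diagonal with opposite signs, and the off-diagonal, and extract the main term from the $(1,1)$-diagonal using Lemma~\ref{Kloosterman-correlation} plus AM--GM to tame the mixed divisor pairs. The diagonal and anti-diagonal treatments agree with the paper line for line.

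The one place you genuinely diverge is the off-diagonal, and your version is in fact the more careful one. The paper estimates the off-diagonal by applying the pointwise bound $|A(d_2,n)|\ll (d_2n)^{\vartheta+\varepsilon}$ and then summing $n^{\vartheta-1/3+\varepsilon}/|n-m|$ over $n>m$; that inner sum converges only when $\vartheta<1/3$, which is not currently known unconditionally ($\vartheta\leqslant 5/14>1/3$). Your route through the truncation $m,n\leqslant N$, the split at $n=2m$, Cauchy--Schwarz in $m$, and the Rankin--Selberg mean bounds~(\ref{Average-estimates}) avoids any pointwise-$\vartheta$ input, but pays a harmless $X^\varepsilon$ (the $\sum_{j\leqslant N}j^{-1}$ logarithm). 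This gives the off-diagonal as $O(k^4X^{8/3+\varepsilon})$ rather than $O(k^4X^{8/3})$; for the downstream Theorem~\ref{lower bounds} this is immaterial (it merely shaves $X^\varepsilon$ off the admissible range $k\ll X^{1/3}$), and your closing remark that a smooth dyadic decomposition would remove the $\varepsilon$ is the right way to recover the clean error term if one insists on it.
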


\begin{proof}
We apply Corollary \ref{sharpcutoff-voronoi} with the choice $N= \lfloor k^{-3}X^3\rfloor$ to get
\begin{align*}
&\mathop{\text{\LARGE$\E$}}_{h\in\mathbb Z_k^\times}\int\limits_X^{2X}\left|\widetilde A_1\left(x;\frac hk\right)\right|^2\,\mathrm dx\\
&=\frac{k^2}{48\,\pi^4}\mathop{\text{\LARGE$\E$}}_{h\in\mathbb Z_k^\times}\int\limits_X^{2X}x^{2}\left|\sum_{d\mid k}\sum_{m\leq N}\frac{A(d,m)}{d\,m}\sum_\pm (\mp i)S\!\left(\overline h,\pm m;\frac kd\right)\,e\!\left(\pm\frac{3\,d^{2/3}\,m^{1/3}\,x^{1/3}}k\right)\right|^2\,\mathrm dx\\
&\qquad+\frac{k}{4\pi^2\sqrt 3}\,\Re\mathop{\text{\LARGE$\E$}}_{h\in\mathbb Z_k^\times}\int\limits_X^{2X}x\sum_{d\mid k}\sum_{m\leq N}\frac{A(d,m)}{d\,m}\\&\hspace*{3em}\times\sum_\pm (\mp i) S\!\left(\overline h,\pm m;\frac kd\right)e\!\left(\pm\frac{3\,d^{2/3}\,m^{1/3}\,x^{1/3}}k\right)
\cdot O(k^{5/2}\,x^{2/3})\,\mathrm dx+O_\eps(k^{5}\,X^{7/3+2\eps}).
\end{align*}
Once we have proved that the first term on the right-hand side is $\asymp k^{3}\,X^{3}$, it immediately follows from the Cauchy--Schwarz inequality that the mixed term integral is
\[\ll_\eps\sqrt{k^{3}\,X^{3}}\,\sqrt{k^{5}\,X^{7/3+2\eps}}\ll_\eps k^{4}\,X^{8/3+\eps}.\]
Also, as $k\ll X^{1/3-\delta}$, we have $k^{5}X^{7/3+2\eps}\ll_\eps k^{4}X^{8/3+\eps}$, as required.
  
In the first integral, we expand $\left|\Sigma\right|^2$ as $\Sigma\,\overline\Sigma$. The diagonal terms, i.e.\ those terms where $d_1^2\,m=d_2^2\,n$ and where the signs in sums $\sum_\pm$ are chosen to be equal, give the contribution
\begin{align*}
&\frac{k^2}{48\pi^4}\sum_{d_1\mid k}\sum_{d_2\mid k}\underset{d_1^2m=d_2^2n}{\sum_{m\leq N}\sum_{n\leq N}}
\frac{A(d_1,m)\,\overline{A(d_2,n)}}{d_1\,d_2\,m\,n}\mathop{\text{\LARGE$\E$}}_{h\in\mathbb Z_k^\times}\sum_\pm S\!\left(\overline h,\pm m;\frac k{d_1}\right)S\!\left(\overline h,\pm n;\frac k{d_2}\right)
\int\limits_X^{2X}x^{2}\,\mathrm dx.     
\end{align*}
We will examine the contribution of the different pairs $(d_1,d_2)$ separately at first. The terms where $d_1=d_2=1$ contribute
\begin{align}\label{contribution1-A_1}
\frac{k^2}{24\pi^4}\sum_{m\leq N}\frac{\left|A(1,m)\right|^2}{m^{2}}\left(\frac{k^2-k-1}{k-1}1_{m\not\equiv 0\,(k)}+1_{m\equiv 0\,(k)}\right)\int\limits_X^{2X}x^2\,\mathrm dx
\end{align}
by Lemma \ref{Kloosterman-correlation}.

Similarly, the terms with $d_1=d_2=k$ contribute
\begin{align}\label{contribution2-A_1}
\frac{k^2}{24\pi^4}\sum_{m\leq N}\frac{\left|A(k,m)\right|^2}{k^{2}m^{2}}\int\limits_X^{2X}x^{2}\,\mathrm d x
\end{align}
and the terms where $d_1d_2=k$ contribute
\begin{align}\label{contribution3-A_1}
-\frac{k^2}{12\pi^4}\Re\left(\sum_{m\leq N/k^2}\frac{A(1,k^2m)\overline{A(k,m)}}{k^{3}m^{2}}\right)\int\limits_X^{2X}x^{2}\,\mathrm d x. 
\end{align}
Our aim is to show that these three terms (\ref{contribution1-A_1}), (\ref{contribution2-A_1}), and (\ref{contribution3-A_1}), sum up to $C(k)k^3X^{3}$, where $C(k)\in\mathbb R$ with $C(k)\asymp 1$. The argument is similar as in the proof of Theorem \ref{A_2-meansquare}.

The upper bound of the right order of magnitude follows easily from the averaged estimates for the Fourier coefficients (\ref{Average-estimates}). For the lower bound, observe first that by using the arithmetic-geometric mean inequality the absolute value of (\ref{contribution3-A_1}) is bounded from above by
\begin{align}\label{contribution4-A_1}
&\frac{k^2}{24\pi^4}\sum_{m\leq N/k^2}\frac{\left|A(1,k^2m)\right|^2}{k^{4}m^{2}}\int\limits_X^{2X}x^{2}\,\mathrm dx+\frac{k^2}{24\pi^4}\sum_{m\leq N/k^2}\frac{\left|A(k,m)\right|^2}{k^{2}m^{2}}\int\limits_X^{2X}x^{2}\,\mathrm d x \nonumber\\
&\leq \frac{k^2}{24\pi^4}\sum_{m\leq N}\frac{\left|A(1,m)\right|^2}{m^{2}}\int\limits_X^{2X}x^{2}\,\mathrm dx+\frac{k^2}{24\pi^4}\sum_{m\leq N}\frac{\left|A(k,m)\right|^2}{k^{2}m^{2}}\int\limits_X^{2X}x^{2}\,\mathrm d x .
\end{align}
Notice that the latter term in (\ref{contribution4-A_1}) is exactly (\ref{contribution2-A_1}). Hence the sum of (\ref{contribution1-A_1}), (\ref{contribution2-A_1}), and (\ref{contribution3-A_1}) is bounded from below by 
\[ 
\frac{k^2}{24\pi^4}\cdot\frac{k^2-2k}{k-1}\sum_{m\leq N}\frac{|A(1,m)|^2}{m^2}\left(1-1_{m\equiv 0\,(k)}\right)\int\limits_X^{2X}x^2\,\mathrm d x.\]
This gives the lower bound $\gg k^3X^3$ as in the proof of Theorem \ref{A_2-meansquare}. 

The contribution arising from the terms $d_1^2m=d_2^2n$ where the signs in the sum $\sum_\pm$ are chosen to be distinct is by the first derivative test and Weil's bound given by
\begin{align*}
&\ll k^2\sum_\pm\sum_{d_1|k}\sum_{d_2|k}\sum_{m\leq N}\sum_{\substack{n\leq N\\d_1^2m=d_2^2n}}\frac{|A(d_1,m)\overline{A(d_2,n)|}}{d_1d_2mn}\left|S\left(\overline h,\pm m;\frac k{d_1}\right)S\left(\overline h,\mp n;\frac k{d_2}\right)\right|\\
&\qquad\times\int\limits_X^{2X}x^{2}\,e\!\left(\frac{\pm 6(d_1^2mx)^{1/3}}k\right)\mathrm dx\\
&\ll k^2\cdot k\,X^{2}\,X^{2/3}k\sum_{m\leq N}\frac{\left|A(1,m)\right|^2}{m^{7/3}}\ll X^{8/3}k^{4}
\end{align*} 
as in the proof of Theorem \ref{A_2-meansquare}. 

Next we study the off-diagonal contribution in the same sum, which is given, up to a constant, by 
\begin{align*}
&k^2\sum_\pm\sum_\pm\sum_{d_1|k}\sum_{d_2|k}\underset{d_1^2m\neq d_2^2n}{\sum_{m\leq N}\sum_{n\leq N}}\frac{A(m,d_1)\overline{A(n,d_2)}}{mnd_1d_2}\mathop{\text{\LARGE$\E$}}_{h\in\mathbb Z_k^\times}S\left(\overline h,\pm m;\frac k{d_1}\right)S\left(\overline h,\pm n;\frac k{d_2}\right)\\
&\qquad\qquad\times\int\limits_X^{2X}x^2e\,\left(\pm\frac{3(mxd_1^2)^{1/3}}k\mp\frac{3(nxd_2^2)^{1/3}}k\right)\,\mathrm d x. 
\end{align*}
We restrict to the case where the signs in $\sum_\pm$ are chosen to be the same as the case of distinct signs can be treated similarly with easier arguments. The prior terms contribute, using again the first derivative test and Weil's bound, 
\begin{align*}
\ll k^3\sum_{d_1|k}\sum_{d_2|k}\underset{d_1^2m\neq d_2^2n}{\sum_{m\leq N}\sum_{n\leq N}}\frac{|A(m,d_1)\overline{A(n,d_2)}|}{mn(d_1d_2)^{3/2}}\cdot\frac{X^{8/3}k}{\bigl|d_1^{2/3}m^{1/3}-d_2^{2/3}n^{1/3}\bigr|}.
\end{align*}      
By symmetry it is enough to treat the terms in the $m$-sum with $d_1^2m<d_2^2n$. For these we split the $m$-sum into two parts according to whether $d_1^2m<d_2^2n/2$ or $d_2^2n/2\leq d_1^2m<d_2^2n$. Note that in any case we have
\begin{align}\label{estimate-y} 
\frac1{d_2^{3/2}}\cdot\frac1{|d_2^{2/3}n^{1/3}-d_1^{2/3}m^{1/3}|}\ll\frac{n^{2/3}}{d_2^{1/6}|d_2^2n-d_1^2m|}.
\end{align}
Using this, the terms with $d_1^2m<d_2^2n/2$ contribute
\begin{align*}
&\ll X^{8/3}k^{4}\sum_{d_1|k}\sum_{d_2|k}\sum_{m\leq N}\frac{|A(d_1,m)|}{md_1^{3/2}}\underset{d_1^2m< d_2^2n/2}{\sum_{n\leqslant N}}\frac{|A(d_2,n)|}{nd_2^{1/6}}\cdot\frac{n^{2/3}}{|d_2^2n-d_1^2m|}\\
&\ll X^{8/3}k^{4}\sum_{d_1|k}\sum_{d_2|k}\sum_{m\leq N}\frac{|A(d_1,m)|}{md_1^{3/2}}\underset{d_1^2m< d_2^2n/2}{\sum_{n\leqslant N}}\frac{|A(d_2,n)|}{d_2^{13/6}n^{4/3}}
\ll_\eps X^{8/3+\varepsilon}k^{4}.
\end{align*}
In the last step we have used partial summation together with the averaged bounds (\ref{Average-estimates}) and the fact that $\vartheta\leq 5/14$.

For the terms with $d_2^2n/2\leq d_1^2m<d_2^2n$ we note that  
\[ 
m\geq m^{1/2}\left(\frac{d_2^2n}{2d_1^2}\right)^{1/2}.\]
Combining this with (\ref{estimate-y}) shows that the terms with $d_2^2n/2\leq d_1^2m<d_2^2n$ contribute
\begin{align*}
& \ll X^{8/3}k^4\sum_{d_1|k}\sum_{d_2|k}\sum_{m\leq N}\underset{d_2^2n/2\leq d_1^2m < d_2^2n}{\sum_{n\leqslant N}}\frac{|A(d_1,m)A(d_2,n)|}{m^{1/2}n^{5/6}d_1^{1/2}d_2^{7/6}}\cdot\frac1{|d_2^2n-d_1^2m|}
\end{align*}
Applying the Cauchy--Schwarz inequality shows that this is bounded by
\begin{align}\label{estimate-z}
&\ll k^4X^{8/3}\sum_{d_1|k}\sum_{d_2|k}\frac1{d_1^{1/2}d_2^{7/6}}\left(\sum_{m\leq N}\underset{d_2^2n/2\leq d_1^2m < d_2^2n}{\sum_{n\leqslant N}}\frac{|A(m,d_1)A(n,d_2)|^2}{mn^{5/3}}\right)^{1/2} \nonumber\\
&\qquad\times\left(\sum_{m\leq N}\underset{d_2^2n/2\leq d_1^2m < d_2^2n}{\sum_{n\leqslant N}}\frac1{(d_2^2n-d_1^2m)^2}\right)^{1/2}\\
&\ll_\eps k^4X^{8/3+\varepsilon}\sum_{d_1|k}\sum_{d_2|k}\frac1{d_1^{1/2-\vartheta}d_2^{7/6-\vartheta}} \nonumber\\
&\ll_\eps k^4X^{8/3+\varepsilon} \nonumber.
\end{align}
Above we have used partial summation together with (\ref{Average-estimates}) and the observation that the final double sum in (\ref{estimate-z}) is uniformly bounded with respect to $d_1$ and $d_2$. This finishes the proof.  
\end{proof}
           
\noindent As a consequence of the above theorem we obtain lower bounds for certain averaged mean squares. Theorem \ref{long_mean_square} follows immediately from the latter bound by (\ref{exp-sum}). 
\begin{theorem}\label{lower bounds}
Suppose that $k$ is a prime so that $k\ll X^{1/3-\delta}$ for any sufficiently small fixed $\delta>0$. Then the lower bounds
\[\mathop{\text{\LARGE$\E$}}_{h\in\mathbb Z_k^\times}\int\limits_X^{2X}\left|\widetilde A_1\left(x;\frac hk\right)\right|^2\,\mathrm dx\gg X^3k^3
\quad\text{and}\quad\mathop{\text{\LARGE$\E$}}_{h\in\mathbb Z_k^\times}\int\limits_X^{3X}\left|\widetilde A_0\left(x;\frac hk\right)\right|^2\,\mathrm dx\gg X^{5/3}k\]
hold for sufficiently large $X$.  
\end{theorem}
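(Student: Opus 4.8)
\emph{Proof proposal.} The first lower bound is immediate: the theorem just proved gives
$\mathop{\text{\LARGE$\E$}}_{h\in\mathbb Z_k^\times}\int_X^{2X}\left|\widetilde A_1\!\left(x;\frac hk\right)\right|^2\mathrm dx=C(k)X^3k^3+O(k^4X^{8/3})$ with $C(k)\asymp1$, and since $k\ll X^{1/3}$ with sufficiently small implied constant the error is $\leqslant\frac12C(k)X^3k^3$, so the mean square is $\gg X^3k^3$.

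For the second bound the plan is to run the scheme sketched in the introduction, relating the mean square of $\widetilde A_0$ to those of $\widetilde A_1$ and $\widetilde A_2$. Inserting the identity $\int_x^t\widetilde A_0\!\left(u;\frac hk\right)\mathrm du=\widetilde A_1\!\left(t;\frac hk\right)-\widetilde A_1\!\left(x;\frac hk\right)$ into the trivial identity $\widetilde A_1\!\left(x;\frac hk\right)=\frac1H\int_x^{x+H}\widetilde A_1\!\left(x;\frac hk\right)\mathrm dt$, then using $\frac1H\int_x^{x+H}\widetilde A_1\!\left(t;\frac hk\right)\mathrm dt=\frac1H\bigl(\widetilde A_2\!\left(x+H;\frac hk\right)-\widetilde A_2\!\left(x;\frac hk\right)\bigr)$ and Fubini on the resulting double integral, one obtains, for every $H>0$,
\[
\widetilde A_1\!\left(x;\frac hk\right)=\frac{\widetilde A_2\!\left(x+H;\frac hk\right)-\widetilde A_2\!\left(x;\frac hk\right)}{H}-\frac1H\int_x^{x+H}\widetilde A_0\!\left(u;\frac hk\right)(x+H-u)\,\mathrm du.
\]
Since $0\leqslant x+H-u\leqslant H$ on $[x,x+H]$, the last term is $\leqslant\int_x^{x+H}\left|\widetilde A_0\!\left(u;\frac hk\right)\right|\mathrm du\leqslant H^{1/2}\bigl(\int_x^{x+H}\left|\widetilde A_0\!\left(u;\frac hk\right)\right|^2\mathrm du\bigr)^{1/2}$ by Cauchy--Schwarz. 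I would then square, integrate $x$ over $[X,2X-H]$ (so that every window $[x,x+H]$ stays inside $[X,2X]$), use Fubini together with $\operatorname{meas}\{x\in[X,2X-H]\colon x\leqslant u\leqslant x+H\}\leqslant H$ on the $\widetilde A_0$-term, and average over $h\in\mathbb Z_k^\times$, arriving at
\begin{multline*}
\mathop{\text{\LARGE$\E$}}_{h\in\mathbb Z_k^\times}\int_X^{2X-H}\left|\widetilde A_1\!\left(x;\frac hk\right)\right|^2\mathrm dx\\
\ll\frac1{H^2}\mathop{\text{\LARGE$\E$}}_{h\in\mathbb Z_k^\times}\int_X^{2X}\left|\widetilde A_2\!\left(x;\frac hk\right)\right|^2\mathrm dx+H^2\mathop{\text{\LARGE$\E$}}_{h\in\mathbb Z_k^\times}\int_X^{2X}\left|\widetilde A_0\!\left(x;\frac hk\right)\right|^2\mathrm dx.
\end{multline*}

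Now I would feed in the two inputs. On the left, the $\widetilde A_1$ mean-square evaluation proved above goes through verbatim over the shortened interval $[X,2X-H]$: $H$ will turn out to be $o(X)$, so the interval is still of length $\asymp X$, the diagonal main term is a convergent Dirichlet series in $m$ (hence free of any $X^\varepsilon$) of size $\asymp k^3X^3$, and the off-diagonal and cross terms remain $O(k^4X^{8/3})$; thus the left-hand side is $\gg k^3X^3$. On the right, Theorem \ref{A_2-meansquare} gives $\mathop{\text{\LARGE$\E$}}_{h\in\mathbb Z_k^\times}\int_X^{2X}\left|\widetilde A_2\!\left(x;\frac hk\right)\right|^2\mathrm dx\ll k^5X^{13/3}$. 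Hence
\[
k^3X^3\ll\frac{k^5X^{13/3}}{H^2}+H^2\mathop{\text{\LARGE$\E$}}_{h\in\mathbb Z_k^\times}\int_X^{2X}\left|\widetilde A_0\!\left(x;\frac hk\right)\right|^2\mathrm dx.
\]
Choosing $H=C_0\,kX^{2/3}$ with $C_0$ a sufficiently large absolute constant makes the first term on the right at most half of the left-hand side, while the constraint $H\leqslant X$ amounts to $k\leqslant C_0^{-1}X^{1/3}$, i.e.\ exactly the hypothesised smallness of the implied constant in $k\ll X^{1/3}$. Rearranging then yields $\mathop{\text{\LARGE$\E$}}_{h\in\mathbb Z_k^\times}\int_X^{2X}\left|\widetilde A_0\!\left(x;\frac hk\right)\right|^2\mathrm dx\gg k^3X^3/H^2\asymp kX^{5/3}$, as required.

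The only genuinely delicate point is the bookkeeping of the integration ranges: one must keep the $\widetilde A_0$-integral on exactly $[X,2X]$ (forcing the restriction $x\in[X,2X-H]$) while still extracting the full $k^3X^3$ from the $\widetilde A_1$-mean square over that slightly truncated interval, which requires observing that the proof of the preceding theorem is insensitive to replacing $[X,2X]$ by any subinterval of length $\asymp X$. I expect this to be routine rather than hard, the reason being precisely the size relation $H=C_0kX^{2/3}=o(X)$ forced by $k\ll X^{1/3}$; no analytic input beyond the two mean-square theorems already established is needed.
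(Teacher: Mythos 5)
Your argument is correct and follows essentially the same route as the paper: relate the $\widetilde A_1$ mean square to the $\widetilde A_2$ and $\widetilde A_0$ mean squares via the Riesz smoothing identity, plug in the asymptotics for $\widetilde A_1$ and the upper bound for $\widetilde A_2$ from Theorem \ref{A_2-meansquare}, and rearrange. Two small points where you are actually a bit more careful than the printed proof: you take a concrete $H=C_0kX^{2/3}$ and check $H\leqslant X$ directly, whereas the paper equates the two right-hand terms with an $H$ depending on the unknown $\E\int|\widetilde A_0|^2$, which tacitly requires checking that this optimal $H$ is admissible (it is, because otherwise one would derive a contradiction, but this is left unsaid); and you truncate the $x$-range to $[X,2X-H]$ so all $\widetilde A_0$-evaluations stay inside $[X,2X]$, whereas the paper silently allows the windows to spill over to $[X,2X+H]$, which is harmless for the same reason that your truncation is. These are cosmetic; the substance and the inputs (the $\widetilde A_1$ asymptotic, the $\widetilde A_2$ upper bound, Cauchy--Schwarz plus Fubini on the $\widetilde A_0$ term) are identical.
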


\begin{proof}
The first bound is immediate from the previous theorem. For the latter bound, the main observation is that we can bound the mean square of $\widetilde A_1(x;h/k)$ from above in terms of mean squares of $\widetilde A_2(x;h/k)$ and $\widetilde A_0(x;h/k)$. Knowing a lower bound for the averaged mean square of $\widetilde A_1(x;h/k)$ and an upper bound for the averaged mean square of $\widetilde A_2(x;h/k)$ then leads to the desired conclusion. 

Indeed, for any $0<H\leq X$ to be chosen later, we compute
\begin{align}\label{A_1toA_0estimate}
&\mathop{\text{\LARGE$\E$}}_{h\in\mathbb Z_k^\times}\int\limits_X^{2X}\left|\widetilde A_1\left(x;\frac hk\right)\right|^2\,\mathrm dx \nonumber\\
&=\frac1{H^2}\mathop{\text{\LARGE$\E$}}_{h\in\mathbb Z_k^\times}\int\limits_X^{2X}\left|\int\limits_x^{x+H}\widetilde A_1\left(x;\frac hk\right)\,\mathrm d t\right|^2\,\mathrm d x \nonumber\\
&=\frac1{H^2}\mathop{\text{\LARGE$\E$}}_{h\in\mathbb Z_k^\times}\int\limits_X^{2X}\left|\int\limits_x^{x+H}\left(\widetilde A_1\left(t;\frac hk\right)-\int\limits_x^t\widetilde A_0\left(u;\frac hk\right)\,\mathrm d u\right)\,\mathrm d t\right|^2\,\mathrm d x \nonumber\\
&\ll\frac1{H^2}\mathop{\text{\LARGE$\E$}}_{h\in\mathbb Z_k^\times}\int\limits_X^{2X}\left|\int\limits_x^{x+H}\widetilde A_1\left(t;\frac hk\right)\,\mathrm d t\right|^2\,\mathrm d x
+\frac1{H^2}\mathop{\text{\LARGE$\E$}}_{h\in\mathbb Z_k^\times}\int\limits_X^{2X}\left|\int\limits_x^{x+H}\int\limits_x^t\widetilde A_0\left(u;\frac hk\right)\,\mathrm d u\,\mathrm d t\right|^2\,\mathrm d x.
\end{align}
By simply integrating, the first term on the previous line is clearly 
\begin{align}\label{first-term-1}
\ll\frac1{H^2}\mathop{\text{\LARGE$\E$}}_{h\in\mathbb Z_k^\times}\int\limits_X^{2X+H}\left|\widetilde A_2\left(x;\frac hk\right)\right|^2\,\mathrm dx.
\end{align}
Using the upper bound $\widetilde A_2(x;h/k)\ll x^{5/3}k^{5/2}$ from Corollary \ref{a2bound} we see that (\ref{first-term-1}) is
\[ 
\ll\frac1{H^2}\mathop{\text{\LARGE$\E$}}_{h\in\mathbb Z_k^\times}\int\limits_X^{2X}\left|\widetilde A_2\left(x;\frac hk\right)\right|^2\,\mathrm dx+\frac1H X^{10/3}k^5.
\]

The latter term in (\ref{A_1toA_0estimate}) can be estimated as follows. First note that
\begin{align*}
\int\limits_x^{x+H}\int\limits_x^t\widetilde A_0\left(u;\frac hk\right)\,\mathrm d u\,\mathrm d t&=\int\limits_x^{x+H}\widetilde A_0\left(u;\frac hk\right)(u-x)\,\mathrm d u
\ll H\int\limits_x^{x+H}\left|\widetilde A_0\left(u;\frac hk\right)\right|\,\mathrm d u.
\end{align*}
Using this together with the Cauchy--Schwarz inequality yields
\begin{align*}
\frac1{H^2}\mathop{\text{\LARGE$\E$}}_{h\in\mathbb Z_k^\times}\int\limits_X^{2X}\left|\int\limits_x^{x+H}\int\limits_x^t\widetilde A_0\left(u;\frac hk\right)\,\mathrm d u\,\mathrm d t\right|^2\,\mathrm d x &\ll\mathop{\text{\LARGE$\E$}}_{h\in\mathbb Z_k^\times}\int\limits_X^{2X}\left(\int\limits_x^{x+H}\left|\widetilde A_0\left(u;\frac hk\right)\right|\mathrm d u\right)^2\,\mathrm d x\\
&\ll H\mathop{\text{\LARGE$\E$}}_{h\in\mathbb Z_k^\times}\int\limits_X^{2X+H}\int\limits_x^{x+H}\left|\widetilde A_0\left(u;\frac hk\right)\right|^2\mathrm d u\,\mathrm d x \\
&\ll H^2\mathop{\text{\LARGE$\E$}}_{h\in\mathbb Z_k^\times}\int\limits_X^{2X+H}\left|\widetilde A_0\left(x;\frac hk\right)\right|^2\,\mathrm d x\\
&\ll H^2\mathop{\text{\LARGE$\E$}}_{h\in\mathbb Z_k^\times}\int\limits_X^{3X}\left|\widetilde A_0\left(x;\frac hk\right)\right|^2\,\mathrm d x. 
\end{align*}
Thus we have shown that
\begin{align*}
&\mathop{\text{\LARGE$\E$}}_{h\in\mathbb Z_k^\times}\int\limits_X^{2X}\left|\widetilde A_1\left(x;\frac hk\right)\right|^2\,\mathrm dx\\
&\ll\frac1{H^2}\mathop{\text{\LARGE$\E$}}_{h\in\mathbb Z_k^\times}\int\limits_X^{2X}\left|\widetilde A_2\left(x;\frac hk\right)\right|^2\,\mathrm dx
+H^2\mathop{\text{\LARGE$\E$}}_{h\in\mathbb Z_k^\times}\int\limits_X^{3X}\left|\widetilde A_0\left(x;\frac hk\right)\right|^2\,\mathrm dx+\frac1H X^{10/3}k^5.
\end{align*}
As the second moment of $\widetilde A_2(x;h/k)$ averaged over $h$ modulo $k$ is $\asymp X^{13/3}k^5$ when $k\ll X^{1/3-\delta}$ by Theorem \ref{A_2-meansquare}, this implies
\begin{align}\label{connecting_ineq_1}
\mathop{\text{\LARGE$\E$}}_{h\in\mathbb Z_k^\times}\int\limits_X^{2X}\left|\widetilde A_1\left(x;\frac hk\right)\right|^2\,\mathrm dx&\ll\frac{X^{13/3}k^5}{H^2}+H^2\mathop{\text{\LARGE$\E$}}_{h\in\mathbb Z_k^\times}\int\limits_X^{3X}\left|\widetilde A_0\left(x;\frac hk\right)\right|^2\,\mathrm dx+\frac1H X^{10/3}k^5 \nonumber \\
&\ll\frac{X^{13/3}k^5}{H^2}+H^2\mathop{\text{\LARGE$\E$}}_{h\in\mathbb Z_k^\times}\int\limits_X^{3X}\left|\widetilde A_0\left(x;\frac hk\right)\right|^2\,\mathrm dx, 
\end{align}
where the last estimate holds as $H\leq X$.

Next we shall optimise the choice of the parameter $H$. Actually, before that we need a preliminary lower bound for the moment 
\begin{align*}
\mathop{\text{\LARGE$\E$}}_{h\in\mathbb Z_k^\times}\int\limits_X^{3X}\left|\widetilde A_0\left(x;\frac hk\right)\right|^2\,\mathrm dx
\end{align*}
we are interested in. This is to guarantee that our optimal choice for $H$ satisfies the constraint $H\leq X$.

We make the initial choice $H=X$ in (\ref{connecting_ineq_1}). This gives
\begin{align}\label{prelim_lower_bound_a}
\mathop{\text{\LARGE$\E$}}_{h\in\mathbb Z_k^\times}\int\limits_X^{3X}\left|\widetilde A_0\left(x;\frac hk\right)\right|^2\,\mathrm dx\gg Xk^3
\end{align}
by Theorem \ref{A1ms} as $k\ll X^{1/3-\delta}$. We now make the final optimal choice for $H$ by choosing
\[ 
H=\frac{X^{13/12}k^{5/4}}{\sqrt[4]{\mathop{\text{\LARGE$\E$}}_{h\in\mathbb Z_k^\times}\int\limits_X^{3X}\left|\widetilde A_0\left(x;\frac hk\right)\right|^2\,\mathrm dx}}.\]
This satisfies the constraint $H\leq X$ for sufficiently large $X$ as by (\ref{prelim_lower_bound_a}) and $k\ll X^{1/3-\delta}$ we have 
\[ 
H\ll\frac{X^{13/12}k^{5/4}}{X^{1/4}k^{3/4}}=X^{5/6}k^{1/2}\ll X^{1-\delta/2}.
\]
Using this choice for $H$ in (\ref{connecting_ineq_1}) gives 
\[ 
\mathop{\text{\LARGE$\E$}}_{h\in\mathbb Z_k^\times}\int\limits_X^{3X}\left|\widetilde A_0\left(x;\frac hk\right)\right|^2\,\mathrm dx\gg X^{5/3}k\]
as, by the previous theorem, the mean-square of $\widetilde A_1(x;h/k)$ averaged over $h$ modulo $k$ is $\asymp X^3k^3$ when $k\ll X^{1/3-\delta}$. Hence, the second bound is also established. 
\end{proof}

\section{An improved pointwise bound for long sums with rational twists}

\noindent In this section we improve the best known upper bound for rationally additively twisted sums with sufficiently small denominators.
\begin{theorem}
Let $x\in\left[1,\infty\right[$, and let $h$ and $k$ be coprime integers with $1\leqslant k\ll x^{1/3}$. Then
\[\sum_{m\leqslant x}A(m,1)\,e\!\left(\frac{mh}k\right)\ll_\eps k^{3/4}\,x^{1/2+\vartheta/2+\varepsilon}.\]
\end{theorem}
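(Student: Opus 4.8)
## Proof Proposal

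The plan is to feed the exponent pair $(\alpha,\beta) = (1/2+\varepsilon, 3/4)$ — which at first glance is circular, since this is exactly what we are trying to prove — into the machinery built up in Section 10, and close the loop by a bootstrapping argument. More precisely, I would start from the relation between the mean square of $\widetilde A_1(x;h/k)$ and pointwise bounds for $\widetilde A_0(x;h/k)$ that is implicit in Theorem \ref{long-omega} and its corollary, but run it in the \emph{opposite} direction: use an \emph{upper} bound for $\widetilde A_1(x;h/k)$ (which we have unconditionally, with no averaging, from Corollary \ref{better-a1bound}, namely $\widetilde A_{1,j}(x;h/k) \ll_\varepsilon k^{3/2} x^{1+\varepsilon}$) together with the key identity (\ref{A1-identity}) to bound $\widetilde A_0(x;h/k)$ pointwise. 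Combining the identity
\[
\widetilde A_1\left(x;\frac hk\right)=\frac1H\int\limits_x^{x+H}\widetilde A_1\left(t;\frac hk\right)\,\mathrm dt-\frac1H\int\limits_x^{x+H}\int\limits_x^t \widetilde A_0\left(u;\frac hk\right)\,\mathrm du\,\mathrm dt
\]
with an estimate for the first integral (via Corollary \ref{sharpcutoff-voronoi} and the first derivative test, giving $\ll H^{-1} x^{5/3} k^{5/2} + k^{5/2} x^{2/3}$) and the trivial estimate $\widetilde A_0(u;h/k) = \widetilde A_0(x;h/k) + O\left(\sup_{[x,x+H]}|\widetilde A_0'|\right) \cdot \ldots$ — or more cleanly, by using the truncated Voronoi identity of Corollary \ref{sharpcutoff-voronoi} directly on $\widetilde A_1$ and differencing — one extracts a pointwise bound for $\widetilde A_0(x;h/k)$.

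In more detail: the cleanest route is probably to apply the truncated version of Corollary \ref{sharpcutoff-voronoi} to $\widetilde A_1(x;h/k)$ with a parameter $N$, bound the Voronoi series termwise using Lemma \ref{j-asymptotics} and the Rankin--Selberg average bounds (\ref{Average-estimates}) for $\sum_{m\le x}|A(d,m)|$, and balance against the error term $O(k^{5/2} x^{2/3})$. Alternatively — and this is likely what is actually intended given the appearance of $\vartheta$ — one uses the identity $\widetilde A_0(x;h/k) = \sum_{m\le x} A(m,1) e(mh/k) + O(k^{3/2+\varepsilon})$ together with the relation (\ref{Key-relation}) at level $a=0$: writing $\widetilde A_1$ as an average of $\widetilde A_1$ over a short interval minus a double integral of $\widetilde A_0$, and then \emph{solving} for $\widetilde A_0$. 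Here one inputs the known upper bound $\widetilde A_{1,j}(x;h/k)\ll_\varepsilon k^{3/2}x^{1+\varepsilon}$ (Corollary \ref{better-a1bound}) on \emph{both} occurrences of $\widetilde A_1$, giving
\[
\left|\frac1H\int_x^{x+H}\int_x^t \widetilde A_0\left(u;\frac hk\right)\,\mathrm du\,\mathrm dt\right| \ll_\varepsilon k^{3/2} x^{1+\varepsilon},
\]
and then one needs a complementary \emph{lower-order smoothing} argument to descend from this averaged control of $\widetilde A_0$ to a pointwise bound. The trick for that descent is: $\widetilde A_0(x;h/k) - \widetilde A_0(x';h/k) = \sum_{x' < m \le x} A(m,1) e(mh/k)$, which for $|x - x'| = H$ is bounded by $\sum_{x<m\le x+H}|A(m,1)| \ll_\varepsilon H x^\varepsilon$ (by (\ref{Average-estimates}) with $d=1$, which needs $\vartheta$ harmlessly) \emph{provided $H \gg x^\vartheta$}; hence $\widetilde A_0(x;h/k) \ll_\varepsilon H^{-1}\left|\int_x^{x+H}\widetilde A_0\right| + H x^\varepsilon$, and the first integral is exactly $\widetilde A_1(x+H;h/k) - \widetilde A_1(x;h/k) \ll_\varepsilon k^{3/2} x^{1+\varepsilon}$. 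Choosing $H = k^{3/4} x^{1/2}$ to balance $H^{-1} k^{3/2} x$ against $H x^\vartheta \cdot x^\varepsilon$... gives $H^{-1} k^{3/2} x = k^{3/4} x^{1/2}$ and $H x^\vartheta = k^{3/4} x^{1/2+\vartheta}$, so the dominant term is $k^{3/4} x^{1/2 + \vartheta/2 + \varepsilon}$ — wait, let me recompute: balancing $k^{3/2} x / H$ with $H$ gives $H = k^{3/4} x^{1/2}$ and common value $k^{3/4} x^{1/2}$, but we also carry the additive $H x^\vartheta$ term which is then $k^{3/4} x^{1/2 + \vartheta}$; to instead balance $k^{3/2} x / H$ against $H x^\vartheta$ take $H = k^{3/4} x^{1/2 - \vartheta/2}$, yielding the claimed $k^{3/4} x^{1/2 + \vartheta/2 + \varepsilon}$, which is $\gg x^\vartheta$ exactly when $k \gg x^{\vartheta/3 - 2/3}$ — essentially always in our range — so the descent step is legitimate.

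The main obstacle, and the place where care is genuinely needed, is the descent from averaged to pointwise control of $\widetilde A_0$: one must verify that the short-interval sum $\sum_{x < m \le x+H} A(m,1) e(mh/k)$ really is controlled by $H x^\varepsilon$ uniformly via (\ref{Average-estimates}), that the parameter $H = k^{3/4} x^{1/2-\vartheta/2}$ stays in the admissible range $1 \le H \ll x$ (which holds since $k \ll x^{1/3}$), and that there is no hidden circularity — the input bound $\widetilde A_{1,j}(x;h/k) \ll_\varepsilon k^{3/2} x^{1+\varepsilon}$ from Corollary \ref{better-a1bound} is genuinely unconditional and does not itself rely on the conclusion we are proving. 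Everything else — the appeal to the truncated Voronoi identity, Lemma \ref{j-asymptotics}, and the first derivative test if one takes the more direct Voronoi route — is routine given the apparatus already assembled in Sections 5–10.
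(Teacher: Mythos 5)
Your final argument is exactly the paper's: replace $\widetilde A_0(x;h/k)$ by its average $H^{-1}\int_x^{x+H}\widetilde A_0(t;h/k)\,\mathrm dt$ at the cost of a short-sum error $O(Hx^{\vartheta+\varepsilon})$, identify that integral as $\widetilde A_1(x+H;h/k)-\widetilde A_1(x;h/k)\ll_\eps k^{3/2}x^{1+\varepsilon}$ by Corollary \ref{better-a1bound}, and balance with $H=k^{3/4}x^{(1-\vartheta)/2}$, so after the initial meandering your proposal and the paper coincide. The one imprecision is in sourcing the short-sum error: the estimate $\sum_{x<m\leqslant x+H}|A(m,1)|\ll Hx^{\vartheta+\varepsilon}$ follows from the pointwise bound $A(m,1)\ll_\eps m^{\vartheta+\varepsilon}$ rather than from (\ref{Average-estimates}) (which only controls long averages and does not give $\ll Hx^{\varepsilon}$ over a short window), and it holds for all $H\geqslant1$ without the side condition $H\gg x^{\vartheta}$ that you impose; your final balancing implicitly uses this correct version anyway.
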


\begin{proof}
By the estimate (\ref{Connecting_A_0_to_A}) it is enough to prove the desired bound for $\widetilde A_0(x;h/k)$. Let $H\in\left[1,\infty\right[$ satisfy $H\ll x$, which will be chosen at the end. Estimating a short exponential sum by absolute values yields
\[\int\limits_x^{x+H}\widetilde A_0\!\left(t;\frac hk\right)\,\mathrm dt-H\,\widetilde A_0\!\left(x;\frac hk\right)=\int\limits_x^{x+H}\left(\widetilde A_0\!\left(t;\frac hk\right)-\widetilde A_0\!\left(x;\frac hk\right)\right)\mathrm dt\ll_\varepsilon H^2\,x^{\vartheta+\varepsilon},\]
so that
\[H\,\widetilde A_0\!\left(x;\frac hk\right)=\int\limits_x^{x+H}\widetilde A_0\!\left(t;\frac hk\right)\,\mathrm dt+O(H^2\,x^{\vartheta+\varepsilon}).\]
We know from Corollary \ref{better-a1bound} that
\[\widetilde A_1\!\left(x;\frac hk\right)\ll_\eps k^{3/2}\,x^{1+\varepsilon}.\]
This bound gives
\begin{align*}
\widetilde A_0\!\left(x;\frac hk\right)
&=\frac1{H}\int\limits_{x}^{x+H}\widetilde A_0\!\left(t;\frac hk\right)\,\mathrm dt+O(H\,x^{\vartheta+\varepsilon})\\
&=\frac1{H}\left(\widetilde A_1\!\left(x+H;\frac hk\right)\vphantom{\Bigg|}-\widetilde A_1\!\left(x;\frac hk\right)\right)+O(H\,x^{\vartheta+\varepsilon})\\
&\ll_\eps\frac1H\,k^{3/2}\,x^{1+\varepsilon}+H\,x^{\vartheta+\varepsilon}.
\end{align*}
Choosing $H=k^{3/4}\,x^{(1-\vartheta)/2}$ yields
\[\widetilde A_0\!\left(x;\frac hk\right)\ll_\eps k^{3/4}\,x^{1/2+\vartheta/2+\varepsilon},\]
as desired. 
\end{proof}

\section{Short second moment for Riesz weighted sums}

\noindent The goal of this section is to compute the short second moment of Riesz weighted sums for $a=2$ and $a=3$. Let us start by listing some consequences of the first derivative test.

\begin{lemma}\label{first-derivative1}
Let $\Xi,U,T\in\left[0,X\right]$ and $k,m,n,d_1,d_2\in\mathbb Z_+$. Then
\begin{align*}
&\int\limits_X^{X+\Xi}x^{(4a+2)/3}\,e\!\left(\pm\frac{3\!\left(d_1^{2/3}m^{1/3}\left(x+T\right)^{1/3}+d_2^{2/3}n^{1/3}\left(x+U\right)^{1/3}\right)}k\right)\mathrm dx \ll\frac{X^{(4a+4)/3}k}{d_1^{2/3}m^{1/3}+d_2^{2/3}n^{1/3}}.
\end{align*}
\end{lemma}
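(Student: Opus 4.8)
The plan is to apply the first derivative test, Lemma \ref{Lemma-Huxley}, in a completely direct way; the only subtlety will be to check that the phase has a uniformly large derivative on the interval. Write the phase as
\[f(x):=\pm\frac3k\left(d_1^{2/3}m^{1/3}(x+T)^{1/3}+d_2^{2/3}n^{1/3}(x+U)^{1/3}\right)\]
and the amplitude as $g(x):=x^{(4a+2)/3}$, so that the integral in question is $\int_X^{X+\Xi}g(x)\,e(f(x))\,\mathrm dx$.

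First I would differentiate the phase, obtaining
\[f'(x)=\pm\frac1k\left(d_1^{2/3}m^{1/3}(x+T)^{-2/3}+d_2^{2/3}n^{1/3}(x+U)^{-2/3}\right).\]
The key point is that since $x\in[X,X+\Xi]$ with $0\leqslant\Xi\leqslant X$ and $0\leqslant T,U\leqslant X$, both $x+T$ and $x+U$ lie in $[X,3X]$, so $(x+T)^{-2/3}\asymp X^{-2/3}\asymp(x+U)^{-2/3}$; moreover the two summands in $f'(x)$ carry the common sign $\pm$, so there is no cancellation between them. Hence
\[|f'(x)|\gg\frac1{k\,X^{2/3}}\left(d_1^{2/3}m^{1/3}+d_2^{2/3}n^{1/3}\right)=:\lambda\]
uniformly on the interval, and I would use this $\lambda$ in Lemma \ref{Lemma-Huxley}.

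It then remains to feed in the amplitude. On $[X,X+\Xi]\subseteq[X,2X]$ we have $g(x)\ll X^{(4a+2)/3}=:G$, and since $g$ is monotone increasing, $\int_X^{X+\Xi}|g'(x)|\,\mathrm dx=g(X+\Xi)-g(X)\ll X^{(4a+2)/3}$. Plugging $G$, $\lambda$, and this last bound into Lemma \ref{Lemma-Huxley} gives
\[\int\limits_X^{X+\Xi}g(x)\,e(f(x))\,\mathrm dx\ll\frac G\lambda+\frac1\lambda\int\limits_X^{X+\Xi}|g'(x)|\,\mathrm dx\ll\frac{X^{(4a+2)/3}}\lambda\ll\frac{k\,X^{(4a+4)/3}}{d_1^{2/3}m^{1/3}+d_2^{2/3}n^{1/3}},\]
which is precisely the claimed estimate.

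I do not expect any genuine obstacle here: the argument is pure size bookkeeping once one has observed the crucial same-sign structure of $f'$, which forces the two terms to reinforce rather than cancel (in contrast to the truly off-diagonal situations treated elsewhere in the paper, where one instead has a difference of such terms and the small-divisor quantity $|d_1^{2/3}m^{1/3}-d_2^{2/3}n^{1/3}|$ appears in the denominator). The only point needing slight care is tracking the power of $X$ arising from $(x+T)^{-2/3}\asymp X^{-2/3}$, which converts the $G/\lambda$ bound $X^{(4a+2)/3}\cdot kX^{2/3}/(d_1^{2/3}m^{1/3}+d_2^{2/3}n^{1/3})$ into the stated $kX^{(4a+4)/3}/(d_1^{2/3}m^{1/3}+d_2^{2/3}n^{1/3})$.
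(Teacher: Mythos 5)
Your proof is correct and is exactly the argument the paper has in mind: the paper states that this lemma is ``straightforward to prove'' from Lemma \ref{Lemma-Huxley}, and your identification of the key point — that the two terms of $f'$ share the sign $\pm$ and hence reinforce, giving $|f'|\gg (d_1^{2/3}m^{1/3}+d_2^{2/3}n^{1/3})/(kX^{2/3})$ uniformly on $[X,X+\Xi]$ since $x+T,x+U\in[X,3X]$ — is precisely what distinguishes this estimate from Lemmas \ref{first-derivative-2} and \ref{first-derivative-3}, where the phase has a difference of such terms and one must work harder to control the small denominator. The bookkeeping with the amplitude $g(x)=x^{(4a+2)/3}$ (monotone, so $\int|g'|\ll X^{(4a+2)/3}$) is also as intended.
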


\begin{lemma}\label{first-derivative-2}
Let $\Xi,U,T\in\left[0,X\right]$ and $k,m,n,d_1,d_2\in\mathbb Z_+$ with $U\leqslant T$ and $d_1^2m<d_2^2n$. Then
\begin{align*}
&\int\limits_X^{X+\Xi}x^{(4a+2)/3}\,e\!\left(\pm\frac{3\!\left(d_1^{2/3}m^{1/3}\left(x+T\right)^{1/3}-d_2^{2/3}n^{1/3}\left(x+U\right)^{1/3}\right)}k\right)\mathrm dx \ll\frac{X^{(4a+4)/3}k}{d_2^{2/3}n^{1/3}-d_1^{2/3}m^{1/3}}.
\end{align*}
\end{lemma}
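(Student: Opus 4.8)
The plan is to prove both Lemma~\ref{first-derivative1} and Lemma~\ref{first-derivative-2} by a direct application of the first-derivative test, i.e.\ of Lemma~\ref{Lemma-Huxley}, after isolating the appropriate phase function and verifying a lower bound for the modulus of its derivative. In both cases the integrand is written as $g(x)\,e(f(x))$, where $g(x)=x^{(4a+2)/3}$ and $f(x)$ is the phase $\pm 3\bigl(d_1^{2/3}m^{1/3}(x+T)^{1/3}\pm d_2^{2/3}n^{1/3}(x+U)^{1/3}\bigr)/k$ (with the appropriate choice of sign in the second case). On $[X,X+\Xi]\subseteq[X,2X]$ we have $g(x)\ll X^{(4a+2)/3}$ and $g'(x)\ll X^{(4a-1)/3}$, so $\int_X^{X+\Xi}|g'(x)|\,\mathrm dx\ll X^{(4a+2)/3}$; hence Lemma~\ref{Lemma-Huxley} yields a bound of the shape $X^{(4a+2)/3}/\lambda$ once we produce a valid $\lambda$ with $|f'(x)|\geqslant\lambda$ throughout the interval.

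The computation of $f'$ is the heart of the matter. Differentiating,
\[
f'(x)=\pm\frac1k\left(d_1^{2/3}m^{1/3}(x+T)^{-2/3}\pm d_2^{2/3}n^{1/3}(x+U)^{-2/3}\right).
\]
In the setting of Lemma~\ref{first-derivative1} both inner terms carry the same sign, so there is no cancellation, and since $x+T,x+U\in[X,3X]$ we get $|f'(x)|\gg k^{-1}X^{-2/3}\bigl(d_1^{2/3}m^{1/3}+d_2^{2/3}n^{1/3}\bigr)$. Taking this as $\lambda$ and plugging into the first-derivative bound gives $X^{(4a+2)/3}\cdot k X^{2/3}\bigl(d_1^{2/3}m^{1/3}+d_2^{2/3}n^{1/3}\bigr)^{-1}=X^{(4a+4)/3}k\bigl(d_1^{2/3}m^{1/3}+d_2^{2/3}n^{1/3}\bigr)^{-1}$, which is exactly the claimed estimate. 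For Lemma~\ref{first-derivative-2} the two terms have opposite signs and one must rule out near-cancellation: here the extra hypotheses $U\leqslant T$ and $d_1^2m<d_2^2n$ are used. Since $d_1^2m<d_2^2n$ and $U\le T$, the negative term $d_2^{2/3}n^{1/3}(x+U)^{-2/3}$ dominates — indeed $d_2^{2/3}n^{1/3}(x+U)^{-2/3}\ge d_2^{2/3}n^{1/3}(x+T)^{-2/3}>d_1^{2/3}m^{1/3}(x+T)^{-2/3}$ — so the bracket is bounded below in absolute value by, say, half of its larger term, and one needs to quantify the gap. The cleanest route is to factor out $(x+T)^{-2/3}$ (or a common comparison point in $[X,3X]$) and reduce to bounding $d_2^{2/3}n^{1/3}-d_1^{2/3}m^{1/3}$ from below times something of size $X^{-2/3}$; using the integrality of $d_1^2m$ and $d_2^2n$ together with the strict inequality $d_1^2m<d_2^2n$ (hence $d_1^2m\le d_2^2n-1$) shows the difference $d_2^{2/3}n^{1/3}-d_1^{2/3}m^{1/3}$ is comparable to $\bigl(d_2^2n-d_1^2m\bigr)/\bigl(d_2^{2/3}n^{1/3}+d_1^{2/3}m^{1/3}\bigr)^2$, but for the present lemma we only need the weaker statement $|f'(x)|\gg k^{-1}X^{-2/3}\bigl(d_2^{2/3}n^{1/3}-d_1^{2/3}m^{1/3}\bigr)$, which follows once the near-cancellation is controlled.

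The main obstacle is therefore the lower bound for $|f'|$ in the difference case, i.e.\ showing that the two competing terms in $f'(x)$ cannot nearly cancel on the whole interval, and extracting the clean denominator $d_2^{2/3}n^{1/3}-d_1^{2/3}m^{1/3}$ rather than some messier expression. Concretely, I would argue that for $x\in[X,X+\Xi]$,
\[
\left|d_1^{2/3}m^{1/3}(x+T)^{-2/3}-d_2^{2/3}n^{1/3}(x+U)^{-2/3}\right|
\geqslant (x+T)^{-2/3}\left(d_2^{2/3}n^{1/3}-d_1^{2/3}m^{1/3}\right),
\]
using $(x+U)^{-2/3}\ge(x+T)^{-2/3}$ (from $U\le T$) for the second term, then bounding $(x+T)^{-2/3}\gg X^{-2/3}$. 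Once this inequality is in hand, the first-derivative test with $\lambda\asymp k^{-1}X^{-2/3}\bigl(d_2^{2/3}n^{1/3}-d_1^{2/3}m^{1/3}\bigr)$ closes the argument exactly as in the first lemma. Both proofs are short and essentially identical modulo this sign bookkeeping, so I would present them together.
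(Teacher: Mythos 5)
Your proof is correct and is exactly the straightforward first-derivative-test argument the paper has in mind when it says "the first two lemmas are straightforward to prove": differentiate the phase, use $U\leqslant T$ (so $(x+U)^{-2/3}\geqslant(x+T)^{-2/3}$) together with $d_1^{2/3}m^{1/3}<d_2^{2/3}n^{1/3}$ to get the lower bound $\lvert f'(x)\rvert\gg k^{-1}X^{-2/3}\bigl(d_2^{2/3}n^{1/3}-d_1^{2/3}m^{1/3}\bigr)$, and then apply Lemma~\ref{Lemma-Huxley} with $G\ll X^{(4a+2)/3}$ and $\int\lvert g'\rvert\ll X^{(4a+2)/3}$. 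One small aside: your remark that "the bracket is bounded below in absolute value by, say, half of its larger term" is not true without further information and is not needed; the inequality you give in the following paragraph is the clean and correct lower bound and is what actually closes the argument.
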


\begin{lemma}\label{first-derivative-3}
Let $\Xi,T\in\left]0,X\right]$ and $k,m,n,d_1,d_2\in\mathbb Z_+$ with $d_1^2m<d_2^2n$, $d_1\geqslant d_2$, as well as $d_1^2 m\leqslant X/6T$. Then
\begin{align}\label{final-first-derivative}
&\int\limits_X^{X+\Xi}x^{(4a+2)/3}\,e\!\left(\pm\frac{3\!\left(d_1^{2/3}m^{1/3}\,x^{1/3}-d_2^{2/3}n^{1/3}\left(x+T\right)^{1/3}\right)}k\right)\mathrm dx \nonumber \\
&\qquad\qquad\ll\frac{X^{(4a+4)/3}\,k\,n^{2/3}\,d_2^{-2/3}}{n-m}.
\end{align}
\end{lemma}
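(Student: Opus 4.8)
The plan is to recognise the integral in \eqref{final-first-derivative} as an oscillatory integral to which the first-derivative test, Lemma~\ref{Lemma-Huxley}, applies, and to spend essentially all of the effort on a serviceable lower bound for the derivative of the phase. Write
\[f(x):=\pm\frac3k\Bigl(d_1^{2/3}m^{1/3}x^{1/3}-d_2^{2/3}n^{1/3}(x+T)^{1/3}\Bigr),\qquad g(x):=x^{(4a+2)/3},\]
so the integral equals $\int_X^{X+\Xi}g(x)\,e(f(x))\,\mathrm dx$. On $[X,X+\Xi]\subseteq[X,2X]$ we have $g(x)\ll X^{(4a+2)/3}$ and $\int_X^{X+\Xi}|g'(x)|\,\mathrm dx\ll X^{(4a+2)/3}$, so Lemma~\ref{Lemma-Huxley} reduces the claim to producing the pointwise bound
\[|f'(x)|\gg\frac{(n-m)\,d_2^{2/3}}{k\,n^{2/3}\,X^{2/3}}\qquad(X\leqslant x\leqslant X+\Xi),\]
after which Lemma~\ref{Lemma-Huxley} returns exactly the estimate asserted in \eqref{final-first-derivative}.

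Differentiating, $f'(x)=\pm\dfrac{x^{-2/3}}k\bigl(d_1^{2/3}m^{1/3}-d_2^{2/3}n^{1/3}(1+T/x)^{-2/3}\bigr)$, and the hypothesis $m\leqslant X/(6T)$ enters here: for $x\geqslant X\geqslant6mT$ one has $0<T/x\leqslant1/(6m)\leqslant1/6$, hence $(1+T/x)^{-2/3}=1-\psi(x)$ with $0\leqslant\psi(x)\leqslant\tfrac{2T}{3x}\leqslant\tfrac1{9m}$, so the two summands of $f'$ differ in their evaluation factors only by a controlled amount. Putting $\delta:=d_2^{2/3}n^{1/3}-d_1^{2/3}m^{1/3}>0$ and factoring $\delta=(d_2^2n-d_1^2m)\big/(d_2^{4/3}n^{2/3}+d_1^{2/3}d_2^{2/3}m^{1/3}n^{1/3}+d_1^{4/3}m^{2/3})$, the inequality $d_1^{2/3}m^{1/3}<d_2^{2/3}n^{1/3}$ gives $\delta\asymp(d_2^2n-d_1^2m)/(d_2^{4/3}n^{2/3})$, a normalisation I would keep in view.

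First I would dispose of the generic case $d_1^2m\leqslant\tfrac12 d_2^2n$: then $d_1^{2/3}m^{1/3}\leqslant2^{-1/3}d_2^{2/3}n^{1/3}$ while $d_2^{2/3}n^{1/3}(1-\psi(x))\geqslant\tfrac89d_2^{2/3}n^{1/3}$, so $k\,x^{2/3}|f'(x)|=\bigl|d_2^{2/3}n^{1/3}(1-\psi(x))-d_1^{2/3}m^{1/3}\bigr|\gg d_2^{2/3}n^{1/3}$, whence $|f'(x)|\gg d_2^{2/3}n^{1/3}/(kX^{2/3})$; since $n-m<n$ this already beats the required bound and Lemma~\ref{Lemma-Huxley} finishes the case. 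The remaining range $\tfrac12d_2^2n<d_1^2m<d_2^2n$ is the substantive one: there $d_1^2m\asymp d_2^2n$, hence $d_1^{4/3}m^{2/3}\asymp d_2^{4/3}n^{2/3}$ and (using $d_1\geqslant d_2$) $m<n\asymp(d_1/d_2)^2m$, and $f$ has a single critical point $x_0=Td_1\sqrt m\,(d_2\sqrt n+d_1\sqrt m)/(d_2^2n-d_1^2m)$. I would split $[X,X+\Xi]$ into the portion at distance $\geqslant cX$ from $x_0$, where the factorisation $q^3-p^3=(d_2^2n-d_1^2m)(x-x_0)(x-x_{\mathrm{neg}})$ with $p=d_1^{2/3}m^{1/3}(x+T)^{2/3}$, $q=d_2^{2/3}n^{1/3}x^{2/3}$ and $x_{\mathrm{neg}}<0$ gives $k|f'(x)|\asymp(d_2^2n-d_1^2m)|x-x_0|/(d_2^{4/3}n^{2/3}X^{5/3})$, and a short portion around $x_0$, on which $k|f''(x)|\asymp d_2^{2/3}n^{1/3}T/X^{8/3}$ (derived as above but with $(1+T/x)^{-5/3}$ replacing $(1+T/x)^{-2/3}$) and one instead invokes a van der Corput second-derivative estimate; the identity $(d_2^2n-d_1^2m)(X-x_0)=X(d_2^2n-d_1^2m)-Td_1\sqrt m(d_2\sqrt n+d_1\sqrt m)$ together with $m\leqslant X/(6T)$ controls how close $x_0$ can come to the interval and hence how these two pieces balance.

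The hard part will be precisely this last step. The crude first-derivative estimate in the tight range only produces a factor $1/(d_2^2n-d_1^2m)$, which is weaker than the $1/(n-m)$ required in \eqref{final-first-derivative} by the amount $d_2^2(n-m)/(d_2^2n-d_1^2m)=1+(d_1^2-d_2^2)m/(d_2^2n-d_1^2m)$, so the first-derivative test alone is not enough when $x_0$ lies in or beside $[X,X+\Xi]$; extracting the full strength of $1/(n-m)$ there is where the hypotheses $d_1\geqslant d_2$ and $m\leqslant X/(6T)$ must be pushed to their limit, through the combination of the first- and second-derivative tests on the split just described. The rest is routine bookkeeping.
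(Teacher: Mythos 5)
Your setup mirrors the paper's: the first-derivative test via Lemma~\ref{Lemma-Huxley}, the common-denominator form of $f'$, the cubic factorisation with $u^2+uv+v^2\asymp d_2^{4/3}n^{2/3}X^{4/3}$, the location of the critical point $x_0$, and the disposal of the generic range $d_1^2m\leqslant\tfrac{1}{2}d_2^2n$. But the paper does not split cases or invoke a second-derivative estimate at all; its entire proof is the assertion that $u^3-v^3=(d_2^2n-d_1^2m)x^2-d_1^2mT(2x+T)\asymp(d_2^2n-d_1^2m)X^2$, followed by a single application of Lemma~\ref{Lemma-Huxley}. You are right to be suspicious of that $\asymp$ in the tight range, but the sharper diagnosis -- which you circle without landing on -- is that the hypothesis $d_1\geqslant d_2$ is incompatible with the conclusion. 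Pushing the paper's own estimate through Lemma~\ref{Lemma-Huxley} gives $\ll X^{(4a+4)/3}k\,d_2^{4/3}n^{2/3}/(d_2^2n-d_1^2m)$, and reducing this to the stated $X^{(4a+4)/3}k\,n^{2/3}d_2^{-2/3}/(n-m)$ requires $d_2^2n-d_1^2m\geqslant d_2^2(n-m)$, i.e.\ $d_1\leqslant d_2$ -- the \emph{reverse} inequality.

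It is precisely under $d_1\leqslant d_2$ that the paper's numerator estimate is legitimate and there is no stationary point to worry about: one has $d_1^2mT(2x+T)\leqslant 5d_1^2\,mT\,X\leqslant\tfrac{5}{6}d_1^2X^2\leqslant\tfrac{5}{6}d_2^2(n-m)X^2\leqslant\tfrac{5}{6}(d_2^2n-d_1^2m)X^2$ using $mT\leqslant X/6$ and $d_1\leqslant d_2$, so the quadratic term $(d_2^2n-d_1^2m)x^2\geqslant(d_2^2n-d_1^2m)X^2$ dominates throughout $[X,X+\Xi]$ and hence $x_0<X$. For $d_1>d_2$, i.e.\ $(d_1,d_2)=(k,1)$ with $k>1$ prime, none of this goes through: taking $n-k^2m$ small and $mT\asymp X$, the critical point $x_0\asymp k^2mT/(n-k^2m)$ does lie inside $[X,X+\Xi]$ -- the resonance you feared -- and stationary phase forces the integral to be $\gg X^{(4a+2)/3}|f''(x_0)|^{-1/2}\asymp X^{(4a+6)/3}\sqrt{k}\,n^{-1/6}T^{-1/2}$, which exceeds the stated bound by a factor $\asymp(X/k)^{1/6}(X/T)^{2/3}\gg X^{1/9}$ once $k\ll X^{1/3}$. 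So the estimate is false as written in that case, and your split-and-van-der-Corput scaffolding is neither the paper's route nor a rescue of the resonant range; the constructive response to your own observation -- that the crude factor $1/(d_2^2n-d_1^2m)$ falls short of $1/(n-m)$ exactly \emph{because} $d_1\geqslant d_2$ -- would have been to flag the hypothesis as a slip for $d_1\leqslant d_2$, after which the paper's one-step first-derivative argument closes without any further work.
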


\noindent The first two lemmas are straightforward to prove, but the third one requires some explanation. The derivative of the phase function is, up to a sign, given by
\begin{align*}
\frac{d_1^{2/3}m^{1/3}}{x^{2/3}k}-\frac{d_2^{2/3}n^{1/3}}{(x+T)^{2/3}k}
=\frac{d_1^{2/3}m^{1/3}\left(x+T\right)^{2/3}-d_2^{2/3}n^{1/3}\,x^{2/3}}{\left(x+T\right)^{2/3}\,x^{2/3}\,k}.
\end{align*}
The denominator is of course $\asymp X^{4/3}k$, and the numerator is, by using the identity $x^3-y^3=(x-y)(x^2+xy+y^2)$, given by
\begin{align*}
\frac{\left(d_1^2m-d_2^2n\right)x^2+2\,x\,Td_1^2m+d_1^2m\,T^2}{\left(d_1^{2/3}m^{1/3}\left(x+T\right)^{2/3}\right)^2+
d_1^{2/3}m^{1/3}\,d_2^{2/3}n^{1/3}\,x^{2/3}\left(x+T\right)^{2/3}
+\left(d_2^{2/3}n^{1/3}\,x^{2/3}\right)^2}.
\end{align*}
In the numerator, the denominator is given by $\asymp d_2^{4/3}n^{2/3}\,X^{4/3}$ and the numerator is
\[\asymp\left(d_2^2n-d_1^2m\right)X^2.\]
Now (\ref{final-first-derivative}) follows immediately from Lemma \ref{Lemma-Huxley}.

Let us then proceed to the proof of the short mean square result for Riesz weighted exponential sums related to Hecke--Maass cusp forms. 

\begin{theorem}\label{short-mean-square}
Let $X\in\left[1,\infty\right[$, $k$ be a prime, and let $\Delta,\Xi\in\left[1,X\right]$ satisfy\footnote{Observe that the first condition implies $k\ll X^{1/3-\delta}$.} $X^{1/2+\delta}k^{3/2}\ll\Delta\ll X^{2/3-\delta}k$, and $X^{2+\delta}k^3\ll\Xi\,\Delta^2$ for any sufficiently small fixed $\delta>0$.
Then, for $X$ large enough,
\[\mathop{\text{\LARGE$\E$}}_{h\in\mathbb Z_k^\times}\int\limits_X^{X+\Xi}\left|\widetilde A_2\left(x+\Delta;\frac hk\right)-\widetilde A_2\left(x;\frac hk\right)\right|^2\mathrm dx\asymp\Xi\,\Delta^2\,X^2k^3,\]
and
\[\mathop{\text{\LARGE$\E$}}_{h\in\mathbb Z_k^\times}\int\limits_X^{2X}\left|\widetilde A_3\left(x+\Delta;\frac hk\right)-\widetilde A_3\left(x;\frac hk\right)\right|^2\mathrm dx\asymp\Xi\,\Delta^2\,X^{10/3}k^5.\]
\end{theorem}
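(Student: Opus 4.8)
The plan is to imitate the strategy of Theorem~\ref{A_2-meansquare} and of the long second moment computed in Section~10, but now applied to the difference $\widetilde A_a(x+\Delta;h/k)-\widetilde A_a(x;h/k)$ with $a\in\{2,3\}$ (and $\Xi$ taken to be $X$ in the second assertion). Writing each of the two sums via Corollary~\ref{approximate-voronoi-for-large-a} and subtracting, we obtain a Voronoi main term of the shape
\[
\frac{(-1)^ak^a}{(2\pi)^{a+1}\sqrt3}\sum_{d\mid k}\frac1{d^{(2a+1)/3}}\sum_{m=1}^{\infty}\frac{A(d,m)}{m^{(a+2)/3}}\sum_{\pm}i^{\pm a}\,S\!\left(\overline h,\pm m;\tfrac kd\right)E^{\pm}_{d,m}(x),
\]
where $E^{\pm}_{d,m}(x):=(x+\Delta)^{(2a+1)/3}e\!\left(\pm\tfrac{3d^{2/3}m^{1/3}(x+\Delta)^{1/3}}k\right)-x^{(2a+1)/3}e\!\left(\pm\tfrac{3d^{2/3}m^{1/3}x^{1/3}}k\right)$, together with an error that is $\ll k^{(2a+3)/2}d(k)X^{2a/3}$ uniformly on $[X,X+\Xi]$. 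Squaring, integrating over $x\in[X,X+\Xi]$ and averaging over $h\in\mathbb Z_k^\times$, we split the result into: (i) the contribution of the error term together with its cross terms with the main part; (ii) the diagonal $d_1^2m_1=d_2^2m_2$ with equal signs in the two $\sum_\pm$; and (iii) the off-diagonal, comprising all remaining terms (including equal-index, opposite-sign terms). Since $k$ is prime we have $d\in\{1,k\}$, which trivialises the divisor bookkeeping; the Kloosterman correlations are evaluated by Lemma~\ref{Kloosterman-correlation}, and the averaging over $h$ divides these by $\varphi(k)=k-1$, so that $\mathop{\text{\LARGE$\E$}}_{h\in\mathbb Z_k^\times}S(\overline h,m;k)^2\asymp k$ for $m\not\equiv0\pmod k$.

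For the diagonal main term (the pairs $d_1=d_2=1$, $m_1=m_2=m$, equal signs) one first records, for $x\asymp X$,
\[
\bigl|E^{\pm}_{1,m}(x)\bigr|^2=\Bigl((x+\Delta)^{(2a+1)/3}-x^{(2a+1)/3}\Bigr)^{2}+2(x+\Delta)^{(2a+1)/3}x^{(2a+1)/3}\Bigl(1-\cos\bigl(\tfrac{6\pi m^{1/3}}{k}((x+\Delta)^{1/3}-x^{1/3})\bigr)\Bigr),
\]
and since $(x+\Delta)^{1/3}-x^{1/3}\asymp\Delta X^{-2/3}$ and $\Delta\le cX^{2/3}k$ with $c$ small, the cosine argument is $\ll c$ for bounded $m$; hence $|E^{\pm}_{1,m}(x)|^2\asymp m^{2/3}\Delta^2X^{(4a-2)/3}k^{-2}$ for such $m$, with the quadratic approximation for $1-\cos$ giving the matching lower bound. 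Summing against the weight $|A(1,m)|^2m^{-2(a+2)/3}$, the exponent of $m$ becomes $-(2a+2)/3<-1$, so the series converges: the $m=1$ term yields the lower bound, the full series the matching upper bound. Collecting the prefactor $k^{2a}$, the averaged Kloosterman factor $\asymp k$, and the integration length $\Xi$, the $d_1=d_2=1$ diagonal is $\asymp k^{2a-1}\Xi\Delta^2X^{(4a-2)/3}$, which is $\asymp\Xi\Delta^2X^2k^3$ for $a=2$ and $\asymp\Xi\Delta^2X^{10/3}k^5$ for $a=3$. The terms with $d_1=d_2=k$ are smaller by a power of $k$ (using $\sum_m|A(k,m)|^2m^{-s}\ll k^{2\vartheta+\varepsilon}$, a consequence of (\ref{Average-estimates})), the cross pairs $d_1d_2=k$ are dominated by these via the arithmetic--geometric mean inequality exactly as in the proof of Theorem~\ref{A_2-meansquare}, and the ``large-$m$'' part of the diagonal (where the cosine argument exceeds $1$, i.e.\ $m\gg (X^{2/3}k/\Delta)^3$) contributes only $\ll\Xi\Delta^{2a+1}$, which is admissible because $\Delta\le cX^{2/3}k$.

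The off-diagonal terms are handled with the first-derivative test. Expanding each product $E^{\pm}_{d_1,m_1}(x)\overline{E^{\pm}_{d_2,m_2}(x)}$ into four integrals with smooth weights of size $\asymp X^{(4a+2)/3}$ and applying the relevant one of Lemmas~\ref{first-derivative1}--\ref{first-derivative-3} in each case, every such integral is $\ll X^{(4a+4)/3}k\big/\bigl|d_1^{2/3}m_1^{1/3}\pm d_2^{2/3}m_2^{1/3}\bigr|$; bounding the Kloosterman sums pointwise by Weil (a factor $\ll k$ for the product) and summing, using the convergence of $\sum_{m_1\ne m_2}|A(1,m_1)||A(1,m_2)|(m_1m_2)^{-(a+2)/3}|m_1^{1/3}-m_2^{1/3}|^{-1}$ (valid for $a\geq2$) and (\ref{Average-estimates}) for the terms involving $d>1$, the off-diagonal is $\ll k^{2a+2}X^{(4a+4)/3}$. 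This is $\ll\Xi\Delta^2X^{(4a-2)/3}k^{2a-1}$ precisely when $X^2k^3\ll\Xi\Delta^2$, one of our hypotheses. Finally, after squaring, the error term contributes $\ll\Xi k^{2a+3}X^{4a/3}$, and by Cauchy--Schwarz its cross term with the main part is $\ll\Xi k^{2a+1}\Delta X^{(4a-1)/3}$; both are dominated by the diagonal main term as soon as $\Delta\gg k^2X^{1/3}$, which follows from $\Delta\gg X^{1/2}k^{3/2}$ together with $k\ll X^{1/3}$. Combining the three contributions yields the claimed $\asymp$ in both cases. I expect the main obstacle to be the off-diagonal bookkeeping: one must organise the sign/shift combinations in $E^{\pm}_{d_1,m_1}\overline{E^{\pm}_{d_2,m_2}}$, invoke the correct first-derivative estimate in each (the delicate instance being two nearly equal frequencies $d_1^{2/3}m_1^{1/3}\approx d_2^{2/3}m_2^{1/3}$ but distinct shifts, handled by Lemma~\ref{first-derivative-3}), and check that the admissible ranges of $\Delta$ and $\Xi$ are exactly those making every error contribution fall below the diagonal; the diagonal computation itself is routine once one observes that the smallness of $c$ forces the relevant phase differences to be $\ll1$.
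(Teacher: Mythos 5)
Your plan follows the paper's strategy quite closely, but there is one genuine gap: you apply the first-derivative lemmas to \emph{every} off-diagonal term, without introducing the intermediate cutoff $N_0:=X/(6\Delta)$ that the paper uses. Lemma~\ref{first-derivative-3} (the first-derivative test for the "close frequencies, distinct shifts" configuration you correctly single out as the delicate one) carries the hypothesis $m\leqslant X/(6T)$, i.e.\ $m\leqslant N_0$ with $T=\Delta$. Without that, the numerator of the phase derivative, namely $\left(d_1^2m-d_2^2n\right)x^2+2xTd_1^2m+d_1^2mT^2$, need not be $\asymp\left(d_2^2n-d_1^2m\right)X^2$, and the stated bound fails. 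A trivial estimate of the unmanageable off-diagonal range is not a fallback: with $a=2$ the weight $m^{-(a+2)/3}=m^{-4/3}$ is too close to critical, and the resulting tail $\sum_{m>N_0}\left|A(1,m)\right|m^{-4/3}\ll N_0^{\vartheta-1/3+\varepsilon}$ may actually grow under the Kim--Sarnak bound $\vartheta\leqslant5/14$, so the ensuing error overtakes the main term $\Xi\Delta^2X^{(4a-2)/3}k^{2a-1}$ in part of the assumed range. The paper resolves this by splitting each $s_\lambda^\pm$ into low ($m\leqslant N_0$) and high ($m>N_0$) frequency parts; for the high-high piece only same-shift pairs occur (so Lemma~\ref{first-derivative-2} suffices and the $m\leqslant X/(6T)$ constraint is moot), for the low-low piece the constraint $m\leqslant N_0$ guarantees Lemma~\ref{first-derivative-3}'s hypothesis, and the low-high cross term is bounded separately. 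You would need to insert exactly this split into your off-diagonal bookkeeping to close the argument.

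A smaller stylistic difference: the paper first replaces the amplitude $(x+\Delta)^{(2a+1)/3}$ by $x^{(2a+1)/3}$ at a cost $\ll\Delta X^{(2a-2)/3}k^{(2a+1)/2}$, absorbed into the existing $O(X^{2a/3}k^{(2a+3)/2})$ error because $\Delta\ll kX^{2/3}$. This removes the amplitude-difference term $\left((x+\Delta)^{(2a+1)/3}-x^{(2a+1)/3}\right)^2$ from your diagonal computation. Your way of keeping it works, but you should record that the amplitude term $\asymp\Delta^2X^{(4a-4)/3}$ is $\ll m^{2/3}\Delta^2X^{(4a-2)/3}k^{-2}$ because $k^3\ll mX$ (which follows from $k\ll X^{1/3}$); otherwise the claimed $\asymp$ for $|E_{1,m}^{\pm}|^2$ is not justified. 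Apart from the missing $N_0$ split, the remaining steps (AM--GM for the $d_1d_2=k$ pairs, the $N_1\asymp X^2k^3\Delta^{-3}$ cutoff for the diagonal lower bound, and the numerical checks that $\Delta\gg k^2X^{1/3}$ and $\Xi\Delta^2\gg X^2k^3$ suffice) match the paper.
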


\begin{proof}   
We shall actually prove a more general result concerning $\widetilde A_a(x;h/k)$ for any integer $a\geqslant2$ satisfying $a\equiv0\pmod3$ or $a\equiv2\pmod3$. Our starting point is the formula
\begin{align*}   
&\widetilde A_a\left(x+\Delta;\frac hk\right)-\widetilde A_a\left(x;\frac hk\right)
=\frac{(-1)^ak^a}{(2\pi)^{a+1}\sqrt3}\sum_\pm i^{\pm a}\sum_{d|k}\sum_{m=1}^\infty\frac{A(d,m)}{m^{(a+2)/3}d^{(2a+1)/3}}S\left(\overline h,\pm m;\frac kd\right)\\
&\qquad\times\left(\left(x+\Delta\right)^{(2a+1)/3}e\!\left(\pm\frac{3(md^2(x+\Delta))^{1/3}}k\right)-x^{(2a+1)/3}\,e\!\left(\pm\frac{3(md^2x)^{1/3}}k\right)\right)\\
&\qquad\qquad+O\,\left(X^{2a/3}k^{(2a+3)/2}\right),
\end{align*}
which follows from Corollary \ref{approximate-voronoi-for-large-a}. Note that $d(k)$ does not appear in the error term as we assume $k$ to be a prime.  

Exchanging here the factor $(x+\Delta)^{(2a+1)/3}$ to $x^{(2a+1)/3}$ costs $\ll\Delta\,X^{(2a-2)/3}k^{(2a+1)/2}$, which is $\ll X^{2a/3}k^{(2a+3)/2}$ provided that $\Delta\ll kX^{2/3-\delta}$. Thus,
\begin{align*}
&\widetilde A_a\left(x+\Delta;\frac hk\right)-\widetilde A_a\left(x;\frac hk\right)\\
&=\frac{(-1)^ax^{(2a+1)/3}k^a}{(2\pi)^{a+1}\sqrt3}\sum_\pm i^{\pm a}\sum_{d|k}\sum_{m=1}^\infty\frac{A(d,m)}{m^{(a+2)/3}d^{(2a+1)/3}}S\left(\overline h,\pm m;\frac kd\right)\\&\qquad
\times\left(e\!\left(\pm\frac{3(md^2(x+\Delta))^{1/3}}k\right)-e\!\left(\pm\frac{3(md^2x)^{1/3}}k\right)\right)+O\,\left(X^{2a/3}k^{(2a+3)/2}\right).
\end{align*}
Now the second moment may be expanded as
\begin{align}\label{second-moment-for-a-sum}
&\mathop{\text{\LARGE$\E$}}_{h\in\mathbb Z_k^\times}\int\limits_X^{X+\Xi}\left|\widetilde A_a\left(x+\Delta;\frac hk\right)-\widetilde A_a\left(x;\frac hk\right)\right|^2\,\mathrm dx \nonumber\\
&=\frac{k^{2a}}{(2\pi)^{2a+2}\cdot3} \nonumber\\
&\times\int\limits_X^{X+\Xi}x^{(4a+2)/3}\mathop{\text{\LARGE$\E$}}_{h\in\mathbb Z_k^\times}\left|\sum_\pm i^{\pm a}\sum_{d|k}\sum_{m=1}^\infty\frac{A(d,m)}{m^{(a+2)/3}d^{(2a+1)/3}}S\left(\overline h,\pm m;\frac kd\right)\left(e\!\left(\ldots\right)-e\!\left(\ldots\right)\right)\right|^2\mathrm dx \nonumber\\
&-\frac{(-1)^ak^a}{2^a\pi^{a+1}\sqrt3}\Re\int\limits_X^{X+\Xi}x^{(2a+1)/3}\sum_\pm i^{\pm a}\sum_{d|k}\sum_{m=1}^\infty\frac{A(d,m)}{m^{(a+2)/3}d^{(2a+1)/3}}
\mathop{\text{\LARGE$\E$}}_{h\in\mathbb Z_k^\times}S\left(\overline h,\pm m;\frac kd\right) \nonumber \\
&\qquad\qquad\qquad\times\left(e\!\left(\ldots\right)-e\!\left(\ldots\right)\right)\cdot O\,\left(X^{2a/3}k^{(2a+3)/2}\right)\,\mathrm dx+O\,\left(\Xi\,X^{4a/3}k^{2a+3}\right).
\end{align}
Once we have proved that the first term is $\asymp\Xi\,\Delta^2\,X^{(4a-2)/3}k^{2a-1}$, then the middle term is by the Cauchy--Schwarz inequality
\begin{align}\label{Estimate-1}
\ll\sqrt{\Xi\,\Delta^2\,X^{(4a-2)/3}k^{2a-1}}\sqrt{\Xi\,X^{4a/3}k^{2a+3}}
\ll\Xi\,\Delta\,X^{(4a-1)/3}\,k^{2a+1}.
\end{align}
Thus, we can focus on the first term on the right-hand side of (\ref{second-moment-for-a-sum}). In the course of examining this term various subterms arise. We will derive an upper bound for each of them and at the end of the proof show that these terms contribute less than the main term that will turn out to be $\asymp\Xi\,\Delta^2\,X^{(4a-2)/3}\,k^{2a-1}$. 

For $\lambda\in\{0,\Delta\}$ we write
\[s_\lambda^+:=i^a\sum_{d|k}\sum_{m=1}^\infty\frac{A(d,m)}{m^{(a+2)/3}d^{(2a+1)/3}}e\!\left(\frac{3(md^2(x+\lambda))^{1/3}}k\right)S\left(\overline h,m;\frac kd\right)\]and\[
s_\lambda^-:=i^{-a}\sum_{d|k}\sum_{m=1}^\infty\frac{A(d,m)}{m^{(a+2)/3}d^{(2a+1)/3}}e\!\left(-\frac{3(md^2(x+\lambda))^{1/3}}k\right)S\left(\overline h,-m;\frac kd\right),\]
so that, up to a constant, the integral under study takes the shape
\begin{align}\label{Expanding-the-integral}
&k^{2a}\int\limits_X^{X+\Xi}x^{(4a+2)/3}\mathop{\text{\LARGE$\E$}}_{h\in\mathbb Z_k^\times}\left|\sum_\pm i^{\pm a}\sum_{d|k}\sum_{m=1}^\infty\frac{A(d,m)}{m^{(a+2)/3}d^{(2a+1)/3}}S\left(\overline h,\pm m;\frac kd\right)\left(e\!\left(\ldots\right)-e\!\left(\ldots\right)\right)\right|^2\mathrm dx \nonumber\\
&=k^{2a}\int\limits_X^{X+\Xi}x^{(4a+2)/3}\mathop{\text{\LARGE$\E$}}_{h\in\mathbb Z_k^\times}\left|\left(s_\Delta^+-s_0^+\right)+\left(s_\Delta^--s_0^-\right)\right|^2\,\mathrm d x \nonumber\\
&= k^{2a}\sum_{\pm}\int\limits_X^{X+\Xi}x^{(4a+2)/3}\mathop{\text{\LARGE$\E$}}_{h\in\mathbb Z_k^\times}\left|s_\Delta^\pm-s_0^\pm\right|^2\mathrm dx \nonumber \\&\qquad\qquad\quad+2k^{2a}\Re\int\limits_X^{X+\Xi}x^{(4a+2)/3}\mathop{\text{\LARGE$\E$}}_{h\in\mathbb Z_k^\times}\left(s_\Delta^+-s_0^+\right)\overline{\left(s_\Delta^--s_0^-\right)}\,\mathrm d x,
\end{align}
where we sum over both choices of the sign $\pm$. Furthermore, we split each of the sums $s_\lambda^\pm$ into parts
\[s_\lambda^{\pm,\leqslant}:=i^{\pm a}\sum_{d|k}\sum_{m\leqslant N_0}\ldots\quad\text{and}\quad
s_\lambda^{\pm,>}:=i^{\pm a}\sum_{d|k}\sum_{m>N_0}\ldots,\]
where we set $N_0:=X/6\Delta$.

Now the first integral on the right-hand side of (\ref{Expanding-the-integral}) is
\begin{align}\label{Expanding-the-integral-2}
&k^{2a}\sum_\pm\int\limits_X^{X+\Xi}x^{(4a+2)/3}\mathop{\text{\LARGE$\E$}}_{h\in\mathbb Z_k^\times}\left|s_\Delta^{\pm,\leqslant}-s_0^{\pm,\leqslant}\right|^2\mathrm dx\\
&\qquad+k^{2a}\sum_\pm\int\limits_X^{X+\Xi}x^{(4a+2)/3}\mathop{\text{\LARGE$\E$}}_{h\in\mathbb Z_k^\times}\left|s_\Delta^{\pm,>}-s_0^{\pm,>}\right|^2\mathrm dx \nonumber\\
&\qquad+2k^{2a}\Re\sum_\pm\int\limits_X^{X+\Xi}x^{(4a+2)/3}\mathop{\text{\LARGE$\E$}}_{h\in\mathbb Z_k^\times}\left(s_\Delta^{\pm,\leqslant}-s_0^{\pm,\leqslant}\right)\overline{\left(s_\Delta^{\pm,>}-s_0^{\pm,>}\right)}\mathrm dx. \nonumber
\end{align}
Of the three new integrals we treat the last one first. It is given by
\begin{align*}
&2k^{2a}\Re\sum_\pm\int\limits_X^{X+\Xi}x^{(4a+2)/3}\left(s_\Delta^{\pm,\leqslant}-s_0^{\pm,\leqslant}\right)\overline{\left(s_\Delta^{\pm,>}-s_0^{\pm,>}\right)}\,\mathrm dx\\
&=k^{2a}\sum_\pm\int\limits_X^{X+\Xi}x^{(4a+2)/3}\left(\overline{\left(s_\Delta^{\pm,\leqslant}-s_0^{\pm,\leqslant}\right)}\left(s_\Delta^{\pm,>}-s_0^{\pm,>}\right)+\left(s_\Delta^{\pm,\leqslant}-s_0^{\pm,\leqslant}\right)\overline{\left(s_\Delta^{\pm,>}-s_0^{\pm,>}\right)}\right)\mathrm dx\\
&\ll k^{2a}\sum_\pm\sum_{d_1|k}\sum_{m\leq N_0}\frac{A\left(d_1,m\right)}{m^{(a+2)/3}d_1^{(2a+1)/3}}S\left(\overline h,\pm m;\frac k{d_1}\right)\\
&\qquad\qquad\times\sum_{d_2|k}\sum_{n> N_0}\frac{\overline{A\left(d_2,n\right)}}{n^{(a+2)/3}d_2^{(2a+1)/3}}S\left(\overline h,\pm n;\frac k{d_2}\right)\\
&\times\int\limits_X^{X+\Xi}x^{(4a+2)/3}\left(e\!\left(\mp\frac{3\left(x+\Delta\right)^{1/3}\left(m^{1/3}d_1^{2/3}-n^{1/3}d_2^{2/3}\right)}k\right) \right. \\
&\left. \qquad\qquad\qquad-e\!\left(\mp\frac{3\left(\left(x+\Delta\right)^{1/3}m^{1/3}d_1^{2/3}-x^{1/3}n^{1/3}d_2^{2/3}\right)}k\right)\right. \\ 
&\left.\qquad\qquad\qquad\qquad \qquad\quad-e\!\left(\mp\frac{3\left(x^{1/3}m^{1/3}d_1^{2/3}-\left(x+\Delta\right)^{1/3}n^{1/3}d_2^{2/3}\right)}k\right) \right. \\
&\left. \qquad\qquad\qquad\qquad\qquad\qquad\qquad\qquad+e\!\left(\mp\frac{3x^{1/3}\left(m^{1/3}d_1^{2/3}-n^{1/3}d_2^{2/3}\right)}k\right)\right)\mathrm d x.
\end{align*}
As $n>m$ and $k$ is a prime, the diagonal contribution from $d_1^2m=d_2^2n$ can only arise when $d_1=k$ and $d_2=1$ in the integrals corresponding to the first and fourth exponential term on the right-hand side of the previous display. This in particular means that $N_0<n=k^2m$ and so we have $m>N_0/k^2$ in the diagonal terms corresponding to the first and fourth exponential term. Hence, their total contribution is, for any sufficiently small fixed $\varepsilon>0$,
\begin{align}\label{Estimate-2}
&\ll k^{2a}\sum_{N_0/k^2<m\leq N_0}\frac{\left|A\left(k,m\right)\right|}{m^{(a+2)/3}k^{(2a+1)/3}}\cdot\frac{\left|A\left(1,k^2m\right)\right|}{\left(k^2m\right)^{(a+2)/3}}\cdot k^{1/2}\Xi X^{(4a+2)/3} \nonumber\\
&\ll_\eps k^{3\vartheta+2a/3-7/6}\Xi X^{(4a+2)/3}\sum_{N_0/k^2<m\leq N_0}\frac1{m^{(2a+4)/3-2\vartheta-2\varepsilon}} \nonumber\\
&\ll_\eps k^{3\vartheta+2a/3-7/6}\Xi X^{(4a+2)/3}\left(\frac{N_0}{k^2}\right)^{-2/3-\varepsilon}\sum_{N_0/k^2<m\leq N_0}\frac1{m^{(2a+2)/3-2\vartheta-3\varepsilon}} \nonumber\\
&\ll_\eps k^{3\vartheta+2a/3+1/6+2\varepsilon}\Delta^{2/3+\varepsilon}\Xi X^{4a/3-\eps},
\end{align}
where we have applied the pointwise bound $\left|A\left(m,n\right)\right|\ll (mn)^{\vartheta+\varepsilon}$ in the second step and in the final step we recall the definition of $N_0=X/6\Delta$. 

The diagonal contribution in the integrals corresponding to the two other exponentials are estimated similarly, noting that by the first derivative test
\begin{align*}
\int\limits_X^{X+\Xi}x^{(4a+2)/3}e\!\left(\mp\frac{3m^{1/3}d^{2/3}\left(\left(x+\Delta\right)^{1/3}-x^{1/3}\right)}k\right)\,\mathrm d x\ll\frac{X^{(4a+7)/3}\,k}{\Delta\,m^{1/3}d^{2/3}}.
\end{align*}
\noindent We may estimate the off-diagonal terms with $d_1^2m\neq d_2^2n$ by Lemma \ref{first-derivative-2} or Lemma \ref{first-derivative-3} depending on the term in question. By symmetry it is enough to consider the terms with $d_1^2m<d_2^2n$. As a model case, we consider the sum
\begin{align*}
&k^{2a}\sum_\pm\sum_{d_1|k}\sum_{m\leq N_0}\frac{A\left(d_1,m\right)}{m^{(a+2)/3}d_1^{(2a+1)/3}}S\left(\overline h,\pm m;\frac k{d_1}\right)\sum_{d_2|k}\sum_{n> N_0}\frac{\overline{A\left(d_2,n\right)}}{n^{(a+2)/3}d_2^{(2a+1)/3}}S\left(\overline h,\pm n;\frac k{d_2}\right)\\
&\qquad\qquad\quad\times\int\limits_X^{X+\Xi}x^{(4a+2)/3}e\!\left(\mp\frac{3\left(x^{1/3}m^{1/3}d_1^{2/3}-\left(x+\Delta\right)^{1/3}n^{1/3}d_2^{2/3}\right)}k\right)\,\mathrm d x
\end{align*}
For simplicity, we further restrict to terms with $d_1=k$, $d_2=1$ (in which case $n=d_2^2n>d_1^2m=k^2m$) as the other terms are easier to deal with. 

Note that when $N_0/k^2<m\leq N_0$ we may argue as in the diagonal case to see that these terms are by Weil's bound 
\begin{align}\label{Estimate-3}
&\ll k^{2a+1/2}\sum_{N_0/k^2\leq m\leq N_0}\frac{|A(k,m)|}{m^{(a+2)/3}k^{(2a+1)/3}}\sum_{n>N_0}\frac{|A(1,n)|}{n^{(a+2)/3}}\cdot X^{(4a+2)/3}\Xi \nonumber\\
&\ll k^{4a/3+1/6+\vartheta}X^{(4a+2)/3}\Xi N_0^{-a/3+1/3}\left(\frac{N_0}{k^2}\right)^{-a/3+1/3} \nonumber\\
&\ll k^{2a-1/2+\vartheta}X^{(2a+4)/3}\Xi\Delta^{(2a-2)/3}.
\end{align}

For the terms with $m\leq N_0/k^2$ we use Lemma \ref{first-derivative-3} and Weil's bound to estimate the sum in question as
\begin{align}\label{off-diagonal-special-case}
&\ll k^{2a+1/2}\sum_{m\leqslant N_0/k^2}\frac{\left|A(k,m)\right|}{m^{(a+2)/3}k^{(2a+1)/3}}\sum_{n>N_0}\frac{\left|A(1,n)\right|}{n^{(a+2)/3}}\cdot\frac{X^{(4a+4)/3}\,n^{2/3}k}{|n-m|} \nonumber\\
&\ll k^{4a/3+7/6}X^{(4a+4)/3}\sum_{m\leq N_0/k^2}\frac{|A(k,m)|}{m^{(a+2)/3}}\sum_{n>N_0}\frac{|A(1,n)|}{n^{a/3}}\cdot\frac1{|n-m|}.
\end{align}
To the inner sum we apply the Cauchy--Schwarz inequality and partial summation (together with the estimate (\ref{Average-estimates})) to see that 
\begin{align*}
\sum_{n>N_0}\frac{|A(1,n)|}{n^{a/3}}\cdot\frac1{|n-m|}&\leq\left(\sum_{n>N_0}\frac{|A(1,n)|^2}{n^{2a/3}}\right)^{1/2}\left(\sum_{n>N_0}\frac1{(n-m)^2}\right)^{1/2} \\
&\ll N_0^{-a/3+1/2},
\end{align*}
where the last estimate holds uniformly in $m\leq N_0$. Using this in (\ref{off-diagonal-special-case}) gives a contribution 
\begin{align}\label{Estimate-4}
&\ll k^{4a/3+7/6}X^{(4a+4)/3}\sum_{m\leq N_0/k^2}\frac{|A(k,m)|^2}{m^{(a+2)/3}}\left(\frac X\Delta\right)^{-a/3+1/2} \nonumber \\
&\ll_\varepsilon k^{4a/3+7/6+2\vartheta+\varepsilon}X^{a+11/6}\Delta^{a/3-1/2},
\end{align} 
where we have in addition used partial summation together with (\ref{Average-estimates}).

Note also that the second integral on the right-hand side of (\ref{Expanding-the-integral}) can be treated using even simpler arguments. Indeed, applying Lemma \ref{first-derivative1} to bound the exponential integrals, and estimating then trivially using the Weil bound shows that these terms contribute
\begin{align}\label{second-integral}
&\ll k^{2a+1}\sum_{d_1|k}\sum_{m=1}^\infty\frac{|A(d_1,m)|}{m^{(a+2)/3}d_1^{(2a+1)/3}}\sum_{d_2|k}\sum_{n=1}^\infty\frac{|A(d_2,n)|}{n^{(a+2)/3}d_1^{(2a+1)/3}}\cdot\frac1{\sqrt{d_1d_2}} \nonumber\\
&\qquad\qquad\qquad\qquad\times\frac{X^{(4a+4)/3}k}{d_1^{2/3}m^{1/3}+d_2^{2/3}n^{1/3}} \nonumber \\
&\ll X^{(4a+4)/3}k^{2a+2}.
\end{align} 
  
Let us next treat the middle term in (\ref{Expanding-the-integral-2}). Using the triangle inequality it is clearly enough to estimate the integrals
\[k^{2a}\int\limits_X^{X+\Xi}x^{(4a+2)/3}\,\bigl|s_\lambda^{\pm,>}\bigr|^2\,\mathrm dx.\]
We expand the square to see that the integral is bounded from above by the expression
\begin{align*}
&k^{2a}\sum_{d_1|k}\sum_{d_2|k}\sum_{m>N_0}\sum_{n>N_0}\frac{\left|A\left(d_1,m\right)A\left(d_2,n\right)\right|}{m^{(a+2)/3}n^{(a+2)/3}d_1^{(2a+1)/3}d_2^{(2a+1)/3}}S\left(\overline h,\pm m;\frac k{d_1}\right)S\left(\overline h,\pm n;\frac k{d_2}\right)\\
&\qquad\qquad\times\int\limits_X^{X+\Xi}x^{(4a+2)/3}e\left(\pm\frac{3\left(x+\lambda\right)^{1/3}\left(m^{1/3}d_1^{2/3}-n^{1/3}d_2^{2/3}\right)}k\right)\mathrm d x.
\end{align*}
We treat the diagonal and off-diagonal terms separately. Let us first concentrate on the diagonal, which correspond to the terms where $d_1^2m_1=d_2^2m_2$. These contribute
\begin{align}\label{Estimate-5}
&\ll k^{2a+1}\sum_{m>N_0}\frac{\left|A(1,m)\right|^2}{m^{(2a+4)/3}}\cdot\Xi\,X^{(4a+2)/3} \nonumber\\
&\ll k^{2a+1}\,\Xi\,X^{(4a+2)/3}\,N_0^{-(2a+1)/3} \nonumber\\
&\ll k^{2a+1}\Xi\,X^{(2a+1)/3}\,\Delta^{(2a+1)/3}
\end{align}
by partial summation coupled with (\ref{Average-estimates}). 

Similarly, the off-diagonal terms contribute, using Lemma \ref{first-derivative-2} and Weil's bound, 
\begin{align*}
&\ll k^{2a+1}\sum_{d_1|k}\sum_{d_2|k}\sum_{m>N_0}\frac{\left|A(d_1,m)\right|}{m^{(a+2)/3}d_1^{(2a+1)/3}}\sum_{n>m}\frac{\left|A(d_2,n)\right|}{n^{(a+2)/3}d_2^{(2a+1)/3}}\cdot\frac{X^{(4a+4)/3}k}{|d_1^{2/3}m^{1/3}-d_2^{2/3}n^{1/3}|}.
\end{align*}
By symmetry it is enough to treat the terms in the $m$-sum with $d_1^2m<d_2^2n$. For these we split the $m$-sum into two parts according to whether $d_1^2m<d_2^2n/2$ or $d_2^2n/2\leq d_1^2m<d_2^2n$. Let $\eta>0$ be arbitrarily small, but fixed. Note that for $d_2^2n/2\leq d_1^2m$ we have, using (\ref{reciprocal-estimate}) and the pointwise bound $A(m,d)\ll (md)^{\vartheta+\eta}$, that 
\begin{align}\label{estimate_X}
&\frac{|A(d_2,n)|}{m^{(a+2)/3}d_1^{(2a+1)/3}n^{(a+2)/3}d_2^{(2a+1)/3}}\cdot\frac1{|d_2^{2/3}n^{1/3}-d_1^{2/3}m^{1/3}|} \nonumber\\
&\ll\frac{n^{\vartheta+\eta}}{m^{(a+2)/3}d_1^{(2a+1)/3}n^{(a+2)/3}d_2^{2a/3-1-\vartheta-\eta}|d_2^2n-d_1^2m|}.
\end{align}
Observe that 
\[ 
n^{a/3}=n^{1/6}n^{a/3-1/6}\gg\left(\frac{d_1^2m}{d_2^2}\right)^{1/6}n^{a/3-1/6}.\]
Using this together with an easy estimate
\[ 
n^\vartheta\ll\left(\frac{d_1^2m}{d_2^2}\right)^\vartheta\leq\left(\frac{d_1^2m}{d_2^2}\right)^{5/14}\]
gives that (\ref{estimate_X}) is
\begin{align*}
\ll \frac1{m^{a/3+10/21}d_1^{2a/3-1/21}n^{a/3-1/6-\eta}d_2^{2a/3-13/21-\vartheta-\eta}|d_2^2n-d_1^2m|}.
\end{align*}
This means that the terms with $d_2^2n/2\leq d_1^2m<d_2^2n$ contribute
\begin{align*}
&\ll k^{2a+2}X^{(4a+4)/3}\\
&\quad\times\sum_{d_1|k}\sum_{d_2|k}\sum_{n>N_0}\sum_{\substack{m>N_0\\
d_2^2n/2\leq d_1^2m<d_2^2n}}\frac{\left|A(d_1,m)\right|}{m^{a/3+10/21}d_1^{2a/3-1/21}n^{a/3-1/6-\eta}d_2^{2a/3-13/21-\vartheta-\eta}(d_2^2n-d_1^2m)}\\
&\ll k^{2a+2}X^{(4a+4)/3}\\
&\quad\times\sum_{d_1|k}\sum_{d_2|k}\sum_{n>N_0}\sum_{m>N_0}\left(\frac{d_1^2m}{d_2^2}\right)^{2\eta}\frac{\left|A(d_1,m)\right|}{m^{a/3+10/21}d_1^{2a/3-1/21}n^{a/3-1/6+\eta}d_2^{2a/3-13/21-\vartheta-\eta}(d_2^2n-d_1^2m)},
\end{align*}
where in the last step we have multiplied both the numerator and denominator by $n^{2\eta}$, used the estimate $n\ll d_1^2m/d_2^2$, and finally dropped the constraint $d_2^2n/2\leq d_1^2m<d_2^2n$ in the $m$-sum. Next, observe that for a fixed $m$ the $n$-sum can be bounded by the Cauchy--Schwarz inequality as
\[ 
\sum_{n>N_0}\frac1{n^{a/3-1/6+\eta}(d_2^2n-d_1^2m)}\leq\left(\sum_{n>N_0}\frac1{n^{2a/3-1/3+2\eta}}\right)^{1/2}\left(\sum_{n>N_0}\frac1{(d_2^2n-d_1^2m)^2}\right)^{1/2}\ll 1,\]
where the final bound is uniform in $m$, $d_1$, and $d_2$. 

Combining everything we have shown that in total the terms with $d_2^2n/2\leq d_1^2m<d_2^2n$ contribute, using partial summation together with (\ref{Average-estimates}),
\begin{align}\label{Estimate-6}
&\ll k^{2a+2}X^{(4a+4)/3}\sum_{d_1|k}\sum_{d_2|k}\sum_{m>N_0}\frac{1}{m^{a/3+10/21-2\eta}d_1^{2a/3-1/21-4\eta-\vartheta}d_2^{2a/3-13/21-\vartheta+3\eta}} \nonumber\\
&\ll k^{2a+2}X^{(4a+4)/3},
\end{align}
where the inner triple sum is $\ll 1$ by the estimate $\vartheta\leq 5/14$. 

For the terms with $d_1^2m<d_2^2n/2$ we note that by (\ref{estimate_X}) one has
\begin{align*}
&\frac{|A(d_2,n)|}{m^{(a+2)/3}d_1^{(2a+1)/3}n^{(a+2)/3}d_2^{(2a+1)/3}}\cdot\frac1{|d_2^{2/3}n^{1/3}-d_1^{2/3}m^{1/3}|} \nonumber\\
&\ll_\eps\frac{n^{\vartheta+\varepsilon}}{m^{(a+2)/3}d_1^{(2a+1)/3}n^{(a+2)/3}d_2^{2a/3-1-\vartheta-\varepsilon}(d_2^2n-d_1^2m)}\\
&\ll_\eps \frac1{m^{(a+2)/3}d_1^{(2a+1)/3}n^{(a+5)/3-\vartheta-\varepsilon}d_2^{(2a+7)/6-\vartheta-\varepsilon}}.
\end{align*}
Hence, the terms with $d_1^2m<d_2^2n/2$ contribute
\begin{align*}
\ll_\eps k^{2a+2}X^{(4a+4)/3}\sum_{d_1|k}\sum_{d_2|k}\sum_{n>N_0}\sum_{m>N_0}\frac{|A(d_1,m)|}{m^{(a+2)/3}d_1^{(2a+1)/3}n^{(a+5)/3-\vartheta-\varepsilon}d_2^{(2a+7)/6-\vartheta-\varepsilon}},
\end{align*}
which is again 
\begin{align}\label{Estimate-7}
\ll k^{2a+2}X^{(4a+4)/3}
\end{align}
using partial summation and (\ref{Average-estimates}).


Now only one integral remains, namely
\[k^{2a}\sum_\pm\int\limits_X^{X+\Xi}x^{(4a+2)/3}\mathop{\text{\LARGE$\E$}}_{h\in\mathbb Z_k^\times}\left|s_\Delta^{\pm,\leqslant}-s_0^{\pm,\leqslant}\right|^2\mathrm dx,\]
where we have for simplicity dropped the constant $(2\pi)^{-(2a+2)}/3$ in front. When we again multiply out $\left|\Sigma\right|^2=\Sigma\,\overline\Sigma$, we obtain from the off-diagonal terms by Lemma \ref{first-derivative-2} or Lemma \ref{first-derivative-3} depending on the term in question, and Weil's bound a contribution
\begin{align*}
&\ll k^{2a+2}\sum_{d_1|k}\sum_{d_2|k}\sum_{m\leqslant N_0}\sum_{m<n\leqslant N_0}\frac{\left|A(d_1,m)\right|\cdot\left|A(d_2,n)\right|}{m^{(a+2)/3}\,n^{(a+2)/3}d_1^{2a/3+5/6}d_2^{2a/3+5/6}}\cdot\frac{X^{(4a+4)/3}}{|d_1^{2/3}m^{1/3}-d_2^{2/3}n^{1/3}|}.
\end{align*}
Estimating this is completely analogous to the computation above and leads to an upper bound 
\begin{align}\label{Estimate-8}
\ll X^{(4a+4)/3}k^{2a+2}.
\end{align} 

To evaluate the diagonal contribution we set $N_1:=\ell\,X^2k^3\,\Delta^{-3}$ with a constant $\ell$ so small that
\[\frac{m^{1/3}\left((x+\Delta)^{1/3}-x^{1/3}\right)}k\leqslant\frac1{12},\]
say, for each $m\leqslant N_1$. Clearly $N_1<N_0$ for $X$ sufficiently large, as $\Delta\gg X^{1/2+\delta}k^{3/2}$.

By estimating trivially, the higher frequencies $m\geq N_1$ in the diagonal terms contribute
\begin{align}\label{Estimate-9}
&\ll k^{2a+1}\sum_{N_1<m\leqslant N_0}\frac{\left|A(1,m)\right|^2}{m^{(2a+4)/3}}\,\Xi\,X^{(4a+2)/3} \nonumber\\
&\ll k^{2a+1} N_1^{-2a/3-1/3}\,\Xi\,X^{(4a+2)/3}
\ll\Xi\,\Delta^{2a+1}.
\end{align}

In the remaining smaller frequency diagonal terms, we see that these terms are given by
\begin{align}\label{short-main-term}        
&k^{2a}\sum_\pm\sum_{d_1|k}\sum_{d_2|k}\sum_{m\leqslant N_1}\sum_{\substack{n\leqslant N_1\\d_1^2m=d_2^2n}}\frac{A(d_1,m)\overline{A(d_2,n)}}{m^{(a+2)/3}n^{(a+2)/3}d_1^{(2a+1)/3}d_2^{(2a+1)/3}}\nonumber \\
&\qquad\qquad\times\mathop{\text{\LARGE$\E$}}_{h\in\mathbb Z_k^\times}S\left(\overline h,\pm m;\frac k{d_1}\right)S\left(\overline h,\pm n;\frac k{d_2}\right) \nonumber\\
&\qquad\qquad\qquad\times\int\limits_X^{X+\Xi}x^{(4a+2)/3}\left|e\!\left(\pm\frac{3m^{1/3}d_1^{2/3}\left((x+\Delta)^{1/3}-x^{1/3}\right)}k\right)-1\right|^2\mathrm dx.
\end{align}    
Using Lemma \ref{Kloosterman-correlation} we note that this equals $S_1+S_2+S_3+S_4$, where
\begin{align*}
&S_1:=k^{2a}\left(k-\frac1{k-1}\right)\sum_\pm\sum_{\substack{m\leq N_1\\
m\not\equiv 0\,(k)}}\frac{|A(1,m)|^2}{m^{(2a+4)/3}}\\
&\qquad\qquad\times\int\limits_X^{X+\Xi}x^{(4a+2)/3}\left|e\left(\pm\frac{3m^{1/3}\left((x+\Delta)^{1/3}-x^{1/3}\right)}k\right)-1\right|^2\,\mathrm d x\\
&S_2:=k^{2a}\sum_\pm\sum_{m\leq N_1}\frac{|A(k,m)|^2}{m^{(2a+4)/3}k^{(4a+2)/3}}\\
&\qquad\qquad\times\int\limits_X^{X+\Xi}x^{(4a+2)/3}\left|e\left(\pm\frac{3m^{1/3}k^{2/3}\left((x+\Delta)^{1/3}-x^{1/3}\right)}k\right)-1\right|^2\,\mathrm d x\\
&S_3:=-2k^{2a}\sum_\pm\Re\sum_{m\leq N_1/k^2}\frac{A(1,k^2m)\overline{A(k,m)}}{(k^2m)^{(a+2)/3}m^{(a+2)/3}k^{(2a+1)/3}}\\
&\qquad\qquad\qquad\times\int\limits_X^{X+\Xi}x^{(4a+2)/3}\left|e\left(\pm\frac{3m^{1/3}k^{2/3}\left((x+\Delta)^{1/3}-x^{1/3}\right)}k\right)-1\right|^2\,\mathrm d x \\
&S_4:=k^{2a}\sum_\pm\sum_{\substack{m\leq N_1\\
m\equiv 0\,(k)}}\frac{|A(1,m)|^2}{m^{(2a+4)/3}}\int\limits_X^{X+\Xi}x^{(4a+2)/3}\left|e\left(\pm\frac{3m^{1/3}\left((x+\Delta)^{1/3}-x^{1/3}\right)}k\right)-1\right|^2\,\mathrm d x.
\end{align*}
Our next objective is to show that 
\begin{align}\label{Estimate-10}
S_1+S_2+S_3+S_4\asymp \Xi\Delta^2 X^{(4a-2)/3}k^{2a-1}.
\end{align}
The upper bound follows simply by estimating trivially using known average growth properties of the Fourier coefficients. Towards the lower bound, note that by the arithmetic-geometric mean inequality we have 
\begin{align*}
2\cdot\frac{|A(1,k^2m)\overline{A(k,m)|}}{(k^2m)^{(a+2)/3}m^{(a+2)/3}k^{(2a+1)/3}}\leq\frac{|A(1,k^2m)|^2}{(k^2m)^{(2a+4)/3}}+\frac{|A(k,m)|^2}{k^{ (4a+2)/3}m^{(2a+4)/3}}
\end{align*}
and so 
\begin{align*}
|S_3|&\leq k^{2a}\sum_\pm\sum_{m\leq N_1/k^2}\frac{|A(1,k^2m)|^2}{(k^2m)^{(2a+4)/3}}\int\limits_X^{X+\Xi}x^{(4a+2)/3}\left|e\left(\pm\frac{3m^{1/3}k^{2/3}\left((x+\Delta)^{1/3}-x^{1/3}\right)}k\right)-1\right|^2\,\mathrm d x\\
&\quad+k^{2a}\sum_\pm\sum_{m\leq N_1}\frac{|A(k,m)|^2}{k^{(4a+2)/3}m^{(2a+4)/3}}\int\limits_X^{X+\Xi}x^{(4a+2)/3}\left|e\left(\pm\frac{3m^{1/3}k^{2/3}\left((x+\Delta)^{1/3}-x^{1/3}\right)}k\right)-1\right|^2\,\mathrm d x\\
&\leq |S_4|+|S_2|.
\end{align*}
From this it follows that 
\begin{align*} 
S_1+S_2+S_3+S_4 &\gg k^{2a+1}\sum_\pm\sum_{m\leq N_1}\frac{|A(1,m)|^2}{m^{(2a+4)/3}}\left(1-1_{m\equiv 0\,(k)}\right)\\
&\qquad\qquad\times\int\limits_X^{X+\Xi} x^{(4a+2)/3}\left|e\left(\pm\frac{3m^{1/3}\left((x+\Delta)^{1/3}-x^{1/3}\right)}{k}\right)-1\right|^2\,\mathrm d x.
\end{align*}
Using the non-negativity of the terms involved and dropping all but the first term in the sum, this is
\[ 
\geq k^{2a+1}\sum_{\pm}\int\limits_X^{X+\Xi}x^{(4a+2)/3}\left|e\left(\pm\frac{3\left((x+\Delta)^{1/3}-x^{1/3}\right)}{k}\right)-1\right|^2\,\mathrm d x.\]   
Using the estimate $\left|e(\alpha)-1\right|\asymp\left|\alpha\right|$, which holds for small $|\alpha|$ (recall that $\Delta\ll X^{2/3-\delta}k$), we can bound this from below by
\begin{align*}
&\gg k^{2a+1}\Xi\,X^{(4a+2)/3}\left(\frac{\Delta}{X^{2/3}k}\right)^2\\
&\asymp\Xi\,\Delta^2\,X^{(4a-2)/3}\,k^{2a-1}.
\end{align*}
We conclude that the expression (\ref{short-main-term}) is $\asymp \Xi\,\Delta^2\,X^{(4a-2)/3}\,k^{2a-1}$. 

Combining estimates (\ref{second-moment-for-a-sum})-(\ref{Estimate-10}) we have shown that
\begin{align*}
\mathop{\text{\LARGE$\E$}}_{h\in\mathbb Z_k^\times}\int\limits_X^{X+\Xi}\left|\widetilde A_a\left(x+\Delta;\frac hk\right)-\widetilde A_a\left(x;\frac hk\right)\right|^2\,\mathrm dx=S+O(E),
\end{align*}
where $S\asymp\Xi\,\Delta^2\,X^{(4a-2)/3}\,k^{2a-1}$ and
\begin{align*}
E:=&\Xi X^{4a/3}k^{2a+3}+\Xi\Delta X^{(4a-1)/3}k^{2a+1}+k^{3\vartheta+2a/3+1/6+2\eps}\Delta^{2/3+\eps}\Xi X^{4a/3-\varepsilon} \\
&+k^{2a+\vartheta-1/2}X^{(2a+4)/3}\Xi\Delta^{(2a-2)/3}+k^{4a/3+7/6+\vartheta+\varepsilon}X^{a+11/6}\Delta^{a/3-1/2}\\
&+X^{(4a+4)/3}k^{2a+2}+k^{2a+1}\Xi X^{(2a+1)/3}\Delta^{(2a+1)/3}+\Xi \Delta^{2a+1}.
\end{align*}
Now we note that $E=o(S)$ by considering the terms in $E$ individually. Below we list which assumptions are needed to bound each of the terms. These claims can be easily verified by straightforward computations:  \\
\begin{itemize}
\item $\Xi X^{4a/3}k^{2a+3}=o(S)$ holds as $\Delta\gg X^{1/2+\delta}k^{3/2}$ and $k\ll X^{1/3-\delta}$.

\item $\Xi\Delta X^{(4a-1)/3}k^{2a+1}=o(S)$ holds as $\Delta\gg X^{1/2+\delta}k^{3/2}$ and $k\ll X^{1/3-\delta}$.

\item $k^{3\vartheta+2a/3+1/6+2\varepsilon}\Delta^{2/3+\varepsilon}\Xi X^{4a/3-\eps}=o(S)$ holds as $\Delta\gg X^{1/2+\delta}k^{3/2}$ and $\vartheta\leq 5/14$.

\item $k^{2a+\vartheta-1/2}X^{(2a+4)/3}\Xi\Delta^{(2a-2)/3}=o(S)$ holds as $\Delta\ll X^{2/3-\delta}k$, $k\ll X^{1/3-\delta}$, and $\vartheta<1/2$ (when $a\geq 4)$ and as $\Delta\gg X^{1/2+\delta}k^{3/2}$, $k\ll X^{1/3-\delta}$, and $\vartheta<1/2$ (when $a<4)$. 

\item $k^{4a/3+7/6+2\vartheta+\varepsilon}X^{a+11/6}\Delta^{a/3-1/2}=o(S)$ holds as $\Xi\Delta^2\gg X^{2+\delta}k^3$, $\Delta\ll X$, and $\vartheta\leq 5/14$.

\item $X^{(4a+4)/3}k^{2a+2}=o(S)$ holds as $X^{2+\delta}k^3\ll\Xi\Delta^2$. 

\item $k^{2a+1}\Xi X^{(2a+1)/3}\Delta^{(2a+1)/3}=o(S)$ holds as $\Delta\gg X^{1/2+\delta}k^{3/2}$ and $k\ll X^{1/3-\delta}$.

\item $\Xi \Delta^{2a+1}=o(S)$ holds as $\Delta\ll X^{2/3-\delta}k$.
\end{itemize} 
\noindent These yield the claimed estimate
\[ 
\mathop{\text{\LARGE$\E$}}_{h\in\mathbb Z_k^\times}\int\limits_X^{X+\Xi}\left|\widetilde A_a\left(x+\Delta;\frac hk\right)-\widetilde A_a\left(x;\frac hk\right)\right|^2\,\mathrm dx\asymp\Xi\,\Delta^2\,X^{(4a-2)/3}\,k^{2a-1}. \]
The statements of the theorem follow by specialising to the cases $a=2$ and $a=3$.

\end{proof}     
  
\section{$\Omega$-result for short sums of Fourier coefficients}

\noindent Choosing $\Xi=X$ (which clearly satisfies the required assumptions) in Theorem \ref{short-mean-square} leads to the bounds
\begin{align}\label{A_2-meansquare-2}
\mathop{\text{\LARGE$\E$}}_{h\in\mathbb Z_k^\times}\int\limits_X^{2X}\left|\widetilde A_2\left(x+\Delta;\frac hk\right)-\widetilde A_2\left(x;\frac hk\right)\right|^2\,\mathrm dx\asymp\Delta^2\,X^3k^3,
\end{align}                  
and
\begin{align}\label{A_3-meansquare}
\mathop{\text{\LARGE$\E$}}_{h\in\mathbb Z_k^\times}\int\limits_X^{2X}\left|\widetilde A_3\left(x+\Delta;\frac hk\right)-\widetilde A_3\left(x;\frac hk\right)\right|^2\,\mathrm dx\asymp\Delta^2\,X^{13/3}k^5
\end{align}
in the range $k^{3/2}X^{1/2+\delta}\ll\Delta\ll X^{2/3-\delta}k$. Using similar ideas as before, these can be used to deduce lower bounds for twisted short sums of the Hecke eigenvalues on average when one also averages over the numerator of the exponential twist, which will in turn yield an $\Omega$-result for short sums. 

\begin{theorem}\label{ShortA_0-lower-bound}
Let $X\in[1,\infty[$, let $k$ be a prime and suppose that $k^{3/2}\,X^{1/2+\delta}\ll\Delta\ll kX^{2/3-\delta}$ for any sufficiently small fixed $\delta>0$. Then, for sufficiently large $X$, we have 
\begin{align*}
\mathop{\text{\LARGE$\E$}}_{h\in\mathbb Z_k^\times}\int\limits_X^{4X}\left|\widetilde A_0\left(x+\Delta;\frac hk\right)-\widetilde A_0\left(x;\frac hk\right)\right|^2\,\mathrm dx\gg\Delta^2\,X^{1/3}k^{-1}.
\end{align*}
\end{theorem}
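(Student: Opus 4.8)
The plan is to iterate the mechanism used in the proof of Theorem~\ref{lower bounds}, now with the \emph{differenced} Riesz means
\[
D_a(x):=\widetilde A_a\!\left(x+\Delta;\frac hk\right)-\widetilde A_a\!\left(x;\frac hk\right),\qquad a\in\mathbb Z_+\cup\{0\},
\]
in place of the sums $\widetilde A_a(x;h/k)$ themselves. Since $\int_y^z\widetilde A_b(u;h/k)\,\mathrm du=\widetilde A_{b+1}(z;h/k)-\widetilde A_{b+1}(y;h/k)$, one has for every index $b$ the two relations $\int_x^{x+H}D_b(t)\,\mathrm dt=D_{b+1}(x+H)-D_{b+1}(x)$ and $D_b(x)-D_b(t)=-\int_x^t D_{b-1}(u)\,\mathrm du$, exactly as in~(\ref{Key-relation}). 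Combining these with the trivial identities $D_a(x)=\frac1H\int_x^{x+H}D_a(x)\,\mathrm dt=\frac1H\int_{x-H}^{x}D_a(x)\,\mathrm dt$ gives, for any $H\in\left[1,X/2\right]$ and any $a\geqslant1$, a forward representation
\[
D_a(x)=\frac1H\bigl(D_{a+1}(x+H)-D_{a+1}(x)\bigr)-\frac1H\int_x^{x+H}\int_x^t D_{a-1}(u)\,\mathrm du\,\mathrm dt
\]
and an analogous backward one involving $D_{a+1}(x)-D_{a+1}(x-H)$ and $\int_{x-H}^x\int_t^x D_{a-1}(u)\,\mathrm du\,\mathrm dt$. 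The point of keeping both is that for $x\in[X,3X/2]$ the forward representation involves $D_{a\pm1}$ only at points of $[X,2X]$, while for $x\in[3X/2,2X]$ the backward one does the same, provided $H\leqslant X/2$.

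From here I would square the relevant representation, use $|D_a(x)|^2\ll|P(x)|^2+|Q(x)|^2$ where $P(x)$ and $Q(x)$ are the two terms on its right-hand side, rewrite the inner double integral as $\int_x^{x+H}(x+H-u)D_{a-1}(u)\,\mathrm du$ and estimate it by the Cauchy--Schwarz inequality, and finally integrate over $x\in[X,2X]$ (splitting the range at $3X/2$) and apply Fubini. This should produce the bootstrap inequality
\[
\mathop{\text{\LARGE$\E$}}_{h\in\mathbb Z_k^\times}\int\limits_X^{2X}\bigl|D_a(x)\bigr|^2\,\mathrm dx\ll\frac1{H^2}\mathop{\text{\LARGE$\E$}}_{h\in\mathbb Z_k^\times}\int\limits_X^{2X}\bigl|D_{a+1}(x)\bigr|^2\,\mathrm dx+H^2\mathop{\text{\LARGE$\E$}}_{h\in\mathbb Z_k^\times}\int\limits_X^{2X}\bigl|D_{a-1}(x)\bigr|^2\,\mathrm dx
\]
for every $H\in[1,X/2]$, the two-sided shift being precisely what keeps all three integrals over $[X,2X]$. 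This is the exact analogue, for differenced sums, of the inequality on p.~26 exploited in the proof of Theorem~\ref{lower bounds}.

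I would then apply this twice, each time with a window $H\asymp kX^{2/3}$; this is legitimate because the hypotheses $k^{3/2}X^{1/2}\ll\Delta\ll kX^{2/3}$ force $k\ll X^{1/3}$ with small implied constant, hence $H\leqslant X/2$. First, with $a=2$: the left-hand side is $\gg\Delta^2X^3k^3$ by~(\ref{A_2-meansquare-2}), whereas the first term on the right is $\ll H^{-2}\Delta^2X^{13/3}k^5$ by~(\ref{A_3-meansquare}); choosing the implied constant in $H=c_0kX^{2/3}$ large enough makes this at most half of the left-hand side, which leaves
\[
\mathop{\text{\LARGE$\E$}}_{h\in\mathbb Z_k^\times}\int\limits_X^{2X}\bigl|D_1(x)\bigr|^2\,\mathrm dx\gg\frac{\Delta^2X^3k^3}{H^2}\asymp\Delta^2X^{5/3}k.
\]
Second, with $a=1$: the left-hand side is now $\gg\Delta^2X^{5/3}k$ by the previous line, the first term on the right is $\ll H^{-2}\Delta^2X^3k^3$ by~(\ref{A_2-meansquare-2}), and the same choice of $H$ absorbs it, leaving
\[
\mathop{\text{\LARGE$\E$}}_{h\in\mathbb Z_k^\times}\int\limits_X^{2X}\bigl|D_0(x)\bigr|^2\,\mathrm dx\gg\frac{\Delta^2X^{5/3}k}{H^2}\asymp\Delta^2X^{1/3}k^{-1},
\]
which is the asserted bound.

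The only genuine difficulty I anticipate is bookkeeping: one must verify that the window $H$ never leaves $[1,X/2]$ and that, at each step, $\Delta$ and $k$ remain in the range where~(\ref{A_2-meansquare-2}) and~(\ref{A_3-meansquare}) (equivalently Theorem~\ref{short-mean-square}) are available, together with the fact that the two-sided shift genuinely confines all arguments of $D_{a\pm1}$ to $[X,2X]$ so that no enlargement of the interval of integration is ever needed. All of these are consequences of the stated hypotheses once the implied constants there are taken sufficiently small, and conceptually nothing new is required beyond iterating the argument of Theorem~\ref{lower bounds} on the differenced sums $D_a$.
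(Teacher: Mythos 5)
Your proposal is correct and follows essentially the same route as the paper: apply the bootstrap inequality to the differenced Riesz means $D_a = \widetilde A_a(\cdot+\Delta;h/k)-\widetilde A_a(\cdot;h/k)$ twice, first passing from $D_2$ to $D_1$ (using the known mean squares of $D_2$ and $D_3$), then from $D_1$ to $D_0$. The only cosmetic differences are that the paper balances the two right-hand terms by solving for the optimal $H$ rather than choosing $H\asymp kX^{2/3}$ with a large constant (these are equivalent, and the optimal $H$ indeed comes out $\asymp kX^{2/3}$), and the paper does not fuss over the two-sided/one-sided shift or interval enlargement, since extending $[X,2X]$ to $[X,2X+H]$ only changes constants — worth noting that your care there is unnecessary. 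One small slip: you cannot literally reuse the same constant $c_0$ in the second application of the inequality, since the implied constant in the lower bound for $\mathop{\text{\LARGE$\E$}}\int|D_1|^2$ obtained in step one itself scales like $c_0^{-2}$; you need a larger constant (still $\asymp kX^{2/3}$) at the second step, which your phrase "a window $H\asymp kX^{2/3}$" already permits.
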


\noindent Notice that this result immediately implies Theorem \ref{Short-mean-square} (and consequently Theorem \ref{Short-Omega}) as the residue terms in the sums $\widetilde A_0(x;h/k)$ cancel each other. 
  
\begin{proof}
We begin by proving a lower bound for the mean square of a short first order Riesz sum $\widetilde A_1(x+\Delta;h/k)-\widetilde A_1(x;h/k)$. As in the proof of Theorem \ref{lower bounds}, this time using the pointwise bound $\widetilde A_3(x;h/k)\ll k^{7/2}x^{7/3}$ from Corollary \ref{a2bound}, we have for any $0<H\leq X$ that
\begin{align}\label{connecting_ineq_2}
&\mathop{\text{\LARGE$\E$}}_{h\in\mathbb Z_k^\times}\int\limits_X^{2X}\left|\widetilde A_2\left(x+\Delta;\frac hk\right)-\widetilde A_2\left(x;\frac hk\right)\right|^2\,\mathrm dx \nonumber\\
&\ll \frac1{H^2}\mathop{\text{\LARGE$\E$}}_{h\in\mathbb Z_k^\times}\int\limits_X^{2X}\left|\widetilde A_3\left(x+\Delta;\frac hk\right)-\widetilde A_3\left(x;\frac hk\right)\right|^2\,\mathrm dx \nonumber\\
&\qquad\qquad+H^2\mathop{\text{\LARGE$\E$}}_{h\in\mathbb Z_k^\times}\int\limits_X^{3X}\left|\widetilde A_1\left(x+\Delta;\frac hk\right)-\widetilde A_1\left(x;\frac hk\right)\right|^2\,\mathrm dx+\frac1H k^7X^{14/3} \nonumber\\
&\ll\frac1{H^2}\,\Delta^2\,X^{13/3}k^5+H^2\mathop{\text{\LARGE$\E$}}_{h\in\mathbb Z_k^\times}\int\limits_X^{3X}\left|\widetilde A_1\left(x+\Delta;\frac hk\right)-\widetilde A_1\left(x;\frac hk\right)\right|^2\,\mathrm dx+\frac1H k^7X^{14/3},
\end{align}
where the last estimate follows from (\ref{A_3-meansquare}). 

Next we shall optimise the choice of the parameter $H$. Actually, before that we need a preliminary lower bound for the moment 
\begin{align*}
\mathop{\text{\LARGE$\E$}}_{h\in\mathbb Z_k^\times}\int\limits_X^{3X}\left|\widetilde A_1\left(x+\Delta;\frac hk\right)-\widetilde A_1\left(x;\frac hk\right)\right|^2\,\mathrm dx
\end{align*}
we are interested in. This is to guarantee that our optimal choice for $H$ satisfies the constraint $H\ll \Delta^2X^{-1/3}k^{-2}$ (and in particular $H\leq X$ when $X$ is large enough). This is needed so that $H^{-1}X^{14/3}k^7\ll H^{-2}\Delta^2X^{13/3}k^5$.

We make the initial choice $H=\Delta^2X^{-1/3}k^{-2}$ (which satisfies the constraint $H\leq X$ for sufficiently large $X$ by the assumption $\Delta\ll X^{2/3-\delta}k$) in (\ref{connecting_ineq_2}). This gives
\begin{align}\label{prelim_lower_bound_b}
\mathop{\text{\LARGE$\E$}}_{h\in\mathbb Z_k^\times}\int\limits_X^{3X}\left|\widetilde A_1\left(x+\Delta;\frac hk\right)-\widetilde A_1\left(x;\frac hk\right)\right|^2\,\mathrm dx\gg \Delta^{-2}k^7X^{11/3}
\end{align}
using (\ref{A_2-meansquare-2}) as $\Delta\gg k^{3/2}X^{1/2+\delta}$. We now make the final choice for $H$ by choosing
\[ 
H=\frac{\Delta^{1/2}X^{13/12}k^{5/4}}{\sqrt[4]{\mathop{\text{\LARGE$\E$}}_{h\in\mathbb Z_k^\times}\int\limits_X^{3X}\left|\widetilde A_1\left(x+\Delta;\frac hk\right)-\widetilde A_1\left(x;\frac hk\right)\right|^2\,\mathrm dx}}.
\]
This satisfies the constraint $H\ll \Delta^2X^{-1/3}k^{-2}$ thanks to (\ref{prelim_lower_bound_b}) and the assumption $\Delta\gg X^{1/2+\delta}k^{3/2}$. Using this choice in (\ref{connecting_ineq_2}) gives
\begin{align}\label{prelim_lower_bound_29}
\mathop{\text{\LARGE$\E$}}_{h\in\mathbb Z_k^\times}\int\limits_X^{3X}\left|\widetilde A_1\left(x+\Delta;\frac hk\right)-\widetilde A_1\left(x;\frac hk\right)\right|^2\,\mathrm dx\gg\left(\frac{\Delta^2X^3k^3}{\Delta X^{13/6}k^{5/2}}\right)^2=\Delta^2X^{5/3}k
\end{align}
by (\ref{A_2-meansquare-2}), giving a lower bound for the short first order Riesz sum.

We now turn into estimating the sum appearing in the statement of the theorem. Again, arguing as in the proof of Theorem \ref{lower bounds} and using (\ref{A_2-meansquare-2}) together with the bound $\widetilde A_2(x;h/k)\ll x^{5/3}k^{5/2}$ gives
\begin{align}\label{connecting_ineq_3}
&\mathop{\text{\LARGE$\E$}}_{h\in\mathbb Z_k^\times}\int\limits_X^{3X}\left|\widetilde A_1\left(x+\Delta;\frac hk\right)-\widetilde A_1\left(x;\frac hk\right)\right|^2\,\mathrm dx \nonumber\\
&\ll\frac1{H^2}\mathop{\text{\LARGE$\E$}}_{h\in\mathbb Z_k^\times}\left(\int\limits_X^{2X}+\int\limits_{3X/2}^{3X}\right)\left|\widetilde A_2\left(x+\Delta;\frac hk\right)-\widetilde A_2\left(x;\frac hk\right)\right|^2\,\mathrm dx \nonumber \\
&\qquad+H^2\mathop{\text{\LARGE$\E$}}_{h\in\mathbb Z_k^\times}\int\limits_X^{4X}\left|\widetilde A_0\left(x+\Delta;\frac hk\right)-\widetilde A_0\left(x;\frac hk\right)\right|^2\,\mathrm dx+\frac1H X^{10/3}k^5 \nonumber\\
&\ll\frac1{H^2}\,\Delta^2\,X^3\,k^3+H^2\mathop{\text{\LARGE$\E$}}_{h\in\mathbb Z_k^\times}\int\limits_X^{4X}\left|\widetilde A_0\left(x+\Delta;\frac hk\right)-\widetilde A_0\left(x;\frac hk\right)\right|^2\,\mathrm dx+\frac1H X^{10/3}k^5
\end{align}
for any $0<H\leq X$.

Next we shall optimise the choice of the parameter $H$. Actually, before that we need a preliminary lower bound for the moment 
\begin{align*}
\mathop{\text{\LARGE$\E$}}_{h\in\mathbb Z_k^\times}\int\limits_X^{4X}\left|\widetilde A_0\left(x+\Delta;\frac hk\right)-\widetilde A_0\left(x;\frac hk\right)\right|^2\,\mathrm dx
\end{align*}
we are interested in. This is to guarantee that our optimal choice for $H$ satisfies the constraint $H\ll \Delta^2X^{-1/3}k^{-2}$ (and in particular $H\leq X$ when $X$ is large enough in view of the assumption $\Delta\ll kX^{2/3-\delta}$). This is needed so that $H^{-1}X^{10/3}k^5\ll H^{-2}\Delta^2X^3k^3$.

We make the initial choice $H=\Delta^2X^{-1/3}k^{-2}$ (which again satisfies the constraint $H\leq X$ for sufficiently large $X$) in (\ref{connecting_ineq_3}). This gives
\begin{align}\label{prelim_lower_bound_30}
\mathop{\text{\LARGE$\E$}}_{h\in\mathbb Z_k^\times}\int\limits_X^{4X}\left|\widetilde A_0\left(x+\Delta;\frac hk\right)-\widetilde A_0\left(x;\frac hk\right)\right|^2\,\mathrm dx\gg \Delta^{-2}X^{7/3}k^5
\end{align}
using (\ref{prelim_lower_bound_29}) as $\Delta\gg X^{1/2+\delta}k^{3/2}$. We now make the final optimal choice for $H$ by choosing
\[ 
H=\frac{\Delta^{1/2}X^{3/4}k^{3/4}}{\sqrt[4]{\mathop{\text{\LARGE$\E$}}_{h\in\mathbb Z_k^\times}\int\limits_X^{4X}\left|\widetilde A_0\left(x+\Delta;\frac hk\right)-\widetilde A_0\left(x;\frac hk\right)\right|^2\,\mathrm dx}}.
\]
This satisfies the constraint $H\ll \Delta^2X^{-1/3}k^{-2}$ thanks to (\ref{prelim_lower_bound_30}) and the assumption $\Delta\gg X^{1/2+\delta}k^{3/2}$. Using this choice in (\ref{connecting_ineq_3}) gives
 
\begin{align*}
k\,\Delta^2\,X^{5/3}&\ll \mathop{\text{\LARGE$\E$}}_{h\in\mathbb Z_k^\times}\int\limits_X^{3X}\left|\widetilde A_1\left(x+\Delta;\frac hk\right)-\widetilde A_1\left(x;\frac hk\right)\right|^2\,\mathrm dx\\
&\ll k^{3/2}\,X^{3/2}\Delta\sqrt{\mathop{\text{\LARGE$\E$}}_{h\in\mathbb Z_k^\times}\int\limits_X^{4X}\left|\widetilde A_0\left(x+\Delta;\frac hk\right)-\widetilde A_0\left(x;\frac hk\right)\right|^2\,\mathrm dx},
\end{align*}
and hence
\[\mathop{\text{\LARGE$\E$}}_{h\in\mathbb Z_k^\times}\int\limits_X^{4X}\left|\widetilde A_0\left(x+\Delta;\frac hk\right)-\widetilde A_0\left(x;\frac hk\right)\right|^2\,\mathrm dx\gg\left(\frac{\Delta^2 X^{5/3}k}{\Delta X^{3/2}k^{3/2}}\right)^2=\Delta^2\,X^{1/3}\,k^{-1}.\]
This completes the proof. 
\end{proof}

\section*{Appendix: Corrections to \cite{Jaasaari--Vesalainen1}}\label{gl3sums-corrections}

\noindent In \cite{Jaasaari--Vesalainen1} there was a wrong factor $d$ instead of the correct factor $d^{1/3}$ in Theorem 1. This was corrected in \cite{Jaasaari--Vesalainen3}, which also described how the proofs of Theorem 2 and Corollary 3 need to be modified in order to obtain the same conclusions. Alas, the computation of the asymptotics for the Meijer $G$-function revealed another mistake in the proof of Theorem 1 of \cite{Jaasaari--Vesalainen1}. Namely, when $j=1$, the $\Gamma$-quotient expression used to approximate the $\Gamma$-quotient coming from the functional equation of the twisted $L$-function of the underlying Maass cusp form is correct only in the upper half-plane; in the lower half-plane the factor $i^j$ should read $(-i)^j$, thereby changing Theorem 1, though fortunately not really affecting the proofs of Theorem 2 and Corollary 3 apart from obvious cosmetic changes. The corrected formulation of Theorem 1 of \cite{Jaasaari--Vesalainen1} reads as follows.
\begin{theorem}
Let $x,N\in\left[2,\infty\right[$ with $N\ll x$, and let $h$ and $k$ be coprime integers with $1\leqslant k\leqslant x$ and $k\ll\left(Nx\right)^{1/3}$, the latter having a sufficiently small implicit constant depending on the underlying Maass cusp form. Then we have
\begin{multline*}
\sum_{m\leqslant x}A(m,1)\,e\!\left(\frac{mh}k\right)\\
=\frac{x^{1/3}}{2\pi\sqrt3}\sum_{d\mid k}\frac1{d^{1/3}}\sum_{d^2\,m\leqslant N_k}\frac{A(d,m)}{m^{2/3}}\sum_\pm S\!\left(\overline h,\pm m;\frac kd\right)e\!\left(\pm\frac{3\,d^{2/3}\,m^{1/3}\,x^{1/3}}k\right)\\
+O(k\,x^{2/3+\vartheta+\varepsilon}\,N^{-1/3})+O(k\,x^{1/6+\varepsilon}\,N^{1/6+\vartheta}).
\end{multline*}
\end{theorem}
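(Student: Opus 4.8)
The statement to prove is the corrected Theorem~1 of \cite{Jaasaari--Vesalainen1}: a truncated Voronoi identity for the plain rationally twisted sum $\sum_{m\leqslant x}A(m,1)e(mh/k)$, with the main term a Dirichlet polynomial of length $N$ on the dual side and two error terms.

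The plan is to follow the same route that produced the Riesz-weighted Voronoi identities in Sections~7 and 8, but now pushed down to the case $a=0$ via a smoothing-desmoothing argument. First I would invoke the formula for $\widetilde A_1(x;h/k)$ in the precise form given by Corollary~\ref{improved-A_1-voronoi} (or Corollary~\ref{sharpcutoff-voronoi}), which after averaging over $j$ and applying Lemma~\ref{j-asymptotics} expresses $\widetilde A_1(x;h/k)$ as $\frac{kx}{4\pi^2\sqrt3}\sum_{d\mid k}\frac1d\sum_{m\leqslant M}\frac{A(d,m)}{m}\sum_\pm(\mp i)S(\overline h,\pm m;k/d)e(\pm 3d^{2/3}m^{1/3}x^{1/3}/k)$ plus a controllable error, with $M$ a parameter at our disposal (subject to $M\gg k^{-3}x^3$). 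The key relation is the elementary desmoothing identity already used in Section~11,
\[
\widetilde A_0\!\left(x;\frac hk\right)=\frac1H\left(\widetilde A_1\!\left(x+H;\frac hk\right)-\widetilde A_1\!\left(x;\frac hk\right)\right)+O\!\left(H\,x^{\vartheta+\varepsilon}\right),
\]
valid for any $1\leqslant H\ll x$; the $O$-term comes from bounding the short sum $\sum_{x\leqslant m\leqslant x+H}A(m,1)e(mh/k)$ trivially by $\sum_{x\leqslant m\leqslant x+H}|A(m,1)|\ll H x^{\vartheta+\varepsilon}$ using \eqref{Average-estimates}.

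Next I would substitute the truncated $\widetilde A_1$ formula into this identity. Writing $g_{d,m}^\pm(x):=x\,e(\pm 3d^{2/3}m^{1/3}x^{1/3}/k)$, the difference quotient $\frac1H(g_{d,m}^\pm(x+H)-g_{d,m}^\pm(x))$ should be compared to the derivative $\tfrac{\mathrm d}{\mathrm dx}g_{d,m}^\pm(x)=e(\pm 3d^{2/3}m^{1/3}x^{1/3}/k)(1\pm \pi i d^{2/3}m^{1/3}x^{-2/3}/k)$; the leading $x^{1/3}$-free part of this derivative, together with the factor $kx\cdot\frac1{m}$ out front and the extra $m^{1/3}x^{-2/3}/k$, reassembles into exactly $x^{1/3}d^{2/3}m^{-2/3}\cdot(\text{phase})$ up to the constant $\tfrac{3}{2\pi\sqrt3}$ adjustments — this is how the $\frac{x^{1/3}}{2\pi\sqrt3}$, the $d^{1/3}$ normalization (note $\tfrac1d\cdot d^{2/3}=d^{-1/3}$), and the $m^{-2/3}$ of the target formula emerge. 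The error in replacing the difference quotient by the derivative is controlled by a second-derivative (mean-value) estimate: $\frac1H(G(x+H)-G(x))-G'(x)\ll H\sup|G''|$, and summing $H\sup|G''|$ over the dual variables, weighted by $|A(d,m)|/m$ and $|S(\overline h,\pm m;k/d)|\ll (k/d)^{1/2+\varepsilon}$, gives a term of the shape $H\cdot k^{?}x^{?}M^{?}$. Together with the tail $\sum_{m>M}$ from truncating $\widetilde A_1$ (bounded by absolute values using Lemma~\ref{j-asymptotics}), the residual error from Corollary~\ref{improved-A_1-voronoi}, and the $O(Hx^{\vartheta+\varepsilon})$ above, one collects all error contributions and optimizes the two free parameters $H$ and $M$ against the requested output lengths. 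Relating the dual length: the condition $d^2 m\leqslant N_k$ in the statement (a slightly rescaled $N$) should correspond to the truncation $M$, so $M$ is essentially determined by $N$.

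The main obstacle I expect is the bookkeeping of error terms under the two-parameter optimization, and in particular showing that the second-derivative/mean-value bound on the desmoothing step does not dominate: one must verify that choosing $H$ small enough to kill $H\sup|G''|\cdot(\text{sum})$ and $Hx^{\vartheta+\varepsilon}$ is still compatible with $H$ large enough that the truncation length $M\gg k^{-3}x^3$ required by Corollary~\ref{improved-A_1-voronoi} translates, after rescaling by the $x^{1/3}$-factor, into the stated $N^{-1/3}$ and $N^{1/6+\vartheta}$ error exponents. A secondary subtlety, and the reason the correction to \cite{Jaasaari--Vesalainen1} was needed in the first place, is keeping careful track of the factor $i^j$ versus $(-i)^j$ when averaging over $j\in\{0,1\}$: one must use the corrected asymptotics of Lemma~\ref{j-asymptotics}, in which the sign of the phase in the two terms of $\sum_\pm$ is tied to the sign of $\exp(\pm\frac{\pi i}2(a+j))$, so that in the $a=0,1$ reductions the coefficient $\mp i$ appears (as in Corollary~\ref{sharpcutoff-voronoi}) rather than a spurious constant — this is exactly what produces the symmetric $\sum_\pm S(\overline h,\pm m;k/d)e(\pm\cdots)$ with a clean real-valued main term in the final statement. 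Modulo these two points, the argument is a routine if lengthy assembly of the pieces already established in Sections~5–8 and~11.
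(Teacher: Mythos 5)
Two points to make. First, the paper does not re-derive this result: the appendix only \emph{states} the corrected form of Theorem~1 of \cite{Jaasaari--Vesalainen1} and points to the original proof, whose structure (as the paper's description of the $i^j$ versus $(-i)^j$ mistake makes clear) is a direct Perron--plus--functional--equation--plus--Stirling argument carried out at the level of the sharp cut-off sum itself, with a careful truncation and tail analysis in the $a=0$ case. Your proposal is therefore a genuinely different route, going through the Riesz mean $\widetilde A_1$ and then desmoothing.

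The route as you describe it has a concrete gap. The $\widetilde A_1$ Voronoi identity from this paper carries a residual error of order at least $k^{3/2+\eps}x^{5/3}M^{-1/3}$ (Corollary~\ref{improved-A_1-voronoi}, with $M$ the truncation length). After desmoothing this contributes $\gg k^{3/2}x^{5/3}M^{-1/3}/H$, and to get this below the claimed error $kx^{2/3+\vartheta+\eps}N^{-1/3}$ (with $M\asymp N$) one needs $H\gg k^{1/2}x^{1-\vartheta}$. On the other hand the mean-value step costs $O(Hx^{\vartheta+\eps})$, and insisting this be $\ll kx^{2/3+\vartheta+\eps}N^{-1/3}$ forces $H\ll kx^{2/3}N^{-1/3}$. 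Compatibility requires $N\ll k^{3/2}x^{3\vartheta-1}$, which for $\vartheta\leqslant 5/14$ and $k\ll x^{1/3}$ restricts $N$ to a range far narrower than the $N\ll x$ claimed in the theorem, and is vacuous (forcing $N<1$) if $\vartheta=0$. In other words, the desmoothing method would deteriorate as one moves towards the Ramanujan--Petersson conjecture, whereas the theorem should only get easier. A second, smaller but genuine obstruction: all the $\widetilde A_1$ tools in this paper (Proposition~\ref{riesz-identity-a1}, Corollaries~\ref{improved-A_1-voronoi} and \ref{sharpcutoff-voronoi}) require $k\ll x^{1/3}$, whereas the theorem permits $k\ll(Nx)^{1/3}$, which for $N$ near $x$ allows $k$ up to roughly $x^{2/3}$; your proposed proof cannot cover this extended range. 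The claim that the argument is ``a routine if lengthy assembly'' therefore understates the problem: the optimization you defer to the end in fact fails to close.
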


\section*{Acknowledgements}

\noindent During the research the author was supported by the Finnish Cultural Foundation, by the Emil Aaltonen Foundation, and by the Engineering and Physical Sciences Research Council [grant number EP/T028343/1]. The author is grateful to Esa V. Vesalainen for several ideas and useful conversations related to the present work as well as sharing some of his computations, which were helpful in preparing this paper. The author also wishes to thank the anonymous referee for a detailed reading of the paper and many helpful comments which improved this work.

\bibliography{gl3omega}{}

\begin{thebibliography}{10}

\bibitem{Blomer}
Valentin Blomer.
\newblock Shifted convolution sums and subconvexity bounds for automorphic
  {$L$}-functions.
\newblock {\em Int. Math. Res. Not.}, (73):3905--3926, 2004.

\bibitem{Brudern}
J.~Br\"udern.
\newblock {\em Einf\"uhrung in die analytische Zahlentheorie}.
\newblock Springer, 1995.

\bibitem{CPZ2020}
M.~Cafferata, A.~Perelli, and A.~Zaccagnini.
\newblock An extension of the {B}ourgain-{S}arnak-{Z}iegler theorem with
  modular applications.
\newblock {\em Q. J. Math.}, 71(1):359--377, 2020.

\bibitem{Czarnecki}
Kyle Czarnecki.
\newblock Resonance sums for {R}ankin-{S}elberg products of
  {$SL_m(\mathbb{Z})$} {M}aass cusp forms.
\newblock {\em J. Number Theory}, 163:359--374, 2016.

\bibitem{Ernvall-Hytonen2011}
Anne-Maria Ernvall-Hyt\"{o}nen.
\newblock On the mean square of short exponential sums related to cusp forms.
\newblock {\em Funct. Approx. Comment. Math.}, 45:97--104, 2011.

\bibitem{Ernvall-Hytonen2015}
Anne-Maria Ernvall-Hyt\"{o}nen.
\newblock Mean square estimate for relatively short exponential sums involving
  {F}ourier coefficients of cusp forms.
\newblock {\em Ann. Acad. Sci. Fenn. Math.}, 40(1):385--395, 2015.

\bibitem{Ernvall-Hytonen--Jaasaari--Vesalainen}
Anne-Maria Ernvall-Hyt\"{o}nen, Jesse J\"{a}\"{a}saari, and Esa~V. Vesalainen.
\newblock Resonances and {$\Omega$}-results for exponential sums related to
  {M}aass forms for {${\rm SL}(n,\mathbb{Z})$}.
\newblock {\em J. Number Theory}, 153:135--157, 2015.

\bibitem{Ernvall--Hytonen-Karppinen}
Anne-Maria Ernvall-Hyt\"{o}nen and Kimmo Karppinen.
\newblock On short exponential sums involving {F}ourier coefficients of
  holomorphic cusp forms.
\newblock {\em Int. Math. Res. Not. IMRN}, (10):Art. ID. rnn022, 44, 2008.

\bibitem{Fouvry-Ganguly2014}
\'{E}tienne Fouvry and Satadal Ganguly.
\newblock Strong orthogonality between the {M}\"{o}bius function, additive
  characters and {F}ourier coefficients of cusp forms.
\newblock {\em Compos. Math.}, 150(5):763--797, 2014.

\bibitem{Friedlander--Iwaniec}
John~B. Friedlander and Henryk Iwaniec.
\newblock Summation formulae for coefficients of {$L$}-functions.
\newblock {\em Canad. J. Math.}, 57(3):494--505, 2005.

\bibitem{Goldfeld}
Dorian Goldfeld.
\newblock {\em Automorphic forms and {$L$}-functions for the group {${\rm
  GL}(n,\mathbb R)$}}, volume~99 of {\em Cambridge Studies in Advanced
  Mathematics}.
\newblock Cambridge University Press, Cambridge, 2006.
\newblock With an appendix by Kevin A. Broughan.

\bibitem{Goldfeld--Li1}
Dorian Goldfeld and Xiaoqing Li.
\newblock Voronoi formulas on {${\rm GL}(n)$}.
\newblock {\em Int. Math. Res. Not.}, pages Art. ID 86295, 25, 2006.

\bibitem{Harcos}
Gergely Harcos.
\newblock An additive problem in the {F}ourier coefficients of cusp forms.
\newblock {\em Math. Ann.}, 326(2):347--365, 2003.

\bibitem{He-Wang2023}
Xiaoguang He and Mengdi Wang.
\newblock Discorrelation of multiplicative functions with nilsequences and its
  application on coefficients of automorphic {$L$}-functions.
\newblock {\em Mathematika}, 69(1):250--285, 2023.

\bibitem{Huxley}
M.~N. Huxley.
\newblock {\em Area, lattice points, and exponential sums}, volume~13 of {\em
  London Mathematical Society Monographs. New Series}.
\newblock The Clarendon Press, Oxford University Press, New York, 1996.
\newblock Oxford Science Publications.

\bibitem{Ivic3}
Aleksandar Ivi\'{c}.
\newblock {\em The {R}iemann zeta-function}.
\newblock A Wiley-Interscience Publication. John Wiley \& Sons, Inc., New York,
  1985.
\newblock The theory of the Riemann zeta-function with applications.

\bibitem{Ivic}
Aleksandar Ivi\'{c}.
\newblock On the ternary additive divisor problem and the sixth moment of the
  zeta-function.
\newblock In {\em Sieve methods, exponential sums, and their applications in
  number theory ({C}ardiff, 1995)}, volume 237 of {\em London Math. Soc.
  Lecture Note Ser.}, pages 205--243. Cambridge Univ. Press, Cambridge, 1997.

\bibitem{Ivic2}
Aleksandar Ivi\'{c}.
\newblock On the divisor function and the {R}iemann zeta-function in short
  intervals.
\newblock {\em Ramanujan J.}, 19(2):207--224, 2009.

\bibitem{Ivic--Matsumoto--Tanigawa}
Aleksandar Ivi\'{c}, Kohji Matsumoto, and Yoshio Tanigawa.
\newblock On {R}iesz means of the coefficients of the {R}ankin-{S}elberg
  series.
\newblock {\em Math. Proc. Cambridge Philos. Soc.}, 127(1):117--131, 1999.

\bibitem{Iwaniec}
Henryk Iwaniec.
\newblock {\em Topics in classical automorphic forms}, volume~17 of {\em
  Graduate Studies in Mathematics}.
\newblock American Mathematical Society, Providence, RI, 1997.

\bibitem{Jaasaari}
Jesse J\"{a}\"{a}saari.
\newblock On short sums involving {F}ourier coefficients of {M}aass forms.
\newblock {\em J. Th\'{e}or. Nombres Bordeaux}, 32(3):761--793, 2020.

\bibitem{Jaasaari--Vesalainen1}
Jesse J\"{a}\"{a}saari and Esa~V. Vesalainen.
\newblock On sums involving {F}ourier coefficients of {M}aass forms for {${\rm
  SL}(3,\mathbb Z)$}.
\newblock {\em Funct. Approx. Comment. Math.}, 57(2):255--275, 2017.

\bibitem{Jaasaari--Vesalainen3}
Jesse J\"{a}\"{a}saari and Esa~V. Vesalainen.
\newblock Corrections to: {\it {O}n sums involving {F}ourier coefficients of
  {M}aass forms for {${\rm SL}(3,\mathbb Z)$}}.
\newblock {\em Funct. Approx. Comment. Math.}, 62(1):59--62, 2020.

\bibitem{Jutila1985}
M.~Jutila.
\newblock On exponential sums involving the divisor function.
\newblock {\em J. Reine Angew. Math.}, 355:173--190, 1985.

\bibitem{Jutila}
M.~Jutila.
\newblock {\em Lectures on a method in the theory of exponential sums},
  volume~80 of {\em Tata Institute of Fundamental Research Lectures on
  Mathematics and Physics}.
\newblock Tata Institute of Fundamental Research, Bombay; by Springer-Verlag,
  Berlin, 1987.

\bibitem{Jutila1987}
M.~Jutila.
\newblock On exponential sums involving the {R}amanujan function.
\newblock {\em Proc. Indian Acad. Sci. Math. Sci.}, 97(1-3):157--166, 1987.

\bibitem{Jutila1996}
Matti Jutila.
\newblock The additive divisor problem and its analogs for {F}ourier
  coefficients of cusp forms. {I}.
\newblock {\em Math. Z.}, 223(3):435--461, 1996.

\bibitem{Kim--Sarnak}
Henry~H. Kim.
\newblock Functoriality for the exterior square of {${\rm GL}_4$} and the
  symmetric fourth of {${\rm GL}_2$}.
\newblock {\em J. Amer. Math. Soc.}, 16(1):139--183, 2003.
\newblock With appendix 1 by Dinakar Ramakrishnan and appendix 2 by Kim and
  Peter Sarnak.

\bibitem{Lester}
Stephen Lester.
\newblock On the variance of sums of divisor functions in short intervals.
\newblock {\em Proc. Amer. Math. Soc.}, 144(12):5015--5027, 2016.

\bibitem{Li}
Xiaoqing Li.
\newblock The central value of the {R}ankin-{S}elberg {$L$}-functions.
\newblock {\em Geom. Funct. Anal.}, 18(5):1660--1695, 2009.

\bibitem{Luke1}
Yudell~L. Luke.
\newblock {\em The special functions and their approximations, {V}ol. {I}},
  volume Vol. 53 of {\em Mathematics in Science and Engineering}.
\newblock Academic Press, New York-London, 1969.

\bibitem{Luke2}
Yudell~L. Luke.
\newblock {\em Mathematical functions and their approximations}.
\newblock Academic Press, Inc. [Harcourt Brace Jovanovich, Publishers], New
  York-London, 1975.

\bibitem{Meurman2}
T.~Meurman.
\newblock On exponential sums involving the {F}ourier coefficients of {M}aass
  wave forms.
\newblock {\em J. Reine Angew. Math.}, 384:192--207, 1988.

\bibitem{Miller}
Stephen~D. Miller.
\newblock Cancellation in additively twisted sums on {${\rm GL}(n)$}.
\newblock {\em Amer. J. Math.}, 128(3):699--729, 2006.

\bibitem{Miller--Schmid}
Stephen~D. Miller and Wilfried Schmid.
\newblock Automorphic distributions, {$L$}-functions, and {V}oronoi summation
  for {${\rm GL}(3)$}.
\newblock {\em Ann. of Math. (2)}, 164(2):423--488, 2006.

\bibitem{Montgomery--Vaughan}
Hugh~L. Montgomery and Robert~C. Vaughan.
\newblock {\em Multiplicative number theory. {I}. {C}lassical theory},
  volume~97 of {\em Cambridge Studies in Advanced Mathematics}.
\newblock Cambridge University Press, Cambridge, 2007.

\bibitem{Munshi2013}
Ritabrata Munshi.
\newblock Shifted convolution sums for {$\mathrm{GL}(3)\times\mathrm{GL}(2)$}.
\newblock {\em Duke Math. J.}, 162(13):2345--2362, 2013.

\bibitem{Munshi2014}
Ritabrata Munshi.
\newblock The circle method and bounds for {$L$}-functions---{I}.
\newblock {\em Math. Ann.}, 358(1-2):389--401, 2014.

\bibitem{Vesalainen}
Esa~V. Vesalainen.
\newblock Moments and oscillations of exponential sums related to cusp forms.
\newblock {\em Math. Proc. Cambridge Philos. Soc.}, 162(3):479--506, 2017.

\bibitem{Wu--Zhai}
Jie Wu and Wenguang Zhai.
\newblock Distribution of {H}ecke eigenvalues of newforms in short intervals.
\newblock {\em Q. J. Math.}, 64(2):619--644, 2013.

\end{thebibliography}
\bibliographystyle{plain}

\end{document}